\documentclass[11pt]{amsart}

\usepackage{adjustbox,epsfig}

\usepackage{mathtools}
\usepackage{enumerate}
\usepackage{color}
 \usepackage[bookmarks=true,
bookmarksnumbered=true, breaklinks=true,
pdfstartview=FitH, hyperfigures=false,
plainpages=false, naturalnames=true,
colorlinks=false,
pdfpagelabels]{hyperref}

\makeatletter
\newtheorem*{rep@theorem}{\rep@title}
\newcommand{\newreptheorem}[2]{%
\newenvironment{rep#1}[1]{%
 \def\rep@title{#2 \ref{##1}}%
 \begin{rep@theorem}}%
 {\end{rep@theorem}}}
\makeatother
\newreptheorem{theorem}{Theorem}
\newreptheorem{corollary}{Corollary}

\usepackage{color}
\usepackage{verbatim}

\newcommand{\R}{\mathbb{R}} 

\newcommand{\Rn}{\mathbb{R}^n} 
\def\LC{\operatorname{LC}_n}
\newcommand \B[1][n]{B_2^{#1}}
\newcommand{\N}{\mathbb{N}}
\newcommand{\vol}[1][n]{\operatorname{vol}_{#1}}
\newcommand{\Vol}{\operatorname{vol}_{nm}}
\def \s{\mathbb{S}^{n-1}}

\def \M{\mathcal{M}}
\newcommand {\conbode}[1][n] {\mathcal{K}^{#1}_e}
\newcommand {\conbodI}[1][n] {\mathcal{K}^{#1}_I}
\newcommand {\conbodo}[1][nm] {\mathcal{K}^{#1}_o}
\newcommand {\conbodio}[1][n] {\mathcal{K}^{#1}_{(o)}}

\newcommand {\Lu}[1]{\mathcal{L}^{#1}}
\newcommand {\Ld}[1]{\mathcal{L}_{#1}}

\mathtoolsset{showonlyrefs}

\newtheorem{theorem}{Theorem}[section]
\newtheorem{definition}[theorem]{Definition}
\newtheorem{lemma}[theorem]{Lemma}
\newtheorem{proposition}[theorem]{Proposition}
\newtheorem{corollary}[theorem]{Corollary}

\newtheorem{rem}[theorem]{Remark}

\title[Cost-Santal\'o inequalities for Polish measure spaces]{On cost-induced Santal\'o-type inequalities \\ in Polish measure spaces}

\author[Langharst]{Dylan Langharst}
\address{Department of Mathematical Sciences, Carnegie Mellon University, Pittsburgh, PA 15213, USA}
\email{dlanghar@andrew.cmu.edu}
\thanks{D.L. is supported by US NSF grant DMS-2502744}

\author[Malliaris]{Andreas Malliaris}
\address{Institut de Math\'ematiques de Toulouse (UMR 5219). University of Toulouse \& CNRS. UPS, F-31062
Toulouse Cedex 09, France.}
\email{andreas.malliaris@math.univ-toulouse.fr}
\thanks{A.M. received support from the Graduate School EUR-MINT (State support managed by the
National Research Agency for Future Investments program bearing the reference ANR-18-EURE-0023)}

\author[Roysdon]{Michael Roysdon}
\address{Department of Mathematical Sciences, University of Cincinnati, Cincinnati, OH, USA and Department of Mathematics, The Ohio State University, Columbus OH, USA}
\email{roysdon.3@osu.edu}
\thanks{M.R. is supported by US NSF grant DMS-2548742.}

\subjclass[2020]{Primary 28A75, 52A40; Secondary 52A20, 49Q22, 53C23}
\thanks{Keywords: Blaschke-Santal\'o inequality, polarity, cost functions, Polish measure spaces}

\begin{document}
\begin{abstract}
    We introduce a framework for establishing Blaschke-Santal\'o-type inequalities on $m$-tuples of Polish measure spaces coupled together by a continuous cost function. Central to our approach is a transference principle, which provides a mechanism to lift geometric weighted inequalities involving cost-polar sets into functional integral inequalities of Santal\'o-type. We call these equivalent inequalities \textit{cost-Santal\'o inequalities}. This definition expands and includes previous notions in the literature.
    
    We apply this principle to deduce several new versions of functional Santal\'o inequalities, including on the space of rectangular matrices and a functional sine Santal\'o inequality. A surprising development is that probability spaces with log-concave isoperimetric functions fit into our framework, for example, Gauss space and spherical space, leading to new functional Santal\'o inequalities in these settings. In particular, we obtain results for $\operatorname{RCD}(K,\infty)$ spaces. As a discrete application, we obtain an inequality for the Hamming cube. Finally, we explore applications to optimal transport, utilizing our functional framework to establish generalized transport-entropy inequalities on arbitrary Polish spaces satisfying a cost-Santal\'o inequality, which we explicitly instantiate for matrix spaces.
\end{abstract}
\maketitle
\tableofcontents
\section{Introduction}
\subsection{Motivation}

The concept of duality is pervasive throughout mathematics. A historical example is the Legendre transform on $\R^n
$: for a measurable function $\psi:\R^n\to\R\cup\{+\infty\}$ its Legendre transform $\mathcal{L}\psi:\R^n\to\R\cup\{+\infty\}$ is defined, for $v\in\R^n$, by
\begin{equation}\mathcal{L}\psi(v)=\sup_{u\in\Rn}[\langle v,u \rangle-\psi(u)].
\label{eq:legendre}
\end{equation}
Here, $\R^n$ is equipped with the Euclidean inner product $\langle \cdot,\cdot\rangle:\R^n\times \R^n\to\R$. The Legendre transform preserves the class $\operatorname{Cvx}(\R^n)$ of lower semicontinuous, convex functions on $\R^n$ with values in $\R\cup\{+\infty\}$, and serves as a duality on that space.

Moreover, this transform admits a natural analogue in the log-concave setting. The polar of a function $f:\R^n\to\R_+$ is defined by
\begin{equation}
\label{eq:classical_polarity}
f^\circ(v):=\exp\left[-\mathcal{L}(-\log f)(v)\right]=\inf_{u\in\Rn}\left[\frac{e^{-\langle v,u\rangle}}{f(u)}\right],\end{equation}
and is a log-concave function. Recall that a (Borel) measurable function $f:\R^n\to\R_+$ is log-concave if, for every $\lambda\in (0,1)$ and $x,y\in\R^n$, it holds
\[
f((1-\lambda)x+\lambda y)\geq f(x)^{1-\lambda}f(y)^\lambda.
\]
The polarity \eqref{eq:classical_polarity} preserves the class of functions (we write u.s.c for upper semicontinuous)
\begin{align*}
\LC:=\left\{f:\R^n\rightarrow \R_+: f\text{ is log-concave, u.s.c, and} \; \; 0\!<\!\int_{\R^n} f(x) d x \!< \infty\right\}.
\end{align*}

In his PhD thesis, Ball \cite{Ball_thesis} established that, for an even function $f:\R^n\to\R_+$ satisfying $0<\int_{\R^n}f(x)dx <\infty$, one has
\begin{equation}
\label{eq:ball_bS}
\left(\int_{\R^n}f(x)dx\right)\cdot \left(\int_{\R^n} f^\circ(x) dx\right) \leq (2\pi)^n.
\end{equation}
The works of Artstein, Klartag and Milman \cite{AAKM04}, Fradelizi and Meyer \cite{FM07}, and Lehec \cite{JL09} culminate in the following expansion of \eqref{eq:ball_bS}.
\begin{proposition}[Functional Blaschke-Santal\'o]
\label{prop:Ball_Fradelizi_Meyer}
Let $f:\R^n\to\R_+$ be a measurable function such that $0<\int f <+\infty$. Then, there exists $z\in \R^n$ with the following property: 
\\
for every measurable function $g:\R^n\to\R_+$ and measurable function $\Omega:\R_{+}\to\R_{+}$ satisfying
$$f(x+z)g(y)\le \Omega^{2}(\langle x,y\rangle ) \quad \text{for every } x,y\in \R^n \text{ such that } \langle x,y\rangle >0,$$
        one has
         \begin{equation}
         \label{eq:BS_fun}
         \int_{\R^n} f(x)dx\int_{\R^n} g(y)dy\le
        \left( \int_{\R^n} \Omega({|x|^{2}})dx\right)^2.\end{equation}
\end{proposition}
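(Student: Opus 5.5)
The plan is Lehec's route: reduce to the even case by choosing $z$ appropriately, then use a partition/induction argument built on the Prékopa--Leindler inequality. This is the mechanism of \cite{JL09} (see also \cite{FM07}).

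\textbf{Choosing $z$.} Pick $z$ so that the hyperplane-splitting property holds. I would try a Yao--Yao partition point for the measure $f(x)\,dx$. Its defining property is that $\R^n$ splits into $2^n$ cones (orthants after a linear change of variables), each carrying measure $2^{-n}\int f$. Existence is a topological (Borsuk--Ulam/degree) argument; this is exactly where only integrability of $f$ is used. A simpler variant would take $z$ to be a point where, for every half-space $H$ through the origin, $\int_{H} f(x+z)\,dx \ge$ the corresponding share. Lehec's actual proof uses induction on dimension with the Yao--Yao centre.

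\textbf{Induction on dimension.} Replace $f$ by $f(\cdot+z)$. The key one-dimensional step is this. If $a,b:\R_+\to\R_+$ satisfy $a(s)b(t)\le \Omega^2(st)$, then
\[
\int_0^\infty a\int_0^\infty b\le\left(\int_0^\infty \Omega(r^2)\,dr\right)^2 .
\]
This follows from the multiplicative Prékopa--Leindler inequality after the substitution $s=e^{u}$, $t=e^{v}$. Concretely, $a(e^u)e^u$ and $b(e^v)e^v$ have product bounded by $[\Omega(e^{u+v})e^{(u+v)/2}]^2$, and one applies Prékopa--Leindler with $\lambda=1/2$ at the point $(u+v)/2$.

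Each half-line (or orthant) is handled separately. The condition $\langle x,y\rangle>0$ then only couples $x$ and $y$ lying in the same orthant after a linear map, with $y$ in the dual orthant. One applies the $n$-dimensional version on each orthant: Prékopa--Leindler in logarithmic coordinates on $\R_+^n$, with $\langle x,y\rangle$ controlled by $x_iy_i$ products via AM--GM. The pieces are then summed. The equal-mass property of the partition ensures that the products over pieces recombine into the full products via Cauchy--Schwarz:
\[
\sum_i \sqrt{A_iB_i}\le\cdots,
\]
with $A_i = 2^{-n}\int f$.

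\textbf{Main obstacle.} The main difficulty is the existence of the centre $z$ with the needed equipartition, together with compatibility of the orthant decomposition under a linear map, which requires dual bases. I would import Yao--Yao's theorem as a black box (as Lehec does) rather than reprove it. I would then check that the linear change of variables preserves the constraint $\langle x,y\rangle>0$ when $y$ is transformed by the inverse transpose, so that Lebesgue integrals scale by reciprocal determinants that cancel in the product.
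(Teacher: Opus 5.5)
The paper does not prove this statement in this generality. It quotes it from the literature and only re-derives the case $f\in\mathcal{F}_n$ (with $z=0$ and $\Omega$ non-increasing), by combining $L\subseteq M(K,L)K^\circ$ with \eqref{eq:BS} and feeding $F(r)=\vol(\B)r^{n/2}$ into Lemma~\ref{l:transfer}. A Lehec-type argument is the right tool for the full statement, and your one-dimensional and orthant lemmas are correct. In fact $\sum_i e^{s_i+t_i}=\langle x,y\rangle=|e^{(s+t)/2}|^2$ holds exactly, so no AM--GM is involved, and hence no monotonicity of $\Omega$ is needed.

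\textbf{The gap is the partition step.} You take the $2^n$ cells to be the orthants of one affine coordinate system centred at $z$, each of mass $2^{-n}\int f$, and pair them with the dual orthants given by the inverse transpose. That is an equipartition of $f\,dx$ by $n$ hyperplanes through a common point, which is the Gr\"unbaum--Hadwiger--Ramos problem. Such an equipartition does not exist in general. The parameter count ($n^2$ parameters against $2^n-1$ constraints) already points to failure for $n\ge 5$, and Avis's moment-curve examples confirm it fails for every $n\ge5$. Yao--Yao's theorem does not provide such a partition. Neither does your ``simpler variant'': a point of half-space depth $\frac12$ need not exist, only depth $\frac1{n+1}$ is guaranteed in general, and depth alone gives no cells.

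If instead you meant a different linear map for each cell, then nothing in your argument ensures that the corresponding dual cones cover $\R^n$, and that covering is exactly what recombination needs. The correct recombination is not Cauchy--Schwarz. It is simply summing the per-cell bounds $\int_{C_i^*}g\le 2^{-n}\bigl(\int\Omega(|x|^2)\,dx\bigr)^2/\int f$ over a family of dual cones $C_i^*$ that covers $\R^n$.

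\textbf{How to repair it.} What rescues the argument is the actual recursive structure of a Yao--Yao partition, as used by Lehec:
\begin{enumerate}
\item a hyperplane $z+u^\perp$ bisecting $f\,dx$;
\item the orthogonal projections onto it of the two halves;
\item recursively, Yao--Yao partitions of these two projected measures with the same centre $z$.
\end{enumerate}
Unwinding the recursion, every cell is $z+R_i(\R^n_+)$ for an orthogonal $R_i$, with the frame depending on the cell. From $n=3$ on, the two halves are generally cut by different hyperplanes, which is why this is not an $n$-hyperplane equipartition.

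These cones are self-dual and tile $\R^n$. So your orthant lemma, applied to $f(z+R_i\,\cdot)$ and $g(R_i\,\cdot)$ on $(0,\infty)^n$, gives $\int_{R_i(\R^n_+)}g\le 2^{-n}\bigl(\int_{\R^n}\Omega(|x|^2)\,dx\bigr)^2/\int f$. Summing over the $2^n$ cells concludes, with no dual bases or determinants needed.

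Equivalently, you can run a genuine induction on dimension. The one-dimensional lemma applied on fibres in the $u$-direction yields the hypothesis on $u^\perp$ with $\widetilde\Omega(a)=\int_0^\infty\Omega(a+r^2)\,dr$. You then use $\int_{u^\perp}\widetilde\Omega(|w|^2)\,dw=\frac12\int_{\R^n}\Omega(|x|^2)\,dx$.

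Finally, if the Yao--Yao theorem you import requires a positive density, obtain the partition for a general integrable $f$ by approximation. The centres stay in a compact set, because every closed half-space bounded by a hyperplane through a centre contains a whole cell and hence has mass at least $2^{-n}\int f$.
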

\noindent The case when $\Omega(t)=e^{-\frac{t}{2}}$ corresponds to the functional polarity given by \eqref{eq:classical_polarity}, as $g=f^\circ$ satisfies the hypotheses of Proposition~\ref{prop:Ball_Fradelizi_Meyer}.

This type of duality has a geometric counterpart. The polar of a compact set $K\subset \R^n$ with positive volume is
$$K^\circ =\{x\in \R^n: \langle x,y\rangle \leq 1,\, \forall \, y \in K\}.$$
Denote by $\conbodo[n]$ the space of \textit{convex bodies} in $\R^n$, where a convex body is a compact, convex set which contains the origin and has non-empty interior. Then, one has $\conbode\subset\conbodio\subset \conbodo[n]$; moreover, $\conbodio$, the subspace of convex bodies that have the origin in their interiors, and $\conbode$, the subspace of origin-symmetric convex bodies, are two spaces fixed by polarity. Recall that a set $A\subset \R^n$ is said to be origin-symmetric if $A=-A$.

In turn, the classical Blaschke-Santal\'o inequality  is precisely the geometric counterpart to Proposition~\ref{prop:Ball_Fradelizi_Meyer}. We recommend the works \cite{MP90,FMZ23,CG06,LZ97} for a rich overview of this inequality; usually, it is stated for convex bodies. We state it here for compact sets, which was proven recently in \cite{FGSZ23}: for $K\subset \R^n$ a compact set with positive volume such that $K$ or $K^\circ$ has center of mass at the origin, it holds
\begin{equation}
\label{eq:BS}
\vol(K)\vol(K^\circ) \leq \vol(\B)^2,
\end{equation}
with equality if and only if $K$ is a centered ellipsoid which may have had null sets removed.

In fact, the inequalities \eqref{eq:BS} and \eqref{eq:BS_fun} are formally equivalent. Indeed, recall that the Minkowski functional of a convex body $K$ is $\|v\|_{K}=\inf\{t>0:v\in tK\}$. If we define $\psi(v)=\frac{1}{2}\|v\|_K^2$, then $\mathcal{L}(\psi(v))=\frac{1}{2}\|v\|_{K^\circ}^2$, and, therefore,
\begin{equation}\exp\left[-\frac{\|v\|_K^2}{2}\right]^\circ=\exp\left[-\frac{\|v\|_{K^\circ}^2}{2}\right].
\label{eq:duality_exp}
\end{equation}
Consequently, using the choice $\Omega(t)=e^{-\frac{t}{2}}$, \eqref{eq:duality_exp} and the fact that
\begin{equation}\vol(K)=\frac{\vol(\B)}{(2\pi)^{\frac{n}{2}}}\int_{\Rn}e^{-\frac{\|v\|_K^2}{2}}dv,
\label{eq:norm_gauss_int}
\end{equation}
produces \eqref{eq:BS} from \eqref{eq:BS_fun} in the convex case. The converse direction in the convex case is not as immediate, and there are a few methods; for example, the method of K. Ball's bodies utilized in \cite{FM07}. 

One can try to deduce the compact case from the convex case using the fact that $K\subset \operatorname{conv}\left(K\cup\{o\}\right)=:L$ and $K^\circ=L^\circ$. Here, $\operatorname{conv} (M)$ is the smallest convex set (with respect to set inclusion) containing any non-empty $M\subset \R^n$. However, the hypothesis of \eqref{eq:BS} does not follow; this reduction does not capture the validity of \eqref{eq:BS} when $K$ itself has center of mass at the origin.

Recently, the second-named and third-named author together with Melbourne and Roberto \cite{MMRR26} introduced the notion of \textit{generalized sup-convolution} and showed that this concept lifts certain geometric inequalities to functional inequalities. We utilize this method and obtain the equivalence between \eqref{eq:BS} and \eqref{eq:BS_fun} in the compact case of \eqref{eq:BS} and any non-increasing $\Omega$ in \eqref{eq:BS_fun}; however, we must pay by placing a centering condition on the function $f$ in Proposition~\ref{prop:Ball_Fradelizi_Meyer}. In this way, we bypass the troublesome vector $z$. 

To give an appetizer, we recall the Pr\'ekopa-Leindler inequality \cite{PreL1,PreL2}, which states that for a given $\lambda\in(0,1)$, if $f,g,h: \R^n\to\R_+$ are a triple of functions satisfying
\begin{equation} h\left((1-\lambda)x+\lambda y\right)\geq f(x)^{1-\lambda}g(y)^{\lambda}, \quad \text{ for all } x,y \in \R^n,
\label{eq:PL_hypo}
\end{equation}
then it holds that
\begin{equation}
    \label{eq:PL}
    \int_{\R^n}h(x)dx \geq \left(\int_{\R^n}f(x)dx\right)^{1-\lambda}\left(\int_{\R^n}g(y)dy\right)^{\lambda}.
\end{equation}
The geometric version of the Pr\'ekopa-Leindler inequality is precisely the Brunn-Minkowski inequality (see, e.g., \cite{G20}). The associated sup-convolution is the classical one: for non-negative, measurable functions $f$ and $g$, one has, for a fixed $\lambda \in (0,1)$,
\begin{equation}
\label{eq:sup_con}
\left(f \square g\right)(z)=\sup_{\substack{x,y\in \R^n: \\ (1-\lambda) x + \lambda y = z}}f(x)^{1-\lambda}g(y)^{\lambda}.
\end{equation}
Notice that, if $h$ satisfies \eqref{eq:PL_hypo}, then $h \geq \left(f \square g\right)$ pointwise. Consequently, one is able to state the Pr\'ekopa-Leindler inequality using just $f, g$ and $\left(f \square g\right)$.

\subsection{Cost functions and the transference principle}
The Legendre transform, and consequently polarity, naturally extends to any real Hilbert space. In general, for any set $X$, it is common to introduce a symmetric function $c:X\times X\to (-\infty,\infty)$ to replace the role of the inner product in \eqref{eq:legendre} and \eqref{eq:classical_polarity}. Recently, a systematic characterization of such transforms was given by Artstein-Avidan, Sadovsky and Wyczesany \cite{AASW23_2}. In this work, we are interested in non-symmetric cost functions and the dualities they define.

We recall that a metric space $(X,d)$ is said to be a \text{Polish space} if it is complete and separable. If $\mu$ is a Borel measure on $X$, we call $(X,d,\mu)$ a Polish measure space. Throughout this work, every set or function called ``measurable'' is meant in the sense of Borel.

Let $(X_1,d_1,\mu_1),\dots,(X_m,d_m,\mu_m)$ be Polish measure spaces. We say an upper semi-continuous function $c:X_1\times \cdots \times X_m\to (-\infty,\infty)$ that is measurable with respect to the product Borel $\sigma$-algebra is a \textit{cost function}. There is a deep relationship  between cost functions and duality; as we recall below, the classical dualities \eqref{eq:legendre} and \eqref{eq:classical_polarity} are when $m=2$, $X_1=X_2=\R^n$ and $c(x,y)=-\langle x,y \rangle$. Sometimes in the literature, the cost function in the classical case is taken to be $\frac{1}{2}|x-y|^2=\frac{1}{2}|x|^2-\langle x,y \rangle +\frac{1}{2}|y|^2$; but the two are equivalent by re-normalizing by the quadratic terms $\frac{|\,\cdot\,|^2}{2}$.

Our main tool is the following transference principle. It pushes Santal\'o-type inequalities from subsets of a metric space to functions via a cost function relationship, and it is in the same spirit as K. Ball's proof of \eqref{eq:ball_bS}. We first mention some notation for its statement. 

Let $(X,\mu)$ be a Polish measure space. Let $\mathcal{C}$ be a collection of $\mu$-measurable subsets of $X$. Then, we denote the class of non-negative functions on $X$ which have almost all of their superlevel sets in $\mathcal{C}$ by
\[
\mathcal{F}\left(\mathcal{C}\right)=\left\{\mu\text{-measurable } f:X\to \R_+\,|\,\{f\geq r\}\in \mathcal{C}\,\, \text{for almost all } r \in (0,\|f\|_\infty)\right\}.
\]
Recall that every continuous, non-decreasing $F:(a,b]\to \R_+$ induces a positive, locally finite Lebesgue-Stieltjes measure $dF(r)$ on the Borel $\sigma$-algebra of $(a,b]$. If $F(a^+)=\lim_{r\to a^+}F(r)=0,$ then $F(t)=\int_a^tdF(r)$ for every $t\in (a,b)$. If $F$ is absolutely continuous, then $dF(r)=F^\prime(r)dr$ by the Radon–Nikodym theorem (see, e.g., \cite{Stein}).
\begin{lemma}[The transference principle]
\label{l:transfer}
For $m\in\N$, let $(X_1,\mu_{X_1}),\dots, (X_m,\mu_{X_m})$ be Polish measure spaces. Let $c \colon X_1 \times\cdots \times X_m \to (-\infty,\infty)$ be a continuous cost function. Fix $\lambda_1,\dots,\lambda_m \in (0,1)$ such that $\sum_{i=1}^m\lambda_i=1$.

Consider collections, for $i=1,\dots,m$, $\mathcal{C}_{X_i}$ of $\mu_{X_i}$-measurable subsets of $X_i$. Let $a,b\in \R\cup\{\pm\infty\}$ be such that for all non-empty $A_i\in \mathcal{C}_{X_i}$, $\sup_{\substack{x_i\in A_i \\ 1\leq i \leq m}} -c(x_1,\dots,x_m) \in [a,b]$ and consider a continuous, non-decreasing function $F \colon (a,b]\to [0,\infty)$ satisfying $F(a^+)=0$.

Then, the following are equivalent:\begin{enumerate}
    \item For every non-empty $A_1\in \mathcal{C}_{X_1},\dots,A_m\in\mathcal{C}_{X_m}$, one has
    \[
\prod_{i=1}^m\mu_{X_i}(A_i)^{\lambda_i} \leq F\left(\sup_{\substack{x_i\in A_i \\ 1\leq i \leq m}} -c(x_1,\dots,x_m) \right).
\]
\item For every $(m+1)$-tuple of measurable functions $f_1 \in \mathcal{F}(\mathcal{C}_{X_1}),\dots, f_m\in \mathcal{F}\left(\mathcal{C}_{X_m}\right)$ and $\Omega \colon (a,b]  \to [0,\infty)$, where $\Omega$ is non-increasing, that satisfy
\begin{align*}
\prod_{i=1}^mf_i(x_i)^{\lambda_i} &\leq \Omega(-c(x_1,\dots,x_m)) \text{ for all } 
\\
&(x_1,\dots,x_m)\in X_1\times\cdots\times  X_m, \text{ with } -c(x_1,\dots,x_m)\in (a,b],
\end{align*}
one has
\[
\prod_{i=1}^m\left(\int_{X_i} f_i(x) d\mu_{X_i}(x) \right)^{{\lambda_i}} \leq \int_{a}^b \Omega(r) dF(r).
\]
\end{enumerate}
\end{lemma}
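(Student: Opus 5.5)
The plan is to prove both implications separately. The direction (2)$\Rightarrow$(1) is a specialization to indicator functions, and (1)$\Rightarrow$(2) is a layer-cake argument combined with a multiplicative (geometric-mean) Prékopa–Leindler-type estimate on the distribution functions.

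\emph{(2)$\Rightarrow$(1).} Take non-empty $A_i\in\mathcal{C}_{X_i}$ and set $f_i=1_{A_i}$. Each superlevel set $\{f_i\ge r\}$ with $r\in(0,1)$ equals $A_i$, so $f_i\in\mathcal{F}(\mathcal{C}_{X_i})$. Let $s=\sup_{x_i\in A_i}-c(x_1,\dots,x_m)\in[a,b]$ and choose $\Omega=1_{(a,s]}$, which is non-increasing. If some $x_i\notin A_i$, the product $\prod f_i(x_i)^{\lambda_i}$ vanishes. Otherwise $-c(x)\le s$, so $\Omega(-c(x))=1$. Hence the hypothesis of (2) holds. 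Then (2) gives
\[\prod\mu_{X_i}(A_i)^{\lambda_i}\le\int_{(a,s]}dF=F(s)-F(a^+)=F(s).\]
When $s=a$ both sides should be read as limits; this is handled by replacing $\Omega$ with $1_{(a,s+\epsilon]}$ and using the continuity of $F$. When $s=b$ we have $\Omega\equiv1$ on $(a,b]$.

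\emph{(1)$\Rightarrow$(2).} By the layer-cake formula, $\int f_i\,d\mu_{X_i}=\int_0^\infty\mu_{X_i}(\{f_i\ge t\})\,dt$. Substituting $t=e^{-u}$ or similar, write $\phi_i(u)=\mu_{X_i}(\{f_i\ge e^{u}\})$, so that $\int f_i=\int_{\R}\phi_i(u)e^{u}du$. The key pointwise claim is the following. For levels $t_1,\dots,t_m>0$ with all $A_i=\{f_i\ge t_i\}$ non-empty and belonging to $\mathcal{C}_{X_i}$ (true for a.e.\ level), the hypothesis on $\Omega$ gives
\[\prod t_i^{\lambda_i}\le\Omega(-c(x))\quad\text{for all } x\in\prod A_i.\]
Since $\Omega$ is non-increasing, this means $\prod t_i^{\lambda_i}\le\Omega(r)$ for all $r<s$, where $s=\sup_{\prod A_i}-c$. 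Hence $s\le\rho(\prod t_i^{\lambda_i})$, where $\rho(\tau):=\sup\{r\in(a,b]:\Omega(r)\ge\tau\}$. Then (1) and the monotonicity of $F$ give
\[\prod_i\phi_i(\log t_i)^{\lambda_i}\le F\bigl(\rho(\textstyle\prod t_i^{\lambda_i})\bigr)=:h\bigl(\textstyle\sum\lambda_i\log t_i\bigr).\]
Since the weights $e^{u_i}$ satisfy $\prod (e^{u_i})^{\lambda_i}=e^{\sum\lambda_iu_i}$, the functions $\phi_i(u)e^{u}$ and $h(u)e^{u}$ satisfy the hypothesis \eqref{eq:PL_hypo} of the one-dimensional Prékopa–Leindler inequality, in its $m$-function form. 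Therefore
\[\prod(\int f_i)^{\lambda_i}\le\int_{\R}h(u)e^{u}du=\int_0^\infty F(\rho(\tau))\,d\tau.\]

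Finally, I identify $\int_0^\infty F(\rho(\tau))\,d\tau$ with $\int_a^b\Omega\,dF$ by Fubini. First write $F(\rho(\tau))=\int_{(a,b]}1_{\{r<\rho(\tau)\}}\,dF(r)$, up to endpoint conventions, using $F(a^+)=0$. Next, $r<\rho(\tau)$ implies $\Omega(r)\ge\tau$ because $\Omega$ is non-increasing. Integrating in $\tau$ then gives
\[\int_0^\infty F(\rho(\tau))\,d\tau\le\int_{(a,b]}\Omega(r)\,dF(r).\]

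The main obstacle is bookkeeping rather than ideas. Several boundary issues need care: levels where some superlevel set is empty (there $\phi_i=0$, so the pointwise inequality is trivial); the a.e.\ membership in $\mathcal{C}_{X_i}$ (which is harmless for integrals); measurability of $h$ (since $h$ is monotone, it is measurable); the case $s=a$ or $\rho$ taking the value $a$; and the $m$-fold Prékopa–Leindler inequality on $\R$, which follows by induction from \eqref{eq:PL}.
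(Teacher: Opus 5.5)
Your argument is correct. The direction (2)$\Rightarrow$(1) is the paper's own: indicators, together with $\Omega=1_{(a,s]}$. The $\epsilon$-perturbation at $s=a$ is unnecessary. In that case $-c\le a$ on $\prod_i A_i$, so $\Omega\equiv 0$ on $(a,b]$ is already admissible and gives the bound $0=F(a^+)$.

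For (1)$\Rightarrow$(2), however, you take a genuinely different and more self-contained route. The paper performs no layer-cake computation. It forms the generalized sup-convolution $r\mapsto\sup_{-c(x)\ge r}\prod_i f_i(x_i)^{\lambda_i}$ on $(a,b)$ equipped with $dF$. It notes that for weighted indicator tuples the integral is $\prod_i\alpha_i^{\lambda_i}F(\sup_{\prod A_i}-c)$, so (1) is precisely the geometric hypothesis of the lifting theorem of \cite{MMRR26} (Theorem~\ref{thm:MMRR_equivalence}). It then imports the functional conclusion from that theorem and bounds the sup-convolution pointwise by $\Omega$.

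You instead prove the lifting by hand, in four steps:
\begin{enumerate}
\item take superlevel sets at different heights $t_i$;
\item combine hypothesis (1) with the monotonicity of $\Omega$ and $F$ to get $\prod_i\phi_i(u_i)^{\lambda_i}\le h(\sum_i\lambda_iu_i)$ with $h$ monotone;
\item apply the $m$-function one-dimensional Pr\'ekopa--Leindler inequality with the weights $e^{u}$;
\item use Tonelli to compare $\int_0^\infty F(\rho(\tau))\,d\tau$ with $\int_a^b\Omega\,dF$.
\end{enumerate}

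Your route keeps everything on $\R$ with monotone functions. This sidesteps the measurability of the sup-convolution (analytic sets, lower integrals) and any external black box. It also exhibits the whole principle as one-dimensional Pr\'ekopa--Leindler applied to distribution functions. With Borell--Brascamp--Lieb in place of Pr\'ekopa--Leindler, it would also give the $p\le 1$ extension mentioned after the paper's proof. The paper's route buys brevity and modularity, since one cited theorem handles all $p$-means and all generalized sup-convolutions at once.

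When writing it up, make three points explicit.
\begin{enumerate}
\item The pointwise hypothesis on $\Omega$ only applies where $-c(x)\in(a,b]$. What you actually get is $\Omega(r)\ge\prod_it_i^{\lambda_i}$ for $r\in(a,s)$, using $-c\le b$ on $\prod_iA_i$ from the standing assumption $s\in[a,b]$. This is exactly the point the paper checks.
\item Pr\'ekopa--Leindler needs its hypothesis at every $(u_1,\dots,u_m)$. So first set $\phi_i=0$ on the null set of exceptional levels where $\{f_i\ge e^{u}\}\notin\mathcal{C}_{X_i}$ but has positive measure.
\item In defining $\rho$ and $h$, adopt the conventions $\sup\emptyset=a$ and $F(a)=0$.
\end{enumerate}
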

Here, when $b=\infty$, the interval $(a,b]$ means $(a,\infty)$; symmetrically, when $a=-\infty$, the interval $[a,b)$ means $(-\infty,b)$. The same convention applies to $[a,b]$. We also use the convention that $0\times \infty=0$. Motivated by Lemma~\ref{l:transfer}, we introduce the following definition, which we may use from time-to-time.
\begin{definition}
\label{def:cost_sant}
    Let $m\in \N$. Let $a,b\in \R\cup\{\pm \infty\}$, with $a<b$, and let $F:(a,b]\to \R_+$ be a continuous, non-decreasing function such that $\lim_{r\to a^+}F(r)=0$. Let $\lambda_i\in (0,1)$ satisfy $\sum_{i=1}^m\lambda_i=1$ and set $\lambda=(\lambda_1,\dots,\lambda_m)$. 
    
    We say Polish measure spaces $(X_1,\mu_1),\dots,(X_m,\mu_m)$ satisfy the cost-Santal\'o inequality associated with $(\lambda,c,F)$, with domain $(\mathcal{C}_{x_1},\dots, \mathcal{C}_{X_m})$, where $\mathcal{C}_{X_i}\subset 2^{X_i}$, if they satisfy the equivalent inequalities in Lemma~\ref{l:transfer} with these parameters.
    
    If a single space $(X,\mu)$ is repeated $m$-times, we mention this space just once; the amount of occurrences is dictated by the length of $\lambda$. If $m=2$, we may list $\lambda$ just once, since $\lambda_1=\lambda$ and $\lambda_2=1-\lambda$ in this case.
\end{definition}

We denote for $K\subset \R^n$ a compact set with positive volume, $b(K)=\int_Kx\frac{dx}{\vol(K)}$ its center of mass. We define
\[
\mathcal{C}^n=\left\{K\subset\R^n|\,b(K)=0, \, K\text{ is compact with positive volume}\right\}
\]
and define the following class of non-negative, measurable functions
$$\mathcal{F}_n=\mathcal{F}\left(\mathcal{C}^n\right):=\left\{f:\R^n\to\R_+\bigg|\,0<\int_{\R^n}f<\infty\,\text{and, for a.e. }r\in (0,\|f\|_\infty), \{f\geq r\}\in\mathcal{C}^n  \, \right\},$$
under the convention that $b\left(\emptyset\right)=o$. We denote by $\mathcal{B}\left(X\right)$ the Borel $\sigma$-algebra on a Polish measure space $(X,d,\mu)$.

We will derive the equivalence between the functional \eqref{eq:BS_fun} and geometric \eqref{eq:BS} Blaschke-Santal\'o inequalities for all $\Omega$ when $f\in \mathcal{F}_n$ using the transference principle (Lemma~\ref{l:transfer}). In the case of sets, this corresponds to when $K$ has center of mass at the origin. We present here a sketch, which we will complete in Section~\ref{sec:prelim}. Consider $K \in \mathcal{C}^n$ and let $L \in \mathcal{B}(\R^n)$ and define $M(K,L) = \sup_{x\in K, y\in L} \langle x, y \rangle$. A direct application of polarity and the Blaschke–Santal\'o inequality \eqref{eq:BS} yields the sharp relation
\[
\vol(K)^{\frac{1}{2}} \vol(L)^{\frac{1}{2}} \leq \vol(\B) M(K,L)^{\frac{n}{2}}.
\]
When sent through the lens of Lemma~\ref{l:transfer} for $m=2$ under the cost $c(x,y)=-\langle x,y\rangle$, this inequality satisfies the hypothesis required with the function $F(r)=\vol(\B) r^{\frac{n}{2}}$. 

Consequently, for any function $f\in\mathcal{F}_n$ and any measurable function $g:\R^n\to\R_+$ satisfying $f(x)g(y)\leq \Omega^2(\langle x,y\rangle)$ whenever $\langle x,y\rangle>0$, Lemma~\ref{l:transfer} automatically produces
\[
\left(\int_{\R^n} f\right)^{\frac{1}{2}} \left(\int_{\R^n} g\right)^{\frac{1}{2}} \leq \frac{n}{2} \vol(\B) \int_0^\infty \Omega(r) r^{\frac{n}{2}-1}\,dr.
\] 
A change of variables and an integration in polar coordinates identifies the right-hand side as $\int_{\R^n}\Omega(|x|^2)\,dx$, recovering the functional Santal\'o inequality \eqref{eq:BS_fun}. While this introductory showcase highlights the efficiency of our transference mechanism, one minor limitation is that this approach does not preserve the characterization of the equality cases.

Interest in Santal\'o-type inequalities induced by a cost function of more than two variables was recently sparked by
Kolesnikov and Werner \cite{KW22}, who considered the cost function
$c:X_1\times\cdots\times X_m\to \mathbb R$ given by
\begin{equation}
    c(x_1,\dots,x_m)=\sum_{1\le i<j\le m}|x_i-x_j|^2
\end{equation}
and then conjectured a generalization of
Proposition~\ref{prop:Ball_Fradelizi_Meyer}; a recent work~\cite{NT24_2} by Nakamura and Tsuji, building on their
previous techniques~\cite{NT24,NT22}, resolved this conjecture in arguably the most important choice of $\Omega$, the exponential function.

\subsection{New Examples}
Having detailed how the transference principle bridges geometric and functional Blaschke-Santal\'o inequalities, we provide further examples. As far as we are aware, the resulting functional inequalities are new. 

\subsubsection{New Euclidean functional Santal\'o inequalities} In his PhD thesis, K. Ball \cite{Ball_thesis} proposed the following variant of the Blaschke-Santal\'o inequality: for every $K\in\conbode[n]$,
\begin{equation}
\label{eq:Ball_san}
\int_{K}\int_{K^\circ}\langle x,y\rangle^2 dx dy \leq \int_{\B}\int_{\B}\langle x,y\rangle^2 dx dy,
\end{equation}
with equality if and only if $K$ is an ellipsoid. Clearly, using again the involution property $K\subseteq K^{\circ\circ}=\operatorname{conv}(K\cup\{o\})$, the convex case immediately implies that \eqref{eq:Ball_san} holds for any origin-symmetric compact set with positive volume.

This Ball-Santal\'o inequality remained unresolved until 2026, when it was solved by B\"or\"oczky, Patsalos and Saroglou \cite{BPS26}. In the intermediate decades, Huang and Li \cite{HL17} showed that the Ball-Santal\'o inequality \eqref{eq:Ball_san} is equivalent to the following functional statement: given $\Omega: \mathbb{R} \rightarrow \mathbb{R}_{+}$ and $f, g: \mathbb{R}^n \rightarrow \mathbb{R}_{+}$  even, measurable functions satisfying $f(x) g(y) \leq \Omega^2(\langle x, y\rangle)$, for all $x, y \in \mathbb{R}^n$ such that $\langle x, y\rangle>0$, it holds that
\begin{equation}
\label{eq:fun_ball}
\int_{\mathbb{R}^n} \int_{\mathbb{R}^n}\langle x, y\rangle^2 f(x) g(y) d x d y \leq \frac{1}{n}\left(\int_{\mathbb{R}^n}|u|^2 \Omega\left(|u|^2\right) d u\right)^2 .
\end{equation}

We would like to mention that this type of functional inequality, where there is a kernel density $K(x,y)$ on $X \times X$, falls outside of our framework. Similar inequalities, with kernels of the form $|\langle x,y\rangle|^p$ and $\langle x,y\rangle_+^p$, were discovered in \cite{LYZ00,HS09,NVH19}. Nevertheless, the method used in \cite{BPS26} to prove \eqref{eq:Ball_san} was establishing the following stronger formula, which we call the isotropic Ball-Santal\'o inequality:
\begin{equation}
\label{eq:better_ball_san}
\int_K\langle x, u\rangle^2 d x \int_{K^{\circ}}\langle x, u\rangle^2 d x \leq\left(\int_{B_2^n}\langle x, u\rangle^2 d x\right)^2, \, \, u\in \R^n.
\end{equation}
The formula \eqref{eq:better_ball_san} holds only for \textit{isotropic} convex bodies $K$, which are defined as those origin-symmetric convex bodies satisfying
\begin{equation}
\label{eq:iso_def}
\int_K\langle x, u\rangle^2 d x=\frac{1}{n} \int_K|x|^2 d x \quad \forall \,\, u \in \mathbb{S}^{n-1}.
\end{equation}
In particular, this means if $i\neq j$, then $\int_{K}x_ix_jdx=0$. To get \eqref{eq:Ball_san} from \eqref{eq:better_ball_san}, merely open the quadratic form in the integral and use this property. 

Notice that \eqref{eq:better_ball_san} is in our framework. We let
\[
\conbodI=\{K\in \conbode: K \,\,  \text{ is isotropic}\}
\]
and establish the following.

\begin{theorem}[The Functional, isotropic Ball-Santal\'o inequality]
\label{thm:func_ball_san}
    Let $u \in \R^n$. Let $f \in \mathcal{F}(\conbodI)$. Then, for every measurable function $g:\R^n\to\R_+$ and measurable, non-increasing function $\Omega : (0, \infty) \to (0, \infty)$ such that 
    \[
    f(x)g(y) \leq \Omega^2(\langle x, y \rangle), \qquad \forall \,\,x, y \in \R^n \text{ with } \langle x,y \rangle>0,
    \] 
    it holds
    \begin{equation}
    \label{eq:func_better_ball_san_equiv}
        \left(\int_{\R^n} f(x) \langle x,u \rangle^2 dx \right) \left(\int_{\R^n} g(y) \langle y,u \rangle^2 dy \right) \leq \left(\int_{\R^n} \Omega(|z|^2) \langle z,u \rangle^2 dz \right)^2.
    \end{equation}
\end{theorem}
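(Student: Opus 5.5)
We apply the transference principle (Lemma~\ref{l:transfer}) with $m=2$, $\lambda_1=\lambda_2=\frac12$ and $X_1=X_2=\R^n$. Both copies carry the weighted measure $d\mu_u(x)=\langle x,u\rangle^2dx$, which is a Borel measure on a Polish space. The cost is the continuous function $c(x,y)=-\langle x,y\rangle$. The domains are $\mathcal{C}_{X_1}=\conbodI$ and $\mathcal{C}_{X_2}=\mathcal{B}(\R^n)$, the Borel sets. We take $(a,b]=(0,\infty)$ and $F(r)=I_u\, r^{\frac{n+2}{2}}$, where $I_u:=\int_{\B}\langle x,u\rangle^2dx$. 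This $F$ is continuous, non-decreasing, and satisfies $F(0^+)=0$. If $u=0$ both sides of \eqref{eq:func_better_ball_san_equiv} vanish, so we assume $u\neq 0$. Since the hypothesis on $f,g$ is exactly $f(x)^{1/2}g(y)^{1/2}\le \Omega(\langle x,y\rangle)$ whenever $\langle x,y\rangle\in(0,\infty)$, the implication (1)$\Rightarrow$(2) of Lemma~\ref{l:transfer} reduces the theorem to a geometric inequality plus an identification of the right-hand side.

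\textbf{Geometric step.} Let $K\in\conbodI$ and let $L$ be a non-empty Borel set. Put $M=\sup_{x\in K,y\in L}\langle x,y\rangle$. Since $K$ is origin-symmetric, $M\ge 0$.
\begin{itemize}
\item If $M=0$, then every $y\in L$ satisfies $\langle x,y\rangle\le 0$ for all $x\in K$. Because $K$ is symmetric with $0$ in its interior, this forces $L\subseteq\{0\}$, so $\mu_u(L)=0$ and the inequality is trivial.
\item If $M=\infty$, then $F(M)=\infty$ and there is nothing to prove.
\item If $0<M<\infty$, then $L\subseteq MK^\circ$ by definition of polarity. The measure $\mu_u$ is $(n+2)$-homogeneous, that is, $\mu_u(tA)=t^{n+2}\mu_u(A)$. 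Hence $\mu_u(L)\le M^{n+2}\mu_u(K^\circ)$, and the isotropic Ball--Santal\'o inequality \eqref{eq:better_ball_san} for the isotropic body $K$ gives
\[
\mu_u(K)^{\frac12}\mu_u(L)^{\frac12}\le M^{\frac{n+2}{2}}\bigl(\mu_u(K)\mu_u(K^\circ)\bigr)^{\frac12}\le I_u M^{\frac{n+2}{2}}=F(M).
\]
\end{itemize}
This is condition (1) of Lemma~\ref{l:transfer}. Since $f\in\mathcal{F}(\conbodI)$, and every measurable $g$ lies in $\mathcal{F}(\mathcal{B}(\R^n))$, statement (2) yields
\[
\left(\int f\,d\mu_u\right)^{\frac12}\left(\int g\,d\mu_u\right)^{\frac12}\le \frac{n+2}{2}I_u\int_0^\infty\Omega(r)r^{\frac n2}dr .
\]

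\textbf{Identifying the right-hand side.} We compute both sides in polar coordinates, with $\sigma$ the surface measure on $\s$. Write $S_u:=\int_{\s}\langle\theta,u\rangle^2d\sigma(\theta)$. Then
\[
I_u=\frac{1}{n+2}S_u .
\]
Substituting $r=s^2$ gives
\[
\int_{\R^n}\Omega(|z|^2)\langle z,u\rangle^2dz=S_u\int_0^\infty\Omega(s^2)s^{n+1}ds=\frac{S_u}{2}\int_0^\infty\Omega(r)r^{\frac n2}dr .
\]
Combining the two identities, the right-hand side above equals $\int_{\R^n}\Omega(|z|^2)\langle z,u\rangle^2dz$. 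Squaring then gives \eqref{eq:func_better_ball_san_equiv}.

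\textbf{Expected obstacle.} The main point to check is the bookkeeping needed to apply Lemma~\ref{l:transfer} legitimately. Three things must be verified:
\begin{itemize}
\item the suprema of $\langle x,y\rangle$ over the admissible sets lie in $[0,\infty]$;
\item the boundary cases $M=0$ and $M=\infty$ are handled, which is where the symmetry of $K$ and the convention $0\times\infty=0$ are used;
\item the superlevel sets of $f$ are genuinely isotropic bodies, so that \eqref{eq:better_ball_san} applies to them.
\end{itemize}
The substantive geometric input is simply \eqref{eq:better_ball_san}, taken from \cite{BPS26}.
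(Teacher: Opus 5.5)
Your proposal is correct and takes the paper's route. The paper defines the $(n+2)$-homogeneous measure $d\mu_u=\langle x,u\rangle^2dx$ and feeds the isotropic Ball--Santal\'o inequality \eqref{eq:better_ball_san} into Lemma~\ref{l:general_homogeneous_santalo} with $\lambda=\frac12$; that lemma packages exactly what you do by hand (the inclusion $L\subseteq M(K,L)K^\circ$ plus homogeneity giving condition (1) of Lemma~\ref{l:transfer} with $F(r)=\mu_u(\B)r^{\frac{n+2}{2}}$, and the polar-coordinate identification of $\int_0^\infty\Omega\,dF$ with $\int\Omega(|z|^2)\,d\mu_u$), and your inlined check that $M(K,L)\ge 0$ from the symmetry of $K$ is cleaner than that lemma's non-negativity hypothesis on $-c$, which fails pointwise for $c=-\langle x,y\rangle$.
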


Another example, whose history is outlined in Section~\ref{sec:sine}, is the so-called \textit{sine duality} introduced in \cite{HLXY22,AASW23_2,AASW23}. Define 
$$[x,y]=\begin{cases}
|x||y|\sqrt{1-\left\langle \frac{x}{|x|},\frac{y}{|y|} \right\rangle^2}& \text{ if }x,y \in \R^n\setminus\{o\},
\\
0& \text{otherwise}.
\end{cases}
$$
We will utilize the cost function $c(x,y)=-[x,y]$. What is particularly interesting about this cost function is that $-c$ is not convex in general. Then, we prove the following.
\begin{corollary}[The Functional Sine-Santal\'o inequality]
\label{cor:sine_fun}
    Let $n \geq 2$. For any non-negative measurable functions $f, g: \R^n \to \R_+$ and any non-increasing measurable function $\Omega: (0, \infty) \to (0, \infty)$ satisfying the pointwise bound:
    $$f(x)g(y) \leq \Omega^2([x,y]) \quad \text{for all } x, y \in \R^n \text{ with } [x,y] > 0,$$
one has the inequality:
    $$\left(\int_{\R^n} f(x) dx \right) \left(\int_{\R^n} g(y) dy \right) \leq \left(\int_{\R^n} \Omega(|z|^2) dz \right)^2.$$
\end{corollary}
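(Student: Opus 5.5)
The plan is to deduce Corollary~\ref{cor:sine_fun} from Lemma~\ref{l:transfer}, following the Euclidean showcase in the introduction. We take $m=2$, $\lambda_1=\lambda_2=\frac12$, $X_1=X_2=\R^n$ with Lebesgue measure, and $\mathcal{C}_{X_1}=\mathcal{C}_{X_2}=\mathcal{B}(\R^n)$. Then $\mathcal{F}(\mathcal{C}_{X_i})$ is simply all measurable $f:\R^n\to\R_+$, so no centering or symmetry restriction is imposed. We use the cost $c(x,y)=-[x,y]$, which is continuous, together with $a=0$, $b=\infty$ and $F(r)=\vol(\B)r^{\frac n2}$. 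Replacing $\Omega$ by $\Omega^2$ in the hypothesis matches direction (2) of Lemma~\ref{l:transfer} after taking square roots. The right-hand side $\int_0^\infty\Omega(r)\,dF(r)=\frac n2\vol(\B)\int_0^\infty \Omega(r)r^{\frac n2-1}dr$ is identified with $\int_{\R^n}\Omega(|z|^2)dz$ by polar coordinates, exactly as in the introduction. Squaring gives the claim. So everything reduces to direction (1), the geometric inequality
\[
\vol(A)^{\frac12}\vol(B)^{\frac12}\le \vol(\B)\,M^{\frac n2},\qquad M=\sup_{x\in A,\,y\in B}[x,y],
\]
for non-empty Borel $A,B\subset\R^n$.

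To prove the geometric inequality, we introduce the sine polar $A^\diamond=\{y:[x,y]\le 1\ \forall x\in A\}$. We treat trivial cases first. If $M=\infty$ there is nothing to prove. If $M=0$, then all points of $A\cup B$ lie on a single line through the origin (for $n\ge2$), so one of the sets is null and the convention $0\times\infty=0$ applies. Otherwise, by the homogeneity $[sx,y]=s[x,y]$ we may rescale to $M=1$, so that $B\subset A^\diamond$. Since $x\mapsto[x,y]=|x\wedge y|$ is a seminorm, each set $\{y:[x,y]\le1\}$ is a symmetric convex cylinder. Hence $A^\diamond$ is convex and origin-symmetric, and by the same reasoning $A\subset A^{\diamond\diamond}$ with $A^{\diamond\diamond}$ symmetric convex. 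We may also assume $A$ is bounded by inner regularity, approximating by compact subsets and passing to the limit in $\vol(A)$. Moreover, $[x,y]=[-x,y]$ and $[\cdot,y]$ is a seminorm, so replacing $A$ by $K=\overline{\operatorname{conv}}(A\cup -A)$ leaves the sine polar unchanged. It therefore suffices to show $\vol(K)\vol(K^\diamond)\le\vol(\B)^2$ for origin-symmetric convex $K$, degenerate cases included, where $\vol(K)=0$ makes the statement trivial.

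The main obstacle is this sine Blaschke--Santal\'o inequality for symmetric convex bodies. Our first attempt is to invoke the known result from the literature on sine duality \cite{HLXY22,AASW23_2}. As a sanity check, when $n=2$ we have $[x,y]=|\langle Jx,y\rangle|$ with $J$ the rotation by $\pi/2$, so $K^\diamond=(JK)^\circ$ and the claim is exactly \eqref{eq:BS} for the symmetric body $JK$. For general $n$, if a citation is unavailable, we would try a direct comparison. The idea is to show that $K^\diamond$ has volume at most that of a classical polar of some linear image of $K$, using $[x,y]\ge|\langle x,y'\rangle|$ for suitable $y'\perp$-projections. Alternatively, we would use Steiner symmetrization, checking that it does not decrease $\vol(K^\diamond)$, and conclude at the ball, where $\B^\diamond$ has the correct volume. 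Once this is in place, Lemma~\ref{l:transfer} yields the corollary.
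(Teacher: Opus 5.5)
Your proposal is correct and is essentially the paper's argument. Both take the sine Blaschke--Santal\'o inequality from \cite{HLXY22}, which the paper writes as $\vol(A)\vol(A^\diamond)\le \vol(A^{\diamond\diamond})\vol((A^{\diamond\diamond})^\circ)\le\vol(\B)^2$ using $\vol(K^\diamond)\le\vol(K^\circ)$ for sine-polar $K$, and push it through the transference principle with $F(r)=\vol(\B)r^{n/2}$, identifying the right-hand side by polar coordinates. The only difference is bookkeeping: you verify the geometric hypothesis directly for all Borel sets (via inner regularity and $\overline{\operatorname{conv}}(A\cup -A)$), whereas the paper applies Lemma~\ref{l:general_homogeneous_santalo} on bounded sets and then removes boundedness by an approximation/monotone-convergence step whose details it omits, so your version in effect supplies that omitted step.
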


Other examples that follow from Blaschke-Santal\'o inequalities on $\R^n$ can be found in Section~\ref{sec:euclidean}. A particularly noteworthy example is that for unconditional, log-concave measures; see Corollary~\ref{c:unconditional_functional_BS}.

\subsubsection{Isoperimetry and Enlargement} Given a Polish space $(X,d)$ and a non-empty $A\in \mathcal{B}(X)$, we write $A_\delta=\{x\in X:d(x,A)\leq \delta\}$ for the closed $\delta$-neighborhood of $A$, with $\delta>0$.  

Notice that the $n$-dimensional sphere $\mathbb{S}^n$ can be realized as a Polish probability space equipped with its geodesic distance $d(\cdot,\cdot) = \arccos (\langle \cdot,\cdot \rangle)$, where $\langle \cdot, \cdot \rangle$ is the usual scalar product, and the Haar probability measure $\sigma_n$. Our instinct was to then use the spherical Blaschke-Santal\'o inequality shown by Gao, Hug and Schneider \cite[Corollary]{GHS01} to obtain a functional Santal\'o inequality. As it turns out, that inequality is too weak, and we show in Proposition~\ref{p:cap_BS} that it is a consequence of the spherical isoperimetric inequality with $\delta=\frac{\pi}{2}$. 

Let $A\subset \mathbb S^n$ be a non-empty, measurable set. Let $C_A\subset \mathbb S^n$ be any spherical cap with the same Haar measure as $A$. Then, recall that the spherical isoperimetric inequality states
\begin{equation}
\label{eq:sph_iso}
\sigma_n(A_\delta)\geq \sigma_n((C_A)_\delta), \qquad \delta>0,
\end{equation}
with equality if and only if $A$ is a spherical cap, up to removal of null sets. We utilize \eqref{eq:sph_iso} itself to deduce cost Santal\'o inequalities. The geometric case follows from Lemma~\ref{l:general_enlargement_santalo} below, and we outline the (equivalent) functional case here. If $C_n(r)$ is any spherical cap of radius $r$, then we denote its Haar measure by
\[
\Sigma_n(r):=\sigma_n(C_n(r))= \frac{\int_0^r \sin^{n-1}(\theta) \, d\theta}{\int_0^\pi \sin^{n-1}(\theta) \, d\theta}.
\]

\begin{corollary}
\label{cor:spherical_functional}
Define the function $G: [-1, 1] \to [0, \frac{1}{2}]$ by $G(r) = \Sigma_n\left(\frac{\pi - \arccos r}{2}\right)$.

For any measurable functions $f, g : \mathbb{S}^n \to \R_+$ and any measurable, non-increasing function $\Omega: (-1,1] \to (0, \infty)$ satisfying $$f(x)g(y) \leq \Omega^2(\langle x, y \rangle) \text{ for all }x, y \in \mathbb{S}^n,\, x\neq - y,$$ one has
\begin{equation}
\label{eq:sph_func_BS_general}
    \left(\int_{\mathbb{S}^n} f(x) \,d\sigma_n(x)\right)^{\frac{1}{2}} \left(\int_{\mathbb{S}^n} g(y) \,d\sigma_n(y)\right)^{\frac{1}{2}} \leq 2\int_{-1}^1 \Omega(r) \,dG(r).
\end{equation}
In particular, if $f$ and $g$ satisfy the constraint $f(x)g(y) = 0$ for all $\langle x, y \rangle > 0$, we obtain
\begin{equation}
\label{eq:sph_func_polar_bound}
    \left(\int_{\mathbb{S}^n} f(x) \,d\sigma_n(x)\right)^{\frac{1}{2}} \left(\int_{\mathbb{S}^n} g(y) \,d\sigma_n(y)\right)^{\frac{1}{2}} \leq \int_{-1}^1 \Omega(r) \,dG(r).
\end{equation}
\end{corollary}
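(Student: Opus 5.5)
The plan is to apply the transference principle, Lemma~\ref{l:transfer}, with $m=2$, $X_1=X_2=\mathbb{S}^n$, $\mu_{X_i}=\sigma_n$, $\lambda=\frac12$, and the continuous cost $c(x,y)=-\langle x,y\rangle$. With these choices $-c=\langle x,y\rangle\in[-1,1]$, so we take $a=-1$ and $b=1$. The collections $\mathcal{C}_{X_i}$ are all Borel subsets, so $\mathcal{F}(\mathcal{C}_{X_i})$ contains every measurable function. The hypothesis $f(x)g(y)\le\Omega^2(\langle x,y\rangle)$ for $x\neq -y$ is exactly the functional hypothesis of Lemma~\ref{l:transfer} on $(a,b]=(-1,1]$. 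It therefore suffices to verify the geometric inequality (1) of the lemma with $F=2G$ for \eqref{eq:sph_func_BS_general}, and with $F=G$ on the restricted range $(-1,0]$ for \eqref{eq:sph_func_polar_bound}. Note that $G$ is continuous and non-decreasing on $[-1,1]$, since $\arccos$ is decreasing and $\Sigma_n$ is increasing, and $G(-1^+)=\Sigma_n(0)=0$.

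\textbf{Geometric step.} Let $A,B\subset\mathbb{S}^n$ be non-empty measurable sets and put $s=\sup_{x\in A,y\in B}\langle x,y\rangle$ and $\theta=\arccos s\in[0,\pi]$. Every $x\in A$, $y\in B$ satisfies $d(x,y)\ge\theta$, so $B\cap A_\delta=\emptyset$ for every $\delta<\theta$. Write $\sigma_n(A)=\Sigma_n(r_A)$. The spherical isoperimetric inequality \eqref{eq:sph_iso} gives $\sigma_n(A_\delta)\ge\Sigma_n(r_A+\delta)$, with the convention $\Sigma_n\equiv1$ beyond $\pi$. Hence
\[
\sigma_n(B)\le 1-\Sigma_n(r_A+\delta)=\Sigma_n(\pi-r_A-\delta),
\]
using the symmetry $\sin(\pi-t)=\sin t$. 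Letting $\delta\uparrow\theta$ and using continuity of $\Sigma_n$, we get
\[
\sigma_n(A)\sigma_n(B)\le\Sigma_n(r)\,\Sigma_n(\pi-\theta-r),\qquad r=r_A\in[0,\pi-\theta].
\]
The density $\sin^{n-1}$ is log-concave on $[0,\pi]$, so its distribution function $\Sigma_n$ is log-concave. The map $r\mapsto\log\Sigma_n(r)+\log\Sigma_n(\pi-\theta-r)$ is then concave and symmetric about $r=\frac{\pi-\theta}{2}$, so it is maximized there. This yields $\sigma_n(A)^{1/2}\sigma_n(B)^{1/2}\le\Sigma_n\bigl(\tfrac{\pi-\theta}{2}\bigr)=G(s)$.

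\textbf{Handling the endpoint.} The bound $G(s)$ cannot serve as $F$ on all of $(-1,1]$. For example, $A=B=\mathbb{S}^n$ gives $s=1$ and left side $1$, while $G(1)=\frac12$. The issue is that when $\theta=0$ the bound above is vacuous, and $F$ must remain continuous. This is why the general statement uses $F=2G$. For $s<1$ we have $\sigma_n(A)^{1/2}\sigma_n(B)^{1/2}\le G(s)\le2G(s)$. For $s=1$ we have $2G(1)=1\ge\sigma_n(A)^{1/2}\sigma_n(B)^{1/2}$ trivially. Lemma~\ref{l:transfer} then gives \eqref{eq:sph_func_BS_general}, since $\int_{-1}^1\Omega\,d(2G)=2\int_{-1}^1\Omega\,dG$.

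\textbf{Polar case.} Under $f(x)g(y)=0$ whenever $\langle x,y\rangle>0$, every pair of non-empty superlevel sets $\{f\ge r\}$, $\{g\ge t\}$ has $s\le 0$. We therefore apply Lemma~\ref{l:transfer} with $b=0$ and $F=G|_{(-1,0]}$, where the sharp bound $G(s)$ is available. The hypothesis $f(x)g(y)\le\Omega^2(\langle x,y\rangle)$ for $\langle x,y\rangle\in(-1,0]$ is inherited, and the lemma gives the bound $\int_{-1}^0\Omega\,dG\le\int_{-1}^1\Omega\,dG$.

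The main points to check carefully are the log-concavity of $\Sigma_n$ together with the resulting optimization, and the passage from the strict separation $\delta<\theta$ to the limit. The latter should follow from continuity of $\Sigma_n$, with isoperimetry valid for arbitrary measurable $A$.
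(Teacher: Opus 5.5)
Your proof is correct and follows essentially the paper's argument: spherical isoperimetry plus the midpoint optimization from the log-concavity of $\Sigma_n$ (the content of Lemma~\ref{l:general_enlargement_santalo} and Proposition~\ref{p:realization_of_theta}), fed into the transference principle with $F=2G$ to absorb the case $\sup\langle x,y\rangle=1$. The differences are either cosmetic or in your favor. You apply Lemma~\ref{l:transfer} directly with the cost $-\langle x,y\rangle$ on $(-1,1]$, rather than going through the metric form with profile $2H$ and the substitution $r=\cos s$, and this matches the hypothesis ``$x\neq -y$'' exactly. In the polar case you use $\sup\langle x,y\rangle\le 0$ on the superlevel sets to take $b=0$ instead of the paper's essential-disjointness argument, which even gives the slightly sharper bound $\int_{-1}^0\Omega\,dG$.
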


Motivated by this example, we consider Polish probability spaces satisfying isoperimetric inequalities (in the sense of enlargement, as in the spherical case) with enlargement functions that are log-concave and symmetric about their mean in Section~\ref{sec:iso}; see Corollary~\ref{cor:differential_isoperimetry_santalo} and Proposition~\ref{p:symmetric_isop_enlargement} below. We derive Corollary~\ref{cor:spherical_functional} from this directly. We also apply this framework to obtain a Santal\'o-type inequality for every even log-concave probability measure on the real line in Corollary~\ref{cor:functional_1d_symmetric}. We mention here another illuminating example. 

Recall that the Gaussian measure $\gamma_n$ on $\R^n$ is given by $\gamma=\gamma_1$ and $$d\gamma_n(x)=\frac{1}{(2\pi)^{\frac{n}{2}}}e^{-\frac{|x|^2}{2}}\,dx, \qquad n\in \N.$$ Let $\Psi_\gamma(t)=\gamma((-\infty,t))$ be the cumulative distribution function of $\gamma$. The Gaussian isoperimetric inequality states, in its enlargement form, if $A\subset \R^n$ is a measurable set with $\gamma_n(A)=\Psi_\gamma(a)\in (0,1)$, then
\begin{equation}
\label{eq:gamma_iso}
\gamma_n(A_\delta)\geq \Psi_\gamma(a+\delta), \qquad \delta>0,
\end{equation}
where $A_\delta$ is defined using the Euclidean distance $d(x,y)=|x-y|$. 

Recall also that a probability measure $\mu$ on $\R^n$ is said to be log-concave with respect to $\gamma_n$ if there exists a log-concave function $\varphi$ such that $d\mu(x)=\varphi(x)\,d\gamma_n(x)$; this class includes $\gamma_n$ itself by taking $\varphi\equiv 1$. Then, Bobkov \cite[Theorem 1]{SB02} showed that \eqref{eq:gamma_iso} holds with $\gamma_n$ replaced by all such $\mu$. The following corollary immediately follows from Corollary~\ref{cor:differential_isoperimetry_santalo} and Proposition~\ref{p:symmetric_isop_enlargement}.

\begin{corollary}
\label{cor:functional_gaussian_iso}
Let $\mu$ be a probability measure on $\R^n$ that is log-concave with respect to $\gamma_n$. Then, for any pair of measurable functions $f, g:\R^n\to \R_+$ and any measurable, non-decreasing function $W: \R_+ \to (0, \infty)$ satisfying the pointwise relation
\begin{equation}
\label{eq:pointwise_cond_gaussian}
f(x)^{\frac{1}{2}} g(y)^{\frac{1}{2}} \leq W\left(\frac{|x-y|}{2}\right) \quad \text{for all } x, y \in \R^n,
\end{equation}
one has the functional inequality
\begin{equation}
\label{eq:functional_ineq_gaussian}
\left( \int_{\R^n} f(x) \, d\mu(x) \right)^{\frac{1}{2}} \left( \int_{\R^n} g(y) \, d\mu(y) \right)^{\frac{1}{2}} \leq  \int_{\R} W(|r|) \, d\gamma(r).
\end{equation}
\end{corollary}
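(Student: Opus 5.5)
The plan is to apply Lemma~\ref{l:transfer} with $m=2$, $\lambda=\frac12$, $X_1=X_2=\R^n$, $\mu_{X_1}=\mu_{X_2}=\mu$, and all collections $\mathcal{C}_{X_i}$ equal to the nonempty Borel sets, so that $\mathcal{F}(\mathcal{C}_{X_i})$ contains every measurable $f,g$. The cost will be $c(x,y)=\frac{|x-y|}{2}$. Then $-c\le 0$, and $\Omega(s):=W(-s)$ is non-increasing on $(-\infty,0]$ whenever $W$ is non-decreasing. With this choice the hypothesis \eqref{eq:pointwise_cond_gaussian} reads $f(x)^{1/2}g(y)^{1/2}\le\Omega(-c(x,y))$, which is exactly the hypothesis in (2) of Lemma~\ref{l:transfer}. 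It therefore suffices to establish the geometric statement (1): for nonempty Borel $A,B$,
\[
\mu(A)^{1/2}\mu(B)^{1/2}\le F\Big(-\tfrac{1}{2}d(A,B)\Big),
\]
where $d(A,B)=\inf_{x\in A,\,y\in B}|x-y|$. Here $a=-\infty$, $b=0$, and $F$ must be chosen so that $\int_{-\infty}^0\Omega\,dF=\int_\R W(|r|)\,d\gamma(r)$.

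The natural choice is $F(s)=2\Psi_\gamma(s)$ for $s\le 0$, which satisfies $F(-\infty)=0$, is continuous and non-decreasing, and has $F(0)=1$. By symmetry of $\gamma$ under $r\mapsto -r$,
\[
\int_{-\infty}^0 W(-s)\,2\,d\gamma(s)=\int_\R W(|r|)\,d\gamma(r),
\]
which is the right-hand side of \eqref{eq:functional_ineq_gaussian}.

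The geometric inequality then reads $\sqrt{\mu(A)\mu(B)}\le 2\Psi_\gamma(-\delta/2)$ with $\delta=d(A,B)$, and proving it is the main step. First, if $\delta=0$ the bound is $\le 1$, which is trivial. If $\delta>0$, then $B\cap A_t=\emptyset$ for all $t<\delta$, so $\mu(B)\le 1-\mu(A_t)$. Write $\mu(A)=\Psi_\gamma(\alpha)$. Bobkov's extension of \eqref{eq:gamma_iso} gives $\mu(A_t)\ge\Psi_\gamma(\alpha+t)$, and letting $t\uparrow\delta$ yields $\mu(B)\le 1-\Psi_\gamma(\alpha+\delta)=\Psi_\gamma(-\alpha-\delta)$. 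Hence
\[
\mu(A)\mu(B)\le \Psi_\gamma(\alpha)\Psi_\gamma(-\alpha-\delta),
\]
and by log-concavity of $\Psi_\gamma$ this is at most $\Psi_\gamma(-\delta/2)^2$, since the arguments $\alpha$ and $-\alpha-\delta$ average to $-\delta/2$. Thus
\[
\sqrt{\mu(A)\mu(B)}\le\Psi_\gamma(-\delta/2)\le 2\Psi_\gamma(-\delta/2).
\]
This is presumably the content of Corollary~\ref{cor:differential_isoperimetry_santalo} and Proposition~\ref{p:symmetric_isop_enlargement}: a log-concave, symmetric enlargement profile.

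The main obstacle is the matching of constants, that is, getting the factor $2$ in $F$ right. A tighter choice would be $F(s)=\Psi_\gamma(s)$, but then $F(0)=\frac12$, and the trivial case $\delta=0$ (for instance $A=B=\R^n$) would fail because the left side equals $1$. So the factor $2$ is forced: $F$ has to reach $1$ at $s=0$, and the Gaussian integral against $W(|r|)$ is exactly twice the half-line integral. I would verify that with $F=2\Psi_\gamma$ on $(-\infty,0]$ the bound holds in both regimes. The $\delta=0$ case gives $1\le 1$, and the $\delta>0$ case gives $\Psi_\gamma(-\delta/2)\le 2\Psi_\gamma(-\delta/2)$. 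The transference then yields \eqref{eq:functional_ineq_gaussian}. Finally, I would check the measure-theoretic details of Lemma~\ref{l:transfer}, including that superlevel sets $\{f\ge r\}$ that are empty for large $r$ are harmless.
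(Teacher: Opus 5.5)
Your proposal is correct and follows essentially the paper's route. The paper also combines Bobkov's enlargement inequality with the log-concavity and symmetry of $\Psi_\gamma$ to get $\sqrt{\mu(A)\mu(B)}\le 2\Psi_\gamma(-d(A,B)/2)$ (this is Lemma~\ref{l:general_enlargement_santalo} together with Proposition~\ref{p:realization_of_theta}), and then transfers it to functions. The only difference is cosmetic: you put the factor $\frac12$ into the cost $c(x,y)=\frac{|x-y|}{2}$ and apply Lemma~\ref{l:transfer} directly, whereas the paper uses the metric form Lemma~\ref{l:transfer_distance} with profile $G=2\Theta^I$ and $\widetilde W(s)=W(s/2)$; you should also dispose of the trivial cases $\mu(A)\in\{0,1\}$ before writing $\mu(A)=\Psi_\gamma(\alpha)$.
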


Following groundbreaking work by Talagrand \cite{TM91}, Maurey \cite{BM91} introduced the so-called Property $(\tau)$ to study Gaussian concentration. Later, Artstein, Klartag and Milman \cite{AAKM04} introduced Property (even $\tau$) and demonstrated that Property (even $\tau$) is intimately connected to, and in fact equivalent to, Ball's functional Santal\'o inequality \eqref{eq:ball_bS}. The methodology leading to Corollary~\ref{cor:functional_gaussian_iso} is reminiscent of that in Maurey's work.

In Section~\ref{sec:RCD}, we obtain results for $\mathrm{RCD}(K, \infty)$-spaces, culminating in Corollary~\ref{cor:RCD}. In that section, we introduce this framework properly. In that terminology, the weighted Euclidean space $(\R^n,|\cdot|,\mu)$, where $\mu$ is log-concave with respect to the Gaussian, satisfies the $\operatorname{RCD}(1,\infty)$ condition. Therefore, Corollary~\ref{cor:functional_gaussian_iso} also follows from Corollary~\ref{cor:RCD} by choosing the appropriate parameters. We obtain in Section~\ref{sec:uncon} a different functional inequality for $\gamma_n$ using the geometric Blaschke-Santal\'o inequalities for unconditional, log-concave measures in \cite[Theorem 7]{FMZ23}.

Our investigations into isoperimetry hold for every Polish probability space satisfying an isoperimetric inequality (in the usual Minkowski content sense; see Section~\ref{sec:iso_gen}). However, the integrals on the right-hand side of the above theorems do not simplify cleanly if the associated enlargement function is not log-concave and symmetric about its mean. We obtain in Section~\ref{sec:hcube} a functional Santal\'o inequality on the discrete Hamming cube.

\subsection{Cost functions and Duality}
We now discuss how cost functions impose dualities on sets. We consider two Polish measure spaces $X$ and $Y$ and a continuous cost function $c:X\times Y\to (-\infty,\infty)$. Given a cost function, it induces $c$-transforms, which are dualities on functions that switch the domains. These are usually expressed as infimum convolutions (see, e.g., \cite[Appendix A]{AASW23} and \cite[Definition 1.2.4]{AGA2}). In our work, we consider $c$-Legendre transforms, which can be derived from these $c$-transforms:
\begin{align}
    &\psi:X\to (-\infty,\infty] \quad \text{has $c$-Legendre transform} \quad \mathcal{L}_c\psi(y) = \sup_{x\in X}(-c(x,y)-\psi(x)),
    \\
    &\text{and} \nonumber
    \\
    &\varphi:Y\to (-\infty,\infty] \quad \text{has $c$-Legendre transform} \quad \mathcal{L}^c\varphi(x)= \sup_{y\in Y}(-c(x,y)-\varphi(y)).
\end{align}
In the field of convex analysis, for example in \cite{AASW23_2,AASW22}, one often considers symmetric costs, i.e., when $X=Y$ and $c$ satisfies $c(x,y)=c(y,x)$. In our situation, $X$ will not equal $Y$ in general, and our costs will be non-symmetric. In either case, we have by the definition of the supremum that
\begin{equation} \mathcal{L}_c \psi(y) + \psi(x)\geq -c(x,y) \quad \text{and} \quad \mathcal{L}^c \varphi(x) + \varphi(y)\geq -c(x,y),\quad \forall x\in X,\,\,y\in Y.
\label{eq:sup_ineq}
\end{equation}

To move closer to our considerations, we define, for measurable functions $f:X\to \R_+$ and $g:Y\to \R_+$, their \textit{c-polarities}: 
\begin{align*}
f^{\circ, c}(y) &= \exp[-\mathcal{L}_c(-\log f)(y)]=\inf_{x\in X}\left[\frac{e^{c(x,y)}}{f(x)}\right], \, \text{where }\, f^{\circ, c}:Y\to \R_+,
\\
&\text{and}
\\
g^{\star, c}(x) &= \exp[-\mathcal{L}^c(-\log g)(x)]=\inf_{y\in Y}\left[\frac{e^{c(x,y)}}{g(y)}\right], \, \text{where }\, g^{\star, c}:X\to \R_+.
\end{align*}
Each operation is order-reversing: focusing on just the functions $\varphi$ and $g$ on $Y$, as the functions $\psi$ and $f$ on $X$ are analogous, we have
\begin{equation}
\label{eq:duality}
\varphi \geq \mathcal{L}_c(\mathcal{L}^c\varphi),\; \mathcal{L}^c\varphi = \mathcal{L}^c(\mathcal{L}_c(\mathcal{L}^c\varphi)) \text{ and } g \leq (g^{\star, c})^{\circ, c},\; g^{\star, c} = ((g^{\star, c})^{\circ, c})^{\star, c}.\end{equation}
We apply polarity to the case when $X$ and $Y$ are Polish geodesic spaces and $-c$ is geodesically convex in each variable. Then, regardless of the choice of $\psi$ and $\varphi$, $\mathcal{L}_c \psi$ and $\mathcal{L}^c \varphi$ are convex and $\psi^{\circ, c}$ and $\varphi^{\star, c}$ are log-concave (in the geodesic sense).

\subsection{Duality on Matrix Spaces}
The final setting for functional Blaschke-Santal\'o inequalities for $m$ different functions is the matrix space, which represents an example of a non-symmetric space and a non-symmetric cost. We consider two parameters concerning dimension, $m,n\in\N$. We set the space $Y=\R^n$ and $X=(\R^n)^m$. Of course, $(\R^n)^m=\R^{nm}$, but the product structure will be of vital importance. 

In fact, to explicitly use the product structure, we identify $\R^{nm}$ with the space of $n\times m$ matrices with real entries, which we write as $M_{n,m}(\R)$. We will, as usual, write elements of $M_{n,m}(\R)$ in capital letters. Then, for $d\in\{n,m\}$, $\R^d$ is the space of $d\times 1$ (column) vectors. Integration in $M_{n,m}(\R)$ is merely integration on $\R^{nm}$ with respect to the Lebesgue measure. 

Let $h_K(v)=\sup_{u \in K}\langle v,u \rangle$ denote the support function of a convex body. For our results, we will be concerned with the cost function
\begin{equation}c(A,v)=-h_Q(A^tv),
\label{eq:our_costs}
\end{equation}
where $Q\in\conbodo[m]$, $v\in\R^n$, $A\in M_{n,m}(\R)$, and $A^t\in M_{m,n}(\R)$ is the transpose of $A$. If one chooses $Q=[-1,1]\subset \R$, so that $A=a$ for some $a\in\R^n$, then $h_Q(A^tv)=|\langle a,v \rangle|$. We recall that if $x\in\R^n$ and $y\in\R^m$, then their tensor product $x\otimes y \in M_{n,m}(\R)$ is right matrix multiplication of $x$ by the transpose of $y$.

We will use some special notation that is specific to this cost function. We remark only once that $-c$ is a convex function, and thus all pertinent Legendre transforms are convex and all polarities are log-concave. We first have the definitions corresponding to the $c$-polarities. Given a function $\psi:M_{n,m}(\R)\to\R$ and a $Q\in\conbodo[m]$, we define the lowered $Q$-Legendre transform of $\psi$ as the function $\Ld Q\psi:\R^{n}\to\R$ given by
\begin{equation}
\label{eq:Q_leg_low}
    \Ld Q\psi(v)=\sup_{A\in M_{n,m}(\R)}\left(h_Q(A^tv)-\psi(A)\right).
\end{equation}
Given a function $f:M_{n,m}(\R)\to\R_+$, we define its \textit{lower $Q$-polar function} $f^{\circ,Q}:\R^{n}\to\R_+$ by 
\begin{equation}
\label{eq:lower_Q_polar}
f^{\circ,Q}(v):=\exp\left[-\Ld Q(-\log f)(v)\right]=\inf_{A\in M_{n,m}(\R)}\left[\frac{e^{-h_Q(A^tv)}}{f(A)}\right].\end{equation}
It is easy to see that $f^{\circ,Q}$ is even when either $f$ or $h_Q$ is even. Similarly, the following definitions correspond to the star $c$-polarities for the cost function given by \eqref{eq:our_costs}. For a measurable function $\varphi:\R^{n}\to\R$, we define the raised $Q$-Legendre transform of $\varphi$ as the function $\Lu Q\varphi:M_{n,m}(\R)\to\R$ given by
\begin{equation}
\label{eq:Q_leg_up}
    \Lu Q\varphi(A)=\sup_{v\in\R^{n}}\left(h_Q(A^tv)-\varphi(v)\right).
\end{equation}
Many of the properties enjoyed by $\Ld Q$ are enjoyed by $\Lu Q$, but with a subtle difference in the variables. Given a measurable function $g:\R^{n}\to\R_+$, we define its \textit{upper $Q$-polar function} $g^{\star,Q}:M_{n,m}(\R)\to\R_+$ by 
\begin{equation}
\label{eq:upper_Q_polar}
g^{\star,Q}(A):=\exp\left[-\Lu Q(-\log g)(A)\right]=\inf_{v\in\R^{n}}\left[\frac{e^{-h_Q(A^tv)}}{g(v)}\right].\end{equation}
The polarity ${}^{\star,Q}$ on functions on $\R^{n}$ has properties analogous to the polarity ${}^{\circ,Q}$ on functions on $M_{n,m}(\R)$. Furthermore, if $g$ or $h_Q$ is even, then $g^{\star,Q}$ is even.

To motivate this choice of cost function, we first introduce two types of polarities on convex sets. Fixing again $Q\in\conbodo[m]$, for a compact set $L\subset M_{n,m}(\R)$, its polar in $\R^n$ is the set \begin{equation} L_{\circ,Q}:=\bigcap_{A\in L}(AQ)^\circ\subset \R^n.
\label{eq:lowered_polarity_set}
\end{equation} The gauge of this set is given by 
\[
\|x\|_{L_{\circ,Q}}=\max_{A\in L}h_{AQ}(x)=\max_{A\in L}h_Q(A^tx).
\]
Similarly, for a compact set $K\subset \R^n$, its polar in $M_{n,m}(\R)$ is the set \begin{equation}
\label{eq:raised_polarity_set}
K^{\star,Q}:=\{A\in M_{n,m}(\R):AQ\subseteq K^\circ\}\subset M_{n,m}(\R),\end{equation} 
and its gauge is given by \begin{equation}
\label{eq:a_duality}
\|\Theta\|_{K^{\star,Q}}=\max_{\xi \in \s}\left(\frac{h_Q(\Theta^t\xi)}{h_{K^\circ}(\xi)}\right)=\max_{\xi \in K}h_Q(\Theta^t\xi).\end{equation} We isolate the case when $K=\B$:
\begin{equation}
\label{eq:polar_ball}
\|\Theta\|_{(\B)^{\star, Q}}=\max_{\xi\in\s}h_Q(\Theta^t \xi)=\max_{\xi\in\s}h_{\xi\otimes Q}(\Theta)=\max_{\xi\in\s}h_{\Theta Q}(\xi).\end{equation}

We can now list the following theorems.
\begin{theorem}[The $m$th-order Santal\'o inequality on $M_{n,m}(\R)$]
\label{t:hiBS}
  Fix $Q\in\conbodo[m].$ Consider a compact set $L\subset M_{n,m}(\R)$ with positive volume and non-empty interior. If either $L_{\circ,Q}$ or $(L_{\circ,Q})^\circ$ has center of mass at the origin, then
  $$\Vol(L)\vol(L_{\circ,Q})^m \leq \vol(\B)^m \Vol((B_2^n)^{\star,Q}),
$$
with equality if and only if $L=E^{\star,Q}$ for some centered ellipsoid $E\subset\R^n$ up to removal of null sets.
\end{theorem}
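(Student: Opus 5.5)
The plan is to set $K:=L_{\circ,Q}\subset\R^n$ and split the inequality into three steps. First, compare $L$ with the raised polar $K^{\star,Q}$. Second, compare $\Vol(K^{\star,Q})$ with $\vol(K^\circ)^m$ through a symmetrization argument. Third, close with the Blaschke--Santal\'o inequality \eqref{eq:BS} for compact sets, which is exactly where the centering hypothesis enters. Throughout I use the characterization $K^{\star,Q}=\{A: AQ\subseteq K^\circ\}$ from \eqref{eq:raised_polarity_set}.

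\textbf{Step 1: $L\subseteq K^{\star,Q}$.} Let $A\in L$. By \eqref{eq:lowered_polarity_set}, $K\subseteq (AQ)^\circ$. Taking polars reverses inclusion, so
\[
AQ\subseteq \operatorname{conv}(AQ\cup\{o\})=(AQ)^{\circ\circ}\subseteq K^\circ .
\]
Hence $A\in K^{\star,Q}$, and therefore $\Vol(L)\le \Vol(K^{\star,Q})$. The hypothesis that $L$ has non-empty interior gives that $K$ is compact with positive volume. Indeed, each column of $A$ is $Ae_j$ and lies in $AQ$ up to scaling, since $Q$ contains a neighbourhood of directions spanning $\R^m$ at the origin; so $\bigcup_{A\in L}AQ$ is bounded, which gives $K$ non-empty interior. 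Moreover $K$ is compact, being an intersection of closed sets, and it is bounded because the sets $AQ$, $A\in L$, span $\R^n$ in all directions.

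\textbf{Step 2: $\Vol(\{A:AQ\subseteq M\})\le \bigl(\vol(M)/\vol(\B)\bigr)^m\,\Vol((\B)^{\star,Q})$ for compact convex $M\ni o$, applied with $M=K^\circ$.} Two ingredients are needed.

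\emph{Linear covariance.} For $T\in GL_n$ one has $A\in\{AQ\subseteq TM\}$ iff $T^{-1}A\in\{AQ\subseteq M\}$. Since $A\mapsto T^{-1}A$ acts on each of the $m$ columns, the map $M\mapsto \Vol(\{AQ\subseteq M\})/\vol(M)^m$ is $GL_n$-invariant.

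\emph{Monotonicity under Steiner symmetrization.} Fix a direction $u\in\s$. Write each $A$ as $(A',a)$, where $a=A^tu\in\R^m$ is the ``$u$-row'' and $A'$ collects the remaining components. For fixed $A'$, the condition $AQ\subseteq M$ says that for every $q\in Q$ the number $\langle a,q\rangle$ lies in the chord of $M$ above the point $A'q$. So the fiber $\{a\}$ is an intersection of slabs $\{a:\langle a,q\rangle\in[\alpha(A'q),\beta(A'q)]\}$.

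Under Steiner symmetrization, each chord is replaced by a centered chord of the same length. I will show that the symmetrized fiber contains $\tfrac12(\text{fiber at }A')-\tfrac12(\text{fiber at }-A'\text{ reflected})$, in the spirit of the Campi--Gronchi shadow-system and parallel chord movement argument. Brunn--Minkowski in $\R^m$ then gives that the fiber volumes do not decrease. Integrating over $A'$ yields monotonicity.

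Iterating Steiner symmetrizations, $M$ converges to the ball of the same volume. Continuity of $\Vol(\{AQ\subseteq\cdot\})$ in the Hausdorff metric then gives the claimed bound. This step is the main obstacle. The delicate points are the fiber-inclusion argument, which must be checked for a general $Q\in\conbodo[m]$ that need not be symmetric, and the Hausdorff continuity.

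\textbf{Step 3: conclusion.} Combining the three steps,
\[
\Vol(L)\vol(K)^m\le \Vol((\B)^{\star,Q})\,\frac{\bigl(\vol(K^\circ)\vol(K)\bigr)^m}{\vol(\B)^m}\le \vol(\B)^m\Vol((\B)^{\star,Q}).
\]
The last inequality is \eqref{eq:BS} applied to the compact set $K$, using the assumption that $K$ or $K^\circ$ is centered.

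For the equality case: equality forces equality in \eqref{eq:BS}, so $K$ is a centered ellipsoid $E$ up to null sets, and it forces $\Vol(L)=\Vol(E^{\star,Q})$ with $L\subseteq E^{\star,Q}$. Conversely, when $E=T\B$, linear covariance turns Step 2 into an equality, so $L=E^{\star,Q}$ up to null sets gives equality.
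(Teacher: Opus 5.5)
Your decomposition is the paper's own. Lemma~\ref{l:equiv} uses $L\subseteq (L_{\circ,Q})^{\star,Q}$, which is your Step 1, to reduce to Theorem~\ref{t:hiBS_2} for $K=L_{\circ,Q}$. The paper then gets Theorem~\ref{t:hiBS_2} from Proposition~\ref{p:PPIinfty} (your Step 2 with $M=K^\circ$) and \eqref{eq:BS} (your Step 3). The real difference is Step 2. The paper does not prove it; it imports it from \cite[Corollary 4.5]{HLPRY25_2} as a $p\to\infty$ limit of $L^p$ inequalities. You instead prove it directly by Steiner symmetrization. That route works and is more elementary, but the fiber inclusion you announce is the wrong one.

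The fiber of $\{A:AQ\subseteq M\}$ over $-A'$ is governed by the chords of $M$ above the points $-A'q$. Those chords are unrelated to the chords above $A'q$, so no combination with that fiber controls the symmetrized fiber at $A'$. Comparing reflected sections is the device needed when symmetrizing polar bodies, where the constraint is dual. Here the constraint $Aq\in M$ is primal, and the fiber at $A'$ alone suffices. Write $A=A'+u\otimes a$ with $a=A^tu$, and set $F_M(A')=\{a:\alpha(A'q)\le\langle a,q\rangle\le\beta(A'q)\ \forall q\in Q\}$. If $a,b\in F_M(A')$, then $|\langle \tfrac{a-b}{2},q\rangle|\le\tfrac12(\beta-\alpha)(A'q)$ for every $q\in Q$. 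The projection constraint $A'Q\subseteq P_{u^\perp}M$ is unchanged, since $P_{u^\perp}S_uM=P_{u^\perp}M$. Hence
\[
F_{S_uM}(A')\supseteq \tfrac12 F_M(A')+\tfrac12\bigl(-F_M(A')\bigr),
\]
and Brunn--Minkowski gives $\vol[m](F_{S_uM}(A'))\ge\vol[m](F_M(A'))$. No symmetry of $Q$ enters.

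The Hausdorff continuity you worry about is also unnecessary. If $M_k\to r\B$, then $M_k\subseteq(r+\varepsilon)\B$ eventually. Monotonicity together with $\{A:AQ\subseteq tM\}=t\{A:AQ\subseteq M\}$ then gives the required upper bound. With this correction you have a self-contained proof of Proposition~\ref{p:PPIinfty} for convex bodies. That is all the theorem needs, since $L_{\circ,Q}$ is automatically closed and convex.

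Two smaller points:
\begin{itemize}
\item Boundedness of $K$ does not follow from $L$ having interior. Take $Q=[0,1]^m$ and suppose all columns of all $A\in L$ lie in $\{\langle\cdot,w\rangle>0\}$. Then $-tw\in K$ for every $t>0$, so $K$ is unbounded. It is the centering hypothesis that forces $K$ to be a convex body with $o\in\operatorname{int}K$. (Positive volume of $K$, by contrast, only needs $L$ and $Q$ compact.)
\item In the converse equality direction you need $\vol((E^{\star,Q})_{\circ,Q})=\vol(E)$, and linear covariance does not give this. Use Proposition~\ref{p:dualities}(7). Alternatively, combine $E\subseteq(E^{\star,Q})_{\circ,Q}$ (origin-symmetric, hence centered) with the inequality itself.
\end{itemize}
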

The above theorem was originally shown in \cite[Theorem 4.8]{HLPRY25_2} without equality conditions via a limiting process.  We show in Lemma~\ref{l:equiv} that Theorem~\ref{t:hiBS} is entirely equivalent to:

\begin{theorem}[The $m$th-order Santal\'o inequality on $\R^n$]
\label{t:hiBS_2}
  Fix $Q\in\conbodo[m].$ Let $K\subset \R^n$ be a compact set of positive volume such that $K$ or $K^\circ$ has center of mass at the origin. Then,
  \begin{equation}
  \Vol(K^{\star,Q})\vol(K)^m \leq \vol(\B)^m \Vol((B_2^n)^{\star,Q}).
\label{eq:BS_onRn}
\end{equation}
There is equality if and only if $K=E$ for some centered ellipsoid $E\subset\R^n$ up to removal of null sets.
\end{theorem}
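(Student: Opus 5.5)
We reduce Theorem~\ref{t:hiBS_2} to the classical compact Blaschke--Santal\'o inequality \eqref{eq:BS}. The key observation is a formula for $\Vol(K^{\star,Q})$ in terms of the single norm $\|\cdot\|_{K^\circ}$. By \eqref{eq:a_duality}, $A\in K^{\star,Q}$ if and only if $AQ\subseteq K^\circ$. Since $K^\circ=(\operatorname{conv}(K\cup\{o\}))^\circ$, we have $K^{\star,Q}=L^{\star,Q}$ with $L=\operatorname{conv}(K\cup\{o\})$, so the left-hand side depends on $K$ only through $K^\circ$.

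\textbf{Step 1: a linear-invariance formula.} Fix $Q$. For a convex body $C$ (here $C=K^\circ$), consider the functional $\Phi(C)=\Vol(\{A: AQ\subseteq C\})$. If $T\in GL_n$, then $\{A: AQ\subseteq TC\}=T\{A:AQ\subseteq C\}$, where $T$ acts by left multiplication on $M_{n,m}(\R)$. This map has determinant $|\det T|^m$, so
\[
\Phi(TC)=|\det T|^m\Phi(C),
\]
exactly the homogeneity of $\vol(C)^m$. Hence the ratio $\Phi(C)/\vol(C)^m$ is $GL_n$-invariant. The plan is to prove
\[
\Phi(C)\le \frac{\Vol((\B)^{\star,Q})}{\vol(\B)^m}\,\vol(C)^m \qquad(\ast)
\]
for every convex body $C$ containing the origin, with equality exactly for centered ellipsoids. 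Combining $(\ast)$ with $C=K^\circ$ and \eqref{eq:BS}, which gives $\vol(K)^m\vol(K^\circ)^m\le\vol(\B)^{2m}$, yields \eqref{eq:BS_onRn}. The factor $\Vol((\B)^{\star,Q})$ arises because $(\ast)$ is normalized at $C=\B$.

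\textbf{Step 2: proving $(\ast)$.} We would use Steiner symmetrization of $C$ in a direction $u$ and show that $\Phi$ does not decrease. Write $A=(a_1,\dots,a_m)$ column-wise. Then $AQ\subseteq C$ means $\sum_j q_j a_j\in C$ for all $q\in Q$. Decompose each $a_j=(a_j',t_j)$ with respect to $u^\perp\oplus\R u$. For fixed $a'=(a_j')$, the admissible $t=(t_j)\in\R^m$ form a convex set $S(a')$. Given $A=(a',t)$ and $B=(a',s)$ admissible for $C$, we have $(a',\tfrac{t-s}{2})$ admissible for the symmetral $S_uC$: for each $q$, the points $\langle q,t\rangle$ and $\langle q,s\rangle$ lie in the fiber of $C$ over $\sum q_ja_j'$, so their half-difference lies in the symmetral fiber. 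Thus $S(a')_{S_uC}\supseteq \tfrac12\bigl(S(a')_C-S(a')_C\bigr)$, and Brunn--Minkowski on $\R^m$ gives $\vol_m(\text{fiber})$ non-decreasing. Integrating over $a'$ shows $\Phi(S_uC)\ge\Phi(C)$, while $\vol(C)$ is preserved. Iterating Steiner symmetrizations converging to a ball, and using continuity of $\Phi$ in the Hausdorff metric, gives $(\ast)$ for $C$ of the right volume; then scale.

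\textbf{Step 3: equality and centering.} Equality in \eqref{eq:BS_onRn} forces equality in \eqref{eq:BS}, so $K$ is a centered ellipsoid up to null sets. Conversely, ellipsoids give equality by Step 1. The centering hypothesis is used only through \eqref{eq:BS}.

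\textbf{Main obstacle.} The hardest step is Step 2, in particular the convergence and continuity argument when $C=K^\circ$ is unbounded. If $K$ is not full-dimensional around the origin, $K^\circ$ could be unbounded. However, $K$ has positive volume, and the center-of-mass condition should force $o\in\operatorname{int}\operatorname{conv}K$, making $K^\circ$ bounded. Verifying this under either centering hypothesis needs care. Alternatively, if $(\ast)$ is awkward, one can deduce the theorem from Theorem~\ref{t:hiBS} via Lemma~\ref{l:equiv}, taking $L=K^{\star,Q}$.
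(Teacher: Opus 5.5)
Your proposal is correct, and its skeleton matches the paper's. The paper also proves $\Vol(K^{\star,Q})\vol(K^\circ)^{-m}\le\Vol((\B)^{\star,Q})\vol(\B)^{-m}$ (Proposition~\ref{p:PPIinfty}, i.e.\ your $(\ast)$ with $C=K^\circ$). It then multiplies by $\vol(K)^m$, applies \eqref{eq:BS}, and reads off the equality cases from those of \eqref{eq:BS} plus linear invariance, just as in your Steps 1 and 3.

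The real difference is how the comparison inequality is obtained. For $m\ge2$ the paper does not prove it; it cites \cite[Corollary 4.5]{HLPRY25_2}, where it comes out of an $L^p$ framework by a limiting argument. Your Step 2 proves it directly, and the argument works:
\begin{itemize}
\item If $(a',t)$ and $(a',s)$ are admissible for $C$, then for each $q\in Q$ both $\langle q,t\rangle$ and $\langle q,s\rangle$ lie in the fibre of $C$ over $\sum_j q_ja_j'$. Hence $\langle q,\frac{t-s}{2}\rangle$ lies in the corresponding fibre of $S_uC$.
\item Brunn--Minkowski in $\R^m$ gives $\vol[m]\bigl(\frac12(S-S)\bigr)\ge\vol[m](S)$ for the convex set $S=S(a')_C$. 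Fubini over $a'\in(u^\perp)^m$ then gives $\Phi(S_uC)\ge\Phi(C)$.
\item The limiting step is harmless. $\Phi$ is monotone under inclusion and $\Phi(\lambda C)=\lambda^{nm}\Phi(C)$. Hausdorff convergence of the symmetrals to $r\B$ gives $(r-\delta_k)\B\subseteq C_k\subseteq(r+\delta_k)\B$ with $\delta_k\to0$, so $\Phi(C_k)\to\Phi(r\B)$.
\end{itemize}
Your route thus gives a self-contained, elementary proof of the key lemma, essentially a matrix version of the Meyer--Pajor symmetrization argument. The paper's route is shorter but rests on an external, less transparent input.

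There are three minor points.
\begin{itemize}
\item Your parenthetical claim that $(\ast)$ is an equality \emph{only} for centered ellipsoids is false for some $Q$. For $Q=\operatorname{conv}\{o,e_1,\dots,e_m\}$ one has $\{A:AQ\subseteq C\}=C^m$, so $(\ast)$ is an identity. You never use this, since Step 3 extracts equality from \eqref{eq:BS} alone, so just drop it.
\item Your ``main obstacle'' disappears. If $K$ is centered, then $h_K>0$ on $\s$ as in the proof of Proposition~\ref{p:max_cost}, so $K^\circ$ is bounded. If $K^\circ$ is centered, it has finite volume, and a closed convex set with $o$ in its interior and finite volume is bounded. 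Either way $K^\circ$ is a convex body with $o$ in its interior, which is all Step 2 needs.
\item Your fallback via Theorem~\ref{t:hiBS} and Lemma~\ref{l:equiv} would be circular inside the paper, which derives Theorem~\ref{t:hiBS} from this theorem. If you instead import \cite[Theorem 4.8]{HLPRY25_2}, it comes without equality conditions.
\end{itemize}
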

We supply a proof of this theorem by using Proposition~\ref{p:PPIinfty} below, which is an inequality that relates the volumes of $K^\circ$ and $K^{\star,Q}$. This approach yields the sought-after equality conditions. We now present our main theorem in the matrix setting.
\begin{theorem}
\label{t:main}
     Fix $m\in\N$, $Q\in\conbodo[m]$. Let $f\in \mathcal{F}_n$. Then, for every measurable $g:M_{n,m}(\R)\to\R_+$ and measurable, non-increasing function $\Omega:\R_{+}\to\R_{+}$ such that
    $$f(v) g(A)\leq \Omega^{m+1}\left(h_Q(A^tv)\right) \quad \text{for all } (A,v)\in M_{n,m}(\R)\times \R^n,$$ 
    it holds
    \[
    \left(\int_{M_{n,m}(\R)}g(A)dA\right)\Bigg(\int_{\R^n} f(v)^\frac{1}{m}dv\Bigg)^m
    \!\!\!\!\leq\! \left(\int_{M_{n,m}(\R)}\!\!\!\!\!\!\!\!\!\!\!\!\Omega(\|A\|_{(B_2^n)^{\star,Q}}^{m+1})dA\right)\!\left(\int_{\R^n}\Omega\left(|v|^\frac{m+1}{m}\right)dv\right)^{m}.
\]
    There is equality when $\Omega$ is log-concave and $f = \frac{1}{C} \cdot \Omega\left( |\cdot|^{\frac{m+1}{m}}\right)^m$ and $g = C\cdot \Omega\left( \|\,\cdot\,\|_{(\B)^{\star, Q}}^{m+1}\right)$ for some constant $C>0$.
\end{theorem}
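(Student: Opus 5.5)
The plan is to apply the transference principle (Lemma~\ref{l:transfer}) with two spaces, after recasting the hypothesis in the form the lemma expects. The two spaces are $X_1=M_{n,m}(\R)$ and $X_2=\R^n$, both with Lebesgue measure. The weights are $\lambda_1=\frac{1}{m+1}$ and $\lambda_2=\frac{m}{m+1}$, and the continuous cost is $c(A,v)=-h_Q(A^tv)$. We take $f_1=g$ and $f_2=f^{1/m}$. Then
\[
f_1(A)^{\frac1{m+1}}f_2(v)^{\frac m{m+1}}=(g(A)f(v))^{\frac1{m+1}}\le \Omega(h_Q(A^tv)),
\]
which is exactly the functional hypothesis of Lemma~\ref{l:transfer}.

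For the domains, we let $\mathcal{C}_{X_1}$ be all Borel sets, and $\mathcal{C}_{X_2}=\mathcal{C}^n$. Since superlevel sets of $f^{1/m}$ are superlevel sets of $f$, we have $f^{1/m}\in\mathcal{F}(\mathcal{C}^n)$. Because $o\in Q$, we have $h_Q\ge0$, so we may take $a=0$ and $b=\infty$.

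The geometric inequality needed is the following. Let $L\subset M_{n,m}(\R)$ be Borel and $K\in\mathcal{C}^n$, and put $M=\sup_{A\in L,\,v\in K}h_Q(A^tv)$. We want
\[
\Vol(L)^{\frac1{m+1}}\vol(K)^{\frac m{m+1}}\le F(M),\qquad F(r)=\big(\vol(\B)^m\Vol((\B)^{\star,Q})\big)^{\frac1{m+1}}r^{\frac{nm}{m+1}}.
\]
We argue by homogeneity. For $0<M<\infty$ and every $A\in L$, $v\in K$, we have $h_Q((A/M)^tv)\le1$. This means $(A/M)Q\subseteq K^\circ$, so $L\subseteq M\,K^{\star,Q}$. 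Therefore
\[
\Vol(L)\vol(K)^m\le M^{nm}\Vol(K^{\star,Q})\vol(K)^m\le M^{nm}\vol(\B)^m\Vol((\B)^{\star,Q}),
\]
where the last step is Theorem~\ref{t:hiBS_2}; it applies because $K$ is compact with positive volume and centered. If $M=\infty$ there is nothing to prove. If $M=0$, then $L$ is contained in $\{A: h_Q(A^tv)=0\ \forall v\in K\}$. Since $K$ has non-empty interior, this set is a cone of $A$ with $A^t$ mapping an open set into the null set of $h_Q$. This is a proper cone, presumably of Lebesgue measure zero, so both sides vanish; I will check this degenerate case carefully. I also need $F$ to be continuous and non-decreasing with $F(0^+)=0$, which is clear.

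Lemma~\ref{l:transfer} then gives
\[
\left(\int g\right)^{\frac1{m+1}}\left(\int f^{1/m}\right)^{\frac m{m+1}}\le\int_0^\infty\Omega\,dF.
\]
Raising this to the power $m+1$, it remains to identify the right-hand side. Write $I=\int_0^\infty\Omega(r)r^{\frac{nm}{m+1}-1}dr$.

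For the matrix integral, use polar coordinates with respect to the gauge $\|\cdot\|_{(\B)^{\star,Q}}$ and substitute $r=s^{m+1}$:
\[
\int\Omega(\|A\|^{m+1})\,dA=\tfrac{nm}{m+1}\Vol((\B)^{\star,Q})\,I .
\]
For the vector integral, use polar coordinates with $r=t^{(m+1)/m}$:
\[
\int\Omega(|v|^{\frac{m+1}m})\,dv=\tfrac{nm}{m+1}\vol(\B)\,I .
\]
On the other hand,
\[
\left(\int\Omega\,dF\right)^{m+1}=\left(\tfrac{nm}{m+1}\right)^{m+1}\vol(\B)^m\Vol((\B)^{\star,Q})\,I^{m+1}.
\]
The two sides coincide, which gives the inequality.

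For the equality statement, we must check that the proposed $f$ and $g$ satisfy the hypothesis. By \eqref{eq:polar_ball}, $h_Q(A^tv)\le\|A\|_{(\B)^{\star,Q}}|v|$. By weighted AM--GM,
\[
\|A\||v|\le\tfrac1{m+1}\|A\|^{m+1}+\tfrac m{m+1}|v|^{\frac{m+1}m}.
\]
Since $\Omega$ is non-increasing and log-concave,
\[
\Omega(h_Q(A^tv))^{m+1}\ge\Omega\big(\|A\|^{m+1}\big)\,\Omega\big(|v|^{\frac{m+1}m}\big)^m=f(v)g(A).
\]
Substituting $f$ and $g$ into the left-hand side gives exactly the right-hand side, since the constant $C$ cancels.

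I expect the main obstacle to be bookkeeping rather than conceptual. First, $f\in\mathcal{F}_n$ must be compatible with the class for $f^{1/m}$, with compactness of the superlevel sets so that Theorem~\ref{t:hiBS_2} applies. Second, the degenerate values $M\in\{0,\infty\}$ must be handled. Third, $\Omega$ is defined on $\R_+$ while Lemma~\ref{l:transfer} works on $(0,\infty]$; the value $\Omega(0)$ is irrelevant because the Stieltjes measure $dF$ has no atom at $0$.
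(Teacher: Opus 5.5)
Your proposal is essentially the paper's proof. The paper invokes Lemma~\ref{l:general_homogeneous_santalo} with $\lambda=\frac{m}{m+1}$ and $\alpha=nm$; that lemma packages exactly your homogeneity step $L\subseteq M\,K^{\star,Q}$, followed by Theorem~\ref{t:hiBS_2} and Lemma~\ref{l:transfer}, and the paper then does the same polar-coordinate identification of $\int_0^\infty\Omega\,dF$.

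The one step to repair is the degenerate case $M=0$. A set $K\in\mathcal{C}^n$ need not have interior. Even interior would not suffice when $o\in\partial Q$: for $n=m=1$, $Q=[0,1]$ and $K=[-2,-1]$, the cone of admissible $A$ is $\{a\geq 0\}$, which has positive measure. Centering is what works. If $A\neq 0$, there is $q\in Q$ with $u=Aq\neq o$, and $\langle u,v\rangle\leq 0$ for all $v\in K$ is impossible for a centered set of positive volume. Hence $M=0$ forces $L\subseteq\{0\}$, which is exactly the paper's verification of condition (3).
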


The power $\frac{1}{m}$ on the terms over $\R^n$ is necessary to ensure both sides are invariant under dilation of the functions. Notice that we may take $\Omega(t)=e^{-\frac{t}{m+1}}$ and $f$ as the dual of $g$, or vice versa, since, if $f$ is a function on $\R^n$ and $g$ is a function on $M_{n,m}(\R)$,
\[
\max\{f^{\star,Q}(A)f(v),g(A)g^{\circ,Q}(v)\} \leq e^{-h_Q(A^tv)}, \quad \forall \;(A,v)\in M_{n,m}(\R)\times \R^n.
\]
With these choices of functions, we recover the origin-symmetric case of Theorems~\ref{t:hiBS} and \ref{t:hiBS_2} via \eqref{eq:dirichlet} below and the following identities, which will be shown in Proposition~\ref{p:polar_gauge}:
\begin{align*}
\label{eq:functional_duals}
\exp\left[-\frac{\|\cdot\|_L^{m+1}}{m+1}\right]^{\circ,Q}\!\!\!(v)&=\exp\left[-\frac{m}{m+1}\|v\|_{L_{\circ,Q}}^\frac{m+1}{m}\right] 
\\ &\text{and} 
\\
\exp\left[-\frac{m}{m+1}\|\cdot\|_K^\frac{m+1}{m}\right]^{\star,Q}\!\!\!\!(A)&=\exp\left[-\frac{\|A\|_{K^{\star,Q}}^{m+1}}{m+1}\right].
\end{align*}

\subsection{Connections to Optimal Transport}
Our final primary objective is to demonstrate that the framework of cost-Santal\'o inequalities yields immediate consequences in the theory of optimal transport. In his seminal work \cite{TM96}, Talagrand established a fundamental transport-entropy inequality for the standard Gaussian measure, bounding the quadratic transportation cost by the relative entropy. 

We show that the transference principle in Lemma~\ref{l:transfer} produces a wide class of new Kantorovich-Talagrand-type inequalities on Polish probability spaces. We first need some definitions. Let $(X,d,\mu)$ be a Polish probability space. We denote by $\mathcal{P}(X)$ the set of all Borel probability measures on $X$. Then, the relative entropy of $\nu \in \mathcal{P}(X)$ with respect to $\mu$ is defined by 
\[
H(\nu|\mu)=\begin{cases}
    \int_{X}f(x)\log f(x) \, d\mu(x), & \text{if } d\nu=f d\mu,
    \\
    +\infty, & \text{otherwise}.
\end{cases}
\]
By Jensen's inequality, $H(\nu|\mu)$ is non-negative. In the introduction, we aim to provide only a representative illustration of our results; we state a consequence for matrix spaces, which arises as a special case of a more general formulation.

Let $\Omega:\R_+\to(0,\infty) $ be a continuous, non-increasing, log-concave function satisfying the integrability conditions
\[
0 < \int_0^\infty \Omega\left(t^{\frac{m+1}{m}}\right)t^{n-1}dt < +\infty \quad \text{and} \quad 0 < \int_0^\infty \Omega\left(t^{m+1}\right)t^{nm-1}dt < +\infty.
\]
Given a body $Q \in \conbodo[m]$, we introduce the probability measures $\eta_{\Omega,m}$ on $\R^n$ and $\mu_{\Omega,m}$ on $M_{n,m}(\R)$ defined by:
\[
d\eta_{\Omega,m}(v) = Z_{1}^{-1} \Omega\left(|v|^{\frac{m+1}{m}}\right) dv \quad \text{and} \quad d\mu_{\Omega,m}(A) = Z_{2}^{-1} \Omega\left(\|A\|^{m+1}_{(\B)^{\star,Q}}\right) dA,
\]
where $Z_{1}$ and $Z_{2}$ denote the corresponding normalizing constants.

\begin{corollary}
\label{c:mass_trans}
    Fix $m,n\in\N$ and let $Q\in\conbodo[m]$ contain the origin. Let $\Omega$ be as above. Consider the cost-transport function on $\R^n \times M_{n,m}(\R)$ given by:
    \[
    c_{\Omega,m}(v,A) = \log\left(\frac{\Omega\left(h_Q(A^tv)\right)^{m+1}}{\Omega\left(|v|^{\frac{m+1}{m}}\right)^m \Omega\left(\|A\|^{m+1}_{(\B)^{\star,Q}}\right)}\right).
    \]
    Let $\nu_1\in\mathcal{P}(\R^n)$ and $\nu_2\in\mathcal{P}(M_{n,m}(\R))$ possess densities with respect to $\eta_{\Omega,m}$ and $\mu_{\Omega,m}$, respectively. Then, the following assertions hold:
    \begin{enumerate}
        \item The restricted Kantorovich optimal transport cost satisfies
        \[
        \mathcal{T}_{c_{\Omega,m}}^{\mathcal{F}}(\nu_1,\nu_2) \leq m H(\nu_1|\eta_{\Omega,m}) + H(\nu_2|\mu_{\Omega,m}),
        \]
        where $\mathcal{T}_{c_{\Omega,m}}^{\mathcal{F}}$ is defined by taking the supremum over pairs of potentials $(\varphi, \psi)$ with the restriction that $e^{\frac{\varphi(\cdot)}{m}} \Omega(|\cdot|^{\frac{m+1}{m}}) \in \mathcal{F}_n$.
        
        \item Moreover, if $Q$ is origin-symmetric and the measures $\nu_1, \nu_2$ are even, then the true optimal transport cost satisfies
        \[
        \mathcal{T}_{c_{\Omega,m}}(\nu_1,\nu_2) \leq m H(\nu_1|\eta_{\Omega,m}) + H(\nu_2|\mu_{\Omega,m}).
        \]
    \end{enumerate}
\end{corollary}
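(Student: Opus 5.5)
The plan is to combine Kantorovich duality, Theorem~\ref{t:main} and the Donsker--Varadhan variational formula for relative entropy. Throughout, $\mathcal{T}_{c_{\Omega,m}}(\nu_1,\nu_2)$ is the supremum of $\int\varphi\,d\nu_1+\int\psi\,d\nu_2$ over measurable pairs satisfying
\[
\varphi(v)+\psi(A)\le c_{\Omega,m}(v,A)\quad\text{for all } (v,A).
\]
The restricted cost $\mathcal{T}^{\mathcal{F}}_{c_{\Omega,m}}$ is the same supremum with the extra constraint in (1). It therefore suffices to bound $\int\varphi\,d\nu_1+\int\psi\,d\nu_2$ for a single admissible pair.

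Given such a pair, I set
\[
f(v)=e^{\varphi(v)}\,\Omega\bigl(|v|^{\frac{m+1}{m}}\bigr)^m,\qquad g(A)=e^{\psi(A)}\,\Omega\bigl(\|A\|^{m+1}_{(\B)^{\star,Q}}\bigr).
\]
Exponentiating the constraint $\varphi+\psi\le c_{\Omega,m}$ gives exactly $f(v)g(A)\le \Omega^{m+1}(h_Q(A^tv))$. Moreover $f^{1/m}=e^{\varphi/m}\Omega(|\cdot|^{\frac{m+1}{m}})$ has the same superlevel sets as $f$, so the restriction in (1) says precisely that $f\in\mathcal{F}_n$.

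Theorem~\ref{t:main} then yields
\[
\Bigl(\int g\Bigr)\Bigl(\int f^{1/m}\Bigr)^m\le Z_2Z_1^m,
\]
which is equivalent to
\[
\Bigl(\int e^{\psi}\,d\mu_{\Omega,m}\Bigr)\Bigl(\int e^{\varphi/m}\,d\eta_{\Omega,m}\Bigr)^m\le 1 .
\]
Next I apply Donsker--Varadhan in the form $\int h\,d\nu\le H(\nu|\rho)+\log\int e^h\,d\rho$, once with $h=\psi$ and once with $h=\varphi/m$, the latter then multiplied by $m$. Adding the two bounds gives
\[
\int\varphi\,d\nu_1+\int\psi\,d\nu_2\le mH(\nu_1|\eta_{\Omega,m})+H(\nu_2|\mu_{\Omega,m})+\log\Bigl[\Bigl(\int e^{\psi}\,d\mu_{\Omega,m}\Bigr)\Bigl(\int e^{\varphi/m}\,d\eta_{\Omega,m}\Bigr)^m\Bigr].
\]
The logarithm is $\le 0$, and taking the supremum over admissible pairs proves (1). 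I would also handle degenerate cases: when a potential is not integrable or an entropy is infinite, the inequality is trivial or follows by truncating $\varphi,\psi$ from above.

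For (2), I first symmetrize. Since $Q=-Q$, we have $h_Q(A^t(-v))=h_Q(A^tv)$, and the gauge of $(\B)^{\star,Q}$ is even. Hence $c_{\Omega,m}(\pm v,\pm A)=c_{\Omega,m}(v,A)$ for all sign choices. Replace $\varphi,\psi$ by
\[
\tilde\varphi(v)=\tfrac12\bigl(\varphi(v)+\varphi(-v)\bigr),\qquad \tilde\psi(A)=\tfrac12\bigl(\psi(A)+\psi(-A)\bigr).
\]
Then $\tilde\varphi(v)+\tilde\psi(A)$ is the average of $\varphi(v)+\psi(A)$ and $\varphi(-v)+\psi(-A)$, so it is $\le c_{\Omega,m}(v,A)$. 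Because $\nu_1,\nu_2$ are even, the integrals are unchanged, so we may assume both potentials are even.

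Next I increase $\varphi$ to the $c$-transform $\varphi^c(v)=\inf_A\bigl[c_{\Omega,m}(v,A)-\psi(A)\bigr]$. This keeps the pair admissible, does not decrease $\int\varphi\,d\nu_1$, and keeps $\varphi$ even. Since $\Omega$ is continuous and positive, $c_{\Omega,m}$ is continuous, so $\varphi^c$ is upper semicontinuous. Consequently the superlevel sets of $f$ are closed and origin-symmetric.

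It remains to get compactness. I truncate to $f_R=f\,1_{RB_2^n}$, which corresponds to lowering the potential to $-\infty$ outside $RB_2^n$. The superlevel sets of $f_R$ are then compact and symmetric, so their centroid is the origin whenever they have positive volume (and $b(\emptyset)=o$ otherwise). Thus $f_R\in\mathcal{F}_n$, and part (1) applies to $(\varphi_R,\psi)$. Letting $R\to\infty$, monotone convergence recovers the bound for $\int\varphi\,d\nu_1+\int\psi\,d\nu_2$.

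I expect this regularization step to be the main obstacle. One must check that level sets of positive but null measure, or those failing to be closed on a null set, do not break membership in $\mathcal{F}_n$; only almost every level needs to qualify, which should help. The truncation limit must also be justified: $\varphi_R=-\infty$ off $RB_2^n$ makes $\int\varphi_R\,d\nu_1=-\infty$ whenever $\nu_1$ is not supported in $RB_2^n$. The fix I would try first is to apply the Donsker--Varadhan step directly to the inequality for $f_R$, and let $R\to\infty$ in $\int e^{\varphi_R/m}\,d\eta_{\Omega,m}$ before taking expectations.
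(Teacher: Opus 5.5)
Your argument is correct. Part (1) is the paper's proof without the packaging. The paper feeds Theorem~\ref{t:main} (in its cost-Santal\'o form with $\lambda=\frac{m}{m+1}$, $\Phi_1=|\cdot|^{\frac{m+1}{m}}$, $\Phi_2=\|\cdot\|^{m+1}_{(B_2^n)^{\star,Q}}$) and the entropy duality \eqref{eq:dual_rel_entropy} through the abstract Theorem~\ref{thm:abstract_transport}, and that is exactly your computation. One small inaccuracy: the restriction gives $f^{1/m}\in\mathcal{F}_n$, not $f\in\mathcal{F}_n$, because $e^{\varphi}\Omega^m$ need not be integrable. This is harmless, since the proof of Theorem~\ref{t:main} only uses $f^{1/m}\in\mathcal{F}(\mathcal{C}^n)$.

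In part (2), both proofs symmetrize the potentials and then regularize, but the regularization differs.

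The paper first restricts to potentials bounded above and passes to upper semicontinuous envelopes. It then gets bounded superlevel sets from $f\le e^{\sup\varphi/m}\,\Omega(|\cdot|^{\frac{m+1}{m}})$ together with the decay of $\Omega$. This shows the Kantorovich supremum can be taken over admissible pairs, so that (2) is literally a special case of (1).

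You instead take the $c$-transform to get upper semicontinuity and cut off spatially by $1_{RB_2^n}$. Your proposed fix for that cutoff is the right one:
\begin{itemize}
\item apply Theorem~\ref{t:main} to $f_R$;
\item let $R\to\infty$ in $\bigl(\int e^{\psi}\,d\mu_{\Omega,m}\bigr)\bigl(\int e^{\varphi_R/m}\,d\eta_{\Omega,m}\bigr)^m\le 1$ by monotone convergence;
\item only then apply Donsker--Varadhan to the untruncated even pair (using $\tilde\varphi\le\varphi^c$ if needed).
\end{itemize}
This proves the bound pair by pair. It needs no a priori upper bound on the potentials and never runs into $\int\varphi_R\,d\nu_1=-\infty$.

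Your worry about null levels is also easily settled. Cap $f_R$ at its essential supremum: this changes $f_R$ only on a null set and preserves upper semicontinuity and the pointwise constraint. After capping, every superlevel set below that level has positive volume, so it is a compact symmetric set with barycenter at the origin.
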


This paper is organized as follows. In Section~\ref{sec:prelim}, we collect the foundational concepts needed for this work. This includes, in Sections~\ref{sec:set_prop} and \ref{sec:properties}, abstract duality with respect to a cost function. In Section~\ref{sec:main}, we prove the transference principle (Lemma~\ref{l:transfer}).

In Section~\ref{sec:euclidean}, we mention new examples in Euclidean space of cost-Santal\'o inequalities. In Section~\ref{sec:matrix}, we prove cost-Santal\'o inequalities for the space of rectangular matrices. In Section~\ref{sec:iso}, we provide examples of new functional Santal\'o inequalities arising from isoperimetry.

Finally, in Section~\ref{sec:opt_trans}, we detail the full development of optimal transport inequalities, which arise as a natural consequence of our framework, along with the necessary background on optimal transport plans and cost functions.

\section{Preliminaries}
\label{sec:prelim}
Throughout this work, we will sometimes denote $\omega_n = \vol(\B)$. We recall that a set $L \subset \R^n$ is a star body if it is compact, star-shaped with respect to the origin (meaning $o \in \operatorname{int}(L)$, and $x \in L$ implies $[o,x] \subset L$), and its Minkowski functional $\|\cdot\|_L$ is continuous on $\R^n$.
In this work, we will utilize a generalization of \eqref{eq:norm_gauss_int}. For $p,c>0$ and any star-shaped set $K$, Fubini's theorem yields the relation:
\begin{equation}
\label{eq:dirichlet}
\vol(K) = \frac{1}{c^{\frac{n}{p}}\Gamma\left(1+\frac{n}{p}\right)}\int_{\R^n} e^{-\frac{\|v\|_K^p}{c}} \,dv.
\end{equation}
Moreover, this identity can be extended to an arbitrary $\Omega$. To do so, we first recall that for any star body $L$ in $\R^n$, the volume can be computed via radial integration:
\begin{equation}
\label{eq:polar_radial}
\vol(L) = \frac{1}{n}\int_{\mathbb{S}^{n-1}} \|u\|_L^{-n} \,du.
\end{equation}

\begin{proposition}
    Let $d\in \N$ and $\alpha>0$. Let $\Omega:\R_+\to \R_+$ be a measurable function such that 
    \[
    0 < \int_{0}^{\infty}\Omega\left(t^\alpha\right)t^{d-1}\,dt < +\infty.
    \]
    Then, for every star body $L\subset \R^d$, it holds that
    \begin{equation}
    \label{eq:set_m_2}
    \vol[d](L) = \frac{1}{d} \frac{\int_{\R^d}\Omega\left(\|x\|_{L}^\alpha\right)\,dx}{\int_0^\infty \Omega\left(t^\alpha\right)t^{d-1}\,dt}.
    \end{equation}
 \end{proposition}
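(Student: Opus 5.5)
The plan is to compute $\int_{\R^d}\Omega(\|x\|_L^\alpha)\,dx$ in polar coordinates and then compare the result with the radial volume formula \eqref{eq:polar_radial}. Since $L$ is a star body, its gauge $\|\cdot\|_L$ is continuous, positively $1$-homogeneous and strictly positive on $\mathbb{S}^{d-1}$. This is because $o\in\operatorname{int}(L)$ makes $L$ contain a small ball, and compactness of $L$ then gives $\|u\|_L$ bounded above and below on the sphere. The function $x\mapsto \Omega(\|x\|_L^\alpha)$ is non-negative and measurable, so Tonelli's theorem in polar coordinates $x=tu$, with $t>0$ and $u\in\mathbb{S}^{d-1}$, gives
\[
\int_{\R^d}\Omega(\|x\|_L^\alpha)\,dx=\int_{\mathbb{S}^{d-1}}\int_0^\infty \Omega\bigl(t^\alpha\|u\|_L^\alpha\bigr)t^{d-1}\,dt\,du .
\]

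For each fixed $u$, I substitute $s=t\|u\|_L$ in the inner integral; this is legitimate because $\|u\|_L\in(0,\infty)$. The inner integral then becomes $\|u\|_L^{-d}\int_0^\infty\Omega(s^\alpha)s^{d-1}\,ds$. The remaining one-dimensional integral does not depend on $u$, and by hypothesis it is a finite positive constant $I$. Factoring it out leaves
\[
\int_{\R^d}\Omega(\|x\|_L^\alpha)\,dx=I\int_{\mathbb{S}^{d-1}}\|u\|_L^{-d}\,du=d\,I\,\vol[d](L),
\]
where the last equality is \eqref{eq:polar_radial} in dimension $d$. Since $0<I<\infty$, dividing by $dI$ gives \eqref{eq:set_m_2}.

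None of these steps is a real obstacle. The only care needed is justifying the interchange of integrals, which Tonelli handles for non-negative measurable integrands, and ensuring the substitution is valid, which needs $\|u\|_L$ finite and nonzero. Both facts follow from the star body assumptions: compactness and $o\in\operatorname{int}(L)$. The finiteness and positivity hypothesis on $I$ guarantees that the left side of \eqref{eq:set_m_2} is finite and that the division is legitimate.
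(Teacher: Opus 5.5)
Your proposal is correct and follows essentially the same route as the paper: integrate in polar coordinates, rescale the radial variable to factor out $\|u\|_L^{-d}$ times the one-dimensional integral, and then apply the radial volume formula $\int_{\mathbb{S}^{d-1}}\|u\|_L^{-d}\,du=d\,\vol[d](L)$. The extra justifications you give are details the paper leaves implicit: Tonelli for the non-negative integrand, and $0<\|u\|_L<\infty$ on the sphere so that the substitution is valid.
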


\begin{proof} 
Integrating in polar coordinates, we observe that
\begin{align*}
    \int_{\R^d}\Omega\left(\|x\|_{L}^\alpha\right)\,dx 
    &= \int_{\mathbb{S}^{d-1}}\int_{0}^{+\infty}\Omega\left(t^\alpha\|\theta\|_{L}^\alpha\right)t^{d-1}\,dt\,d\theta \\
    &= \int_{\mathbb{S}^{d-1}}\|\theta\|_{L}^{-d} \int_{0}^{+\infty}\Omega\left(t^\alpha\right)t^{d-1}\,dt\,d\theta \\
    &= d \vol[d](L) \int_0^\infty \Omega\left(t^\alpha\right)t^{d-1}\,dt.
\end{align*}
Rearranging this equation yields the claim.
\end{proof}

For a Polish space $(X,d)$, we say a curve $\gamma:[0,1] \to X$ is a \textit{geodesic} if 
\begin{align*}
    d(\gamma_s, \gamma_t)=|t-s| d(\gamma_0, \gamma_1), \quad \forall s, t \in[0,1] .
\end{align*}
We say that $(X,d)$ is a \textit{geodesic space} if for every pair of points $x, y \in X$, there exists at least one geodesic $\gamma$ such that $\gamma_0 = x$ and $\gamma_1 = y$. A set $K\subset X$ is \textit{geodesically convex} if for every two points $x, y \in K$, \textbf{every} geodesic $\gamma$ connecting them satisfies $\gamma_t \in K$ for all $t \in [0,1]$. Furthermore, a function $h: X \to \R \cup \{+\infty\}$ is said to be \textit{geodesically convex} if for any geodesic $\gamma: [0,1] \to X$ and any $t \in [0,1]$, $h$ satisfies
\begin{equation}
    h(\gamma_t) \leq (1-t)h(\gamma_0) + t h(\gamma_1).
\end{equation}

\subsection{The Blaschke-Santal\'o inequality}
We now complete the sketch from the introduction, demonstrating how Lemma~\ref{l:transfer} establishes the equivalence between the geometric Blaschke-Santal\'o inequality \eqref{eq:BS} and Proposition~\ref{prop:Ball_Fradelizi_Meyer}. We begin with the geometric setup:

\begin{proposition}
\label{p:max_cost}
    Let $K \in \mathcal{C}^n$ and let $L \in \mathcal{B}(\R^n)$ be non-empty. Define $M(K,L) = \sup_{x\in K, y\in L} \langle x, y \rangle$. Then,
    \begin{enumerate}
        \item If $L$ is unbounded, then $M(K,L)=\infty$.
        \item If $L$ is bounded, then $0\leq M(K,L)<\infty$ and $L\subseteq M(K,L)K^\circ$. Moreover, one has $M(K,L)=0$ if and only if $L=\{o\}$.
    \end{enumerate}
\end{proposition}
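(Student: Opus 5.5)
The whole argument rests on one geometric fact: since $K\in\mathcal{C}^n$ is compact with positive volume and $b(K)=o$, the origin lies in the interior of $\operatorname{conv}(K)$. Suppose instead that $o$ were not interior to $\operatorname{conv}(K)$. Then there would be a unit vector $\theta$ with $\langle x,\theta\rangle\le 0$ for all $x\in K$. Because $b(K)=o$, we would get $\int_K\langle x,\theta\rangle\,dx=0$, so $\langle x,\theta\rangle=0$ for almost every $x\in K$. That forces $K$ to have zero volume outside the hyperplane $\theta^\perp$, which contradicts $\vol(K)>0$. Hence there is $r>0$ with $rB_2^n\subseteq\operatorname{conv}(K)$. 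Equivalently, for every $y\in\R^n$,
\[
\sup_{x\in K}\langle x,y\rangle=h_{\operatorname{conv}(K)}(y)\ge r|y|.
\]
In addition, $K$ is compact, so $K\subseteq RB_2^n$ for some $R<\infty$, and therefore $h_K(y)\le R|y|$.

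For (1), let $L$ be unbounded and take $y_k\in L$ with $|y_k|\to\infty$. Then
\[
M(K,L)\ge \sup_{x\in K}\langle x,y_k\rangle\ge r|y_k|\to\infty .
\]

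For (2), let $L$ be bounded with $L\subseteq R'B_2^n$. Then $M(K,L)\le RR'<\infty$. Since $L$ is non-empty, $M(K,L)\ge h_K(y)\ge r|y|\ge0$ for any $y\in L$. For the inclusion, take $y\in L$; by definition $\langle x,y\rangle\le M(K,L)$ for all $x\in K$. There are two cases.
\begin{itemize}
\item If $M(K,L)>0$, then $\langle x, y/M(K,L)\rangle\le1$ for all $x\in K$, so $y\in M(K,L)K^\circ$.
\item If $M(K,L)=0$, then every $y\in L$ satisfies $r|y|\le h_K(y)\le 0$, so $y=o$. Thus $L=\{o\}\subseteq 0\cdot K^\circ=\{o\}$.
\end{itemize}
This case also gives one direction of the characterization. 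The converse is immediate: if $L=\{o\}$, then $M(K,L)=0$.

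The only real obstacle is the first step, namely deducing $o\in\operatorname{int}\operatorname{conv}(K)$ from $b(K)=o$ and $\vol(K)>0$ for a merely compact, possibly non-convex, set $K$. The hyperplane argument above handles it. Everything else is bookkeeping, including the convention that $0\cdot K^\circ=\{o\}$.
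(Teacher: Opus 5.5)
Your proof is correct and follows essentially the same route as the paper. Both arguments hinge on the uniform bound $h_K(y)\geq \epsilon|y|$. The paper gets it by showing $h_K>0$ on $\mathbb{S}^{n-1}$ from $\int_K\langle x,u\rangle\,dx=0$ and then setting $\epsilon=\min_{\mathbb{S}^{n-1}}h_K$; you package the same centroid-integral argument as $o\in\operatorname{int}\operatorname{conv}(K)$ via a separating direction, and your remark that $\langle x,\theta\rangle=0$ a.e.\ on $K$ would confine $K$ to a null hyperplane makes the paper's ``must change signs'' step precise. The remaining case analysis (unbounded $L$, $M(K,L)\leq R_KR_L$, $L\subseteq M(K,L)K^\circ$, and $M(K,L)=0\iff L=\{o\}$) matches the paper's.
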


\begin{proof}
We first study $K$. We will utilize the support function of $K$, which is given by $h_K(u) = \max_{x \in K} \langle x, u \rangle$. The maximum is achieved for every $u\in \s$ because $K$ is compact. We claim that $h_K>0$ on $\s$. Indeed, since $K$ has center of mass at the origin,
\[
\int_K \langle x, u \rangle \, dx = \left\langle \int_K x \, dx, u \right\rangle =\left\langle o, u \right\rangle = 0, \,\, u\in \s.
\]
Since the function $x \mapsto \langle x, u \rangle$ is continuous and integrates to zero over $K$, which has positive volume, it must change signs on $K$. Therefore, there must exist a point $x \in K$ such that $\langle x, u \rangle > 0$.  Thus, $h_K>0$. Now, by the Extreme Value Theorem, there exists $\epsilon>0$ such that
\[
\epsilon \coloneqq \min_{u \in \s} h_K(u) > 0.
\]
This implies that for any $u \in \s$, there exists an element $x \in K$ satisfying $\langle x, u \rangle \geq \epsilon$.

We now break into our cases; the first to consider is when $L$ is unbounded. Then, we can choose a sequence of vectors $\{y_k\}_{k=1}^{\infty} \subset L$ such that:
\[
\lim_{k \to \infty} |y_k| = \infty.
\]
For each $k$, we can write $y_k = |y_k| u_k$, where $u_k = \frac{y_k}{|y_k|}\in \s$. Moreover, from our analysis on $K$, we have that, for each direction $u_k$, there exists an $x_k \in K$ such that $\langle x_k, u_k \rangle \geq \epsilon.$ It follows that
\[
M(K,L)=\sup_{x \in K, y \in L} \langle x, y \rangle \geq \langle x_k, y_k \rangle = \langle x_k, |y_k| u_k \rangle = |y_k| \langle x_k, u_k \rangle \geq |y_k| \epsilon.
\]
Taking the limit as $k \to \infty$ on the right-hand side gives:
\[
\lim_{k \to \infty} |y_k| \epsilon = \infty, \quad \text{ergo,} \quad M(K,L)=\infty.
\]

The next case is when $L=\{o\}$. Then,
\[
M(K,L)=\sup_{x \in K, y \in L} \langle x, y \rangle = \sup_{x \in K} \langle x, o \rangle=0.
\]
It is immediate that $L=M(K,L)K^\circ=\{o\}$ in this case. We now consider our final case, when $L$ is bounded and there exists $y^\prime\neq o$ such that $y^\prime\in L$. Then, since $K$ is also bounded, there exists $R_K,R_L>0$ such that $|x|\leq R_K, |y|\leq R_L$ for every $x\in K$ and $y\in L$. Thus,
\[
M(K,L) = \sup_{x \in K, y \in L} \langle x, y \rangle \leq R_K R_L <\infty.
\]
To get $M(K,L)>0$, we take our non-zero $y^\prime\in L$ and use that there exists $x^\prime \in K$ such that $\langle x^\prime,y^\prime\rangle \geq |y^\prime|\epsilon$ to get
\[
M(K,L) = \sup_{x \in K, y \in L} \langle x, y \rangle \geq \left\langle x^\prime, y^\prime \right\rangle  \geq |y^\prime|\epsilon >0.
\]
Finally, in this case, if $y\in L$, then, for every $x\in K,$ $\langle x,y \rangle \leq M(K,L)$, and, therefore,  $y/M(K,L)\in K^\circ$. The set inclusion follows.
\end{proof}

\begin{proposition}
\label{p:BS_two}
    Let $K \in \mathcal{C}^n$ and $L \in \mathcal{B}(\R^n)$. Define $M(K,L) = \sup_{x\in K, y\in L} \langle x, y \rangle$. Then,
    \begin{equation}
    \vol(K)^{\frac{1}{2}} \vol(L)^{\frac{1}{2}} \leq \vol(\B) M(K,L)^{\frac{n}{2}}.
    \end{equation}
    If $0<\vol(L)<\infty$, then there is equality if and only if one has, up to removal of null sets, that $K$ is a centered ellipsoid and $L=M(K,L)K^\circ$.
\end{proposition}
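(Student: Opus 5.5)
We split into cases according to Proposition~\ref{p:max_cost} and reduce the bounded case to the compact Blaschke--Santal\'o inequality \eqref{eq:BS}.

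\textbf{Trivial cases.} If $L$ is unbounded, then $M(K,L)=\infty$ and the right-hand side is infinite. Since $\vol(K)<\infty$, the inequality holds under the convention $0\times\infty=0$; the case $\vol(L)=\infty$ needs checking here, but then the right-hand side is $+\infty$ as well. If $L=\{o\}$ (or, more generally, $\vol(L)=0$), the left-hand side vanishes.

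\textbf{Main case.} Assume $L$ is bounded and contains a nonzero point. By Proposition~\ref{p:max_cost}(2), $0<M:=M(K,L)<\infty$ and $L\subseteq MK^\circ$. Hence
\[
\vol(L)\le M^n\vol(K^\circ).
\]
Since $K\in\mathcal{C}^n$ is compact with positive volume and centered, \eqref{eq:BS} gives $\vol(K)\vol(K^\circ)\le\vol(\B)^2$. Multiplying,
\[
\vol(K)\vol(L)\le M^n\vol(K)\vol(K^\circ)\le M^n\vol(\B)^2,
\]
and taking square roots yields the claim. Note that $K^\circ$ is a convex body containing the origin in its interior, because $h_K\ge\epsilon>0$ as shown in the proof of Proposition~\ref{p:max_cost}. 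So $\vol(K^\circ)$ is finite and all quantities are meaningful.

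\textbf{Equality.} Suppose $0<\vol(L)<\infty$ and equality holds; by the first step $L$ is then bounded. Both inequalities in the chain must be equalities.
\begin{itemize}
\item[(i)] Equality in \eqref{eq:BS} forces $K$ to be a centered ellipsoid up to null sets, by the equality characterization of \cite{FGSZ23}.
\item[(ii)] Equality $\vol(L)=\vol(MK^\circ)$ together with $L\subseteq MK^\circ$ forces $L=MK^\circ$ up to a null set.
\end{itemize}
Conversely, suppose $K$ is a centered ellipsoid (up to null sets) and $L=M(K,L)K^\circ$. Then equality holds in \eqref{eq:BS}, and equality holds in the volume comparison.

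\textbf{Main obstacle.} The delicate point is the bookkeeping of ``up to null sets'' and of $M$. If $K$ differs from an ellipsoid $E$ by a null set, $K^\circ$ may differ from $E^\circ$, since polarity sees every point rather than only full-measure sets. One must check that $M(K,L)$ and $K^\circ$ are those of the ellipsoid, or else phrase the characterization with $K$ itself, as stated. I would handle this by noting that $K$ is compact and contains $E$ minus a null set. Its closure of interior points then contains $E$, so $K\supseteq E$ and $K^\circ\subseteq E^\circ$; the volume equalities then pin everything down.
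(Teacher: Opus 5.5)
Your proposal is correct and follows the paper's proof: you dispose of the cases $\vol(L)=0$ and $L$ unbounded, then combine $\vol(L)\le M(K,L)^n\vol(K^\circ)$ from Proposition~\ref{p:max_cost} with the compact-set inequality \eqref{eq:BS}. Your equality analysis, where both links of the chain must be tight, is more explicit than the paper's, which leaves it implicit; for the converse, read ``up to null sets'' for $K$ as ``with null sets removed'', which for compact $K$ forces $K$ to be exactly the ellipsoid $E$, because adding a point outside $E$ (e.g.\ $K=E\cup\{p\}$) shrinks $K^\circ$ by a set of positive volume and breaks equality.
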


\begin{proof}
    If $\vol(L)=0$, the inequality is trivially satisfied, as the left-hand side is zero. Similarly, if $L$ is unbounded, then Proposition~\ref{p:max_cost} states that $\vol(L)\leq \infty=M(K,L)$, and so the inequality is true. Otherwise, taking volume throughout Proposition~\ref{p:max_cost} produces $\vol(L) \leq M(K,L)^n \vol(K^\circ)$. Applying the classical geometric Blaschke-Santal\'o inequality \eqref{eq:BS} yields the claim.
\end{proof}

\begin{proof}[Proof of Proposition~\ref{prop:Ball_Fradelizi_Meyer} when $f\in \mathcal{F}_n$]
In Lemma~\ref{l:transfer}, we set $m=2$, $X_1=X_2=\R^n$, and $\lambda_1=\lambda_2=\frac{1}{2}$. We choose the cost function
\[
    c(x,y) = -\langle x,y\rangle,
\]
which implies $-c(x,y) = \langle x,y\rangle$. Thus, $a=0$ and $b=\infty$. By Proposition~\ref{p:BS_two}, the geometric hypothesis $(1)$ of Lemma~\ref{l:transfer} is satisfied for the classes of sets $\mathcal{C}^n$ and $\mathcal{B}(\R^n)$ and
\[
    F(r) = \vol(\B) r^{\frac{n}{2}}. \quad \text{Consequently,} \quad dF(r) = \frac{n}{2} \vol(\B)r^{\frac{n}{2}-1}\,dr.
\]
Now suppose that $f\in\mathcal{F}_n$, $g:\R^n\to\R_+$ is a measurable function, and $\Omega:(0,\infty)\to[0,\infty)$ is a measurable, non-increasing function such that
\[
    f(x)g(y) \leq \Omega^2(\langle x,y\rangle) \qquad \text{whenever } \langle x,y\rangle > 0.
\]
Applying Lemma~\ref{l:transfer}, we directly obtain
\[
    \left(\int_{\R^n} f(x)\,dx\right)^{\frac{1}{2}} \left(\int_{\R^n} g(y)\,dy\right)^{\frac{1}{2}} 
    \leq \frac{n}{2} \vol(\B) \int_0^\infty \Omega(r) r^{\frac{n}{2}-1}\,dr.
\] 
This is precisely formula \eqref{eq:BS_fun}. Indeed, by making the change of variables $r=u^2$ and converting to polar coordinates, we see that
\begin{align*}
\frac{n}{2} \vol(\B) \int_0^\infty \Omega(r) r^{\frac{n}{2}-1}\,dr
&= n\vol(\B) \int_0^\infty \Omega(u^2)u^{n-1}\,du = \int_{\R^n}\Omega(|x|^2)\,dx.
\end{align*}
We conclude.
\end{proof}

\noindent {\bf Remark.} A recent update of \eqref{eq:BS} was established in \cite{FGSZ24}. Let $K\subset \R^n$ be a compact set with positive volume such that $o\in \operatorname{int}(\operatorname{conv} (K))$. Then,
\begin{equation}
\label{eq:BS_updated}
\vol(K)\vol(K^\circ) \leq \vol(\B)^2 p(K),
\end{equation}
with equality if and only if $K$ is a centered ellipsoid up to removal of null sets. The penalty term $p(K)$ is
\[
p(K) = \min\{c(K),c(K^\circ)\}, \quad \text{where} \quad c(K) = \left(1-\langle b_s(K^\circ),b(K) \rangle\right)^{n+1}.
\]
The barycenter $b(K)$ of $K$ (respectively, the Santal\'o point $b_s(K)$ of $K$) is the unique vector $x\in\R^n$ such that $K-x$ (respectively, $(K-x)^\circ$) has its center of mass at the origin. Note that $p(K) \geq 1.$ 
Throughout this work, every application of \eqref{eq:BS} could be replaced with applications of \eqref{eq:BS_updated} with weaker assumptions on $K$. In particular, this implies stronger versions of Theorems~\ref{t:hiBS} and \ref{t:hiBS_2} and Lemma~\ref{l:general_homogeneous_santalo}. We leave the details to the reader to fill in. \hfill \qedsymbol

\subsection{Duality for Sets}
\label{sec:set_prop}
For two Polish spaces $X$ and $Y$ and a continuous cost function $c: X \times Y \to (-\infty, \infty)$, the cost induces a natural polarity between the subsets of $X$ and $Y$. For a subset $A \subset X$, we define its $c$-polar in $Y$ by:
\begin{equation}
\label{eq:abstract_lowered_polar}
A^{\circ, c} :=\{ y \in Y : -c(x,y) \le 1, \forall x \in A \}.
\end{equation}
Similarly, for a subset $B \subset Y$, we define its star $c$-polar in $X$ by:
\begin{equation}
\label{eq:abstract_raised_polar}
B^{\star, c} := \{ x \in X : -c(x,y) \le 1, \forall y \in B \}.
\end{equation}

In the next proposition, we list facts about the two dualities. For one of the properties, we want to state that polars of sets are convex. We will use geodesic convexity; recall that the geodesic convex hull $\operatorname{conv}(A)$ of set $A$ in a geodesic space is defined as the intersection of all geodesically convex sets containing $A$. Taking $X$ and $Y$ to be Polish geodesic spaces, we say the dualities ${}^{\circ, c}$ and ${}^{\star, c}$ are \textit{convex dualities} if for every $x \in X$ and $y \in Y$, the sublevel sets $\{y' \in Y : -c(x, y') \le 1\}$ and $\{x' \in X : -c(x', y) \le 1\}$ are geodesically convex.

\begin{proposition}
\label{p:dualities}
    Let $X$ and $Y$ be Polish spaces and $c: X \times Y \to (-\infty, \infty)$ be a continuous cost function. Then the following properties hold:
    \begin{enumerate}
        \item Let $A_1, A_2 \subset X$. If $A_1 \subseteq A_2$, then $(A_1)^{\circ, c} \supseteq (A_2)^{\circ, c}$.
        \item Let $B_1, B_2 \subset Y$. If $B_1 \subseteq B_2$, then $(B_1)^{\star, c} \supseteq (B_2)^{\star, c}$.
        \item For any subsets $A \subset X$ and $B \subset Y$, one has $A \subseteq (A^{\circ, c})^{\star, c}$ and $B \subseteq (B^{\star, c})^{\circ, c}$.
        \item For any subsets $A \subset X$ and $B \subset Y$, one has $((A^{\circ, c})^{\star, c})^{\circ, c} = A^{\circ, c}$ and $((B^{\star, c})^{\circ, c})^{\star, c} = B^{\star, c}$.
        \item The collections of $c$-polar sets defined by $\mathcal{S}_X := \{B^{\star, c} : B \subset Y\} \subset 2^X$ and $\mathcal{S}_Y := \{A^{\circ, c} : A \subset X\} \subset 2^Y$ are invariant under the composition of the polarities. That is, if $A \in \mathcal{S}_X$ and $B \in \mathcal{S}_Y$, then $(A^{\circ, c})^{\star, c} = A$ and $(B^{\star, c})^{\circ, c} = B$.
    \end{enumerate}
    Suppose further that $X$ and $Y$ are geodesic spaces, and that the dualities are convex. Then:
    \begin{enumerate}
        \setcounter{enumi}{5}
        \item For any sets $A \subset X$ and $B \subset Y$, one has $A^{\circ, c} = (\operatorname{conv}(A))^{\circ, c}$ and $B^{\star, c} = (\operatorname{conv}(B))^{\star, c}$, where $\operatorname{conv}$ denotes the geodesic convex hull. Moreover, if there exists a point $x_0 \in X$ such that $-c(x_0, y) \le 1$ for all $y \in Y$, then for any set $A \subset X$, one has $A^{\circ, c} = (A \cup \{x_0\})^{\circ, c} = (\operatorname{conv}(A \cup \{x_0\}))^{\circ, c}$. Similarly, if there exists $y_0 \in Y$ such that $-c(x, y_0) \le 1$ for all $x \in X$, then $B^{\star, c} = (B \cup \{y_0\})^{\star, c} = (\operatorname{conv}(B \cup \{y_0\}))^{\star, c}$.
    \end{enumerate}
    Finally, suppose $X = M_{n,m}(\R)$, $Y = \R^n$, and the cost is given by $c(A,v) = -h_Q(A^t v)$ for some $Q \in \conbodo[m]$. Then:
    \begin{enumerate}
        \setcounter{enumi}{6}
        \item For every $K \in \conbode$, one has $(K^{\star, c})^{\circ, c} = K$.
    \end{enumerate}
\end{proposition}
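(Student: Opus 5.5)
Items (1)--(5) are formal consequences of the definitions \eqref{eq:abstract_lowered_polar} and \eqref{eq:abstract_raised_polar}, so I would prove them in order. For (1), if $y\in (A_2)^{\circ,c}$ then $-c(x,y)\le 1$ for all $x\in A_2$, hence for all $x\in A_1$; (2) is identical. For (3), if $x\in A$ then every $y\in A^{\circ,c}$ satisfies $-c(x,y)\le 1$, which is exactly the statement $x\in (A^{\circ,c})^{\star,c}$; the other inclusion is symmetric. For (4), apply (3) with $B=A^{\circ,c}$ to get $A^{\circ,c}\subseteq ((A^{\circ,c})^{\star,c})^{\circ,c}$. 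Conversely, apply (1) to the inclusion $A\subseteq (A^{\circ,c})^{\star,c}$ from (3) to get the reverse inclusion. Item (5) follows from (4): writing $A=B^{\star,c}$ gives $(A^{\circ,c})^{\star,c}=((B^{\star,c})^{\circ,c})^{\star,c}=B^{\star,c}=A$. Continuity of $c$ is not needed here; it only makes the polars closed.

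For (6), note that $A^{\circ,c}=\bigcap_{x\in A}\{y:-c(x,y)\le1\}$, and symmetrically $B^{\star,c}=\bigcap_{y\in B}\{x:-c(x,y)\le 1\}$. Fixing $y\in A^{\circ,c}$, the set $S_y=\{x:-c(x,y)\le1\}$ is geodesically convex by the convexity assumption and contains $A$. Hence it contains $\operatorname{conv}(A)$, so $y\in(\operatorname{conv}A)^{\circ,c}$. Combined with (1), this gives equality. For the statement with $x_0$, observe that $y\in A^{\circ,c}$ automatically satisfies $-c(x_0,y)\le1$, so $A^{\circ,c}=(A\cup\{x_0\})^{\circ,c}$; then apply the first part to $A\cup\{x_0\}$. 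The $B$ versions are symmetric.

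For (7), I would compute the two polars explicitly. By \eqref{eq:raised_polarity_set}, $K^{\star,c}=\{A:h_Q(A^tv)\le1\ \forall v\in K\}=\{A: AQ\subseteq K^\circ\}=K^{\star,Q}$. Also $(K^{\star,c})^{\circ,c}=\bigcap_{A\in K^{\star,Q}}(AQ)^\circ$, and since $AQ\subseteq K^\circ$ each term contains $K^{\circ\circ}=K$. So $K\subseteq (K^{\star,c})^{\circ,c}$, which also follows from (3).

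For the reverse inclusion I need $\bigcup_{A\in K^{\star,Q}} AQ$ to be dense enough that its convex hull is $K^\circ$; then $(K^{\star,c})^{\circ,c}=(\bigcup AQ)^\circ=K^{\circ\circ}=K$. This is the main step. Since $Q\in\conbodo[m]$ has nonempty interior and $Q\ne\{o\}$, I would pick $q\in Q$ with $q\neq o$ and, for each $w\in K^\circ$, take a rank-one matrix $A=w\otimes \xi$ with $\langle \xi,q\rangle=1$ and $h_Q(\xi)$ minimal, so that $AQ=\{w\langle\xi,p\rangle:p\in Q\}$. For $AQ\subseteq K^\circ$ I need $\langle \xi,p\rangle$ to range in $[-1,1]$ on $Q$, or at least in $[0,1]$ after using $o$-symmetry of $K^\circ$. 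This holds if $\xi$ is chosen with $h_Q(\xi)=1$ and $h_Q(-\xi)\le 1$, possible by scaling any $\xi$ with $h_Q(\xi)>0$ to $h_Q(\xi)=1$ and replacing $-\xi$ if needed. Then $\langle\xi,p\rangle\in[-h_Q(-\xi),1]$, and because $K^\circ$ is symmetric and convex, the segment $w[-t,1]\subseteq K^\circ$ whenever $t\le1$. If $h_Q(-\xi)>1$, I would instead normalize by $\max(h_Q(\xi),h_Q(-\xi))=1$, which still gives $w\cdot s\in AQ$ for some $s\in(0,1]$ with $s$ independent of $w$. Hence $sK^\circ\subseteq\bigcup AQ\subseteq K^\circ$, which is not quite enough. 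The real fix is to take $\xi$ to maximize $\langle\xi,q\rangle$ over $q\in Q$, i.e.\ choose $\xi$ with $h_Q(\xi)=1$ attained at some $q^*$; then $w=Aq^*\in AQ$. Since $\langle\xi,p\rangle\ge -h_Q(-\xi)$, I need the symmetric normalization, which works as long as $h_Q(-\xi)\le h_Q(\xi)$ for some direction $\xi$. Picking $\xi$ to be a direction maximizing $h_Q(\xi)/h_Q(-\xi)$ guarantees this, since swapping $\xi\leftrightarrow-\xi$ shows the maximum is at least $1$ (with the convention that the ratio is $\infty$ when $h_Q(-\xi)=0$). This gives $K^\circ\subseteq\bigcup_{A\in K^{\star,Q}}AQ$, and therefore (7).
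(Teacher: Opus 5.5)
Your proof is correct and follows the paper's argument. Items (1)--(6) are the same formal and convexity arguments. For (7), your final choice of a rank-one matrix $w\otimes\xi$ with $h_Q(\xi)=1\ge h_Q(-\xi)$ is exactly the paper's construction. The only difference, harmless and slightly cleaner, is that you cover every $w\in K^\circ$ directly via $[-1,1]w\subseteq K^\circ$, whereas the paper covers $\partial K^\circ$ and then takes a convex hull.

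When writing it up, drop the abandoned attempts: the first normalization with $\langle\xi,q\rangle=1$, and the ``$sK^\circ$'' detour. Your sentence ``replacing $\xi$ by $-\xi$ if needed'' already settles the case $h_Q(-\xi)>h_Q(\xi)$.
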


\begin{proof}
    Properties (1) and (2) follow immediately from the definitions of the polar sets. For (3), let $x \in A$. By the definition of $A^{\circ, c}$, we have $-c(x,y) \le 1$ for all $y \in A^{\circ, c}$. This precisely means $x \in (A^{\circ, c})^{\star, c}$, establishing the inclusion. The inclusion for $B$ is likewise.
    
    For (4), applying (2) to the inclusion $B \subseteq (B^{\star, c})^{\circ, c}$ yields $B^{\star, c} \supseteq ((B^{\star, c})^{\circ, c})^{\star, c}$. Conversely, substituting the set $B^{\star, c}$ into property (3) yields $B^{\star, c} \subseteq ((B^{\star, c})^{\circ, c})^{\star, c}$, establishing equality. The proof for $A^{\circ, c}$ is identical. Property (5) is an immediate consequence of (4).
    
    For (6), since $A \subseteq \operatorname{conv}(A)$, property (1) immediately yields $A^{\circ, c} \supseteq (\operatorname{conv}(A))^{\circ, c}$. For the reverse inclusion, let $y \in A^{\circ, c}$. By definition, this means $-c(x,y) \le 1$ for all $x \in A$, which is equivalent to saying $A \subseteq \{x \in X : -c(x,y) \le 1\}$. By our hypothesis, this sublevel set is geodesically convex in $X$. Because $\operatorname{conv}(A)$ is the smallest geodesically convex set containing $A$, it follows that $\operatorname{conv}(A) \subseteq \{x \in X : -c(x,y) \le 1\}$. This implies that for all $x \in \operatorname{conv}(A)$, $-c(x,y) \le 1$, meaning $y \in (\operatorname{conv}(A))^{\circ, c}$. This establishes equality. The proof for $B^{\star, c}$ follows symmetrically. 

    For the ``moreover'' part, the hypothesis that $-c(x_0, y) \le 1$ for all $y \in Y$ means, for any set $A \subset X$, the condition $-c(x,y) \le 1$ for all $x \in A \cup \{x_0\}$ is the same statement as $-c(x,y) \le 1$ for all $x \in A$. This immediately gives $A^{\circ, c} = (A \cup \{x_0\})^{\circ, c}$. Applying the first part to the set $A \cup \{x_0\}$ yields the second equality $(A \cup \{x_0\})^{\circ, c} = (\operatorname{conv}(A \cup \{x_0\}))^{\circ, c}$. The proof for $B^{\star, c}$ follows in the same manner.

    Finally for (7), write from \eqref{eq:lowered_polarity_set} that
    \[
    (K^{\star,Q})_{\circ,Q}=\bigcap_{A\in K^{\star,Q}}(AQ)^\circ.
    \]
    From item (3), we only need to establish $(K^{\star,Q})_{\circ,Q} \subseteq K$. By taking the (usual) polarity, it suffices to show 
    \[
    K^\circ\subseteq ((K^{\star,Q})_{\circ,Q})^\circ =\operatorname{conv}\{AQ:A\in K^{\star,Q}\}.
    \]
    Inserting the definition of $K^{\star,Q}$ from \eqref{eq:raised_polarity_set}, we obtain
    \[
    ((K^{\star,Q})_{\circ,Q})^\circ =\operatorname{conv}\{AQ:AQ\subseteq K^\circ,A\in M_{n,m}(\R)\}.
    \]
    Now, we consider only those $A$ that are of rank $1$. That is, 
    \[
    ((K^{\star,Q})_{\circ,Q})^\circ \supseteq \operatorname{conv}\{v\otimes u Q: v\otimes u Q\subseteq K^\circ, u\in\R^m, v\in\R^n\}:=C(K).
    \]
    The set $v\otimes u Q$ has a nice structure:
    \[
    v\otimes u Q = \left\{\langle u,x\rangle v:x\in Q\right\} = \left\{\langle u,x\rangle:x\in Q\right\} v = [-h_Q(-u),h_Q(u)]v.
    \]
   By definition of polarity, $[-h_Q(-u),h_Q(u)]v \subset K^\circ$ if and only if $h_Q(u)h_K(v)\leq 1$ and $h_Q(-u)h_K(-v)\leq 1$. This is immediate if $Q$ is also origin-symmetric. To show $C(K) \supseteq K^\circ$ in the general case, it suffices to show that for any boundary point $v \in \partial K^\circ$, there exists some $u \in \R^m$ such that $v \in v \otimes u Q \subseteq K^\circ$. 
    
    Since $v \in \partial K^\circ$ and $K^\circ$ is origin-symmetric, $h_K(v) = h_K(-v) = 1$, meaning $[-1,1]v \subseteq K^\circ$. Thus, we only need to find a vector $u \in \R^m$ satisfying $h_Q(u) = 1$ and $h_Q(-u) \le 1$. 

    Because $Q$ is a convex body with non-empty interior, there exists some direction $w \in \R^m$ such that $h_Q(w) > 0$. We compare $h_Q(w)$ and $h_Q(-w)$:
    \begin{itemize}
        \item If $h_Q(w) \ge h_Q(-w)$, we define $u = w / h_Q(w)$. By homogeneity, $h_Q(u) = 1$ and $h_Q(-u) = h_Q(-w)/h_Q(w) \le 1$.
        \item If $h_Q(-w) > h_Q(w)$, we define $u = -w / h_Q(-w)$, which similarly yields $h_Q(u) = 1$ and $h_Q(-u) < 1$. 
    \end{itemize}

    In either case, we obtain a direction $u$ such that $v \otimes u Q = [-h_Q(-u), 1]v \subseteq [-1, 1]v \subseteq K^\circ$. Since this set contains $v$, it follows that every boundary point of $K^\circ$ is contained in $C(K)$. Because $C(K)$ is convex and contains the origin, it must contain the convex hull of $\partial K^\circ$. Therefore, $C(K) \supseteq K^\circ$, completing the proof.
\end{proof}

\noindent {\bf Remark.} In \cite{AASW23,AASW23_2}, the authors of those works considered the case of a symmetric cost $c:X\times X\to (-\infty,\infty)$ and defined the $c$-dual of a set $K\subset X$ at level $t\in (-\infty,\infty)$ as 
\[
K^c_t=\left\{y\in X\bigg|\, \inf_{x\in K}c(x,y)\geq t\right\}.
\]
However, it is easy to see that such dualities are equivalent to \eqref{eq:abstract_lowered_polar} and \eqref{eq:abstract_raised_polar} for the case $X=Y$ for all $t\in \R$, as $\inf_{x\in K}c(x,y)\geq t$ if and only if $\sup_{x\in K}e^{-c(x,y)+t}\leq 1$; this is of the correct form. Nevertheless, their considerations were orthogonal to ours.

Firstly, in \cite{AASW23}, they were interested in generalizing some known results concerning finite costs in the theory of optimal transport, primarily the existence of optimal transport plans, to the case when the cost can take the value $+\infty$. The closest connection between their results and ours is that they introduce a condition (see \cite[Definition 3.2]{AASW23}) which is equivalent to (see \cite[Lemma 3.12]{AASW23}): if $c:X\times X\to (-\infty,\infty)$ is an upper semi-continuous cost function, then two probability measures $\mu, \nu \in \mathcal{P}\left(X\right)$ are $c$-compatible if and only if, for every set $K\subset X$ satisfying $K=(K^c_\infty)^c_{\infty}$ it holds
\[
\nu(K^c_\infty) \leq 1-\mu(K),
\]
and they are strongly $c$-compatible if and only if the inequality is strict when $\nu(K^c_\infty)\notin\{0,1\}$. Then, they showed in \cite[Theorem 1.1]{AASW23}, as an illustrative example, the existence of an optimal transport plan between $\mu$ and $\nu$ with certain properties.

Secondly, in \cite{AASW23_2}, they considered \textit{order-reversing quasi-involutions} $T:2^X\to 2^X$, which satisfy $K\subset TTK$ and, if $K\subset L$, then $TL \subset TK$. Of course, Proposition~\ref{p:dualities} shows that the dualities \eqref{eq:abstract_lowered_polar} and \eqref{eq:abstract_raised_polar} are examples. In this symmetric situation, items (1)-(5) are classical anyway (see, e.g., \cite[Lemma 3.10]{AASW23}). They showed \cite[Theorem 1.3]{AASW23_2} that, if $T:2^X\to 2^X$ is an order-reversing quasi-involution, then there exists a symmetric cost $c:X\times X\to \{\pm 1\}$ such that, for every $K\subset X$, $TK=K_0^c$. Along the way, they combine a particular duality they discovered, the so-called dual polarity, with the Blaschke-Santal\'o inequality for origin-symmetric convex bodies in $\R^n$ and the Gaussian measure by Cordero-Erausquin \cite{CE02} to obtain a Blaschke-Santal\'o-type inequality for the dual polarity and a collection of cone-like convex sets using the Gaussian measure. 

While these applications beautifully demonstrate the utility of their framework, it is a distinct structural direction from the functional Santal\'o-type inequalities via the transference principle developed in the present work. \hfill \qedsymbol

\subsection{Duality for Functions}
\label{sec:properties} 
In this section, we establish some important properties about the polarities ${}^{\star, c}$ and ${}^{\circ, c}$. First, we have that they are in duality with each other (a restatement of \eqref{eq:duality}).
\begin{proposition}
\label{p:leg_dual}
Let $X$ and $Y$ be two Polish spaces, and let $c: X \times Y \to \R$ be a continuous cost function. For any functions $\psi: X \to \R$ and $\varphi: Y \to \R$, $$\Lu c\Ld c \psi \leq \psi \quad \text{and} \quad \Ld c\Lu c \varphi \leq \varphi.$$
    It follows that, if $f: X\to\R_+$ and $g:Y\to\R_+$, then,
    $$(f^{\circ, c})^{\star, c} \geq f \quad \text{and} \quad (g^{\star, c})^{\circ, c}\geq g.$$
\end{proposition}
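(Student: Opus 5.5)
The claim follows by unwinding the definitions of the $c$-Legendre transforms and using only the elementary sup-inequality \eqref{eq:sup_ineq}, together with the monotonicity of $\exp$. No structure of $X$, $Y$, or $c$ beyond the definitions is needed. Continuity of $c$ only guarantees measurability and plays no role in the inequality.

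\textbf{Step 1: the Legendre inequality.} Fix $x\in X$. By definition,
\[
\Lu c\Ld c\psi(x)=\sup_{y\in Y}\bigl(-c(x,y)-\Ld c\psi(y)\bigr).
\]
For every $y\in Y$, \eqref{eq:sup_ineq} gives $\Ld c\psi(y)+\psi(x)\ge -c(x,y)$. Rearranging, $-c(x,y)-\Ld c\psi(y)\le \psi(x)$ for every $y$, and taking the supremum over $y$ yields $\Lu c\Ld c\psi(x)\le\psi(x)$.

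Some care is needed with infinite values. If $\Ld c\psi(y)=+\infty$, the corresponding term is $-\infty$ and contributes nothing to the supremum. If $\psi$ takes the value $+\infty$ somewhere (permissible in the extended setting), the inequality is trivial at such points. The inequality $\Ld c\Lu c\varphi\le\varphi$ is proved identically, using the second half of \eqref{eq:sup_ineq} with the roles of $x$ and $y$ exchanged.

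\textbf{Step 2: the polar inequality.} Set $\psi=-\log f$, which takes values in $(-\infty,+\infty]$, with $+\infty$ exactly where $f=0$. Then
\[
f^{\circ,c}=\exp[-\Ld c\psi], \qquad -\log f^{\circ,c}=\Ld c\psi .
\]
Hence
\[
(f^{\circ,c})^{\star,c}=\exp\bigl[-\Lu c(-\log f^{\circ,c})\bigr]=\exp\bigl[-\Lu c\Ld c\psi\bigr]\ge \exp[-\psi]=f,
\]
by Step 1 and the fact that $t\mapsto e^{-t}$ is decreasing. The statement $(g^{\star,c})^{\circ,c}\ge g$ follows the same way from $\Ld c\Lu c\varphi\le\varphi$ with $\varphi=-\log g$.

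\textbf{Main obstacle.} The only point needing attention is the extended-real bookkeeping: $f$ or $g$ may vanish, making $\psi=+\infty$, and the transforms may equal $\pm\infty$. I would fix the conventions $e^{-\infty}=0$ and $1/0=+\infty$, the latter matching the infimum form $\inf_x e^{c(x,y)}/f(x)$ of the polars. With these conventions, one checks that $\Ld c\psi(y)+\psi(x)\ge -c(x,y)$ still holds. It is trivially true when either term is $+\infty$. A term $-\infty$ cannot occur unless $\psi\equiv+\infty$; in that case $f\equiv 0$ and the claim is immediate.
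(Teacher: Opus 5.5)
Your proposal is correct and is essentially the paper's argument. The paper writes $\Lu c\Ld c\psi(x_0)=\sup_{y}\inf_{x}\bigl(-c(x_0,y)+c(x,y)+\psi(x)\bigr)$ and bounds the infimum by taking $x=x_0$, which is exactly your use of \eqref{eq:sup_ineq}; the paper merely says the polar inequalities ``follow,'' and your Step 2 (composing with the decreasing map $t\mapsto e^{-t}$ under the extended-real conventions) fills in that step.
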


\begin{proof}
We will show the first inequality; the second is identical by symmetry. For any $x_0 \in X$, we have
\begin{align*}
\Lu c\Ld c \psi (x_0) &= \sup_{y \in Y} \left( -c(x_0, y) - \Ld c \psi(y) \right) \\
&= \sup_{y \in Y} \inf_{x \in X} \left(-c(x_0, y) + c(x, y) + \psi(x) \right) \leq \psi(x_0),
\end{align*}
where the final inequality follows simply by choosing $x = x_0$ in the infimum.
\end{proof}

In the next two propositions, we show that $\Ld c$ and $\Lu c$ are indeed functional versions of the geometric polarities $L\to L^{\circ, c}$ and $K\to K^{\star, c}$, respectively. We say a function $c: X \times Y \to \R$ is $1$-homogeneous in each variable if $c(sx, ty) = st c(x,y)$ for all $s,t > 0$, $x\in X,y\in Y$.

\begin{proposition}
\label{p:polar_gauge}
Let $X$ and $Y$ be geodesic vector spaces, and let $c: X \times Y \to \R$ be a continuous cost function that is $1$-homogeneous in each variable. Let $1 < p, q < \infty$ such that $\frac{1}{p} + \frac{1}{q} = 1$. Let $K \subset X$ and $L \subset Y$ be compact, geodesically convex sets containing the origin in their respective interiors. Then,
\[
\Ld c\left(\frac{\|\cdot\|_K^p}{p}\right)(y)=\frac{\|y\|_{K^{\circ, c}}^q}{q} \quad \text{and} \quad \Lu c\left(\frac{\|\cdot\|_L^q}{q}\right)(x)=\frac{\|x\|_{L^{\star, c}}^p}{p}.
\]
\end{proposition}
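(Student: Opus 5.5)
The plan is to prove the first identity directly; the second follows by the same argument with the roles of $X$ and $Y$ (and of $p$ and $q$) interchanged. The idea is to exploit the $1$-homogeneity of $c$ to reduce the supremum defining $\Ld c$ to a one-dimensional optimization along rays. The first step is to identify the gauge of the $c$-polar. By definition, $y\in K^{\circ,c}$ if and only if $\sup_{x\in K}(-c(x,y))\le 1$. By homogeneity in the second variable, $\sup_{x\in K}(-c(x,ty))=t\sup_{x\in K}(-c(x,y))$ for $t>0$. Hence
\[
\|y\|_{K^{\circ,c}}=\sup_{x\in K}\bigl(-c(x,y)\bigr)=:h(y),
\]
provided this supremum is nonnegative. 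Nonnegativity holds because $o\in K$ and homogeneity forces $c(o,y)=0$. Compactness of $K$ and continuity of $c$ make $h$ finite and attained.

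Next, I write every $x\in X$ as $x=s\theta$ with $s=\|x\|_K\ge 0$ and $\theta\in\partial K$; this is possible because $K$ is compact with $o$ in its interior. The case $x=o$ contributes $0$. By homogeneity in the first variable,
\[
\Ld c\Bigl(\tfrac{\|\cdot\|_K^p}{p}\Bigr)(y)=\sup_{\theta\in\partial K}\ \sup_{s\ge 0}\Bigl(s\,(-c(\theta,y))-\tfrac{s^p}{p}\Bigr).
\]
For fixed $\theta$, set $\alpha=-c(\theta,y)$. The inner supremum is the one-dimensional Young conjugate, equal to $\alpha_+^q/q$; it is attained at $s=\alpha^{1/(p-1)}$ when $\alpha>0$ and equals $0$ otherwise. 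Since $t\mapsto t_+^q/q$ is nondecreasing, taking the supremum over $\theta$ gives $\bigl(\sup_{\theta\in\partial K}(-c(\theta,y))\bigr)_+^q/q$.

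It remains to check that $\sup_{\theta\in\partial K}(-c(\theta,y))_+=\sup_{x\in K}(-c(x,y))$. Any $x\in K$ has the form $s\theta$ with $s\le 1$. Then $-c(x,y)=s(-c(\theta,y))\le(-c(\theta,y))_+$, which gives one inequality; the reverse inequality is immediate since $\partial K\subset K$. Combined with the first step, this yields $\|y\|_{K^{\circ,c}}^q/q$.

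The main obstacle is making the radial decomposition and the identification of the gauge rigorous in this abstract ``geodesic vector space'' setting. In particular, the argument needs the sets $K$ and $K^{\circ,c}$ to be star-shaped, with their Minkowski functionals behaving as expected. It also needs care in the degenerate case where $\sup_{x\in K}(-c(x,y))=0$, so that the gauge of $y$ is $0$ and both sides vanish. I would handle this by using only the linear structure and homogeneity, never convexity, beyond the fact that $o\in\operatorname{int}K$ guarantees $\|\cdot\|_K$ is finite and positive away from $o$.
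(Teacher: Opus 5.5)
Your proof is correct, but it is organized differently from the paper's.

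The paper argues by homogeneity and a single sublevel set:
\begin{itemize}
\item It first shows, via the substitution $x=\lambda^{q-1}z$ and homogeneity of $c$ in both variables, that $\Ld c\bigl(\tfrac{\|\cdot\|_K^p}{p}\bigr)$ is positively $q$-homogeneous.
\item It then only checks that $\{\Ld c(\cdot)\le 1/q\}=K^{\circ,c}$.
\item For $y\in K^{\circ,c}$, it uses $-c(x,y)\le\|x\|_K$ together with the scalar bound $\sup_{\alpha\ge 0}(\alpha-\alpha^p/p)=1/q$.
\item For $y\notin K^{\circ,c}$, it tests the supremum at a normalized point of $K$ where $-c>1$.
\end{itemize}

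You instead compute the transform exactly. The ray decomposition $x=s\theta$ reduces it to a one-dimensional Young conjugate, giving the explicit formula $\frac1q\bigl(\sup_{x\in K}(-c(x,y))\bigr)^q$. You then separately identify $\sup_{x\in K}(-c(x,y))$ with $\|y\|_{K^{\circ,c}}$, using only homogeneity in the second variable and $c(o,y)=0$.

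What each route buys:
\begin{itemize}
\item Your route is more transparent. It isolates the abstract gauge formula $\|y\|_{K^{\circ,c}}=\sup_{x\in K}(-c(x,y))$, which is the general form of the identity $\|v\|_{L_{\circ,Q}}=\max_{A\in L}h_Q(A^tv)$ stated in the introduction.
\item The paper's route avoids both the decomposition and any explicit optimization. In exchange, it relies on the (easy, but only asserted there) fact that a nonnegative positively $q$-homogeneous function is determined by one sublevel set.
\end{itemize}

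On the obstacle you flagged: you do not actually need $\theta\in\partial K$. It suffices that $\theta=x/\|x\|_K$ satisfies $\|\theta\|_K=1$ and that $\theta\in K$ by closedness of $K$. So your argument, like the paper's, uses only compactness and $o\in\operatorname{int}K$, and no convexity.
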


\begin{proof}
We show only the first equality; the second is symmetric. First, observe that $\Ld c\left(\frac{\|\cdot\|_K^p}{p}\right)$ is positively homogeneous of degree $q$. Indeed, for $\lambda > 0$, by substituting $x = \lambda^{q-1}z$ and noting that $p(q-1) = q$, we have:
\begin{align*}
\Ld c\left(\frac{\|\cdot\|_K^p}{p}\right)(\lambda y)
&= \sup_{x \in X}\left\{ -c(x, \lambda y) - \frac{\|x\|_K^p}{p} \right\} \\
&= \sup_{z \in X}\left\{ -c(\lambda^{q-1}z, \lambda y) - \frac{\|\lambda^{q-1}z\|_K^p}{p} \right\} \\
&= \lambda^q \sup_{z \in X}\left\{ -c(z, y) - \frac{\|z\|_K^p}{p} \right\} \\
&= \lambda^q \Ld c\left(\frac{\|\cdot\|_K^p}{p}\right)(y).
\end{align*}

Since a positively homogeneous function is determined by one level set, it suffices to show that $\Ld c\left(\frac{\|\cdot\|_K^p}{p}\right)(y) \leq 1/q$ if and only if $y \in K^{\circ, c}$. 

Take $y \in K^{\circ, c}$. By the definition of the $c$-polar \eqref{eq:abstract_lowered_polar}, we have $-c(x, y) \le 1$ for all $x \in K$. By the separate $1$-homogeneity of $-c$, for any $x \in X$, 
\[
\|x\|_K^{-1} (-c(x, y)) = -c\left(\frac{x}{\|x\|_K}, y \right) \leq 1,
\]
which implies $-c(x, y) \le \|x\|_K$. Therefore,
\begin{align*}
\Ld c\left(\frac{\|\cdot\|_K^p}{p}\right)(y)
&\leq \sup_{x \in X}\left\{\|x\|_K - \frac{\|x\|_K^p}{p}\right\} \leq \sup_{\alpha \geq 0} \left\{\alpha - \frac{\alpha^p}{p}\right\} = \frac{1}{q}.
\end{align*}

On the other hand, if $y \notin K^{\circ, c}$, then there exists $x_0 \in X$ for which
\[
-c(x_0, y) > 1 \geq \|x_0\|_K.
\]
By homogeneity, we obtain $-c\left(\frac{x_0}{\|x_0\|_K}, y \right) > 1$, so that evaluating the supremum at the normalized vector gives:
\[
\Ld c\left(\frac{\|\cdot\|_K^p}{p}\right)(y) \geq -c\left(\frac{x_0}{\|x_0\|_K}, y \right) - \frac{\left\|\frac{x_0}{\|x_0\|_K}\right\|_K^p}{p} > 1 - \frac{1}{p} = \frac{1}{q},
\]
and the result follows.
\end{proof}

 Recall that the indicator function of a set $A$ is given by
\begin{equation}
    I_A(x)= \begin{cases}
0, \quad &x\in A,\\
+\infty, \quad &x \notin A.\\
\end{cases}
\end{equation}
We denote by $o_X$ and $o_Y$ the origins in the vector spaces $X$ and $Y$, respectively.

\begin{proposition}
\label{p:indicator_gauge_general}
Let $X$ and $Y$ be geodesic vector spaces, and let $c: X \times Y \to \R$ be a cost function that is $1$-homogeneous in each variable. Let $K \subset X$ and $L \subset Y$ be compact, geodesically convex sets containing the origin in their respective interiors. Then, 
\[
\Ld c(\|\cdot\|_K)(y)=I_{K^{\circ, c}}(y) \quad \text{and} \quad \Lu c(\|\cdot\|_L)(x)=I_{L^{\star, c}}(x).
\]
\end{proposition}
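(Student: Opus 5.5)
We prove only the first identity; the second follows by exchanging the roles of $X$ and $Y$ (replacing $\Ld c$, ${}^{\circ,c}$ by $\Lu c$, ${}^{\star,c}$). By definition,
\[
\Ld c(\|\cdot\|_K)(y)=\sup_{x\in X}\left(-c(x,y)-\|x\|_K\right).
\]
The idea is the same as in Proposition~\ref{p:polar_gauge}. The $1$-homogeneity of $c$ in the first variable makes the expression inside the supremum positively $1$-homogeneous in $x$. A positively $1$-homogeneous function has supremum either $0$ (at $x=o_X$) or $+\infty$. So the whole task is to decide which alternative occurs, and to show that this is governed exactly by membership of $y$ in $K^{\circ,c}$. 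First we record that $c(o_X,y)=0$: homogeneity gives $c(sx,y)=s\,c(x,y)$ for all $s>0$, and letting $s\to0^+$ (using continuity of $c$, or simply setting $x=o_X$) yields $c(o_X,y)=0$. Taking $x=o_X$ then shows $\Ld c(\|\cdot\|_K)(y)\ge 0$ for every $y$.

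\emph{Case $y\in K^{\circ,c}$.} By definition, $-c(x,y)\le 1$ for all $x\in K$. Since $K$ is compact and contains the origin in its interior, $0<\|x\|_K<\infty$ for every $x\neq o_X$, and $x/\|x\|_K\in K$. Homogeneity then gives $-c(x,y)=\|x\|_K\,\bigl(-c(x/\|x\|_K,y)\bigr)\le \|x\|_K$, exactly as in the proof of Proposition~\ref{p:polar_gauge}. Hence every term in the supremum is $\le 0$, and combined with the lower bound, $\Ld c(\|\cdot\|_K)(y)=0=I_{K^{\circ,c}}(y)$.

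\emph{Case $y\notin K^{\circ,c}$.} There is $x_0\in K$ with $-c(x_0,y)>1\ge\|x_0\|_K$, so $\delta:=-c(x_0,y)-\|x_0\|_K>0$. For $t>0$, homogeneity gives $-c(tx_0,y)-\|tx_0\|_K=t\delta\to+\infty$, so $\Ld c(\|\cdot\|_K)(y)=+\infty=I_{K^{\circ,c}}(y)$.

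The only delicate point is interpreting homogeneity. The hypothesis $c(sx,ty)=st\,c(x,y)$ for $s,t>0$ is used with $t=1$; this is legitimate because the paper states it for all $s,t>0$. We also use $\|x\|_K\ge0$ together with the Minkowski functional identity $\|tx\|_K=t\|x\|_K$. Compactness of $K$ ensures $\|x\|_K>0$ for $x\neq o_X$, and $o_X\in\operatorname{int}K$ ensures finiteness; both are needed for the normalization $x/\|x\|_K\in K$. Nothing beyond this bookkeeping appears to be an obstacle.
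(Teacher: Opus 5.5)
Your proof is correct and follows essentially the same route as the paper: when $y\in K^{\circ,c}$ you use homogeneity to bound $-c(x,y)\le\|x\|_K$ and evaluate at $x=o_X$ for the matching lower bound, and when $y\notin K^{\circ,c}$ you scale along a ray to get $+\infty$. Your second case is slightly cleaner than the paper's: working directly with $\delta=-c(x_0,y)-\|x_0\|_K>0$ avoids the paper's normalization to $x_0/\|x_0\|_K$ and its auxiliary $\epsilon$.
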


\begin{proof}
    We show only the first equality; the second is similar. Observe by \eqref{eq:abstract_lowered_polar} that, if $y\in K^{\circ, c}$, then $-c(x,y) \leq 1$ for every $x\in K$. Thus, for every $y\in K^{\circ, c}$ we obtain from the $1$-homogeneity of $c$ in each variable that $-c(z,y) \leq \|z\|_K$ by applying this fact to $x=\frac{z}{\|z\|_K}\in \partial K$ for any $z\in X\setminus \{o_X\}$. Thus, 
    \[
    \Ld c(\|\cdot\|_K)(y) = \sup_{z\in X}\left( -c(z, y) -\|z\|_K \right) \leq 0.
    \]
    But, by the definition of the supremum, evaluating at the origin yields $\Ld c(\|\cdot\|_K)(y) \geq -c(o_X, y)-\|o_X\|_K = 0$ (since $1$-homogeneity implies $-c(o_X, y) = 0$). We deduce that $\Ld c(\|\cdot\|_K)(y)=0$ for every $y\in K^{\circ, c}$.
    
    On the other hand, if $y\notin K^{\circ, c}$, then $-c(z,y) > \|z\|_K$ for some $z \in X$. From homogeneity, this implies $-c\left(\frac{z}{\|z\|_K}, y\right) > 1$. From the strictness of this inequality, we know that there actually exists some $\epsilon>0$ such that $-c\left(\frac{z}{\|z\|_K}, y\right) > 1+\epsilon$. We thus deduce that $-c(z,y) > (1+\epsilon)\|z\|_K$. Observe then that by evaluating the supremum over the ray $\lambda z$ for $\lambda > 0$,
    \begin{align*}
    \Ld c(\|\cdot\|_K)(y) &= \sup_{x\in X}\left(-c(x,y)-\|x\|_K\right) 
    \\
    &\geq \sup_{\lambda > 0}\left(-c(\lambda z, y)-\|\lambda z\|_K\right) 
    \\
    &\geq \sup_{\lambda > 0}\left(\lambda(1+\epsilon)\|z\|_K - \lambda\|z\|_K\right) 
    \\
    &= \epsilon \|z\|_K \sup_{\lambda > 0} \lambda = +\infty.
    \end{align*}
    Thus, for $y\notin K^{\circ, c}$, we have that $\Ld c(\|\cdot\|_K)(y) = +\infty$. The result follows.
\end{proof}

Using the concept of polarity induced by a cost function, we can state a special case of the functional cost-Santal\'o inequality in Definition~\ref{def:cost_sant} when $m=2$ and $\lambda=\frac{1}{2}$. A common situation in this case is when the functions on $X_1$ belong to some prescribed collection of non-negative, measurable functions $\mathcal{F}$ and the functions on $X_2$ are all non-negative, measurable functions. Then, we may take $f_1=f\in \mathcal{F}$ and $f_2=f^{\circ, c}$ and $\Omega(t)=e^{-\frac{t}{2}}$. The cost-Santal\'o inequality therefore implies that, for every $f\in \mathcal{F}$,
\[
\left(\int_{X_1}f(x)d\mu_1(x)\right)\cdot \left(\int_{X_2}f^{\circ, c}(y)d\mu_2(y)\right) \leq \left(\int_{a}^b e^{-\frac{t}{2}} dF(t)\right)^2.
\]
We specialize further to the symmetric case: $X_1\!=\!X_2\!=X\!$ and $d\mu_1\!=\!d\mu_2\!=\! \left(\int_{a}^b \!e^{-\frac{t}{2}} dF(t)\right)d\mu$. Then, our definition implies the inequality
\[
\left(\int_{X}f(x)d\mu(x)\right)\cdot \left(\int_{X}f^{\circ, c}(y)d\mu(y)\right) \leq 1,
\]
which is precisely, up to presentation, the cost-Santal\'o inequality introduced in \cite[Definition 1.2.9]{AGA2}. Thus, our Definition~\ref{def:cost_sant} completely absorbs this case.

\section{The transference principle and consequences}
\label{sec:main}
\subsection{The Principle}
As mentioned in the introduction, we will utilize generalized sup-convolutions and the framework from \cite{MMRR26}. Let $m\in \N$ and take $u=(u_1,\dots,u_m)$ and $t=(t_1,\dots,t_m)$ with $u_i,t_i>0$ and $\sum_{i=1}^mt_i=1$. We need the following means, where $p\in \R$:
\begin{equation}
\label{eq:means}
    \M_p^{(t)}(u) =\M_p^{(t)}(u_1,\dots,u_m) : =\begin{cases}
        \min_{i}\{u_i\}, & p=-\infty,
        \\
        \left(\sum_{i=1}^m t_iu_i^p\right)^{\frac{1}{p}}, & p \in (-\infty,0)\cup (0,\infty),
        \\
        \prod_{i=1}^m u_i^{t_i}, & p=0,
        \\
        \max_{i}\{u_i\}, & p=\infty.
    \end{cases}
\end{equation}

Generalized sup-convolutions were defined for arbitrary Polish measure spaces. Let $\mathcal{C}$ be a collection of analytic sets in a Polish space $(X,d,\mu)$ containing the empty set, with $X=X_1 \times \cdots \times X_m$, for Polish measure spaces $(X_i,\mu_i)$ and $\mu = \mu_1 \otimes \cdots \otimes \mu_m$.

Now, suppose we have $m$ collections of analytic sets $\mathcal{A}_i$ and define the $m$-tuple $\mathcal{A}=\left(\mathcal{A}_1,\dots,\mathcal{A}_m\right)$. Take functions $f_i,g_i\in \mathcal{F}\left(\mathcal{A}_i\right)$ and define the $m$-tuple of functions $$f=(f_1,\dots,f_m) \quad \text{and}  \quad g=(g_1,\dots,g_m).$$ Then, we say $\square f:Y\to [0,\infty]$ is the generalized sup-convolution of the tuple $f$ (where $(Y,d_Y)$ is another Polish space) if
\begin{enumerate}
    \item Monotonicity: $f_i\leq g_i$ pointwise for all $i$ implies $\square f \leq \square g$;
    \item Super-additivity: suppose the functions $g_i$ satisfy: if there exists $x_i\in X_i$ such that $g_i(x_i)>0$, then $f_i(x_i)\geq f_i(y_i)$ for all $y_i\in X_i$. Then, one has $\square\left(f+g\right) \geq \square f + \square g$ pointwise on $Y$.
    \item Preservation of measurability: let $A_i \in \mathcal{A}_i$, let $a_i > 0$ and set $A=\left(A_1,\dots,A_m\right)$ and $a=(a_1,\dots,a_m)$. Define the $m$-tuple of functions $f_{a,A}=\left(a_1\chi_{A_1},\dots,a_m\chi_{A_m}\right)$. Then, we require $\square f_{a,A}$ to be measurable.
\end{enumerate}
The classical sup-convolution \eqref{eq:sup_con} is the choice $m=2$ and $\mathcal{A}_1=\mathcal{A}_2=$ all analytic subsets of $\R^n$ (i.e., projections of Borel sets) and $Y$ is the Lebesgue space $\R^n$. We say that \eqref{eq:PL} is the functional inequality associated with this sup-convolution.

\begin{proposition}
\label{p:gen_sup}
    Let $m\in \N$ and let, for $i=1,\dots,m$, $(X_i,d_i,\mu_i)$ be Polish measure spaces and consider collections of analytic sets $\mathcal{A}_i\subset 2^{X_i}$. Let $-\infty \leq a<b \leq +\infty$ and let $\Phi:X_1\times\cdots \times X_m\to (a,b)$ be a continuous function.

    Then, the operator $\square f: (a,b) \to [0,\infty]$ given by
\begin{equation*}
\square f(r) = \sup_{\{x= (x_1,\dots,x_m) \colon \Phi(x) \geq r\} }\mathcal{M}_p^{(t)}(f), \quad -\infty < p \leq 1,
\end{equation*}
where $f\in \mathcal{F}\left(\mathcal{A}_1\right)\times \cdots\times \mathcal{F}\left(\mathcal{A}_m\right)$, is a generalized supremal convolution.
\end{proposition}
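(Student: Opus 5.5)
We verify the three axioms of a generalized sup-convolution directly from the definition of $\square f(r)$ as a supremum over the superlevel set $S_r:=\{x\in X_1\times\cdots\times X_m:\Phi(x)\geq r\}$. Monotonicity is immediate: the mean $\mathcal{M}_p^{(t)}$ is non-decreasing in each coordinate for every $p\in[-\infty,1]$. Hence $f_i\leq g_i$ pointwise gives $\mathcal{M}_p^{(t)}(f(x))\leq \mathcal{M}_p^{(t)}(g(x))$ for every $x\in S_r$, and we take suprema.

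For super-additivity, fix $r$ and $\varepsilon>0$, and choose $x,y\in S_r$ that are $\varepsilon$-optimal for $\square f(r)$ and $\square g(r)$ respectively. The key structural fact is that $\mathcal{M}_p^{(t)}$ with $p\leq 1$ is superadditive on $\R_+^m$: $\mathcal{M}_p^{(t)}(u+v)\geq \mathcal{M}_p^{(t)}(u)+\mathcal{M}_p^{(t)}(v)$. This is the reverse Minkowski inequality for $p<1$, concavity plus $1$-homogeneity for $p=0$, and trivial for $p=-\infty$ and $p=1$. We then want a single point $z\in S_r$ at which both $f$ and $g$ are nearly optimal. Here we use the hypothesis on $g$: wherever $g_i(y_i)>0$, the point $y_i$ is a global maximizer of $f_i$.
\begin{itemize}
\item If $\mathcal{M}_p^{(t)}(g(y))=0$, there is nothing to gain from $g$, and we take $z=x$.
\item Otherwise every $g_i(y_i)>0$ (for $p\leq 0$ a zero coordinate kills the mean; for $0<p\leq1$ we may restrict to coordinates with $g_i(y_i)>0$ and handle the rest by monotonicity). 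Then $f_i(y_i)=\|f_i\|_\infty\geq f_i(x_i)$, so $\mathcal{M}_p^{(t)}(f(y))\geq \mathcal{M}_p^{(t)}(f(x))$, and we take $z=y$.
\end{itemize}
In either case,
\[
\square(f+g)(r)\geq \mathcal{M}_p^{(t)}(f(z)+g(z))\geq \mathcal{M}_p^{(t)}(f(z))+\mathcal{M}_p^{(t)}(g(z))\geq \square f(r)+\square g(r)-2\varepsilon,
\]
and we let $\varepsilon\to0$. The delicate point is the $0<p\le 1$ case with some vanishing coordinates, which we settle by comparing coordinatewise, again via monotonicity of the mean.

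For measurability, take $f_{a,A}=(a_1\chi_{A_1},\dots,a_m\chi_{A_m})$. The function $x\mapsto \mathcal{M}_p^{(t)}(f_{a,A}(x))$ takes only finitely many values, one for each subset $J\subseteq\{1,\dots,m\}$ recording which coordinates lie in their set $A_i$. Set $E_J=\prod_i B_i$, with $B_i=A_i$ for $i\in J$ and $B_i=A_i^c$ otherwise, and let $v_J$ be the corresponding value of the mean. Then
\[
\square f_{a,A}(r)=\max\{v_J: E_J\cap S_r\neq\emptyset\}.
\]
It therefore suffices to show that $r\mapsto \chi_{\{E_J\cap S_r\neq\emptyset\}}$ is Borel on $(a,b)$. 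Because the sets $S_r$ decrease as $r$ increases, $\{r: E_J\cap S_r\neq\emptyset\}$ is an interval of the form $(a,s_J)$ or $(a,s_J]$, where $s_J=\sup_{E_J}\Phi$. Each indicator is thus monotone, hence Borel, and $\square f_{a,A}$ is a finite maximum of Borel step functions.

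Notably, this argument only needs each $E_J$ to be a set, not that it be analytic, and continuity of $\Phi$ only guarantees that the $S_r$ are closed. We expect super-additivity, specifically the choice of the common point $z$, to be the main obstacle.
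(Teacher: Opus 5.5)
Your monotonicity and measurability steps are sound. You are also right that analyticity is not needed: $\{r: E\cap S_r\neq\emptyset\}$ is always an interval, whereas the paper's sketch routes this through analyticity of $\Phi(A)$.

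The gap is the super-additivity step for $0<p\le 1$ when some $g_i(y_i)=0$. You evaluate $\mathcal{M}_p^{(t)}$ on a vector with zero entries by the literal formula $(\sum_i t_iu_i^p)^{1/p}$. Then $\mathcal{M}_p^{(t)}(g(y))>0$ no longer forces every $g_i(y_i)>0$, and you propose to ``compare coordinatewise''. The only point such a comparison can produce is the hybrid $z$ with $z_i=y_i$ when $g_i(y_i)>0$ and $z_i=x_i$ otherwise. Nothing gives $\Phi(z)\ge r$, because the constraint defining $S_r$ couples the coordinates.

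Under your reading the claim is in fact false. Take $m=2$, $t=(\tfrac12,\tfrac12)$, and $X_1=X_2=\{0,1\}$ with the discrete metric. Let $(a,b)=(0,3)$, $\Phi(0,0)=\Phi(1,1)=2$ and $\Phi(0,1)=\Phi(1,0)=1$, so $S_2=\{(0,0),(1,1)\}$. Let $f_1\equiv 1$, $f_2=\chi_{\{0\}}$, $g_1=\chi_{\{1\}}$ and $g_2\equiv 0$. The hypothesis of the super-additivity axiom holds, since $f_1$ is constant and $g_2$ vanishes. For every $p\in(0,1]$ one gets $\square f(2)=1$ and $\square g(2)=\mathcal{M}_p^{(t)}(1,0)=2^{-1/p}$, yet
\[
\square(f+g)(2)=\max\{\mathcal{M}_p^{(t)}(1,1),\mathcal{M}_p^{(t)}(2,0)\}=\max\{1,2^{1-1/p}\}=1<1+2^{-1/p}.
\]

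The missing ingredient is a convention the paper uses implicitly. Its means are defined only for positive entries, and one sets $\mathcal{M}_p^{(t)}(u)=0$ as soon as some $u_i=0$, for every $p$. This is the usual Borell--Brascamp--Lieb convention, and the paper's formula $\square\alpha(r)=\mathcal{M}_p^{(t)}(\alpha)\chi_{(a,\sup\Phi(A))}(r)$ for indicator tuples presupposes it. With this convention:
\begin{enumerate}
\item $\mathcal{M}_p^{(t)}(g(y))>0$ forces $g_i(y_i)>0$ for all $i$ and all $p\le 1$.
\item Your second case then goes through verbatim with $z=y$, and the ``delicate'' case disappears.
\item Super-additivity of the mean when one argument has a zero entry reduces to monotonicity.
\item Your sets $E_J$ contribute only for $J=\{1,\dots,m\}$, which recovers the paper's formula. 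Your version correctly tracks whether the endpoint $\sup\Phi(A)$ is included, according to whether it is attained.
\end{enumerate}
Finally, replace $\varepsilon$-optimal points by points whose value exceeds an arbitrary $M<\square f(r)$, so that the case $\square f(r)=\infty$ is covered.
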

\begin{proof}[Proof Sketch]
    We merely sketch the argument, as it is a minor adaptation of the proof of \cite[Theorem 2.3]{MMRR26} (which concerns the operator $\square f$, but with $\Phi(x)=r$ in place of $\Phi(x) \geq r$ in its definition). Monotonicity is free, while super-additivity follows from the definition of the $p$-mean (this is where the requirement $p \leq 1$ comes into play). 
    
    The non-trivial observation is measurability. We briefly comment on this. Let $f = (\alpha_1\chi_{A_1}, \cdots, \alpha_m \chi_{A_m})$ for $A_i \in \mathcal{A}_i$, with $\alpha = (\alpha_1,\cdots, \alpha_m)$, $\alpha_i > 0$, producing
\[
\square f(r)= \mathcal{M}_p^{(t)}(\alpha) \chi_{(a,\sup\Phi(A))}(r),
\]
where $A =A_1 \times \cdots \times A_m$. 
In view of the analyticity of the $A_i$, there are continuous $\psi_i$ and Polish spaces $Z_i$ for which $\psi_i(Z_i) = A_i$. Then the map $\psi:= \Phi(\psi_1(\cdot),\cdots, \psi_m(\cdot))$ is continuous and satisfies $\psi(Z_1 \times \cdots \times Z_m) = \Phi(A)$.  Consequently, $\Phi(A)$ is analytic and universally measurable.  It follows that $(a,\sup \Phi(A))$ is measurable, as required. 
\end{proof}

The following theorem provides an equivalence between the functional inequality associated with a sup-convolution and its geometric counterpart. Since the supremum of Borel measurable functions may not be Borel measurable, we denote by $ \int_{Y}^{\star} \square f(x) \,d\nu(x)$ the lower integral of $\square f(x)$ against $\nu$.

\begin{theorem}[\cite{MMRR26}, Theorem 2.4]\label{thm:MMRR_equivalence}
Let $(X_1,d_1,\mu_1),\dots, (X_m,d_m,\mu_m), (Y,d,\nu)$ be Polish measure spaces. For $p \leq 1$ and $t=(t_1,\dots,t_m)$ with $t_i > 0$ and $\sum_{i=1}^mt_i=1$, the following are equivalent: under the above notations,
\begin{enumerate}
    \item[(i)] For any choice of non-empty sets $A_i \in \mathcal{A}_i$ and constants $\alpha_i > 0$, define the $m$-tuple of functions $\alpha = (\alpha_1\chi_{A_1},\dots, \alpha_m\chi_{A_m})$. One has the geometric inequality
    \begin{equation}\label{eq:geometric_hypothesis}
        \int_{Y} \square \alpha(x) \,d\nu(x) \geq \M_p^{(t)}\left(\alpha_1\mu_1(A_1),\dots, \alpha_m\mu_m(A_m)\right).
    \end{equation}
    \item[(ii)] For any choice of functions $f_i \in \mathcal{F}(\mathcal{A}_i)$, one has the functional inequality
    \begin{equation}\label{eq:functional_conclusion}
        \int_{Y}^{\star} \square f(x) \,d\nu(x) \geq \M_p^{(t)}\left(\int_{X_1} f_1(x) \,d\mu_1(x),\dots, \int_{X_m} f_m(x) \,d\mu_m(x)\right).
    \end{equation}
\end{enumerate}
\end{theorem}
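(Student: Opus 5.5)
The implication (ii)$\Rightarrow$(i) is immediate. Given non-empty $A_i\in\mathcal{A}_i$ and $\alpha_i>0$, the function $f_i=\alpha_i\chi_{A_i}$ lies in $\mathcal{F}(\mathcal{A}_i)$, since all its superlevel sets for $r\in(0,\alpha_i)$ equal $A_i$. By preservation of measurability, $\square\alpha$ is measurable, so the lower integral is the integral, and (ii) reads exactly as \eqref{eq:geometric_hypothesis}.

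For (i)$\Rightarrow$(ii) I plan a layer-cake decomposition, reassembled through the super-additivity axiom of the generalized sup-convolution.

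\textbf{Reductions.} By monotonicity of $\square$ and monotone convergence, I may assume each $f_i$ is bounded with $0<\int f_i<\infty$. Truncate $\min(f_i,R)$, whose superlevel sets are still in $\mathcal{A}_i$, and let $R\to\infty$. Because $\square f$ is monotone in $f$ while the lower integral is monotone and $\M_p^{(t)}$ is continuous, it suffices to approximate each $f_i$ from below.

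\textbf{Layers with nested supports.} For a bounded $f_i$, write $\Phi_i(r)=\int_0^r\mu_i(\{f_i\ge s\})\,ds$. This is continuous and non-decreasing, with $\Phi_i(\|f_i\|_\infty)=\int f_i$. Reparametrize by a mass fraction $\theta\in[0,1]$: let $r_i(\theta)$ be a generalized inverse with $\Phi_i(r_i(\theta))=\theta\int f_i$. Fix $N$ and $\theta_k=k/N$, choosing the $r_i(\theta_k)$ among the full-measure set of good levels, which costs only a small perturbation. Set
\[
g_i^{N}=\sum_{k=1}^N \bigl(r_i(\theta_k)-r_i(\theta_{k-1})\bigr)\chi_{A_{i,k}},\qquad A_{i,k}=\{f_i\ge r_i(\theta_k)\}\in\mathcal{A}_i .
\]
Then $g_i^N\le f_i$, and the sets $A_{i,k}$ decrease in $k$.

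Add the layers from the bottom up, so the first summand is the one supported on the largest set $A_{i,1}$. At each stage the partial sum is constant and maximal on the support of the next layer. This is precisely the hypothesis of super-additivity (2), applied coordinatewise to the tuple. Iterating gives
\[
\square g^N\ \ge\ \sum_{k=1}^N \square\bigl((r_i(\theta_k)-r_i(\theta_{k-1}))\chi_{A_{i,k}}\bigr)_{i}.
\]
Each summand is measurable by (3). Applying \eqref{eq:geometric_hypothesis} to each summand, I get
\[
\int^\star\square f\ \ge\ \sum_{k=1}^N \M_p^{(t)}\bigl(u_{1,k},\dots,u_{m,k}\bigr),\qquad u_{i,k}=(r_i(\theta_k)-r_i(\theta_{k-1}))\,\mu_i(A_{i,k}).
\]

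\textbf{Why the mass parametrization.} This choice avoids the wrong direction of the superadditivity of $\M_p^{(t)}$ for $p\le1$. That property gives $\sum_k\M_p(u_k)\le \M_p(\sum_k u_k)$, which is useless here, and equality holds only for proportional vectors. Since $\mu_i(\{f_i\ge s\})$ is non-increasing, I have
\[
u_{i,k}\le \Phi_i(r_i(\theta_k))-\Phi_i(r_i(\theta_{k-1}))=\tfrac1N\int f_i ,
\]
and $u_{i,k}$ is bounded below by the analogous Riemann sum with the left endpoint shifted by one index. Thus the vectors $u_k$ are nearly proportional to $(\int f_1,\dots,\int f_m)/N$. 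Using $1$-homogeneity and continuity of $\M_p^{(t)}$, I obtain
\[
\sum_k\M_p^{(t)}(u_k)\ \ge\ \M_p^{(t)}\Bigl(\textstyle\int f_1,\dots,\int f_m\Bigr)-\varepsilon_N .
\]

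\textbf{Main obstacle.} The hardest step is showing $\varepsilon_N\to0$, that is, controlling the discrepancy between the right- and left-endpoint sums. The comparison shifts the index by one, so the total error for coordinate $i$ is at most $\max_k (r_i(\theta_k)-r_i(\theta_{k-1}))\,\mu_i(\{f_i>0\})$ plus a boundary term. This is fine when $\mu_i(\{f_i>0\})<\infty$. In general I would first restrict the $f_i$ to superlevel sets $\{f_i\ge\delta\}$, which have finite measure because $\int f_i<\infty$, and then let $\delta\to0$. A minor complication is that $p\le 0$ makes $\M_p^{(t)}$ vanish whenever a coordinate vanishes; this is handled by the positivity of each $\int f_i$ after these reductions. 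Letting $N\to\infty$ and then removing the truncations yields \eqref{eq:functional_conclusion}.
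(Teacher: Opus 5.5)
Your proposal is correct. It follows the layer-cake route that the paper attributes to \cite{MMRR26} without reproducing it: super-additivity reassembles the nested layers, the mass reparametrization of levels avoids the wrong-direction concavity of $\M_p^{(t)}$, and choosing good levels is exactly what the paper's remark on null exceptional levels amounts to.

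The one step to state more carefully is the last. The vectors $u_k$ are proportional to $(\int f_1,\dots,\int f_m)$ only on average: a layer straddling a jump of $s\mapsto\mu_i(\{f_i\ge s\})$ can have $u_{i,k}$ far below $\frac1N\int f_i$, so continuity of $\M_p^{(t)}$ alone does not suffice. Instead, set $\rho_{i,k}=Nu_{i,k}/\int f_i\le 1$. Monotonicity and homogeneity then give $\sum_k\M_p^{(t)}(u_k)\ge \M_p^{(t)}\bigl(\int f_1,\dots,\int f_m\bigr)\bigl(1-\sum_i\bigl(1-\tfrac1N\sum_k\rho_{i,k}\bigr)\bigr)$, and your aggregate error bound makes the last factor tend to $1$.
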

 We pause to remark that, although \cite[Theorem 2.4]{MMRR26} was originally stated when \textbf{every} nontrivial superlevel set of $f_i$ belongs to the corresponding collection $\mathcal{A}_i$, the version we present here in Theorem~\ref{thm:MMRR_equivalence}, with ``every superlevel set'' changed to ``almost every superlevel set'' in the definition of $\mathcal{F}\left(\mathcal{A}_i\right)$, follows from the same proof, since the exceptional levels form a set of $\mu_i$-measure zero and therefore do not contribute to the layer-cake integrals. With this framework in hand, we establish Lemma~\ref{l:transfer}.
\begin{proof}[Proof of Lemma~\ref{l:transfer}]
    We will prove the equivalence by showing (1) implies (2) via the transference principle above, and then showing (2) implies (1) directly.

    To show (1) implies (2), we utilize the equivalence framework of Theorem \ref{thm:MMRR_equivalence}. The target space is $Y = (a,b)$. We choose the geometric mean in \eqref{eq:means} by setting $p=0$, and we assign the weights $t=(\lambda_1,\dots, \lambda_m)$. In what follows, we may replace each $\mathcal C_i$ by
\[
\widehat{\mathcal C}_i:=\mathcal C_i\cup\{\emptyset\}.
\]
Indeed,
\[
\mathcal F(\widehat{\mathcal C}_i)=\mathcal F(\mathcal C_i),
\]
since $\{f\geq r\}\neq\emptyset$ whenever $0<r<\|f\|_\infty$. Moreover, the geometric inequality still holds, as, if at least one $A_i$ is empty, then both sides of \eqref{eq:geometric_hypothesis} vanish.

    For functions $f_i \in \mathcal{F}(\mathcal{C}_{X_i})$, we define the operator:
    \begin{equation}\label{eq:general_sup_conv}
        \square f(r) := \sup_{\{(x_1,\dots,x_m) : -c(x_1,\dots,x_m) \geq r\}} \prod_{i=1}^m f_i(x_i)^{\lambda_i}.
    \end{equation}
    By Proposition~\ref{p:gen_sup}, the operation given by \eqref{eq:general_sup_conv} is a generalized sup-convolution mapping into $Y$. By hypothesis, $Y$ is endowed with the Lebesgue-Stieltjes measure $\nu$ given by $d\nu(r) = dF(r)$, which satisfies $\nu((a, M)) = F(M)$ for $M \in [a,b]$.

    To utilize the equivalence in Theorem~\ref{thm:MMRR_equivalence}, we first establish the geometric hypothesis \eqref{eq:geometric_hypothesis}. Next, we insert $\alpha = (\alpha_1\chi_{A_1},\dots, \alpha_m\chi_{A_m})$, with indicator functions $\alpha_i \chi_{A_i}$ for non-empty $A_i \in \mathcal{C}_{X_i}$ and $\alpha_i>0$, into our sup-convolution \eqref{eq:general_sup_conv}, whereby, for almost all $r\in (a,b)$,
    \[
    \square \alpha (r) = \left(\prod_{i=1}^m \alpha_i^{\lambda_i}\right) \chi_{\left(a, \sup_{x_i \in A_i} -c(x_1,\dots,x_m)\right)}(r).
    \]
    Integrating this against $\nu$ yields:
    \begin{align}
        \int_{a}^b \square\alpha(r) \,d\nu(r) &= \left(\prod_{i=1}^m \alpha_i^{\lambda_i}\right) F\left(\sup_{x_i \in A_i} -c(x_1,\dots,x_m)\right).
    \end{align}
    Applying the geometric hypothesis (1), we obtain
    \begin{align*}
        \int_{a}^b \square\alpha(r) \,d\nu(r) &\geq \prod_{i=1}^m \left(\alpha_i\mu_{X_i}(A_i)\right)^{\lambda_i} 
        \\
        &= \M_0^{(\lambda)}\left(\alpha_i\mu_{X_i}(A_i),\dots, \alpha_m\mu_{X_m}(A_m)\right),
    \end{align*}
    where $\lambda = (\lambda_1,\dots, \lambda_m)$

    Therefore, because we have satisfied the geometric condition, Theorem~\ref{thm:MMRR_equivalence} guarantees that the functional inequality holds for any measurable functions $f_i\in \mathcal{F}\left(\mathcal{A}_i\right)$:
    \begin{equation}\label{eq:general_lifted_functional}
        \int_{(a,b)}^\star \left( \sup_{\{-c(x_1,\dots,x_m) \geq r\}} \prod_{i=1}^m f_i(x_i)^{\lambda_i} \right) \,d\nu(r) \geq \prod_{i=1}^m \left(\int_{X_i} f_i(x_i)\,d\mu_{X_i}(x_i)\right)^{\lambda_i}.
    \end{equation}
    Now suppose that $f_1,\dots,f_m$ satisfy the functional hypothesis 
    \begin{equation}
    \label{eq:pointwise_hypo}
    \prod_{i=1}^m f_i(x_i)^{\lambda_i}\leq \Omega\bigl(-c(x_1,\dots,x_m)\bigr)\quad \text{whenever} -c(x_1,\ldots,x_m)\in(a,b].
    \end{equation}
    If $\prod_{i=1}^m f_i(x_i)^{\lambda_i}=0,$ the above inequality is always true. We claim that, for every tuple $(x_1,\ldots,x_m)$ contributing to the supremum in \eqref{eq:general_lifted_functional} and satisfying
    \[
    \prod_{i=1}^m f_i(x_i)^{\lambda_i}>0, \quad \text{one has } \,-c(x_1,\ldots,x_m)\in(a,b].
    \]
    Indeed, on one hand, since $r\in(a,b)$, and the tuple satisfies $-c(x_1,\ldots,x_m)\geq r,$ we have
    \[
    -c(x_1,\ldots,x_m)>a.
    \]
     On the other hand, from the fact that $f_i\in\mathcal F(\mathcal A_i)$, we may choose $t_i\in(0,f_i(x_i))$ such that $\{f_i\geq t_i\}\in\mathcal A_i$ for every $i$. Since $x_i\in\{f_i\geq t_i\}$, the hypothesis on the collections $\mathcal A_i$ implies that
    \[-c(x_1,\ldots,x_m)\leq \sup_{\substack{y_i\in\{f_i\geq t_i\}\\1\leq i\leq m}} -c(y_1,\ldots,y_m)\leq b.\]
    Hence, $-c(x_1,\ldots,x_m)\in(a,b]$, and therefore, by \eqref{eq:pointwise_hypo} and the monotonicity of $\Omega$, \[\prod_{i=1}^m f_i(x_i)^{\lambda_i} \leq \Omega\bigl(-c(x_1,\ldots,x_m)\bigr)\leq \Omega(r).\]
    Plugging this upper bound into the left side of \eqref{eq:general_lifted_functional} yields the functional conclusion (2):
    \begin{equation}
    \label{eq:trans_proved}
        \prod_{i=1}^m \left(\int_{X_i} f_i(x_i)\,d\mu_{X_i}(x_i)\right)^{\lambda_i} \leq \int_{a}^b \Omega(r) \,d\nu(r).
    \end{equation}

    Conversely, to show (2) implies (1), let $A_i \in \mathcal{C}_{X_i}$ for $i=1,\dots,m$. Define $M = \sup_{x_i \in A_i} -c(x_1,\dots,x_m)$. Choose $f_i = \chi_{A_i}$, and define $\Omega(r) = 1$ for $r \leq M$ and $\Omega(r) = 0$ for $r > M$. It trivially follows that, whenever $(x_1,\dots,x_m)\in X_1\times\cdots\times X_m$, are such that $-c(x_1,\dots,x_m)\in (a,b]$,
    \[
    \prod_{i=1}^m f_i(x_i)^{\lambda_i} = \prod_{i=1}^m \chi_{A_i}(x_i) \leq \Omega(-c(x_1,\dots,x_m)).
    \]
    Applying the functional hypothesis (2), we  recover the geometric inequality (1):
    \[
    \prod_{i=1}^m \mu_{X_i}(A_i)^{\lambda_i} \leq \int_{a}^b \Omega(r) \,d\nu(r) = \nu((a, M)) = F(M) = F\left(\sup_{x_i \in A_i} -c(x_1,\dots,x_m)\right). \qedhere
    \]
\end{proof}

\noindent {\bf Remark.} By repeating the same proof line by line, Lemma~\ref{l:transfer} can be extended to the means $\mathcal{M}_p^{(\cdot)}(\cdot)$ whenever $p \leq 1$. We omit the details for the sake of presentation; the curious reader can fill them in. 

\subsection{Helpful Consequence}
We conclude this section by obtaining an consequence of Lemma~\ref{l:transfer}, which we will use for the results stemming from geometric Blaschke-Santal\'o inequalities. First, we need the following extension of Proposition~\ref{p:max_cost}. We will use the polarity from \eqref{eq:abstract_lowered_polar}. The non-degeneracy condition in \eqref{eq:non_degen} below is an extension of the idea that, for a convex body $K$, $h_K >0$ if and only if $K^\circ$ is bounded.
\begin{proposition}
\label{p:max_cost_generalized}
    Let $(X,\|\cdot\|_X)$ and $(Y,\|\cdot\|_Y)$ be finite dimensional normed spaces. Let $c: X \times Y \to \R$ be a continuous cost function that is positively 1-homogeneous in the second variable, meaning $c(x, ty) = tc(x,y)$ for all $t > 0$.
    
    Suppose a bounded set $K\subset X$ satisfies the non-degeneracy condition:
    \begin{equation}
        \label{eq:non_degen}
        \inf_{u \in \mathbb{S}_Y} \sup_{x \in K} -c(x,u) = \epsilon > 0,
    \end{equation}
    where $\mathbb{S}_Y = \{y \in Y \mid \|y\|_Y = 1\}$. Let $L \subset Y$ be a non-empty set and suppose that $-c$ is non-negative on $K\times L$. Set $M(K,L) = \sup_{x\in K, y\in L} -c(x, y)$. Then,
    \begin{enumerate}
        \item If $L$ is unbounded, then $M(K,L)=\infty$.
        \item If $L$ is bounded, then $0\leq M(K,L)<\infty$ and $L\subseteq M(K,L)K^{\circ, c}$. Moreover, if $-c(x,y)=0$ for all $x \in K$ implies $y=o$, then $M(K,L)=0$ implies $L=\{o_Y\}$.
    \end{enumerate}
\end{proposition}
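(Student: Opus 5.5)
The argument mirrors the proof of Proposition~\ref{p:max_cost}, with the role of $h_K>0$ on the sphere now played by the non-degeneracy condition \eqref{eq:non_degen}, and with linearity of $\langle\cdot,\cdot\rangle$ in $y$ replaced by positive $1$-homogeneity of $c$ in the second variable. I will split into cases according to whether $L$ is unbounded, bounded and equal to $\{o_Y\}$, or bounded and containing a nonzero point.

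\emph{Unbounded $L$.} Choose $y_k\in L$ with $\|y_k\|_Y\to\infty$, and write $y_k=\|y_k\|_Y u_k$ with $u_k\in\mathbb{S}_Y$. By \eqref{eq:non_degen}, for each $k$ there is $x_k\in K$ with $-c(x_k,u_k)\ge \epsilon/2$. (Using $\epsilon/2$ rather than $\epsilon$ avoids assuming the supremum is attained, since $K$ is only bounded, not compact.) Homogeneity then gives
\[
-c(x_k,y_k)=\|y_k\|_Y\,(-c(x_k,u_k))\ge \|y_k\|_Y\,\epsilon/2\to\infty,
\]
so $M(K,L)=\infty$.

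\emph{Bounded $L$, finiteness.} The hypothesis that $-c\ge0$ on $K\times L$ gives $M\ge 0$. For the upper bound, write $y=\|y\|_Y u$ for $y\neq o_Y$, so that $-c(x,y)=\|y\|_Y(-c(x,u))$. It then suffices to bound $-c$ on $K\times\mathbb{S}_Y$. Here $\mathbb{S}_Y$ is compact because $Y$ is finite dimensional, and $\overline K$ is compact because $X$ is finite dimensional and $K$ is bounded. Since $c$ is continuous, $-c$ is bounded on $\overline K\times \mathbb{S}_Y$ by some constant $C$, which yields $M\le C\sup_{y\in L}\|y\|_Y<\infty$. At $y=o_Y$, homogeneity gives $c(x,o_Y)=t\,c(x,o_Y)$ for all $t>0$, hence $c(x,o_Y)=0$, which is harmless. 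This step, using compactness of the closure and continuity up to the boundary, is the only one requiring care, and I expect it to be the main obstacle.

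\emph{Bounded $L$, inclusion.} If $M>0$, then for each $y\in L$ and $x\in K$ we have $-c(x,y/M)=-c(x,y)/M\le 1$, so $y/M\in K^{\circ,c}$ and $L\subseteq MK^{\circ,c}$. If $M=0$, then $-c(x,y)=0$ for all $x\in K$ and $y\in L$, since $-c\ge0$ and its supremum is $0$. Under the extra hypothesis this forces every $y\in L$ to equal $o_Y$, so $L=\{o_Y\}$. The inclusion then reads $\{o_Y\}\subseteq 0\cdot K^{\circ,c}=\{o_Y\}$, interpreting $0\cdot K^{\circ,c}$ as $\{o_Y\}$ since $K^{\circ,c}\ni o_Y$. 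This settles both the ``moreover'' clause and the inclusion in the degenerate case.

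Alternatively, one could observe that \eqref{eq:non_degen} already forces $M>0$ whenever $L$ contains a nonzero point, exactly as in Proposition~\ref{p:max_cost}: for $y'\neq o_Y$ in $L$, choosing $x\in K$ with $-c(x,y'/\|y'\|_Y)\ge\epsilon/2$ gives $M\ge \|y'\|_Y\epsilon/2>0$. This makes the $M=0$ case occur only when $L=\{o_Y\}$ even without the extra hypothesis.
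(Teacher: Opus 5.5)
Your proposal is correct and follows essentially the same route as the paper's proof. The unbounded case uses non-degeneracy plus homogeneity, finiteness comes from compactness in finite dimensions, and the inclusion follows by dividing by $M(K,L)$.

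Your $\epsilon/2$ device is a slightly more careful version of the paper's step, which tacitly assumes the supremum in \eqref{eq:non_degen} is attained. Also, the paper derives $L=\{o_Y\}$ in the case $M(K,L)=0$ from non-degeneracy, as in your closing alternative, rather than from the extra hypothesis.
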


\begin{proof}
    We first consider the case when $L$ is unbounded. There exists a sequence $\{y_k\}_{k=1}^{\infty} \subset L$ such that $\lim_{k \to \infty} \|y_k\|_Y = \infty$. For each $k$, we write $y_k = \|y_k\|_Y u_k$, where $u_k \in \mathbb{S}_Y$. By the non-degeneracy condition \eqref{eq:non_degen}, for each direction $u_k$, there exists an $x_k \in K$ such that $-c(x_k, u_k) \geq \epsilon$. Utilizing the positive 1-homogeneity of $c$ in the second variable, we have:
    \[
    M(K,L) = \sup_{x \in K, y \in L} -c(x, y) \geq -c(x_k, y_k) = \|y_k\|_Y (-c(x_k, u_k)) \geq \|y_k\|_Y \epsilon.
    \]
    Taking the limit as $k \to \infty$, the right-hand side diverges to infinity. Thus, $M(K,L) = \infty$.
    
    Next, assume $L$ is bounded. Because $K$ and $L$ are both bounded sets and $c$ is a continuous function, the supremum of $-c$ on $K \times L$ is finite, so $M(K,L) < \infty$. Since $-c$ is non-negative on $K\times L$, it is immediate that $M(K,L) \geq 0$. 
    
    If $M(K,L) = 0$, then $-c(x,y) = 0$ for all $x \in K$ and $y \in L$. By the homogeneity on the cost function and the non-degeneracy condition \eqref{eq:non_degen}, this implies $y = o$ for any $y \in L$, so $L = \{o\}$.
    
    Now assume $M(K,L) > 0$. Let $y \in L$. By the definition of the supremum, for all $x \in K$, we have $-c(x, y) \leq M(K,L)$. Because $M(K,L) > 0$ and $-c$ is positively 1-homogeneous in the second variable, we can divide both sides by $M(K,L)$:
    \[
    -c\left(x, \frac{y}{M(K,L)}\right) = \frac{1}{M(K,L)} (-c(x,y)) \leq 1, \qquad \forall x \in K.
    \]
    By the definition of the cost-polar, this implies $\frac{y}{M(K,L)} \in K^{\circ, c}$, or $y \in M(K,L)K^{\circ, c}$, completing the set inclusion.
\end{proof}

The next lemma, which is the sought-out consequence, is an extension of Proposition~\ref{p:BS_two}. 

\begin{lemma}
\label{l:general_homogeneous_santalo}
Let $\nu$ be a Borel measure on $\R^n$ and let $\mu$ be a Borel measure on
$\R^d$ which is homogeneous of degree $\alpha>0$. Suppose there
exists a continuous cost function $c:\R^n\times\R^d\to\R$ and a collection
$\mathcal C\subset 2^{\R^n}$ of bounded, measurable sets such that
\begin{enumerate}
    \item $c$ is positively $1$-homogeneous in the second variable;
    \item $-c$ is a non-negative mapping on
    $\mathcal C\times\mathcal B(\R^d)$;
    \item any $K\in\mathcal C$ failing the non-degeneracy condition \eqref{eq:non_degen} has $\nu$-measure zero;
    \item there exists $\lambda\in(0,1)$ and a set $B\in\mathcal C$ such that,
    for every $K\in\mathcal C$,
    \begin{equation}
        \label{eq:gen_geom_hypo}
        \nu(K)^\lambda \mu(K^{\circ,c})^{1-\lambda}
        \leq
        \nu(B)^\lambda \mu(B^{\circ,c})^{1-\lambda}
        :=
        C_{\nu,\mu}.
    \end{equation}
\end{enumerate}

Define the function $F(r)=C_{\nu,\mu}\,r^{\alpha(1-\lambda)}$ on $\R_+$. Then, the polish spaces $(\R^n,|\cdot|,\nu)$ and $(\R^d,|\cdot|,\mu)$ satisfy the cost-Santal\'o inequality associated with
$(\lambda,c,F)$ with domain $(\mathcal C,\mathcal B(\R^d))$. In the implied geometric
inequality,
\begin{equation}
\label{eq:implied_sant}
    \nu(K)^\lambda \mu(L)^{1-\lambda}
    \leq
    C_{\nu,\mu}\,M(K,L)^{\alpha(1-\lambda)},
\end{equation}
where
\[ K\in\mathcal C, \,\,L\in\mathcal B(\R^d), \qquad \text{and} \quad M(K,L)=\sup_{x\in K,y\in L}-c(x,y),
\]
we have an equality characterization: if $L$ is bounded with $\nu(K)\mu(L)>0$, then there is equality in
\eqref{eq:implied_sant} if and only if, up to removal of null sets, one has $L=M(K,L)K^{\circ,c}$ and $K$ is a set for which equality holds in \eqref{eq:gen_geom_hypo}.

Consequently, for every $f\in\mathcal F(\mathcal C)$ and non-negative
measurable function $g$ satisfying the pointwise bound
\[
    f(x)^\lambda g(y)^{1-\lambda}
    \leq
    \Omega(-c(x,y)), \qquad \forall \, x\in \R^n, \, y\in \R^d, \text{such that } (-c(x,y))>0,
\]
 for a non-increasing function $\Omega:\R_+\to\R_+$, one has the functional inequality
\begin{equation}
\label{eq:gen_func_concl}
\begin{split}
    \left(\int_{\R^n} f(x)\,d\nu(x)\right)^\lambda
    \left(\int_{\R^d} g(y)\,d\mu(y)\right)^{1-\lambda}
    &\leq
    C_{\nu,\mu}\,\alpha(1-\lambda)
    \int_0^\infty
    \Omega(r)r^{\alpha(1-\lambda)-1}\,dr
    \\
    &=\frac{C_{\nu,\mu}}{\mu(B_2^d)}
    \int_{\R^d}
    \Omega\left(|z|^{\frac{1}{1-\lambda}}\right)
    d\mu(z).
\end{split}
\end{equation}
\end{lemma}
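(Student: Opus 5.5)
The plan mirrors the proof of Proposition~\ref{p:BS_two} and the Blaschke--Santal\'o application of Lemma~\ref{l:transfer}. I would first establish the geometric inequality \eqref{eq:implied_sant}, then read it as hypothesis (1) of Lemma~\ref{l:transfer} with $m=2$, $\lambda_1=\lambda$, $\lambda_2=1-\lambda$, $a=0$, $b=\infty$ and $F(r)=C_{\nu,\mu}r^{\alpha(1-\lambda)}$. Finally I would compute $\int_0^\infty\Omega\,dF$.

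\textbf{Geometric step.} Fix $K\in\mathcal C$ and non-empty $L\in\mathcal B(\R^d)$.
\begin{itemize}
\item If $\nu(K)=0$ or $\mu(L)=0$, the inequality is trivial; this covers the degenerate $K$ of assumption (3). So I may assume $K$ satisfies \eqref{eq:non_degen}.
\item Assumption (2) makes $-c\ge 0$ on $K\times L$, so Proposition~\ref{p:max_cost_generalized} applies.
\item If $L$ is unbounded, then $M(K,L)=\infty$ and the right-hand side is infinite.
\item If $L$ is bounded and $M(K,L)>0$, then $L\subseteq M(K,L)K^{\circ,c}$. The $\alpha$-homogeneity of $\mu$ gives $\mu(L)\le M(K,L)^{\alpha}\mu(K^{\circ,c})$. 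Raising to the power $1-\lambda$ and applying \eqref{eq:gen_geom_hypo} yields \eqref{eq:implied_sant}.
\item If $M(K,L)=0$, non-degeneracy and homogeneity force every $y\in L$ to be $o$ (as in the proof of Proposition~\ref{p:max_cost_generalized}). Then $L=\{o\}$, and $\mu(\{o\})=0$ follows from homogeneity, since $\mu(\{o\})=t^\alpha\mu(\{o\})$ for all $t>0$.
\end{itemize}

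\textbf{Equality case.} Assume $L$ is bounded and $\nu(K)\mu(L)>0$. Then $M(K,L)\in(0,\infty)$, and equality holds in \eqref{eq:implied_sant} exactly when both inequalities in the chain are equalities. The first is $\mu(L)=\mu(M(K,L)K^{\circ,c})$, which together with the inclusion means $L=M(K,L)K^{\circ,c}$ up to a null set. The second is equality in \eqref{eq:gen_geom_hypo}. For the converse I would just substitute back.

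\textbf{Transfer.} Every $K\in\mathcal C$ and $L$ give $\sup(-c)\in[0,\infty]$, and $F$ is continuous, non-decreasing, with $F(0^+)=0$. Lemma~\ref{l:transfer} therefore delivers the cost-Santal\'o inequality and the functional statement, with right-hand side
\[
\int_0^\infty\Omega\,dF=C_{\nu,\mu}\,\alpha(1-\lambda)\int_0^\infty\Omega(r)r^{\alpha(1-\lambda)-1}\,dr.
\]
For the last identity in \eqref{eq:gen_func_concl} I would use the layer-cake formula with homogeneity. Since $\mu(\{|z|\le s\})=\mu(B_2^d)s^\alpha$, the pushforward of $\mu$ under $|z|$ is $\alpha\mu(B_2^d)s^{\alpha-1}ds$. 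Substituting $r=s^{1/(1-\lambda)}$ gives
\[
\int_{\R^d}\Omega\bigl(|z|^{\frac1{1-\lambda}}\bigr)\,d\mu=\alpha(1-\lambda)\mu(B_2^d)\int_0^\infty\Omega(r)r^{\alpha(1-\lambda)-1}\,dr.
\]

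\textbf{Main obstacle.} The delicate points are the degenerate and boundary cases. One must handle the sets $K$ failing non-degeneracy, and the case $M(K,L)=0$, where one has to justify $\mu(\{o\})=0$. One must also check that the pointwise hypothesis only needs to hold when $-c>0$; the value $r=0$ has $dF$-measure zero, which matches the interval $(a,b]=(0,\infty)$ in Lemma~\ref{l:transfer}.
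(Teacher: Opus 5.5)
Your proposal is correct and follows essentially the same route as the paper. The shared chain is: use assumption (3) to reduce to non-degenerate $K$; apply Proposition~\ref{p:max_cost_generalized} to get $L\subseteq M(K,L)K^{\circ,c}$; combine $\alpha$-homogeneity with \eqref{eq:gen_geom_hypo}; read the equality case off the two-step chain; then invoke Lemma~\ref{l:transfer} with $F(r)=C_{\nu,\mu}r^{\alpha(1-\lambda)}$ and finish with the radial change of variables. Your explicit handling of $M(K,L)=0$ via $\mu(\{o\})=0$ makes precise a step the paper settles in one line. Note that this step implicitly uses finiteness of $\mu$ near the origin, e.g.\ $\mu(B_2^d)<\infty$.
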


\begin{proof}
We first establish the geometric inequality required for
Lemma~\ref{l:transfer}. Let $K\in\mathcal C$ and let
$L\in\mathcal B(\R^d)$ be non-empty. If $\mu(L)=0$ or $\nu(K)=0$, the inequality
holds trivially. Thus, we may assume $\nu(K)\mu(L)>0$. By
condition (3), the set $K$ satisfies the non-degeneracy condition \eqref{eq:non_degen}.

If $L$ is unbounded, Proposition~\ref{p:max_cost_generalized} yields $M(K,L)=\infty,$ and the inequality again holds. Assume now that $L$ is bounded. By Proposition~\ref{p:max_cost_generalized}, we have
\[
    0\leq M(K,L)<\infty \qquad \text{and} \qquad
    L\subseteq M(K,L)K^{\circ,c}.
\]
Since $\mu(L)>0$, this also implies $M(K,L)>0$. Hence, by the
$\alpha$-homogeneity of $\mu$,
\[
    \mu(L)
    \leq
    \mu\bigl(M(K,L)K^{\circ,c}\bigr)
    =
    M(K,L)^\alpha\mu(K^{\circ,c}).
\]
Raising this inequality to the power $1-\lambda$, multiplying by
$\nu(K)^\lambda$, and applying \eqref{eq:gen_geom_hypo}, we obtain
\[
    \nu(K)^\lambda \mu(L)^{1-\lambda}
    \leq
    C_{\nu,\mu}\,M(K,L)^{\alpha(1-\lambda)}.
\]
This proves \eqref{eq:implied_sant}.

We now characterize equality in \eqref{eq:implied_sant} when $L$ is bounded and
$\nu(K)\mu(L)>0$. In the preceding chain, equality holds if and only if equality
holds both in
\[
    \mu(L)
    \leq
    \mu\bigl(M(K,L)K^{\circ,c}\bigr) \qquad \text{and in} \qquad
    \nu(K)^\lambda\mu(K^{\circ,c})^{1-\lambda}
    \leq
    C_{\nu,\mu}.
\]
Therefore, we must have $L=M(K,L)K^{\circ,c}$ up to removal of null sets. The second equality holds if and only if $K$ is a
set for which equality holds in \eqref{eq:gen_geom_hypo}. This proves the
equality characterization.

Thus condition (1) of Lemma~\ref{l:transfer} is satisfied with $F(r)=C_{\nu,\mu}\,r^{\alpha(1-\lambda)}.$ By Definition~\ref{def:cost_sant}, the pair satisfies the associated
cost-Santal\'o inequality on the domain $(\mathcal C,\mathcal B(\R^d))$. Since \[
    dF(r)
    =
    C_{\nu,\mu}\,\alpha(1-\lambda)
    r^{\alpha(1-\lambda)-1}\,dr,
\]
condition (2) of Lemma~\ref{l:transfer} yields the first inequality in \eqref{eq:gen_func_concl}.

We now obtain the equality in \eqref{eq:gen_func_concl}. Since $\mu$ is homogeneous of degree $\alpha$, we have $\mu(rB_2^d)=r^\alpha\mu(B_2^d).$ Set
\[
    p=\frac{1}{1-\lambda}.
\]
Then
\[
    \int_{\R^d}\Omega(|z|^p)\,d\mu(z)
    =
    \int_0^\infty \Omega(r^p)\,d\mu(rB_2^d)
    =
    \alpha\mu(B_2^d)
    \int_0^\infty \Omega(r^p)r^{\alpha-1}\,dr.
\]
Using the change of variables $s=r^p$, we get
\[
    \int_{\R^d}\Omega(|z|^p)\,d\mu(z)
    =
    \frac{\alpha}{p}\mu(B_2^d)
    \int_0^\infty \Omega(s)s^{\frac{\alpha}{p}-1}\,ds.
\]
Since $1/p=1-\lambda$, this becomes
\[
    \int_{\R^d}\Omega(|z|^p)\,d\mu(z)
    =
    \alpha(1-\lambda)\mu(B_2^d)
    \int_0^\infty \Omega(s)s^{\alpha(1-\lambda)-1}\,ds,
\]
as claimed.
\end{proof}

The above lemma also makes sense for Polish spaces admitting scalar multiplication.

\section{Results from Blaschke-Santal\'o inequalities on Euclidean spaces}
\label{sec:euclidean}
We now list new functional Santal\'o inequalities on Euclidean spaces. To demonstrate our approach, let $q\in (0,n)$ and let $a\in L^\frac{n}{q}(\s)$ be non-negative. It was observed in \cite{CKLR26}, with the case $a\equiv 1$, that one may use H\"older's inequality and an appeal to the classical Blaschke-Santal\'o inequality \eqref{eq:BS} to get:
\begin{align*}
        &\vol(K)^\frac{n-q}{n}\int_{K^\circ}a\left(\frac{y}{|y|}\right)|y|^{-q}dy = \frac{ \vol(K)^\frac{n-q}{n}}{n-q}\int_{\mathbb{S}^{n-1}}a(\theta)\|\theta\|_{K^\circ}^{-(n-q)}d\theta
        \\
        &\leq  \frac{ \vol(K)^\frac{n-q}{n}}{n-q}\left(\int_{\mathbb{S}^{n-1}}\|\theta\|_{K^\circ}^{-n}d\theta\right)^{\frac{n-q}{n}}\cdot \left(\int_{\s}a(\theta)^\frac{n}{q}d\theta\right)^{\frac{q}{n}}
        \\
        &=\frac{n}{n-q} \left(\vol(K^\circ) \vol(K)\right)^{\frac{n-q}{n}} \vol(\B)^\frac{q}{n} \|a\|_{L^\frac{n}{q}(\s,\sigma_{n-1})},
        \\
        &\leq \frac{n}{n-q}\vol(\B)^\frac{2n-q}{n}\|a\|_{L^\frac{n}{q}(\s,\sigma_{n-1})}.
    \end{align*}
    There is equality if and only if $K^\circ$ is a centered ellipsoid and $a=c\|\cdot\|_{K^\circ}^{-q}$ for some $c>0$ almost everywhere. In particular, if $a\equiv 1$, then $K^\circ$ must be a centered Euclidean ball. Recall that $\|a\|_{L^\frac{n}{q}(\s,\sigma_{n-1})}$ denotes the $L^\frac{n}{q}$ norm of $a$ with respect to the Haar probability measure $\sigma_{n-1}$ on $\s$.
    
    We now invoke Lemma~\ref{l:general_homogeneous_santalo}, and first obtain that, if $K\subset \R^n$ is a compact set of positive volume such that $K$ or $K^\circ$ has its center of mass at the origin and $L\subset \R^n$ is a non-empty, measurable set, one has for $q\in (0,n)$ and non-negative $a\in L^\frac{n}{q}\left(\s\right)$,
    \[
    \vol(K)^\frac{n-q}{2n-q}\left(\int_{L}a\left(\frac{y}{|y|}\right)|y|^{-q}dy\right)^\frac{n}{2n-q} \leq \left(\frac{nM(K,L)^{n-q}}{n-q}\|a\|_{L^\frac{n}{q}(\s,\sigma_{n-1})}\right)^{\frac{n}{2n-q}}\vol(\B).
    \]
    If $0<\vol(L)<\infty$, there is equality if and only if, up to removal of null sets, $K$ is a centered ellipsoid, $L=M(K,L)K^\circ$, and $a=c\|\cdot\|_{K^\circ}^{-q}$ for some $c>0$ almost everywhere. Secondly, we get the following functional inequality, which, as far as we are aware, is new, even if $a\equiv 1$.

\begin{corollary}[The Weighted Functional Santal\'o inequality]
    Let $q \in (0, n)$, let $a\in L^\frac{n}{q}(\s)$ be non-negative and define $$C_{n,q}(a) = \frac{n-q}{n\|a\|_{L^1(\s,\sigma_{n-1})}} \left(\frac{n}{n-q}\|a\|_{L^\frac{n}{q}(\s,\sigma_{n-1})}\right)^{\frac{n}{2n-q}}.$$ Then, for every function $f \in \mathcal{F}_n$ and any non-negative measurable function $g$ on $\R^n$ that satisfy the pointwise bound
    \[
        f(x)^{\frac{n-q}{2n-q}} g(y)^{\frac{n}{2n-q}} \leq \Omega(\langle x, y \rangle) \quad\forall \,x, y \in \R^n \text{ with } \langle x,y\rangle>0,
    \]
    where $\Omega: \R_+ \to \R_+$ is a measurable, non-increasing function, one has:
    \begin{equation}
        \label{eq:functional_q}
        \left(\int_{\R^n} \!\!\!f(x) dx \right)^{\frac{n-q}{2n-q}} \left(\int_{\R^n} \!\!\!\!g(y) |y|^{-q}a\left(\frac{y}{|y|}\right) dy \right)^{\frac{n}{2n-q}} \leq C_{n,q}(a) \int_{\R^n}\!\!\! \Omega(|z|^\frac{2n-q}{n}) |z|^{-q}a\left(\frac{z}{|z|}\right) dz.
    \end{equation}
\end{corollary}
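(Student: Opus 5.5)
The plan is to apply Lemma~\ref{l:general_homogeneous_santalo} with $d=n$, $\nu$ the Lebesgue measure on $\R^n$, and the weighted measure
\[
d\mu(y)=a\left(\tfrac{y}{|y|}\right)|y|^{-q}\,dy .
\]
We take the cost $c(x,y)=-\langle x,y\rangle$ and the class $\mathcal C=\mathcal C^n$. The parameters are
\[
\lambda=\frac{n-q}{2n-q},\qquad 1-\lambda=\frac{n}{2n-q}.
\]

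\textbf{Step 1: checking the hypotheses.} Since $a$ is $0$-homogeneous, $\mu(tA)=t^{n-q}\mu(A)$, so $\mu$ is homogeneous of degree $\alpha=n-q>0$; moreover $\alpha(1-\lambda)=\frac{n(n-q)}{2n-q}$. The cost is $1$-homogeneous in $y$ and continuous. The proof of Proposition~\ref{p:max_cost} shows that every $K\in\mathcal C^n$ has $\min_{\s}h_K>0$, which is exactly the non-degeneracy condition \eqref{eq:non_degen}; so condition (3) holds vacuously.

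Condition (2), that $-c\ge 0$, is only used in Lemma~\ref{l:general_homogeneous_santalo} to guarantee $M(K,L)\ge 0$. For $K\in\mathcal C^n$ this is supplied directly by Proposition~\ref{p:max_cost}. Also, the pointwise hypothesis is only needed where $\langle x,y\rangle>0$, since we work with $a=0$, $b=\infty$ in Lemma~\ref{l:transfer}. So I would either run the proof of Lemma~\ref{l:general_homogeneous_santalo} verbatim with this observation, or restrict $L$ as in Proposition~\ref{p:BS_two}.

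\textbf{Step 2: the geometric hypothesis \eqref{eq:gen_geom_hypo}.} This is the H\"older plus Blaschke--Santal\'o chain displayed just before the corollary:
\[
\vol(K)^{\frac{n-q}{n}}\mu(K^\circ)\le \frac{n}{n-q}\,\omega_n^{\frac{2n-q}{n}}\|a\|_{L^{n/q}(\s,\sigma_{n-1})}.
\]
Raising it to the power $\frac{n}{2n-q}$ gives
\[
\nu(K)^\lambda\mu(K^\circ)^{1-\lambda}\le C:=\Bigl(\tfrac{n}{n-q}\|a\|_{L^{n/q}}\Bigr)^{\frac{n}{2n-q}}\omega_n .
\]
The main subtlety is that the lemma phrases $C_{\nu,\mu}$ as the value attained at some extremal $B\in\mathcal C$. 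For nonconstant $a$ this value need not be attained inside $\mathcal C^n$ with $K^\circ$ centered. The proof of the lemma, however, only uses $C_{\nu,\mu}$ as an upper bound: the inclusion $L\subseteq M(K,L)K^\circ$ followed by the bound yields \eqref{eq:implied_sant} with $F(r)=C\,r^{\alpha(1-\lambda)}$. So I would invoke the lemma with this constant $C$, noting that attainment is irrelevant for the functional conclusion, which comes from Lemma~\ref{l:transfer}.

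\textbf{Step 3: identifying the constant.} Lemma~\ref{l:general_homogeneous_santalo} gives the right-hand side
\[
\frac{C}{\mu(\B)}\int_{\R^n}\Omega\bigl(|z|^{\frac1{1-\lambda}}\bigr)\,d\mu(z),\qquad \tfrac{1}{1-\lambda}=\tfrac{2n-q}{n}.
\]
It remains to compute $\mu(\B)$ in polar coordinates, using that the surface area of $\s$ is $n\omega_n$:
\[
\mu(\B)=\int_{\s}a\,d\theta\int_0^1 r^{n-1-q}\,dr=\frac{n\omega_n}{n-q}\|a\|_{L^1(\s,\sigma_{n-1})}.
\]
Dividing, the factor $\omega_n$ cancels and
\[
\frac{C}{\mu(\B)}=\frac{n-q}{n\|a\|_{L^1}}\Bigl(\tfrac{n}{n-q}\|a\|_{L^{n/q}}\Bigr)^{\frac{n}{2n-q}}=C_{n,q}(a),
\]
which yields \eqref{eq:functional_q}.
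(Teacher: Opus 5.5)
Your proposal is correct and takes the same route as the paper. The paper also invokes Lemma~\ref{l:general_homogeneous_santalo} with $\nu$ the Lebesgue measure, $d\mu(y)=a(y/|y|)|y|^{-q}\,dy$ (homogeneous of degree $n-q$), $\lambda=\frac{n-q}{2n-q}$, and the constant from the H\"older plus Blaschke--Santal\'o chain, and then identifies $C_{n,q}(a)$ via $\mu(\B)=\frac{n\omega_n}{n-q}\|a\|_{L^1(\s,\sigma_{n-1})}$. Your two technical remarks are accurate and cover points the paper passes over silently: the non-negativity of $-c$ is only needed to get $M(K,L)\geq 0$, which Proposition~\ref{p:max_cost} supplies for $K\in\mathcal{C}^n$, and $C_{\nu,\mu}$ is only used as an upper bound, so it need not be attained.
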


\subsection{Unconditional Log-concave Measures}
\label{sec:uncon}
Recall that a function $f:\R^n\to \R_+$ is said to be \textit{unconditional} if there exists a basis $e_1,\dots,e_n$ of $\R^n$ such that, for every choice of signs $(\varepsilon_1,\dots,\varepsilon_n)\in \{-1,1\}^n$, and $x=(x_1,\dots,x_n)\in \R^n$, one has
\[
f\left(\sum_{i=1}^nx_ie_i\right)=f\left(\sum_{i=1}^nx_i\varepsilon_ie_i\right).
\]
It was shown in \cite[Theorem 7]{FMZ23} that, if $\mu$ is a Borel measure on $\R^n$ with an unconditional, log-concave density with respect to Lebesgue measure, e.g. Gaussian, then, for every $K\in\mathcal{K}_e^n$,
\begin{equation}
\label{eq:unconditional_BS}
    \mu(K)\mu(K^\circ)\leq \mu(B_2^n)^2.
\end{equation}
We obtain the following functional counterpart.

\begin{corollary}[Functional Santal\'o inequality for unconditional log-concave measures]
\label{c:unconditional_functional_BS}
Let $\mu$ be a probability measure on $\R^n$ with an unconditional, log-concave density with respect to Lebesgue measure. Let $f\in\mathcal{F}(\mathcal{K}_e^n)$ and let $g:\R^n\to\R_+$ be measurable. Suppose that $\Omega:\R_+\to\R_+$ is measurable and non-increasing and that
\[
    f(x)^{\frac12}g(y)^{\frac12}
    \leq
    \Omega(\langle x,y\rangle), \qquad \forall \,\, x,y\in\R^n \text{such that} \, \langle x,y\rangle>0.
\]
Then,
\begin{equation}
\label{eq:unconditional_functional_BS}
    \left(\int_{\R^n}f(x)\,d\mu(x)\right)^{\frac12}
    \left(\int_{\R^n}g(y)\,d\mu(y)\right)^{\frac12}
    \leq
    \int_{\R^n}\Omega(|x|^2)\,d\mu(x).
\end{equation}
\end{corollary}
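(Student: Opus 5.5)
The plan is to apply Lemma~\ref{l:transfer} directly, with $m=2$, $X_1=X_2=\R^n$, $\lambda_1=\lambda_2=\frac12$, cost $c(x,y)=-\langle x,y\rangle$, $a=0$, $b=\infty$, and domain $(\mathcal{K}_e^n,\mathcal{B}(\R^n))$. Lemma~\ref{l:general_homogeneous_santalo} is not available here, because $\mu$ is a probability measure and so is not homogeneous. Instead I take
\[
F(r)=\mu\bigl(\sqrt{r}\,B_2^n\bigr),\qquad r>0.
\]
This $F$ is non-decreasing. It is continuous because $\mu$ is absolutely continuous, so spheres are $\mu$-null. It satisfies $F(0^+)=\mu(\{o\})=0$. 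For a symmetric $K$ and a non-empty $L$, the quantity $M(K,L)=\sup_{x\in K,y\in L}\langle x,y\rangle$ is non-negative, since $\langle -x,y\rangle=-\langle x,y\rangle$ and $K=-K$. So the range condition $[a,b]=[0,\infty]$ of Lemma~\ref{l:transfer} holds.

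\textbf{Geometric hypothesis (1).} Let $K\in\mathcal{K}_e^n$ and $L$ be Borel and non-empty, and write $M=M(K,L)$. I need
\[
\mu(K)^{1/2}\mu(L)^{1/2}\le \mu\bigl(\sqrt{M}\,B_2^n\bigr).
\]
The cases split as follows.
\begin{enumerate}
\item If $\mu(L)=0$, the inequality is trivial.
\item If $L$ is unbounded, Proposition~\ref{p:max_cost} gives $M=\infty$. Then the right-hand side is $\mu(\R^n)=1$, which is at least the left-hand side because $\mu$ is a probability measure.
\item If $M=0$, then $L=\{o\}$ by Proposition~\ref{p:max_cost}, so $\mu(L)=0$ and we are in the first case.
\item Otherwise $0<M<\infty$, and $L\subseteq MK^\circ$ by Proposition~\ref{p:max_cost}.
\end{enumerate}
Note that $K\in\mathcal{C}^n$, since a symmetric body is centered, so Proposition~\ref{p:max_cost} applies throughout.

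In the last case, set $s=\sqrt{M}$ and introduce the dilated measure $\mu_s(A):=\mu(sA)$. Its density is $s^n\varphi(sx)$, which is again unconditional and log-concave. Applying \eqref{eq:unconditional_BS} to $\mu_s$ and the symmetric body $K/s$ gives
\[
\mu_s(K/s)\,\mu_s\bigl((K/s)^\circ\bigr)\le \mu_s(B_2^n)^2 .
\]
Now $\mu_s(K/s)=\mu(K)$, $\mu_s((K/s)^\circ)=\mu(s\cdot sK^\circ)=\mu(MK^\circ)$, and $\mu_s(B_2^n)=\mu(\sqrt{M}B_2^n)$. Hence
\[
\mu(K)\,\mu(L)\le \mu(K)\,\mu(MK^\circ)\le F(M)^2 ,
\]
which is exactly hypothesis (1) of Lemma~\ref{l:transfer}.

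\textbf{Functional conclusion.} Lemma~\ref{l:transfer}(2) now applies to $f\in\mathcal{F}(\mathcal{K}_e^n)$ and any measurable $g$. The pointwise condition is required only where $\langle x,y\rangle\in(0,\infty)$, which matches the stated hypothesis. The lemma gives
\[
\left(\int_{\R^n} f\,d\mu\right)^{1/2}\left(\int_{\R^n} g\,d\mu\right)^{1/2}\le \int_0^\infty \Omega(r)\,dF(r).
\]
It remains to identify the right-hand side. Since $F(r)=\mu(\{x:|x|^2<r\})$, the Lebesgue--Stieltjes measure $dF$ is the push-forward of $\mu$ under $x\mapsto |x|^2$. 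Therefore
\[
\int_0^\infty\Omega(r)\,dF(r)=\int_{\R^n}\Omega(|x|^2)\,d\mu(x),
\]
which is \eqref{eq:unconditional_functional_BS}.

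\textbf{Main obstacle.} The only real difficulty is that $\mu$ is not homogeneous, so the scaling $L\subseteq MK^\circ$ cannot be absorbed the way it is in Lemma~\ref{l:general_homogeneous_santalo}. The fix is the rescaling step: the class of unconditional log-concave measures is closed under dilation, so \eqref{eq:unconditional_BS} holds for $\mu_s$ as well. This is what yields the bound $\mu(K)\mu(MK^\circ)\le\mu(\sqrt{M}B_2^n)^2$ without any homogeneity of $\mu$.
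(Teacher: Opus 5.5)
Your proof is correct and follows the paper's argument: the same profile $F(r)=\mu(\sqrt r\,B_2^n)$, the same dilation trick of applying \eqref{eq:unconditional_BS} to the rescaled measure and $K/\sqrt{M}$, and the same application of Lemma~\ref{l:transfer} on the domain $(\mathcal{K}_e^n,\mathcal{B}(\R^n))$. Identifying $dF$ as the push-forward of $\mu$ under $x\mapsto|x|^2$ is a slightly cleaner finish than the paper's differentiation under the integral sign in polar coordinates, and your explicit treatment of unbounded $L$, the case $M=0$, and the continuity of $F$ fills in details the paper leaves implicit.
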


\begin{proof}
Write $d\mu(x)=\phi(x)\,dx$. For $r>0$, define the measure $\mu_r$ on $\R^n$ by
\[
    d\mu_r(x)=r^{\frac n2}\phi(\sqrt r\,x)\,dx.
\]
Then $\mu_r$ also has an unconditional, log-concave density and, for every measurable set $A\subset\R^n$,
\[
    \mu_r(A)=\mu(\sqrt r\,A).
\]
Consequently, applying \eqref{eq:unconditional_BS} with respect to the measure $\mu_r$ and the body $\frac{1}{\sqrt r}K$, we obtain
\begin{align*}
    \mu(K)\mu(rK^\circ)
    &=
    \mu_r\left(\frac{1}{\sqrt r}K\right)
    \mu_r\left(\sqrt r\,K^\circ\right)
    \leq
    \mu_r(B_2^n)^2
    =
    \mu(\sqrt r\,B_2^n)^2.
\end{align*}

Now let $K\in\mathcal{K}_e^n$ and let $L\subset\R^n$ be a non-empty measurable set. Set, as usual, $M(K,L)=\sup_{x\in K,\,y\in L}\langle x,y\rangle.$ Since $L\subseteq M(K,L)K^\circ,$ the preceding inequality with $r=M(K,L)$, yields
\[
    \mu(K)^{\frac12}\mu(L)^{\frac12}
    \leq
    \mu\left(\sqrt{M(K,L)}\,B_2^n\right).
\]
Thus, Lemma~\ref{l:transfer} applies with both Polish measure spaces being $(\R^n,|\cdot|,\mu)$, the parameters $\lambda_1=\lambda_2=\frac12$, the continuous cost function $c(x,y)=-\langle x,y\rangle,$ the domain $\left(\mathcal{K}_e^n,\mathcal{B}(\R^n)\right),$ and the profile function
\[
    F(r)=\mu(\sqrt r\,B_2^n).
\]
We therefore obtain
\[
    \left(\int_{\R^n}f(x)\,d\mu(x)\right)^{\frac12}
    \left(\int_{\R^n}g(y)\,d\mu(y)\right)^{\frac12}
    \leq
    \int_0^\infty\Omega(r)\,dF(r).
\]
Finally, since we have by polar coordinates
\[
    F(r)
    =
    \mu\left(\left\{x\in\R^n:|x|^2\leq r\right\}\right)=\int_{\s}\int_0^{\sqrt{r}}t^{n-1}\phi(t\theta)dt\,d\theta,
\]
we see that
\[
dF(r)=\left(\frac{1}{2}r^{\frac{n}{2}-1}\int_{\s}\phi\left(\sqrt{r}\theta\right)d\theta\right)dr.
\]
The differentiation underneath the integral sign was justified from the fact that integrable, log-concave functions are bounded by an exponential.
Therefore, Fubini's theorem, a variable substitution $r=u^2$ and polar coordinates again yields
\begin{align*}
    \int_0^\infty\Omega(r)\,dF(r)
    &=
    \frac{1}{2}\int_0^\infty\Omega(r)r^{\frac{n}{2}-1}\int_{\s}\phi\left(\sqrt{r}\theta\right)d\theta \,dr,
    \\
    &=\frac{1}{2}\int_{\s}\int_0^\infty\Omega(r)r^{\frac{n}{2}-1}\phi\left(\sqrt{r}\theta\right) dr\,d\theta
    \\
    &=\int_{\s}\int_0^\infty\Omega(u^2)r^{n-1}\phi\left(u\theta\right)du\,d\theta
    \\
    &=\int_{\R^n}\Omega(|x|^2)\phi(x)dx,
\end{align*}
which proves \eqref{eq:unconditional_functional_BS}.
\end{proof}

\subsection{The Ball-Santal\'o inequality}
\label{sec:joint}
We now invoke our framework to deduce the functional version of the isotropic Ball-Santal\'o inequality.
\begin{proof}[Proof of Theorem~\ref{thm:func_ball_san}]

If $u=o$, then the inequality reads $0\leq 0$, and is thus true. Henceforth, we assume $u\neq o$. For $u\in \R^n$, we define the Borel measure $\mu_u$ by $d\mu_u(x) = \langle x, u \rangle^2 dx$. Notice that $\mu_u$ is $(n+2)$-homogeneous. Then, the isotropic Ball-Santal\'o inequality \eqref{eq:better_ball_san} for isotropic convex bodies is precisely that $$\mu_u(K)^\frac{1}{2}\mu_u(K^\circ)^\frac{1}{2} \leq \mu_u(B_2^n), \qquad K\in \conbodI.$$ 
Thus, by Lemma~\ref{l:general_homogeneous_santalo}, for every $K \in \conbodI$ and every non-empty $L \in \mathcal{B}\left(\R^n\right)$, we have the inequality:
     \begin{equation}
     \label{eq:BSiso}
        \mu_u(K)^{\frac{1}{2}} \mu_u(L)^{\frac{1}{2}} \leq \mu_u(B_2^n) M(K,L)^{\frac{n+2}{2}}.
    \end{equation}
Finally, our claimed functional inequality follows from \eqref{eq:gen_func_concl} with $\mu_1=\mu_2=\mu_u$ and $\lambda=\frac{1}{2}$. Notice that $C_{B,\mu_1,\mu_2}=\mu_2(\B)=\mu_u(\B)$ in this case.
\end{proof}

\noindent {\bf Remark.} We recover the inequality \eqref{eq:fun_ball} from Theorem~\ref{thm:func_ball_san} for $f, g \in \mathcal{F}(\conbodI)$. Indeed, applying Theorem~\ref{thm:func_ball_san} to $u=e_i$ and adding up gives the following:
\[
\sum_{i=1}^n \left(\int_{\R^n} f(x) x_i^2 dx\right) \left(\int_{\R^n} g(y) y_i^2 dy\right) \leq \sum_{i=1}^n\left(\int_{\R^n} \Omega(|z|^2) z_i^2 dz \right)^2.
\]
On the other hand, expanding the quadratic form $\langle x,y\rangle^2 = \sum_{i,j} x_i x_j y_i y_j$ and applying Fubini's theorem yields:
\[
\int_{\R^n}\int_{\R^n} f(x)g(y) \langle x,y\rangle^2 dx dy = \sum_{i,j=1}^n \left(\int_{\R^n} f(x) x_i x_j dx\right) \left(\int_{\R^n} g(y) y_i y_j dy\right).
\]
But, since $f, g \in \mathcal{F}(\conbodI)$, their level sets are isotropic, which implies that $\int_{\R^n} x_i x_j f(x) dx = 0$ whenever $i \neq j$. We deduce
\[
\int_{\R^n}\int_{\R^n} f(x)g(y) \langle x,y\rangle^2 dx dy \leq \sum_{i=1}^n\left(\int_{\R^n} \Omega(|z|^2) z_i^2 dz \right)^2.
\]

Finally, because the superlevel sets of $\Omega(|z|^2)$ are Euclidean balls, $\Omega(|z|^2)$ itself has isotropic superlevel sets. One can verify that \eqref{eq:iso_def} holds for this function:
\[
I = \frac{1}{n} \int_{\R^n} \Omega(|z|^2) |z|^2 dz= \int_{\R^n} \Omega(|z|^2) z_i^2 dz.
\]
Consequently,
\[
\sum_{i=1}^n\left(\int_{\R^n} \Omega(|z|^2) z_i^2 dz \right)^2=n I^2 = n^{-1}\left(\int_{\R^n} \Omega(|z|^2) |z|^2 dz \right)^2,
\]
completing the recovery of \eqref{eq:fun_ball}. \hfill \qedsymbol

\subsection{Sine Duality}
\label{sec:sine}

In this section, fix $n\geq 2$. Recall that
$$[x,y]=\begin{cases}
|x||y|\sqrt{1-\left\langle \frac{x}{|x|},\frac{y}{|y|} \right\rangle^2},& \text{ if }x,y \in \R^n\setminus\{o\},
\\
0,& \text{otherwise}.
\end{cases}
$$
Then, the sine polar of a convex body $K$ is given by
\[
K^{\diamond}=\left\{x \in \mathbb{R}^n:[x, y] \leq 1 \text { for all } y \in K\right\} = K^{\star, c}, \qquad c(x,y)=-[x,y],
\]
where we identify the sine polarity with the general duality from \eqref{eq:abstract_raised_polar}.  Note that even if $K$ contains the origin in its interior, the inclusion $K\subseteq K^{\diamond\diamond}$ obtained from Proposition~\ref{p:dualities} is generally strict. 

We elaborate. The following details are all from \cite{HLXY22}, although some of them follow from Proposition~\ref{p:dualities}. Let $\operatorname{Cyl} (A)$ denote the \textit{cylindrical hull} of a measurable set $A \subset \R^n$, which is the intersection of all origin-symmetric closed solid cylinders containing $A$. Set 
$$D_n\!=\!\{A\subset \R^n| \, A \text{ is bounded, measurable and not contained in a $1$-dimensional subspace}\}.$$
Then, for $A\in D_n,$ $A^{\diamond\diamond}=\operatorname{Cyl} (A)$ and $A^{\diamond}=\operatorname{Cyl} (A)^{\diamond}$. 

Moreover, the sine-polar sets $\mathcal{S}_n^s$, i.e., those subsets of $\R^n$ satisfying $A^{\diamond\diamond}=A$, are precisely those origin-symmetric convex sets obtained as the intersection of such cylinders. The description of this set was provided in \cite[Proposition 3.5]{HLXY22}. What is even more interesting is that $D_n^{\diamond\diamond}=\mathcal{S}_n^s\subset \conbode$. That is, even if the original set $A\in D_n$ had an empty interior, its double polar $A^{\diamond\diamond}$ is an origin-symmetric convex body. The condition that $A$ has dimension at least two is used to show $A^{\diamond\diamond}$ is bounded. Additionally, it was shown in \cite[Proposition 3.3]{HLXY22} that, for $K\in \mathcal{S}_n^s$,
\[
\vol(K^\diamond)\leq \vol(K^\circ).
\]
Since, for $A\in D_n$, $A\subseteq A^{\diamond\diamond}=K \in \mathcal{S}_n^s$, it immediately follows from \eqref{eq:BS} that
\begin{equation}
\label{eq:BS_sine}
\vol(A)\vol(A^\diamond) \leq \vol(A^{\diamond\diamond})\vol(A^{\diamond}) = \vol(A^{\diamond\diamond}) \vol((A^{\diamond\diamond})^\circ) \leq \vol(\B)^2,
\end{equation}
with equality if and only if $A$ is, up to removal of null sets, a centered ellipsoid ($n=2$) or a centered Euclidean ball ($n\geq 3$). This sine Blaschke-Santal\'o inequality first appeared in \cite[Theorem 5.2]{HLXY22}. We note that, in the proof above, while the Blaschke-Santal\'o inequality forces ellipsoids for the equality characterization, it was shown in the aforementioned work that, for $n\geq 3$, if a centered ellipsoid $E$ satisfies $E=E^{\diamond\diamond}$, then $E$ is a centered Euclidean ball.

A functional version was shown in \cite{HuLi25}. Let
\[
f^\diamond(x) = \inf_{y\in \R^n}\left[\frac{e^{-[x,y]}}{f(y)}\right]=f^{\circ, c}(x), \qquad c(x,y)=-[x,y].
\]
Then, it was shown in \cite[Proposition 2.2]{HuLi25} that $f^\diamond$ is log-concave, and in \cite[Lemma 3.1]{HuLi25} that if $f$ is even and integrable, then $\int_{\R^n}f^\diamond(x) dx \leq \int_{\R^n}f^\circ(x) dx$. Moreover, equality occurs for certain functions, e.g., a Gaussian. Therefore, Ball's functional Santal\'o \eqref{eq:ball_bS} for even, integrable functions immediately implies the sharp inequality
\[
\left(\int_{\R^n}f(x)dx\right)\left(\int_{\R^n}f^\diamond(x)dx\right) \leq (2\pi)^n.
\]
We now use the transference principle to upgrade this result to the form of Proposition~\ref{prop:Ball_Fradelizi_Meyer}.
\begin{theorem}
\label{thm:func_sine_BS}
Let $n \geq 2$. Consider the space $\R^n$ equipped with the Lebesgue measure and the cost function $c(x,y) = -[x,y]$. Define the function $F: \R_+\to \R_+$ by $F(r) = \vol(\B)r^{\frac{n}{2}}.$
Then, $\R^n$ satisfies the cost-Santal\'o inequality associated with $\left(\frac{1}{2}, c, F \right)$ with domain $(\mathcal{B}_b(\R^n), \mathcal{B}(\R^n))$.
\end{theorem}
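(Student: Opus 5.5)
The plan is to verify condition (1) of Lemma~\ref{l:transfer} directly, with $m=2$, $\lambda_1=\lambda_2=\frac12$, $X_1=X_2=\R^n$ with Lebesgue measure, and cost $c(x,y)=-[x,y]$. Continuity of $c$ is clear: away from the axes, $[x,y]=\sqrt{|x|^2|y|^2-\langle x,y\rangle^2}$, and $[x,y]\to 0$ as $x$ or $y$ tends to $o$. Since $[x,y]\ge 0$, we take $a=0$ and $b=\infty$. Then for all non-empty sets the quantity $M(A,L):=\sup_{x\in A,y\in L}[x,y]$ lies in $[0,\infty]=[a,b]$. Also $F(r)=\vol(\B)r^{n/2}$ is continuous, non-decreasing, and satisfies $F(0^+)=0$.

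So it suffices to show that for every non-empty bounded Borel $A$ and every non-empty Borel $L$,
\[
\vol(A)^{\frac12}\vol(L)^{\frac12}\le \vol(\B)\,M(A,L)^{\frac n2}.
\]
We split into cases.
\begin{itemize}
\item If $\vol(A)=0$ or $\vol(L)=0$, the left side vanishes and there is nothing to prove.
\item If $M(A,L)=\infty$, the right side is infinite and the inequality holds.
\item Otherwise $\vol(A)>0$. A set of positive volume cannot lie in a one-dimensional subspace, and $A$ is bounded and measurable, so $A\in D_n$.
\end{itemize}

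In the remaining case, first suppose $M(A,L)=0$. Then each $y\in L$ satisfies $[x,y]=0$ for all $x\in A$, so $y=o$ or $y$ is parallel to every non-zero $x\in A$. Since $A$ is not contained in a line, $y=o$. Hence $L\subseteq\{o\}$, so $\vol(L)=0$, a case already handled.

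So assume $0<M:=M(A,L)<\infty$. The bracket is symmetric and positively $1$-homogeneous in each variable, so for $y\in L$ and $x\in A$ we get $[x,y/M]=[x,y]/M\le 1$. Thus $y/M\in A^\diamond$, i.e.\ $L\subseteq M A^\diamond$. This is the analogue of Proposition~\ref{p:max_cost_generalized}, done by hand here since the non-degeneracy is exactly $A\in D_n$. Taking volumes gives $\vol(L)\le M^n\vol(A^\diamond)$. The sine Blaschke--Santal\'o inequality \eqref{eq:BS_sine} for $A\in D_n$ then yields
\[
\vol(A)\vol(L)\le M^n\vol(A)\vol(A^\diamond)\le \vol(\B)^2M^n .
\]
Taking square roots gives condition (1), and Definition~\ref{def:cost_sant} concludes. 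Corollary~\ref{cor:sine_fun} should then follow from part (2) of Lemma~\ref{l:transfer}, using $dF(r)=\frac n2\vol(\B)r^{\frac n2-1}dr$ and the polar-coordinate computation already done for Proposition~\ref{prop:Ball_Fradelizi_Meyer}.

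The only delicate point is that \eqref{eq:BS_sine} requires $A\in D_n$, which is where boundedness of the domain $\mathcal{B}_b(\R^n)$ and positivity of volume are used. Also, $A^\diamond$ must be bounded so that $\vol(A^\diamond)<\infty$; this follows from $A^\diamond=\operatorname{Cyl}(A)^\diamond$ with $\operatorname{Cyl}(A)=A^{\diamond\diamond}\in\mathcal{S}_n^s$ an origin-symmetric convex body. I would double-check that the equality $A^{\diamond\diamond}=\operatorname{Cyl}(A)$ quoted from \cite{HLXY22} is valid for merely measurable (not closed) $A$. If it is not, first replace $A$ by its closure, which keeps the volume bound $\vol(A)\le\vol(\bar A)$ and leaves $A^\diamond$ unchanged by continuity of $[\cdot,\cdot]$.
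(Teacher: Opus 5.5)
Your proof is correct and is essentially the paper's argument. The paper simply invokes Lemma~\ref{l:general_homogeneous_santalo} with $\nu=\mu=\vol$, $\lambda=\frac12$, $\alpha=n$ and $(B_2^n)^\diamond=B_2^n$, and that lemma's proof is exactly your case analysis: discard the null cases, obtain $L\subseteq M(A,L)A^\diamond$ by homogeneity, then apply \eqref{eq:BS_sine}, with the non-degeneracy requirement disposed of, as you do, by observing that sets lying in a line have volume zero.
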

Here, $\mathcal{B}_b(\R^n)\subset \mathcal{B}(\R^n)$ are all bounded, measurable sets. This theorem follows from Lemma~\ref{l:general_homogeneous_santalo} with $n=d$ and $\mu$ and $\nu$ both being the Lebesgue measure. Note that the condition on \eqref{eq:BS_sine} that the sets must have dimension at least 2 is not an obstacle, as this means they have volume zero. Corollary~\ref{cor:sine_fun} is the functional inequality implied by Theorem~\ref{thm:func_sine_BS}, after using approximation and the monotone convergence theorem (details omitted) to drop the requirement of bounded level sets inherited from $\mathcal{B}_b(\R^n)$.

\subsection{Weighted Euclidean Santal\'o inequalities}
Colesanti, Livshyts, Kolesnikov, and Rotem \cite{CKLR26} recently considered weighted versions of the functional Santal\'o inequality. A particular case of interest in their work involves measures of the form $e^{-V}$, where $V$ is a smooth, $p$-homogeneous and convex function. Using the transference principle, we can generalize an equivalence between conjectured results appearing in their work. To recover their specific setup, simply restrict to symmetric convex bodies, select $\Omega(r) = e^{-r}$, and set $g = f^\circ$. 

\begin{proposition}
\label{prop:CKLR_general}
    Let $V$ be a strictly convex, $p$-homogeneous, $C^2$ function on $\R^n$, $p>1$, satisfying $V(o)=0$. Then, the geometric inequality
    \begin{equation}
    \label{eq:CLKR_equiv}
        \vol(K)\vol\left(\nabla \mathcal{L}V(K^\circ)\right)^{p-1} \leq \vol\left(\left\{V\leq \frac{1}{p}\right\}\right)^p, \qquad \forall \,K \in \mathcal{C}^n,
    \end{equation}
    holds if and only if for every $f \in \mathcal{F}_n$ and all measurable functions $g:\R^n\to\R_+$ and $\Omega:\R_+\to\R_+$, where $\Omega$ is non-increasing, that satisfy the pointwise bound
    \[
        f(x)^{\frac{1}{p}}g(y)^{\frac{p-1}{p}} \leq \Omega\left(\frac{\langle x,y \rangle}{p}\right), \qquad \forall \,\,x, y \in \R^n \text{ with } \langle x,y \rangle>0,
    \]
    one has the functional inequality:
    \begin{equation}
    \label{eq:CLKR_func_general}
        \left(\int_{\R^n}f(x)dx\right)\left(\int_{\R^n}g(\nabla V(z))dz\right)^{p-1} \leq \left(\int_{\R^n}\Omega(V(z))dz\right)^p.
    \end{equation}
\end{proposition}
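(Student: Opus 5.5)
The plan is to apply Lemma~\ref{l:transfer} with $m=2$, $X_1=X_2=\R^n$, $\lambda_1=\frac1p$, $\lambda_2=\frac{p-1}{p}$, and the continuous cost $c(x,y)=-\frac{\langle x,y\rangle}{p}$, so that $a=0$ and $b=\infty$. On the first factor I would place Lebesgue measure with domain $\mathcal{C}^n$. On the second factor I would place the measure $\mu$ defined by $\mu(A):=\vol\left(\nabla\mathcal{L}V(A)\right)$, with domain $\mathcal{B}(\R^n)$. The first preparatory step is to check that $\nabla V:\R^n\to\R^n$ is a homeomorphism with inverse $\nabla\mathcal{L}V$. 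This uses that $V$ is strictly convex, $C^2$, $p$-homogeneous and coercive (it is $\geq \min_{\s}V\cdot|x|^p$ with that minimum positive), so $\mathcal{L}V$ is $q$-homogeneous with $\frac1p+\frac1q=1$ and differentiable. Consequently $\mu$ is the push-forward of Lebesgue measure under $\nabla V$, so that
\[
\int_{\R^n}g\,d\mu=\int_{\R^n}g(\nabla V(z))\,dz .
\]
Moreover $\nabla\mathcal{L}V$ is $(q-1)=\frac{1}{p-1}$-homogeneous, so $\mu$ is homogeneous of degree $\alpha=\frac{n}{p-1}$. I expect this regularity and homogeneity bookkeeping to be the main technical point; everything after it is the mechanism of Lemma~\ref{l:transfer}.

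Next I would take $F(r)=C\,r^{n/p}$ with $C:=\vol(\{V\leq 1\})=p^{n/p}\vol(\{V\leq \frac1p\})$; note $\alpha(1-\lambda)=\frac np$. By layer-cake and $\{V\leq r\}=r^{1/p}\{V\leq 1\}$,
\[
\int_{\R^n}\Omega(V(z))\,dz=\frac np\,\vol(\{V\leq1\})\int_0^\infty \Omega(r)r^{\frac np-1}\,dr=\int_0^\infty\Omega\,dF .
\]
Thus statement (2) of Lemma~\ref{l:transfer}, after raising both sides to the power $p$, is exactly \eqref{eq:CLKR_func_general}, and the pointwise hypothesis is exactly the one in the proposition. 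It therefore remains to show that statement (1) of Lemma~\ref{l:transfer} is equivalent to \eqref{eq:CLKR_equiv}.

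For \eqref{eq:CLKR_equiv} $\Rightarrow$ (1), fix $K\in\mathcal{C}^n$ and a non-empty Borel set $L$. Put $M=\sup_{x\in K,\,y\in L}\frac{\langle x,y\rangle}{p}$. By Proposition~\ref{p:max_cost}, either $L$ is unbounded and $M=\infty$, or $L=\{o\}$, or $L\subseteq pM\,K^\circ$. In the last case, homogeneity of $\mu$ gives $\mu(L)\leq (pM)^{\frac{n}{p-1}}\vol(\nabla\mathcal{L}V(K^\circ))$. Hence
\[
\vol(K)^{\frac1p}\mu(L)^{\frac{p-1}{p}}\leq (pM)^{\frac np}\left(\vol(K)\vol(\nabla\mathcal{L}V(K^\circ))^{p-1}\right)^{\frac1p}\leq p^{\frac np}M^{\frac np}\vol\left(\{V\leq \tfrac1p\}\right)=F(M).
\]
For the converse, apply (1) with $L=K^\circ$. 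Then $M\leq\frac1p$, and monotonicity of $F$ gives $\vol(K)^{\frac1p}\mu(K^\circ)^{\frac{p-1}{p}}\leq C p^{-n/p}=\vol(\{V\leq\frac1p\})$. Raising to the $p$-th power yields \eqref{eq:CLKR_equiv}. Combining the two directions with Lemma~\ref{l:transfer} proves the proposition.
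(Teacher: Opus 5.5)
Your proposal is correct and is essentially the argument the paper has in mind. The paper omits this proof, saying only that it follows from Lemma~\ref{l:transfer} in the same manner as the other Euclidean examples. Your choices make that work: the cost $-\langle x,y\rangle/p$, the pushforward measure $\mu=(\nabla V)_{\#}\vol$ of homogeneity $\frac{n}{p-1}$, and $F(r)=\vol(\{V\leq 1\})\,r^{n/p}$. The inclusion $L\subseteq M(K,L)K^\circ$ from Proposition~\ref{p:max_cost} then gives one direction, and testing with $L=K^\circ$ gives the other.

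Two points are left implicit, and both are harmless. First, you should say explicitly that the cases $L$ unbounded and $L=\{o\}$ are trivial. Second, $\mathcal{F}_n$ imposes $0<\int f<\infty$ while $\mathcal{F}(\mathcal{C}^n)$ does not; this mismatch does not matter because the converse direction only uses $f=\chi_K$.
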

We omit the proof; it follows from Lemma~\ref{l:transfer} in the same manner as the other Euclidean examples. In \cite{CKLR26}, they proved and disproved \eqref{eq:CLKR_equiv} in various situations.

\section{Matrix space}
\label{sec:matrix}
This section is dedicated to our results concerning Santal\'o-type inequalities on the setting of matrix spaces, primarily Theorems~\ref{t:hiBS}, \ref{t:hiBS_2}, and \ref{t:main}.

\subsection{The Geometric Polarities}
We first continue the investigation of Section~\ref{sec:set_prop} of the polarity on sets in the matrix case. We note that the set inclusions in property (3) of Proposition~\ref{p:dualities} are strict in general. This is seen by taking $Q$ to be origin-symmetric and $K$ not origin-symmetric. Then $(K^{\star,Q})_{\circ,Q}$ is origin-symmetric, but $K$ is not. In the next proposition, we show that, when $L$ is star-shaped, we may exchange intersections over an entire star-shaped $L$ in $M_{n,m}(\R)$ with just intersections over its boundary $\partial L$ in the definition of the polarity.

\begin{proposition}
\label{p:boundary}
    Let $L\subset M_{n,m}(\R)$ be a compact, star-shaped set. Then,
    \[
    \bigcap_{A\in L}(AQ)^{\circ} = \bigcap_{Z\in \partial L}(ZQ)^{\circ}.
    \]
    In particular, $\|\cdot\|_{L_{\circ,Q}} = \sup_{A\in L}\|\cdot\|_{(AQ)^\circ}=\sup_{Z\in\partial L}\|\cdot\|_{(ZQ)^\circ}.$
\end{proposition}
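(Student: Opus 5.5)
The plan is to prove the two inclusions separately. Since $\partial L\subseteq L$ ($L$ is compact, hence closed), every constraint on the right-hand side also appears on the left, so
\[
\bigcap_{A\in L}(AQ)^{\circ}\subseteq \bigcap_{Z\in \partial L}(ZQ)^{\circ}.
\]
For the reverse inclusion, fix $x$ in the right-hand intersection and an arbitrary $A\in L$; the goal is $x\in (AQ)^\circ$, that is, $h_Q(A^tx)\le 1$. Recall that $y\in (AQ)^\circ$ iff $h_{AQ}(y)=h_Q(A^ty)\le 1$.

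The key observation is that $A\mapsto h_Q(A^tx)$ is positively $1$-homogeneous in $A$. Since $Q\ni o$, it is also non-negative. If $A=o$, then $h_Q(o)=0\le 1$ and there is nothing to prove. If $A\neq o$, consider the ray $\{sA:s\ge 1\}$. Because $L$ is bounded, there is a largest $s^*\ge 1$ with $s^*A\in L$; the supremum is attained by compactness. Then $Z:=s^*A\in\partial L$, since points $sA$ with $s>s^*$ lie outside $L$ and accumulate at $Z$. By hypothesis, $h_Q(Z^tx)\le 1$, and homogeneity gives
\[
h_Q(A^tx)=\frac{1}{s^*}\,h_Q(Z^tx)\le \frac{1}{s^*}\le 1.
\]
Hence $x\in(AQ)^\circ$. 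Note that star-shapedness is not really needed for this step; compactness and the cone property of the constraint already suffice. The only care is that $A\in L$ itself is used as the starting point of the ray, so the argument does not require $[o,A]\subset L$.

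The gauge statement then follows by taking the Minkowski functional of both sides. For each $y$, $\|y\|_{(AQ)^\circ}=h_Q(A^ty)$, and the identity
\[
\sup_{A\in L}h_Q(A^ty)=\sup_{Z\in\partial L}h_Q(Z^ty)
\]
follows from the same pointwise inequality $h_Q(A^ty)\le h_Q(Z^ty)$ for the associated boundary point $Z$, together with $\partial L\subseteq L$. Equivalently, the gauge of an intersection of sets all containing the origin is the supremum of their gauges. The step that needs the most care is the existence of the boundary point $Z$ on the ray; this is handled by taking $s^*=\max\{s\ge1: sA\in L\}$, which is finite and attained by compactness.
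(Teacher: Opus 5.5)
Your proof is correct and follows the paper's argument: both write each $A\in L$ as $A=\lambda Z$ with $Z\in\partial L$ and $\lambda\in[0,1]$, then use homogeneity to get $h_{(\lambda Z)Q}(x)=\lambda\, h_{ZQ}(x)\le 1$. Your construction $Z=s^*A$ with $s^*=\max\{s\ge 1: sA\in L\}$ (and the separate case $A=o$) makes explicit the decomposition that the paper attributes to star-shapedness, and it correctly shows that compactness of $L$ alone is enough.
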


\begin{proof}
    Clearly,
    \[
     \bigcap_{A\in L}(AQ)^{\circ} \subseteq \bigcap_{Z\in \partial L}(ZQ)^{\circ},
    \]
    so it remains to show the opposite set inclusion. Indeed, let $x \in \bigcap_{Z\in \partial L}(Z Q)^{\circ}$. Then, $x \in (ZQ)^{\circ}$ for all $Z\in \partial L$, i.e., by definition of polarity, $h_{ZQ}(x) \leq 1,\; \forall Z\in \partial L$. Since $L$ is star-shaped with respect to the origin, for every $A\in L,$ there exists $\lambda \in [0,1]$ and $Z\in \partial L$ such that $A=\lambda Z$. Consequently, we have 
    \[
    h_{AQ}(x) = h_{(\lambda Z)Q}(x) \leq \lambda \leq 1,
    \]
    i.e., $x \in (AQ)^\circ$ for all $ A\in L$. Therefore, $x \in \bigcap_{A\in L}(AQ)^{\circ}$.
\end{proof}

    While not strictly necessary for our investigations, it is interesting to note that the gauge $\|\cdot\|_{K^{\star,Q}}$ is actually a (pseudo)-operator norm when $Q$ contains the origin in its interior. Indeed, in this instance, we can write $h_Q = \|\cdot\|_{Q^\circ}$. Then, for every $A\in M_{n,m}(\R)$, denote its columns by $a_i$, in which case $A^tv = \left(\langle a_i,v\rangle\right)_{i=1}^m\in \R^m$. We deduce
\begin{align*}
    \|A\|_{K^{\star,Q}}&= \sup_{\|v\|_K \leq 1}\big\|\left(\langle a_i,v\rangle\right)_{i=1}^m\big\|_{Q^\circ}=\sup_{\|v\|_K \leq 1}\sup_{\mathcal{E}\in Q}\sum_{i=1}^m\varepsilon_i\langle a_i,v \rangle
    \\
    &=\sup_{\mathcal{E}\in Q}\sup_{\|v\|_K \leq 1}\left\langle\sum_{i=1}^m \varepsilon_i a_i,v\right\rangle = \sup_{\mathcal{E}\in Q}\left\|\sum_{i=1}^m \varepsilon_i a_i \right\|_{K^\circ},
\end{align*}
where $\mathcal{E}=(\varepsilon_1,\dots,\varepsilon_m)$.

    Finally, we would like to highlight a particular choice of $Q$ which has appeared implicitly in the literature. Fix $p\geq 1, q=\frac{p}{p-1}$ and choose, for $i=1,\dots,m$, $c_i>0$ and define the $n\times n$ matrix $T=\operatorname{diag}\left(c_i^\frac{1}{p}\right)$. Then,
    \[
    h_{TB_q^n}(y)=\left(\sum_{i=1}^mc_i|y_i|^p\right)^\frac{1}{p}, \quad y \in \R^m.
    \]
This observation yields
    \begin{equation}
    \label{eq:generalized_lp}
    h_{TB_q^n}(A^tv)=\left(\sum_{i=1}^mc_i|\langle a_i,v\rangle|^p\right)^\frac{1}{p}, \qquad A=(a_i)_{i=1}^m,\,\, a_i,v\in \R^n.
    \end{equation}
    In \cite{ABC26}, they considered, for $A\in M_{n,m}(\R)$ fixed with columns $a_i\in \mathbb{S}^{n-1}$, the generalized $\ell_p$-balls given as the unit ball of the norm $\|v\|=h_{TB_q^n}(A^tv)$. Such sets are also realized as $L_p$ projection bodies of polytopes.

\subsection{Geometric Inequalities} The following fact compares the size of $K^{\star,Q}$ and $K^\circ$. We emphasize that no assumptions about the center of mass are needed on any of the bodies.

\begin{proposition}\label{p:PPIinfty}
Let $n,m \in \N$. Then, for any $Q \in \conbodo[m]$, and any compact set $K\subset \R^n$ with positive volume such that $o\in \operatorname{int}\operatorname{conv}(K)$,
\[\Vol(K^{\star,Q}) \vol(K^\circ)^{-m} \leq \Vol((\B)^{\star,Q}) \vol(\B)^{-m}.\]

 For a fixed $Q$, there is equality when $K=E$ for some centered ellipsoid $E\subset\R^n$ up to removal of null sets. For a fixed $K$, there is equality when $Q=[0,1]^m$.
\end{proposition}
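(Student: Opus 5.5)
Since $K^{\star,Q}=\{A:AQ\subseteq K^\circ\}$ depends on $K$ only through $P:=K^\circ$, and $K^\circ=(\operatorname{conv}(K\cup\{o\}))^\circ$, I will reduce to a convex body $P$ with $o\in\operatorname{int}P$. Set
\[
\Phi(P):=\Vol(\{A\in M_{n,m}(\R): AQ\subseteq P\}).
\]
The claim is then $\Phi(P)\,\vol(P)^{-m}\le \Phi(\B)\,\omega_n^{-m}$. Two scaling facts are immediate. First, $\{A:AQ\subseteq TP\}=T\{A:AQ\subseteq P\}$ and $A\mapsto TA$ has Jacobian $|\det T|^m$ on $M_{n,m}(\R)$, so $\Phi(TP)=|\det T|^m\Phi(P)$ for $T\in GL_n$. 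Hence the ratio $\Phi(P)\vol(P)^{-m}$ is $GL_n$-invariant, which gives the equality case for centered ellipsoids. Second, in particular $\Phi(r\B)=r^{nm}\Phi(\B)$. So it suffices to show that $\Phi$ does not decrease under Steiner symmetrization of $P$. Steiner symmetrization preserves $\vol(P)$, and a sequence of symmetrizations converges in the Hausdorff metric to the ball $r\B$ of the same volume. Upper semicontinuity of $\Phi$ along Hausdorff-convergent sequences (the sets $\{A:AQ\subseteq P\}$ are closed and vary upper semicontinuously with $P$) then gives $\Phi(P)\le \Phi(r\B)=r^{nm}\Phi(\B)=\vol(P)^m\omega_n^{-m}\Phi(\B)$.

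For the symmetrization step, fix $u\in\s$ and write each $A$ uniquely as $A=A'+u\otimes t$, with the columns of $A'$ in $u^\perp$ and $t\in\R^m$. For $x'\in u^\perp$, write the fiber of $P$ over $x'$ as $\{x'+su: \ell_P(x')\le s\le r_P(x')\}$; here $\ell_P$ is convex and $r_P$ is concave. Since $(A'+u\otimes t)q=A'q+\langle t,q\rangle u$, the condition $AQ\subseteq P$ becomes
\[
T_P(A'):=\{t\in\R^m:\ \ell_P(A'q)\le \langle t,q\rangle\le r_P(A'q)\ \ \forall q\in Q\}\ni t .
\]
Each $T_P(A')$ is convex, and by Fubini $\Phi(P)=\int \vol[m](T_P(A'))\,dA'$. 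For the symmetral $S_uP$ one has $r_{S}=-\ell_S=\tfrac12(r_P-\ell_P)$.

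The key inclusion I would prove is
\[
\tfrac12\bigl(T_P(A')-T_P(-A')\bigr)\subseteq T_{S_uP}(A').
\]
Take $t_1\in T_P(A')$ and $t_2\in T_P(-A')$. Since $-A'q=A'(-q)$ gives nothing directly (Q need not be symmetric), I would instead use reflection through $u^\perp$ composed with $x'\mapsto -x'$, i.e.\ work with $\ell_P(-A'q)$ and $r_P(-A'q)$ and the origin-reflected body $-P$, checking that the constraints combine by averaging, exactly as in the classical proof that Steiner symmetrization increases the volume of polar bodies (Meyer–Pajor). Then Brunn–Minkowski in $\R^m$ gives
\[
\vol[m](T_S(A'))\ge \vol[m](T_P(A'))^{1/2}\vol[m](T_P(-A'))^{1/2}.
\]
Integrating over $A'$ and using AM–GM together with the symmetry $A'\mapsto -A'$ of Lebesgue measure yields $\Phi(S_uP)\ge\Phi(P)$.

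The main obstacle is precisely this fiber inclusion when $Q$ is not origin-symmetric: the pairing of $A'$ with $-A'$ must be arranged so that the averaged constraints read $\langle t,q\rangle\in[-\tfrac12(r_P-\ell_P),\tfrac12(r_P-\ell_P)](A'q)$. I would first try the pairing above, and if needed replace $-A'$ by the matrix obtained from reflecting $P$ across $u^\perp$. Finally, for the equality case $Q=[0,1]^m$ with $K$ fixed, I would verify directly that $\{A: A[0,1]^m\subseteq P\}$ has volume proportional to $\vol(P)^m$ for every convex $P$. If that fails, I would check whether the statement intends something narrower there, since this claim does not follow from the symmetrization argument.
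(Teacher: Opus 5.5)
Your framework is sound: reducing to $P=K^\circ$, using $GL_n$-invariance for the ellipsoid case, Steiner-symmetrizing $P$, and passing to the limit. The gap is the symmetrization step, which fails as written in two places.

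First, the inclusion $\tfrac12\bigl(T_P(A')-T_P(-A')\bigr)\subseteq T_{S_uP}(A')$ is false in general. For $t_1\in T_P(A')$ and $t_2\in T_P(-A')$ you only get $\tfrac12\langle t_1-t_2,q\rangle\le\tfrac12\bigl(r_P(A'q)-\ell_P(-A'q)\bigr)$. Bounding this by $\tfrac12(r_P-\ell_P)(A'q)$ would require $\ell_P(-x')\ge\ell_P(x')$ (and likewise $r_P(-x')\le r_P(x')$), which is a symmetry $P$ does not have.

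Second, even granting $\vol[m](T_{S_uP}(A'))\ge\vol[m](T_P(A'))^{1/2}\vol[m](T_P(-A'))^{1/2}$, AM--GM points the wrong way. It gives $\int\sqrt{a(A')a(-A')}\,dA'\le\int a(A')\,dA'$, so nothing follows about $\Phi(S_uP)$ versus $\Phi(P)$.

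Meyer--Pajor is the wrong model here. Their pairing $s\leftrightarrow -s$ is needed when one symmetrizes $K$ and must control $K^\circ$; you are symmetrizing the containing body $P$ itself. The repair is to pair each fiber with itself. If $t_1,t_2\in T_P(A')$, then $A'Q\subseteq P|u^\perp=(S_uP)|u^\perp$ and $\bigl|\tfrac12\langle t_1-t_2,q\rangle\bigr|\le\tfrac12(r_P-\ell_P)(A'q)$ for all $q\in Q$. Hence $\tfrac12T_P(A')+\tfrac12\bigl(-T_P(A')\bigr)\subseteq T_{S_uP}(A')$, and Brunn--Minkowski gives $\vol[m](T_{S_uP}(A'))\ge\vol[m](T_P(A'))$ for every $A'$. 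Fubini then yields $\Phi(S_uP)\ge\Phi(P)$, with no assumption on $Q$ and no centering.

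For the limit, note that $P_k\to r\B$ forces $P_k\subseteq(r+\varepsilon)\B$ eventually. Thus $\Phi(P)\le\Phi(P_k)\le(r+\varepsilon)^{nm}\Phi(\B)$, and letting $\varepsilon\to0$ finishes.

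Repaired this way, your argument is a self-contained proof that genuinely differs from the paper's. The paper treats $m=1$ via the outer-radius inclusion $K^{\star,Q}\subseteq q_0^{-1}K^\circ$. For $m\ge 2$ it simply cites \cite[Corollary 4.5]{HLPRY25_2}, a limiting argument from an $L^p$ framework, and passes from $K$ to $\operatorname{conv}(K)$ via Proposition~\ref{p:dualities}(6). Your route additionally shows that $\Phi$ is monotone under Steiner symmetrization.

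On the last claim, your suspicion is correct, and you should not try to prove it. Take $n=1$, $m=2$ and $P=K^\circ=[-\alpha,\beta]$. Then $A[0,1]^2=\bigl[-(a_1)_{-}-(a_2)_{-},\,(a_1)_{+}+(a_2)_{+}\bigr]$, so $\Phi(P)=\tfrac12(\alpha+\beta)^2+\alpha\beta$. Hence $\Phi(P)\vol[1](P)^{-2}=\tfrac12+\alpha\beta(\alpha+\beta)^{-2}$, which is strictly below the value $\tfrac34$ attained by $\B[1]$ whenever $\alpha\neq\beta$.

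The paper's justification, that $K^{\star,[0,1]^m}=(K^\circ)^m$, is incorrect. Already $A=(1,1)$ has both columns in $[-1,1]$, while $A[0,1]^2=[0,2]$. The identity $K^{\star,Q}=(K^\circ)^m$, and hence equality for every fixed $K$, holds instead for the simplex $Q=\operatorname{conv}\{o,e_1,\dots,e_m\}$. There $AQ=\operatorname{conv}\{o,a_1,\dots,a_m\}\subseteq K^\circ$ exactly when every column $a_i$ lies in $K^\circ$.
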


\begin{proof}
This inequality is trivial when $m=1$. Indeed, let $R(Q)=\max_{q\in Q}|q|$ be the outer-radius of $Q$. There exists $q_0\in Q$ where $|q_0|=R(Q)$, since $Q$ is compact. If $x\in K^{\star,Q}$, then $q_0x\in K^\circ$. Hence, $K^{\star,Q}\subseteq q_0^{-1}K^\circ$. Therefore, $\vol(K^{\star,Q})\leq R(Q)^{-n}\vol(K^\circ)$. But, $(\B)^{\star,Q}=R(Q)^{-1}\B$, and we conclude by taking volumes of this equality.

    For $m\geq 2$, the case when $K$ is convex body containing the origin in its interior was proven via a limiting argument in \cite[Corollary 4.5]{HLPRY25_2} from an $L^p$ framework. The case for general $K$ follows from Proposition~\ref{p:dualities}, item (6), by replacing $K$ with $\operatorname{conv} \left(K\right)$.

    The fact that $K=E$ for a centered ellipsoid $E$ (up to removal of null sets) yields equality is due to the affine invariance of the inequality. Finally, notice that $K^{\circ,[0,1]^m}=(K^\circ)^m$, and therefore, this choice of $Q$ gives equality, namely that both sides are identically $1$.
\end{proof}

As a consequence of Proposition~\ref{p:PPIinfty}, we obtain a direct proof of Theorem~\ref{t:hiBS_2}

\begin{proof}[Proof of Theorem~\ref{t:hiBS_2}]
Observe we can write Proposition~\ref{p:PPIinfty} as
\[
\Vol(K^{\star,Q})\vol(K)^{m}  \leq \Vol((\B)^{\star,Q}) \vol(\B)^{m}\left(\frac{\vol(K)\vol(K^\circ)}{\vol(\B)^2}\right)^{m},
\]
with equality when $K$ is a centered ellipsoid. We conclude by using \eqref{eq:BS}.
\end{proof}

In the next lemma, we prove Theorem~\ref{t:hiBS} by showing that it is equivalent to Theorem~\ref{t:hiBS_2}.

\begin{lemma}
\label{l:equiv}
    Let $L\subset M_{n,m}(\R)$ and $K\subset \R^n$ be compact sets of positive volume whose convex hulls contain the (appropriate) origin in their interiors. Then, 
    \[
    \vol[nm](K^{\star,Q})\vol[n](K)^m \leq \vol[nm](K^{\star,Q})\vol[n]((K^{\star,Q})_{\circ,Q})^m 
    \] 
    and 
    \[ 
    \vol[nm](L)\vol[n](L_{\circ,Q})^m \leq \vol[nm]((L_{\circ,Q})^{\star,Q})\vol[n](L_{\circ,Q})^m,
    \]
    with equality only for invariant sets (up to sets of Lebesgue measure zero). 
    
    In particular, the following two statements are equivalent:
    \begin{enumerate}
        \item if $K$ or $K^\circ$, has center of mass at the origin, then
    \[
    \vol[nm](K^{\star,Q})\vol[n](K)^m \leq \vol[nm]((\B)^{\star,Q})\vol[n](\B)^m 
    \]
    with equality if and only if $K=E$ for a centered ellipsoid $E\subset\R^n$ up to removal of null sets; 
    \item  if $L_{\circ,Q}$ or $(L_{\circ,Q})^\circ$ has center of mass at the origin, then
    \[
    \vol[nm](L)\vol[n](L_{\circ,Q})^m \leq \vol[nm]((\B)^{\star,Q})\vol[n](\B)^m
    \]
    with equality if and only if $L=E^{\star,Q}$ for a centered ellipsoid $E\subset \R^n$ up to removal of null sets.
    \end{enumerate}
\end{lemma}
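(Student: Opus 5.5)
The two displayed inequalities come from the inclusion $A\subseteq (A^{\circ,c})^{\star,c}$ of Proposition~\ref{p:dualities}(3), applied to the pair of spaces $X=M_{n,m}(\R)$, $Y=\R^n$ with cost $c(A,v)=-h_Q(A^tv)$. First I will check that the abstract polars coincide with the concrete ones. For $K\subset\R^n$ one has $A\in K^{\star,Q}$ iff $AQ\subseteq K^\circ$ iff $h_Q(A^tv)\le 1$ for all $v\in K$, which is the abstract star polar. Similarly, $v\in L_{\circ,Q}$ iff $h_{AQ}(v)=h_Q(A^tv)\le1$ for all $A\in L$.

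With this identification, item (3) gives $K\subseteq (K^{\star,Q})_{\circ,Q}$ and $L\subseteq (L_{\circ,Q})^{\star,Q}$. Taking volumes and multiplying by the common factor $\vol[nm](K^{\star,Q})$, respectively $\vol[n](L_{\circ,Q})^m$, yields both inequalities. Equality forces equality of volumes in a set inclusion, so the two sets agree up to null sets. This happens, for example, when $K$ is already of the form $B_{\circ,Q}$, i.e.\ polar-invariant. All factors are finite and positive because of the interior-origin hypotheses: $o\in\operatorname{int}\operatorname{conv}(K)$ makes $K^\circ$ bounded, and $K^{\star,Q}$ lies inside a product-type bound.

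For the equivalence, I will show (1)$\Rightarrow$(2) by setting $K=L_{\circ,Q}$. Then the centering hypothesis of (2) is exactly that of (1). The second inequality followed by (1) gives
\[
\vol[nm](L)\vol[n](L_{\circ,Q})^m\le \vol[nm](K^{\star,Q})\vol[n](K)^m\le \vol[nm]((\B)^{\star,Q})\vol[n](\B)^m.
\]
Equality forces $K=E$, a centered ellipsoid, and $L=(L_{\circ,Q})^{\star,Q}=E^{\star,Q}$ up to null sets. Conversely, if $L=E^{\star,Q}$, then Proposition~\ref{p:dualities}(7) gives $L_{\circ,Q}=E$, and equality holds.

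For (2)$\Rightarrow$(1), I will set $L=K^{\star,Q}$ and $K'=L_{\circ,Q}=(K^{\star,Q})_{\circ,Q}\supseteq K$. Here I must transfer the centering hypothesis, which is the main obstacle. My plan is to apply (2) not with the centering of $K'$ directly but through the chain
\[
\vol[nm](K^{\star,Q})\vol(K)^m\le\vol[nm](L)\vol(L_{\circ,Q})^m .
\]
Using Proposition~\ref{p:dualities}(4), $L$ equals $(L_{\circ,Q})^{\star,Q}=K'^{\star,Q}$. However, the centering of $K$ does not obviously pass to $K'$. So I would instead argue directly: replacing $K$ by $\operatorname{conv}K$ does not change $K^{\star,Q}$, and in the origin-symmetric case $K'=K$ by item (7). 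In general I expect to fall back on the fact that the proof of Theorem~\ref{t:hiBS_2} via Proposition~\ref{p:PPIinfty} gives (1) outright. The equivalence then only needs (1)$\Rightarrow$(2) plus the observation that (2) applied to sets of the form $L=K^{\star,Q}$ is implied by (1) whenever $K'$ satisfies the centering hypothesis.

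Equality in (1) follows from the equality case of (2) together with equality in the inclusion $K\subseteq K'$, which gives $K=E$ up to null sets.
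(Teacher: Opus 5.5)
Your handling of the two displayed inequalities and of (1)$\Rightarrow$(2) matches the paper's argument. You identify $K^{\star,Q}$ and $L_{\circ,Q}$ with the abstract polars for $c(A,v)=-h_Q(A^tv)$, take volumes in the inclusions of Proposition~\ref{p:dualities}(3), and read off equality from items (4)--(5). Item (7) then gives the converse $L=E^{\star,Q}\Rightarrow L_{\circ,Q}=E$. Setting $K=L_{\circ,Q}$ transfers the centering hypothesis verbatim. So this direction is complete, and it is the only one the paper needs to deduce Theorem~\ref{t:hiBS}.

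The gap is in (2)$\Rightarrow$(1), and you located it correctly. Applying (2) to $L=K^{\star,Q}$ requires $K'=(K^{\star,Q})_{\circ,Q}$ or $K'^\circ$ to be centered, but the hypothesis is on $K$. Falling back on Theorem~\ref{t:hiBS_2} makes the equivalence hold only because both statements are already theorems; it is not a derivation from (2). Your final paragraph also obtains the equality case of (1) from that of (2) for $L=K^{\star,Q}$. That silently uses exactly the centering of $K'$ you had just conceded, so it has to be removed.

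The paper's sketch (reduce to convex $K$ by item (6), then take volumes) does not resolve this either. Passing to $\operatorname{conv}K$ destroys the centering of $K$, as the paper's own introduction remarks for the classical polarity. Moreover, for non-symmetric $Q$ with $o\in\operatorname{int}Q$ one has $K'\supsetneq K$ even for convex $K$. For instance, $m=1$, $Q=[-\tfrac12,1]$ gives $K'=\operatorname{conv}(K\cup-\tfrac12K)$, and nothing forces $K'$ or $K'^\circ$ to be centered.

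Duality alone does give the following:
\begin{itemize}
\item If $Q=-Q$, every $(AQ)^\circ$ is origin-symmetric, so $K'$ is an origin-symmetric convex body. Then (2) applied to $L=K^{\star,Q}$, combined with $\vol(K)\le\vol(K')$, yields (1) for every admissible $K$, with no centering assumption at all. Equality forces $K^{\star,Q}=E^{\star,Q}$, hence $K'=E$ by item (7), and so $K=E$ up to null sets.
\item If $o\in\partial Q$, choose $u$ with $h_Q(u)=1$ and $h_Q(-u)=0$. For $v\in K^\circ$, the matrices $v\otimes u$ satisfy $(v\otimes u)Q=[0,1]v\subseteq K^\circ$, so $K'^\circ\supseteq K^\circ$; the reverse inclusion is automatic. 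Hence the case ``$K^\circ$ centered'' transfers.
\end{itemize}
In the remaining cases, only the direct proof of Theorem~\ref{t:hiBS_2} gives (1). These are: $o\in\partial Q$ with only $K$ (not $K^\circ$) centered, and in general $Q\neq-Q$ with $o\in\operatorname{int}Q$.
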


\begin{proof}
First, if $K$ is not convex, then use that $K\subset K^{\circ\circ}= \operatorname{conv}(K)$ and Proposition~\ref{p:dualities} item (6) to reduce to convex $K$. Then, the claim follows from again using Proposition~\ref{p:dualities} by taking the volume throughout the inclusion in item (3) and utilizing items (4) and (5) for the equality characterization.
\end{proof}

Finally, we have the following corollary of Theorem~\ref{t:hiBS_2}, which maximizes over $Q$.
\begin{proposition}
    Fix $L\in\conbodo[n]$ and let $Q\in\conbodo[m]$ be so that $\vol[m](Q)=\omega_m$. Suppose either $Q$ or $Q^\circ$ has center of mass at the origin. Then,
    \[
    \vol[nm](L^{\star,Q}) \leq \vol[nm](L^{\star,B_2^m}).
    \]
\end{proposition}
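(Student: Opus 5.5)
The plan is to exploit a hidden symmetry between the roles of $L$ and $Q$ in the matrix polarity, and then apply Theorem~\ref{t:hiBS_2} with the dimensions $n$ and $m$ interchanged.

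\textbf{Step 1 (transposition identity).} For $A\in M_{n,m}(\R)$ we have $A\in L^{\star,Q}$ if and only if $AQ\subseteq L^\circ$. By definition of the polar, this means $\langle Aq,v\rangle\le 1$ for all $q\in Q$ and $v\in L$. Since $\langle Aq,v\rangle=\langle q,A^tv\rangle$, this is equivalent to $A^tL\subseteq Q^\circ$. Now regard $Q\subset\R^m$ as the set and $L\in\conbodo[n]$ as the "shape body". Then the matrix polarity of \eqref{eq:raised_polarity_set}, with $n$ and $m$ swapped, gives
\[
Q^{\star,L}=\{B\in M_{m,n}(\R): BL\subseteq Q^\circ\}.
\]
Hence $L^{\star,Q}$ is exactly the image of $Q^{\star,L}$ under the transposition $M_{m,n}(\R)\to M_{n,m}(\R)$. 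Transposition is a coordinate permutation of $\R^{nm}$, so it preserves Lebesgue measure, and therefore
\[
\vol[nm](L^{\star,Q})=\vol[nm](Q^{\star,L}).
\]
The same argument with $Q$ replaced by $B_2^m$ gives $\vol[nm](L^{\star,B_2^m})=\vol[nm]((B_2^m)^{\star,L})$.

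\textbf{Step 2 (apply Theorem~\ref{t:hiBS_2} in $\R^m$).} We use Theorem~\ref{t:hiBS_2} with ambient space $\R^m$ in place of $\R^n$, with $n$ playing the role of the number of columns, with $L\in\conbodo[n]$ as the fixed convex body, and with $K=Q$. Its hypotheses hold: $Q$ is compact with positive volume, and $Q$ or $Q^\circ$ has center of mass at the origin. The theorem then yields
\[
\vol[nm](Q^{\star,L})\,\vol[m](Q)^n\le \vol[m](B_2^m)^n\,\vol[nm]((B_2^m)^{\star,L}).
\]
Inserting the normalization $\vol[m](Q)=\omega_m=\vol[m](B_2^m)$ cancels the factors $\omega_m^n$, leaving $\vol[nm](Q^{\star,L})\le\vol[nm]((B_2^m)^{\star,L})$.

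\textbf{Step 3 (conclude).} Combining this with the two volume identities from Step 1 gives $\vol[nm](L^{\star,Q})\le\vol[nm](L^{\star,B_2^m})$. As a byproduct, the equality case of Theorem~\ref{t:hiBS_2} shows that equality holds exactly when $Q$ is a centered ellipsoid of volume $\omega_m$.

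The main obstacle is Step 1: the duality has to be set up carefully so that the transposed set is literally the raised polar in the swapped setting, with $L$ a legitimate element of $\conbodo[n]$ playing the role of $Q$. The dimension bookkeeping must also be checked, so that the exponent $n$ on $\vol[m](Q)$ and the factor $\omega_m^n$ match. Everything else is a direct citation.
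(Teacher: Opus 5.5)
Your proposal is correct and follows the paper's proof essentially verbatim. The paper also derives $(L^{\star,Q})^t=Q^{\star,L}$ from $\langle Aq,\xi\rangle=\langle A^t\xi,q\rangle$, applies Theorem~\ref{t:hiBS_2} with $n$ and $m$ interchanged to $Q\subset\R^m$ with auxiliary body $L$, cancels $\vol[m](Q)^n=\omega_m^n$, and transposes back; your extra remark on the equality case is not part of the statement but does follow from the equality clause of Theorem~\ref{t:hiBS_2}.
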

\begin{proof}
The assertion follows from Theorem~\ref{t:hiBS_2} by interchanging the roles of the two dimensions. Indeed, for every $A\in M_{n,m}(\mathbb R))$, we have
\begin{align*}
A\in L^{\star,Q}&\Longleftrightarrow AQ\subset L^\circ\\
&\Longleftrightarrow \langle Aq,\xi\rangle\leq 1 \quad\text{for every }q\in Q,\ \xi\in L,\\
&\Longleftrightarrow \langle A^t\xi,q\rangle\leq 1 \quad\text{for every }\xi\in L,\ q\in Q,\\
&\Longleftrightarrow A^tL\subset Q^\circ\\
&\Longleftrightarrow A^t\in Q^{\star,L}.
\end{align*}

Consequently, $(L^{\star,Q})^t=Q^{\star,L}.$ Since transposition is an isometry between $M_{n,m}(\mathbb R)$ and $M_{m,n}(\mathbb R)$, it follows that
\[
\vol[nm](L^{\star,Q})=\vol[mn](Q^{\star,L}).
\]

We now apply Theorem~\ref{t:hiBS_2}, with $n$ and $m$ interchanged, to the body $Q\subset\mathbb R^m$ and the auxiliary body $L\subset\mathbb R^n$. We deduce
\[
\vol[mn](Q^{\star,L})\vol[m](Q)^n
\leq\vol[m](B_2^m)^n\vol[mn]\bigl((B_2^m)^{\star,L}\bigr).
\]
Using $\vol[m](Q)=\omega_m$, we obtain $\vol[mn](Q^{\star,L})\leq\vol[mn]\bigl((B_2^m)^{\star,L}\bigr).$ Finally, applying the transpose relation once more gives $\vol[nm](L^{\star,Q})\leq\vol[nm](L^{\star,B_2^m}),$ as desired.
\end{proof}

\subsection{Functional Polarity}
We now move onto considering the associated functional Satanl\'o-type inequality. 

\begin{proof}[Proof of Theorem~\ref{t:main}]
    We apply Lemma~\ref{l:general_homogeneous_santalo} with $\nu = \vol$ and $d=nm$, so that $\R^d = M_{n,m}(\R)$, and we choose $\mu = \Vol$. The measure $\mu$ is homogeneous of degree $\alpha = nm$. We choose the weight $\lambda = \frac{m}{m+1}$. 

    We define the continuous cost function $c: \R^n \times M_{n,m}(\R) \to \R$ by
    \[
        c(v,A) = -h_Q(A^t v).
    \]
    Because $Q \in \conbodo[m]$, $-c(v,A) = h_Q(A^t v) \geq 0$, satisfying Property (2) of Lemma~\ref{l:general_homogeneous_santalo}. Furthermore, $c$ is positively 1-homogeneous in the second variable $A$.

    For Property (3), suppose a bounded set $K \subset \R^n$ fails the non-degeneracy condition. By the compactness of the unit sphere in $M_{n,m}(\R)$, there must exist a non-zero matrix $A$ such that $h_Q(A^tv) = 0$ for all $v \in K$. Now if $Q$ contains the origin in its \textit{interior}, this implies $A^t v = 0$ for all $v \in K$. Thus, $K$ is contained in $\ker(A^t)$, which is a proper subspace of $\R^n$ and therefore has volume zero.

    In the case $Q$ has the origin on its boundary, we have to work a bit harder. Using the same non-zero matrix $A$, we have from the definition of the support function that $\langle y, A^tv \rangle = \langle Ay, v \rangle \leq 0$ for all $y \in Q$ and $v \in K$. Because $A \neq o$ and $Q$ is full-dimensional, there exists some $y \in Q$ such that $u = Ay \neq o$. This forces $\langle u, v \rangle \leq 0$ for all $v \in K$, meaning $K$ is contained in a half-space passing through the origin. However, $K \in \mathcal{C}^n$ has positive volume and its center of mass at the origin, which means it cannot be contained in a half-space. Thus, $K$ must have volume zero.

    By Theorem~\ref{t:hiBS_2}, every $K \in \mathcal{C}^n$ satisfies the geometric condition from \eqref{eq:gen_geom_hypo}:
    \[
        \vol(K)^\frac{m}{m+1} \Vol(K^{\star,Q})^\frac{1}{m+1} \leq \vol(\B)^\frac{m}{m+1} \Vol((\B)^{\star,Q})^\frac{1}{m+1} := C_{\nu,\mu}.
    \]

    We may assume the integrals are finite and non-zero. Let $F(v) = f(v)^{\frac{1}{m}}$. Applying Lemma~\ref{l:general_homogeneous_santalo}, we obtain
    \begin{equation}
    \label{eq:matrix_1D_bound}
        \left(\int_{\R^n} F(v) dv \right)^{\frac{m}{m+1}} \left(\int_{M_{n,m}(\R)} g(A) dA \right)^{\frac{1}{m+1}} \leq C_{\nu,\mu} \frac{nm}{m+1} \int_0^\infty \Omega(r) r^{\frac{nm}{m+1}-1} dr.
    \end{equation}

    To achieve the final formula, we evaluate the integral on the right-hand side using polar coordinate integration. In $\R^n$, substituting $r = t^{\frac{m+1}{m}}$ yields:
    \[
        \int_{\R^n} \!\!\!\Omega\left(|v|^{\frac{m+1}{m}}\right) dv = n\vol(\B)\int_0^\infty \!\!\Omega\left(t^{\frac{m+1}{m}}\right) t^{n-1} dt = \frac{nm}{m+1}\vol(\B) \int_0^\infty\!\!\! \Omega(r) r^{\frac{nm}{m+1}-1} dr.
    \]
    Similarly, integrating in $M_{n,m}(\R)$ and substituting $r = t^{m+1}$ yields:
    \begin{align*}
        \int_{M_{n,m}(\R)} \Omega\left(\|A\|_{(\B)^{\star,Q}}^{m+1}\right) dA &= nm\Vol((\B)^{\star,Q})\int_0^\infty \Omega\left(t^{m+1}\right) t^{nm-1} dt \\
        &= \frac{nm}{m+1}\Vol((\B)^{\star,Q}) \int_0^\infty \Omega(r) r^{\frac{nm}{m+1}-1} dr.
    \end{align*}
    Raising \eqref{eq:matrix_1D_bound} to the $(m+1)$ power therefore yields
    \begin{align*}
        &\left(\int_{\R^n} f(v)^{\frac{1}{m}} dv \right)^m \left(\int_{M_{n,m}(\R)} g(A) dA \right) \leq \left( C_{\nu,\mu}\frac{nm}{m+1} \int_0^\infty \Omega(r) r^{\frac{nm}{m+1}-1} dr \right)^{m+1} \\
        &= \left( \vol(\B)^m \Vol((\B)^{\star,Q}) \right) \left( \frac{\int_{\R^n} \Omega\left(|v|^{\frac{m+1}{m}}\right) dv}{\vol(\B)} \right)^m \left( \frac{\int_{M_{n,m}(\R)} \Omega\left(\|A\|_{(\B)^{\star,Q}}^{m+1}\right) dA}{\Vol((\B)^{\star,Q})} \right) \\
        &= \left(\int_{\R^n} \Omega\left(|v|^{\frac{m+1}{m}}\right) dv\right)^m \left(\int_{M_{n,m}(\R)} \Omega\left(\|A\|_{(\B)^{\star,Q}}^{m+1}\right) dA\right).
    \end{align*}
    The equality conditions follow immediately. We conclude.
\end{proof}

We take a moment to list several corollaries of Theorem~\ref{t:main}, highlighting relations between \eqref{eq:lower_Q_polar} and the classical polarity \eqref{eq:classical_polarity}. For $A\in M_{n,m}(\R)$, let $a_i$ be its column vectors, $i=1,\dots,m$, so that for $v\in\R^n$, $A^tv=(\langle a_1,v\rangle,\ldots,\langle a_m,v\rangle)\in\R^m$. Next, make the choice of $Q=B_\infty^m$, so that $h_Q(\cdot)=\|\cdot\|_{B_1^m}$. In fact, we will consider the more general bodies from \eqref{eq:generalized_lp}, choosing $Q$ a linear image of $B_\infty^m$, so that
\[
h_Q(v)=h_{TB_\infty^m}(v)=\sum_{i=1}^mc_i|v_i|, \qquad T=\operatorname{diag}(c_i),\, c_i>0.
\]
Then, we see that $$e^{-h_Q(A^tv)}=\prod_{i=1}^me^{-c_i|\langle a_i,v \rangle|}.$$

With this observation in mind, we consider the following immediate corollary to Theorem~\ref{t:main}, when $g$ is a product of $m$ functions on $\R^n$ and $Q=TB_\infty^m$.
\begin{corollary}
\label{c:BS_multiple}
    Fix $m\in \N$ and a measurable, non-increasing function $\Omega:\R_+\to\R_+$. For $i=1,\dots,m$, let $c_i>0$. Suppose $f_1,\dots,f_{m+1}:\R^n\to\R_+$ are measurable, even functions that satisfy
    $$\left(\prod_{i=1}^{m}f_i(a_i)\right)\cdot f_{m+1}(v) \leq \Omega^{m+1}\left(\sum_{i=1}^mc_i|\langle a_i,v\rangle|\right) \quad \text{for all } a_1,\dots,a_m,v\in \R^n.$$ 
    Then, we have
    \begin{equation}
  \begin{split}
    &\prod_{i=1}^m\left(\int_{\R^n}f_i(x)dx\right)\Bigg(\int_{\R^n} f_{m+1}(v)^\frac{1}{m}dv\Bigg)^m
    \\
    &\leq \left(\int_{\R^n}\cdots\int_{\R^n}\Omega\left(\left(\max_{|\xi| \leq 1} \sum_{i=1}^m c_i|\langle a_i, \xi \rangle|\right)^{m+1}\right)da_1\cdots da_m\right)\left(\int_{\R^n}\Omega\left(|v|^\frac{m+1}{m}\right)dv\right)^{m}.
\end{split}
    \end{equation}
    When $\Omega$ is log-concave and integrable over $\R_+$, equality holds when there exists $C>0$ such that $$\prod_{i=1}^mf_i(a_i) = C\cdot \Omega\left(\left(\max_{|\xi| \leq 1} \sum_{i=1}^m c_i|\langle a_i, \xi \rangle|\right)^{m+1}\right) \text{ and }f_{m+1} = \frac{1}{C} \cdot \Omega\left( |\cdot|^{\frac{m+1}{m}}\right)^m.$$
\end{corollary}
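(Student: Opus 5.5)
This follows from Theorem~\ref{t:main} by specializing to $Q=TB_\infty^m$ with $T=\operatorname{diag}(c_i)$, so that $h_Q(A^tv)=\sum_{i=1}^m c_i|\langle a_i,v\rangle|$ for $A=(a_1|\cdots|a_m)$.

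First, define $g:M_{n,m}(\R)\to\R_+$ by $g(A)=\prod_{i=1}^m f_i(a_i)$ and set $f=f_{m+1}$. The pointwise hypothesis of the corollary is then exactly $f(v)g(A)\le \Omega^{m+1}(h_Q(A^tv))$ for all $(A,v)$. By Fubini, $\int_{M_{n,m}(\R)}g(A)\,dA=\prod_{i=1}^m\int_{\R^n}f_i$. Second, by \eqref{eq:polar_ball},
\[
\|A\|_{(\B)^{\star,Q}}=\max_{\xi\in\s}\sum_{i=1}^m c_i|\langle a_i,\xi\rangle|=\max_{|\xi|\le1}\sum_{i=1}^m c_i|\langle a_i,\xi\rangle|.
\]
The last equality holds by homogeneity, since the sum is nonnegative. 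With these substitutions, the right-hand side of Theorem~\ref{t:main} becomes the displayed bound.

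The main obstacle is the hypothesis $f\in\mathcal F_n$ of Theorem~\ref{t:main}: here $f_{m+1}$ is only assumed even. Evenness gives origin-symmetric superlevel sets, hence center of mass at the origin, but these sets need not be compact. I would handle this in one of two ways. The first is to note that, for an even function, the ``or $K^\circ$'' clause and the convex-hull reduction in the proof of Theorem~\ref{t:hiBS_2} still apply to bounded symmetric sets; unbounded level sets produce $M=\infty$ in Proposition~\ref{p:max_cost_generalized} and are harmless. The second is to truncate, replacing $f_{m+1}$ by $f_{m+1}\chi_{RB_2^n}$, whose superlevel sets are bounded and symmetric. This only decreases the left side of the pointwise hypothesis, so the inequality holds for the truncation, and monotone convergence as $R\to\infty$ recovers it for $f_{m+1}$, as in the remark after Theorem~\ref{thm:func_sine_BS}. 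Positive volume of the level sets holds for almost every level whenever $\int f_{m+1}>0$, and the inequality is trivial otherwise. Note also that $Q=TB_\infty^m$ is origin-symmetric and contains $o$ in its interior, so it lies in $\conbodo[m]$ and the nondegeneracy step in the proof of Theorem~\ref{t:main} applies directly.

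For the equality case, take $f_{m+1}=C^{-1}\Omega(|\cdot|^{\frac{m+1}{m}})^m$ and suppose $\prod_i f_i(a_i)=C\,\Omega(\|A\|^{m+1}_{(\B)^{\star,Q}})$. Then $g$ is exactly the extremal function in the equality statement of Theorem~\ref{t:main}, whose equality clause for log-concave $\Omega$ then transfers verbatim. I would check that these choices satisfy the hypothesis via the functional duality identities stated after Theorem~\ref{t:main}, which come from Proposition~\ref{p:polar_gauge}. Both sides can then be computed directly, and they agree because the two integrals factor in the same way.
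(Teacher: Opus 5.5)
Your proposal is correct and follows the paper's route: the paper obtains Corollary~\ref{c:BS_multiple} as an immediate specialization of Theorem~\ref{t:main}, taking $Q=TB_\infty^m$, $g(A)=\prod_{i=1}^m f_i(a_i)$ and $f=f_{m+1}$, and reading off $\|A\|_{(\B)^{\star,Q}}$ from \eqref{eq:polar_ball}. Your extra care over $f_{m+1}\in\mathcal{F}_n$ (closed convex hulls for bounded symmetric level sets, $M=\infty$ for unbounded ones) fills in a point the paper leaves implicit. The one slip is in the equality case: the identities after Theorem~\ref{t:main} cover only $\Omega(t)=e^{-t/(m+1)}$, so for general log-concave $\Omega$ the extremal pair satisfies the hypothesis because $h_Q(A^tv)\le \frac{m}{m+1}|v|^{\frac{m+1}{m}}+\frac{1}{m+1}\|A\|^{m+1}_{(\B)^{\star,Q}}$ (Young's inequality) together with monotonicity and log-concavity of $\Omega$, as in the proof of Corollary~\ref{c:mass_trans}.
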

\noindent Here, we used the fact that $(B_2^n)^{\star, B_\infty^m}$ is the unit ball of the operator norm from $\ell_\infty^m$ to $\ell_2^n$. 

In turn, let $f_1,\dots,f_m:\R^n\to \R_+$ be even, measurable functions, and define $\prod_{i=1}^mf_i:M_{n,m}(\R)\to \R$ by $\left(\prod_{i=1}^mf_i\right)(A)=\prod_{i=1}^mf_i(a_i)$. Then, 
\begin{align*}
    \left(\prod_{i=1}^mf_i\right)^{\circ,{TB_\infty^m}}(v)&= \inf_{A\in M_{n,m}(\R)}\left[\frac{\prod_{i=1}^me^{-c_i|\langle a_i,v \rangle|}}{\prod_{i=1}^mf_i(a_i)}\right]
    \\
    &=  \inf_{\substack{a_i\in \R^n, 
    \\
    i=1,\dots,m}} \prod_{i=1}^m\left(\frac{e^{-|\langle a_i,v \rangle|}}{f_i(a_i)^\frac{1}{c_i}}\right)^{c_i}
    = \prod_{i=1}^m\left(\left(f_i^\frac{1}{c_i}\right)^\circ(v)\right)^{c_i}.
\end{align*}
Therefore, by taking $\Omega(t)=e^{-t/(m+1)}$, we may set
\[
f_{m+1}(v)
=
\prod_{i=1}^m
\left(
\left(f_i^{1/c_i}\right)^\circ(v)
\right)^{c_i}
\]
in Corollary~\ref{c:BS_multiple}.

Similarly, we can replace $Q=B_\infty^m$ with $Q=[0,1]^m$, in which case $h_Q(y)=\sum_{i=1}^m(y_i)_+$ and
\begin{align*}
    \left(\prod_{i=1}^mf_i\right)^{\circ,{[0,1]^m}}\!\!\!\!(v)&= \inf_{A\in M_{n,m}(\R)}\left[\frac{\prod_{i=1}^me^{-\langle a_i,v \rangle_+}}{\prod_{i=1}^mf_i(a_i)}\right] 
    =  \inf_{\substack{a_i\in \R^n, 
    \\
    i=1,\dots,m}} \prod_{i=1}^m\frac{e^{-\langle a_i,v \rangle_+}}{f_i(a_i)}
    = \prod_{i=1}^mf_i^\circ(v),
\end{align*}
for any measurable, even functions $f_i:\R^n\to \R_+$. The corresponding result is as follows; this should be seen as the ``asymmetric'' version of the above.
\begin{corollary}
\label{c:BS_multiple_aysm}
    Fix $m\in \N$ and a measurable, non-increasing function $\Omega:\R_+\to\R_+$. Let $f_1,\dots,f_{m+1}:\R^n\to\R_+$ be measurable, even functions that satisfy
    $$\left(\prod_{i=1}^{m}f_i(a_i)\right)\cdot f_{m+1}(v) \leq \Omega^{m+1}\left(\sum_{i=1}^m\langle a_i,v\rangle_+\right) \quad \text{for all } a_1,\dots,a_m,v\in \R^n.$$ 
    Then, we have
    \begin{equation}
  \begin{split}
    &\prod_{i=1}^m\left(\int_{\R^n}f_i(x)dx\right)\Bigg(\int_{\R^n} f_{m+1}(v)^\frac{1}{m}dv\Bigg)^m
    \\
    &\leq \left(\int_{\R^n}\cdots\int_{\R^n}\Omega\left(\left(\max_{|\xi| \leq 1} \sum_{i=1}^m \langle a_i, \xi \rangle_+\right)^{m+1}\right)da_1\cdots da_m\right)\left(\int_{\R^n}\Omega\left(|v|^\frac{m+1}{m}\right)dv\right)^{m}.
\end{split}
    \end{equation}
    When $\Omega$ is log-concave and integrable over $\R_+$, there is equality when there exists $C>0$ such that $$\prod_{i=1}^mf_i(a_i) = C\cdot \Omega\left(\left(\max_{|\xi| \leq 1} \sum_{i=1}^m \langle a_i, \xi \rangle_+\right)^{m+1}\right) \text{ and }f_{m+1} = \frac{1}{C} \cdot \Omega\left( |\cdot|^{\frac{m+1}{m}}\right)^m.$$
\end{corollary}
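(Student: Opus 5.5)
The plan is to apply Theorem~\ref{t:main} with $Q=[0,1]^m\in\conbodo[m]$, taking $g$ to be the product function on $M_{n,m}(\R)$ and $f=f_{m+1}$, exactly as in Corollary~\ref{c:BS_multiple}. Define $g:M_{n,m}(\R)\to\R_+$ by $g(A)=\prod_{i=1}^m f_i(a_i)$, where $a_i$ are the columns of $A$. This $g$ is measurable, and Fubini gives $\int_{M_{n,m}(\R)} g(A)\,dA=\prod_{i=1}^m\int_{\R^n} f_i$. For $Q=[0,1]^m$ we have
\[
h_Q(A^tv)=\sum_{i=1}^m\langle a_i,v\rangle_+ ,
\]
so the pointwise hypothesis of the corollary is exactly $f(v)g(A)\le\Omega^{m+1}(h_Q(A^tv))$, which is the hypothesis of Theorem~\ref{t:main}. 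By \eqref{eq:polar_ball}, the right-hand side of Theorem~\ref{t:main} becomes
\[
\|A\|_{(\B)^{\star,Q}}=\max_{\xi\in\s}h_Q(A^t\xi)=\max_{|\xi|\le1}\sum_{i=1}^m\langle a_i,\xi\rangle_+ .
\]
Passing from the sphere to the ball is harmless, since the expression is nonnegative and $1$-homogeneous in $\xi$. Writing $dA=da_1\cdots da_m$ then yields the stated inequality, and the equality cases transcribe directly from those of Theorem~\ref{t:main}.

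The main obstacle is that Theorem~\ref{t:main} requires $f\in\mathcal{F}_n$, that is, almost every superlevel set compact with barycenter at the origin. Evenness of $f_{m+1}$ makes every superlevel set origin-symmetric, so its barycenter is $o$ whenever it has finite positive volume. Compactness, however, is not automatic. I would handle this by truncation: replace $f_{m+1}$ by $f_{m+1}^{(k)}=f_{m+1}\,\chi_{kB_2^n}$ (and, if needed, restrict to the Borel regularity of the closed superlevel sets). These truncations remain even, have bounded superlevel sets, still satisfy the pointwise hypothesis since they only decreased, and then let $k\to\infty$ by monotone convergence.

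A residual subtlety is that superlevel sets need only be measurable rather than compact. I would address this either by noting that the proof of Theorem~\ref{t:main} via Lemma~\ref{l:general_homogeneous_santalo} only uses boundedness and the centering through Proposition~\ref{p:max_cost_generalized} and Theorem~\ref{t:hiBS_2}, or by inner-regularity approximation of each bounded symmetric superlevel set by compact symmetric subsets, again closing with monotone convergence. Superlevel sets of measure zero or of infinite measure are trivial cases: the latter forces the right-hand side to be infinite, or is excluded by the truncation.

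Evenness of $f_1,\dots,f_m$ is not needed for this reduction, since $g$ lies in the unrestricted class of Theorem~\ref{t:main}.
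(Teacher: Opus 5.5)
Your proposal is correct and follows the paper's route: you apply Theorem~\ref{t:main} with $Q=[0,1]^m$, $f=f_{m+1}$ and $g(A)=\prod_i f_i(a_i)$, using $h_{[0,1]^m}(A^tv)=\sum_i\langle a_i,v\rangle_+$; the paper treats the corollary as immediate from that theorem. You are more careful than the paper in noting that evenness alone does not place $f_{m+1}$ in $\mathcal{F}_n$. Your truncation handles boundedness. For the remaining compactness issue, the cleanest fix is the identity $K^{\star,Q}=(\overline{K})^{\star,Q}$ together with $\mathrm{vol}_n(K)\le \mathrm{vol}_n(\overline{K})$: these let Theorem~\ref{t:hiBS_2} apply to any bounded symmetric superlevel set through its compact, symmetric closure.
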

\noindent In Corollaries~\ref{c:BS_multiple} and \ref{c:BS_multiple_aysm}, we can replace the assumption that our functions are even by requiring $f_{m+1}\in \mathcal{F}_n$.

There have been other proofs of the Blaschke-Santal\'o inequality, which could be adapted to the matrix setting. For example, if one mimics Ball's proof of \eqref{eq:BS_fun} in the case of even functions, one arrives at
\begin{align*}\left(\frac{\omega_n}{\omega_{nm}^\frac{1}{m}}\int_{0}^\infty \!\!\!\!\vol[nm](\{f > \beta\})^\frac{1}{m}d\beta\right)\left(\int_{\R^{n}}\!\!\!\!f^{\circ,Q}(v)dv\right)
\leq  \left(\int_{M_{n,m}(\R)}\!\!\!\!\!\!\!\!\!\!\!\!\!\!\!\!e^{-\frac{\|A\|^2_{(\B)^{\star, Q}}}{2}}dA\right)^\frac{1}{m}\left(\int_{\R^n}\!\!\!\!\!e^{-\frac{|v|^2}{2}}dv\right),
\end{align*}
and one can consider a variant for the other polarity as well. These inequalities are sharp but less interesting.

\section{Results from Enlargement Inequalities}
\label{sec:iso}

Let $(X,d,\mu)$ be a Polish probability space.  For a Borel set $A\subset X$ and $s\ge0$, set
\[
A_s:=\{x\in X:\ d(x,A)\le s\} \quad \text{and} \quad A_{<s}:=\{x\in X:\ d(x,A)< s\}.
\]

\begin{definition}
\label{d:enlargement}
Consider a family of functions $(R_s)_{s\geq 0}$, where each $R_s:[0,1]\to[0,1]$ is a function that satisfies
\begin{equation}
R_0(v)=v, \   R_s(v)\ge v,
\end{equation}
and the family has the property that $s\mapsto R_s(v)$ is non-decreasing and continuous for every $v \in [0,1]$. 

We say the family $(R_s)_{s\geq 0}$ satisfies an enlargement inequality for $(X,d,\mu)$ if
\begin{equation}
\label{eq:enlarge_ineq}
\mu(A_s)\ge R_s(\mu(A)).
\end{equation}
We call each $R_s$ an enlargement function. Moreover, we associate with $(R_s)_{s\geq 0}$ the function $\Theta: \R_+ \to [0,1/2]$ given by
\begin{equation}
\label{eq:theta_def}
\Theta(s):=\sup_{0\le v\le1}\sqrt{v\bigl(1-R_s(v)\bigr)}.
\end{equation}
Finally, the function $\Theta$ is non-increasing in $s$.
\end{definition}

Throughout this section, we need the following special case of Lemma~\ref{l:transfer} and Definition~\ref{def:cost_sant} suited to this setting. 
\begin{lemma}[Metric form of the Transference Principle]
\label{l:transfer_distance}
Let $(X,d,\mu)$ be a Polish probability space. Let $G:\R_+\to\R_+$ be a continuous, non-increasing function decaying to zero at infinity and let $\mathcal A,\mathcal D\subset \mathcal{B}(X)$. Then, the following are equivalent:
\begin{enumerate}
    \item For all
$A\in\mathcal A$ and $B\in\mathcal D$, one has
\[
    \mu(A)^{\frac12}\mu(B)^{\frac12}
    \leq
    G(d(A,B));
\]
\item For every pair of measurable functions $f \in \mathcal{F}(\mathcal{A})$ and $ g\in \mathcal{F}\left(\mathcal{D}\right)$ and measurable, non-decreasing function $W \colon \R_+  \to \R_+$ that satisfy \begin{align*}
f(x)^\frac{1}{2}g(y)^\frac{1}{2} &\leq W(d(x,y)) \quad \text{ for all } 
\quad(x,y)\in X\times  X,
\end{align*}
one has
\[
\left(\int_{X} f(x) d\mu(x) \right)^{\frac{1}{2}} \left(\int_{X} g(y) d\mu(y) \right)^{\frac{1}{2}}\leq \int_{\R_+} W(r) d(-G)(r).
\]
\end{enumerate}
We say that the space $(X,d,\mu)$ satisfies the metric-Santal\'o inequality induced by $G$ on the domain $(\mathcal A,\mathcal D)$.
\end{lemma}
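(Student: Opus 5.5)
This will be a direct specialization of Lemma~\ref{l:transfer} with $m=2$, $X_1=X_2=X$, $\mu_{X_1}=\mu_{X_2}=\mu$, $\lambda_1=\lambda_2=\frac12$, $\mathcal{C}_{X_1}=\mathcal A$, $\mathcal{C}_{X_2}=\mathcal D$, and a cost that converts distances into a ``larger is closer'' quantity. I take $c(x,y)=d(x,y)$, so that $-c(x,y)=-d(x,y)\in(-\infty,0]$. The cost is continuous on $X\times X$, hence an admissible cost function. Also
\[
\sup_{x\in A,\,y\in B}-c(x,y)=-\inf_{x\in A,y\in B}d(x,y)=-d(A,B),
\]
so I set $a=-\infty$, $b=0$. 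The profile is $F(r):=G(-r)$ for $r\in(-\infty,0]$. Since $G$ is continuous and non-increasing, $F$ is continuous and non-decreasing, and $F(a^+)=\lim_{s\to\infty}G(s)=0$ by the decay assumption. With these choices, condition (1) of Lemma~\ref{l:transfer} reads $\mu(A)^{1/2}\mu(B)^{1/2}\le G(d(A,B))$, which is exactly condition (1) here.

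For the functional side, given a non-decreasing $W:\R_+\to\R_+$, I define $\Omega(r):=W(-r)$ on $(-\infty,0]$. This $\Omega$ is non-increasing, and the pointwise hypothesis $f(x)^{1/2}g(y)^{1/2}\le\Omega(-c(x,y))=W(d(x,y))$ coincides with the one in (2). It remains to identify the right-hand side. Under the reflection $r\mapsto -r$, the Lebesgue--Stieltjes measure $dF$ on $(-\infty,0]$ is pushed forward to the measure $d(-G)$ on $[0,\infty)$. Indeed, both assign mass $G(s)-G(t)$ to the interval corresponding to $[s,t)$, for $0\le s<t$. Hence $\int_{-\infty}^0\Omega(r)\,dF(r)=\int_{\R_+}W(s)\,d(-G)(s)$.

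Conversely, every non-increasing $\Omega$ on $(-\infty,0]$ arises as $W(-\cdot)$ for a non-decreasing $W$, so the two families of functional statements coincide. The equivalence of (1) and (2) then follows verbatim from Lemma~\ref{l:transfer}.

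The main obstacle is bookkeeping at the endpoints. The interval convention in Lemma~\ref{l:transfer} is $(a,b]=(-\infty,0]$, which includes the value $d(x,y)=0$, i.e.\ the atom at $s=0$ of $d(-G)$, if any. The pushforward of $dF$ must therefore be matched on closed versus half-open intervals. Continuity of $G$ shows that singletons carry no mass, so this matching is harmless. One must also check that $\sup(-c)\in[a,b]$ holds for all nonempty sets, which is automatic here since $-c\le 0$.
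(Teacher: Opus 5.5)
Your proposal is correct and follows the paper's proof essentially verbatim: the paper also specializes Lemma~\ref{l:transfer} with $m=2$, $X_1=X_2=X$, $\lambda_1=\lambda_2=\frac12$, $c=d$, $F(t)=G(-t)$ on $(-\infty,0]$ and $\Omega(t)=W(-t)$, and it identifies $\int_{(-\infty,0]}\Omega\,dF$ with $\int_{\R_+}W\,d(-G)$ via the reflection $t\mapsto -t$. Your extra endpoint checks are harmless additions that the paper leaves implicit: continuity of $G$ rules out atoms, and $-c\le 0$ forces the supremum to lie in $[a,b]$.
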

\begin{proof}
Define $F:(-\infty,0]\to\R_+$ by $F(t)=G(-t).$ Since $G$ is continuous, non-increasing, and decays to zero at infinity, the
function $F$ is continuous, non-decreasing, and satisfies $F((-\infty)^+)=0$.
Then, the claim follows from Lemma~\ref{l:transfer} with $m=2$, $X_1=X_2=X$,
$\lambda_1=\lambda_2=\frac12$, and $c(x,y)=d(x,y)$. 

Indeed, for $A\in\mathcal A$ and $B\in\mathcal D$,
\[
F\left(\sup_{x\in A,y\in B}-c(x,y)\right)=F\left(-\inf_{x\in A,y\in B}d(x,y)\right)
=
F\left(-d(A,B)\right)
=
G(d(A,B)),
\]
and so the first condition of the present lemma is of the form of the first condition of Lemma~\ref{l:transfer}.

Which means we have the second condition of Lemma~\ref{l:transfer}; we now rewrite this in the form listed in the present lemma. Let $\Omega(t)=W(-t)$, which is non-increasing on $(-\infty,0]$. Then, in the functional formulation, replacing $t\leq0$
by $r=-t\geq0$ gives
\[
    \int_{(-\infty,0]}\Omega(t)\,dF(t)
    =
    \int_{\R_+}W(r)\,d(-G)(r).
\]
This is exactly the stated metric form.
\end{proof}

We first show that such enlargement profiles imply geometric Santal\'o inequalities.

\begin{lemma}
\label{l:general_enlargement_santalo}
Let $(X,d,\mu)$ be a Polish probability space with enlargement functions
$(R_s)_{s\geq 0}$ and associated function $\Theta$. 

Assume $A,B\in \mathcal{B}(X)$ are non-empty sets with
$0<\mu(A)\mu(B)\leq 1$.

Then,
\begin{equation}
\label{eq:enlargement_general}
    \mu(A)^{\frac12}\mu(B)^{\frac12}
    \leq
    2\Theta(d(A,B)).
\end{equation}
Moreover, if $A$ and $B$ are essentially disjoint, that is,
$\mu(A\cap B)=0$, then one has the sharp bound
\begin{equation}
\label{eq:enlargement_sharp}
    \mu(A)^{\frac12}\mu(B)^{\frac12}
    \leq
    \Theta(d(A,B)).
\end{equation}

Furthermore, assume that, for each $s>0$, the supremum defining $\Theta$ in
\eqref{eq:theta_def} is uniquely achieved at some $v_s\in(0,1)$ and that
$R_s(v_s)<1$. Then equality holds in \eqref{eq:enlargement_sharp} with
$0<\mu(A),\mu(B)<1$ if and only if the following conditions hold:
\begin{enumerate}
    \item If $d(A,B)=0$, then $\mu(A)=\mu(B)=\frac12$ and $B=X\setminus A$ up to a null set.

    \item If $d(A,B)>0$, then
    \begin{enumerate}
        \item $\mu(A)=v_{d(A,B)}$ and $\mu(B)=1-R_{d(A,B)}(v_{d(A,B)})$;
        \item
        \[
            \mu(A_{<d(A,B)})=R_{d(A,B)}(\mu(A));
        \]
        \item
        \[
            \mu\bigl(B\Delta(X\setminus A_{<d(A,B)})\bigr)=0.
        \]
    \end{enumerate}
\end{enumerate}
\end{lemma}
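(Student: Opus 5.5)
Set $s=d(A,B)$. The guiding observation is that $B$ never meets the open enlargement $A_{<s}$: if $y\in B$ had $d(y,A)<s$, there would be $x\in A$ with $d(x,y)<s=\inf_{x'\in A,y'\in B}d(x',y')$, which is impossible. Hence $B\subseteq X\setminus A_{<s}$ and
\[
\mu(B)\leq 1-\mu(A_{<s}).
\]
The whole proof compares $\mu(A_{<s})$ with $R_s(\mu(A))$ and then recognises the quantity $\sqrt{\mu(A)(1-R_s(\mu(A)))}$ appearing in \eqref{eq:theta_def}.

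For the sharp bound \eqref{eq:enlargement_sharp} I would argue as follows. For every $t<s$ we have $A_t\subseteq A_{<s}$, so \eqref{eq:enlarge_ineq} gives
\[
\mu(A_{<s})\geq \sup_{t<s}R_t(\mu(A)) = R_s(\mu(A)),
\]
where the last equality uses continuity of $t\mapsto R_t(v)$. When $s=0$ we use instead $\mu(A\cap B)=0$, which gives $\mu(B)\le 1-\mu(A)=1-R_0(\mu(A))$. In both cases $\mu(A)\mu(B)\le \mu(A)\bigl(1-R_s(\mu(A))\bigr)\le\Theta(s)^2$.

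For \eqref{eq:enlargement_general} I would remove the disjointness assumption by splitting $B$ as $B\cap A$ and $B\setminus A$. A cleaner route is to shrink: for $\epsilon<s$, apply the above with $A$ replaced by $A_\epsilon$ or by a suitable sub-piece. I expect this factor-$2$ step to be the main obstacle. The first thing I would try is the following. If $s>0$, the sets $A$ and $B$ are already disjoint, so \eqref{eq:enlargement_sharp} applies directly and the factor $2$ is only needed when $s=0$. In that case $\Theta(0)=\sup_v\sqrt{v(1-v)}=\tfrac12$, so the claimed bound reads $\mu(A)^{1/2}\mu(B)^{1/2}\le 1$, which is trivial from the hypothesis $\mu(A)\mu(B)\le1$. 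So the factor $2$ only covers the degenerate case $s=0$, and it costs nothing.

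For the equality cases, assume equality in \eqref{eq:enlargement_sharp} with $0<\mu(A),\mu(B)<1$. Every inequality in the chain must then be an equality:
\[
\mu(B)=1-\mu(A_{<s}),\qquad \mu(A_{<s})=R_s(\mu(A)),\qquad \mu(A)\bigl(1-R_s(\mu(A))\bigr)=\Theta(s)^2.
\]
When $s>0$, uniqueness of the maximiser gives $\mu(A)=v_s$, and then $\mu(B)=1-R_s(v_s)$, which is positive because $R_s(v_s)<1$. The relation $B\subseteq X\setminus A_{<s}$ together with equality of measures gives (c). When $s=0$, we get $\Theta=\tfrac12$, which forces $v=\tfrac12$, so $\mu(A)=\mu(B)=\tfrac12$; combined with essential disjointness this gives $B=X\setminus A$ up to a null set. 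For the converse, conditions (a)–(c) substituted into the identities above reproduce $\Theta(s)^2$ exactly.
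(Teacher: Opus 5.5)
Your proposal is correct and follows the paper's proof essentially step for step. It uses the inclusion $B\subseteq X\setminus A_{<s}$, the limit $t\uparrow s$ to obtain $\mu(A_{<s})\ge R_s(\mu(A))$, the observation that the factor $2$ is needed (and is trivial) only when $d(A,B)=0$, and equality forced at every link of the chain. The only differences are cosmetic: for $d(A,B)=0$ in the sharp bound you run the same chain via $\mu(B)\le 1-\mu(A)=1-R_0(\mu(A))$ where the paper uses AM--GM, and your exploratory remarks about splitting or shrinking $B$ can simply be deleted.
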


\begin{proof}
Let
\[
\delta=d(A,B)
\]
and let $\sigma \in (0,\delta]$. Then,
\[
    B\subseteq X\setminus A_{<\sigma}.
\]
Moreover, for every $0<t<\sigma$, one has $A_t\subseteq A_{<\sigma}$, and hence,
by the enlargement inequality,
\[
    \mu(A_{<\sigma})
    \geq
    \mu(A_t)
    \geq
    R_t(\mu(A)).
\]
Letting $t\uparrow\sigma$ and using the continuity of
$t\mapsto R_t(\mu(A))$, we obtain
\[
    \mu(A_{<\sigma})\geq R_\sigma(\mu(A)).
\]
Therefore,
\begin{equation}
\label{eq:key_theta_chain}
\begin{split}
    \mu(A)^{\frac12}\mu(B)^{\frac12}
    &\leq
    \sqrt{\mu(A)\bigl(1-\mu(A_{<\sigma})\bigr)}
    \leq
    \sqrt{\mu(A)\bigl(1-R_\sigma(\mu(A))\bigr)}
    \leq
    \Theta(\sigma).
\end{split}
\end{equation}

We now prove the two inequalities. 

If $\delta>0$, then we may take $\sigma=\delta$ in
\eqref{eq:key_theta_chain}. This gives
\[
    \mu(A)^{\frac12}\mu(B)^{\frac12}\leq \Theta(\delta) \leq 2\Theta(\delta),
\]
showing both \eqref{eq:enlargement_sharp} (since $\delta>0$ implies $A\cap B= \emptyset$) and \eqref{eq:enlargement_general} hold in
this case.

It remains only to consider $\delta=0$. For arbitrary Borel sets, the trivial bound gives
\[
    \mu(A)^{\frac12}\mu(B)^{\frac12}\leq 1.
\]
Since $R_0(v)=v$, we have
\[
    \Theta(0)
    =
    \sup_{0\leq v\leq 1}\sqrt{v(1-v)}
    =
    \frac12.
\]
Hence
\[
    \mu(A)^{\frac12}\mu(B)^{\frac12}
    \leq
    2\Theta(0)
    =
    2\Theta(\delta),
\]
which proves \eqref{eq:enlargement_general}. If, in addition, $A$ and $B$ are
essentially disjoint, then
\[
    \mu(A)+\mu(B)=\mu(A\cup B)\leq 1,
\]
and the inequality for arithmetic and geometric means gives
\begin{equation}
\label{eq:using_am_gm}
    \mu(A)^{\frac12}\mu(B)^{\frac12}
    \leq \frac{\mu(A)+\mu(B)}{2} \leq
    \frac12
    =
    \Theta(0)
    =
    \Theta(\delta).
\end{equation}
This proves \eqref{eq:enlargement_sharp}.

It remains to characterize equality in \eqref{eq:enlargement_sharp}. Suppose first
that $\delta=0$. In the preceding argument, equality holds if and only if equality holds \eqref{eq:using_am_gm}, which yields
\[
    \mu(A)=\mu(B)=\frac12 \qquad \text{and} \qquad
    \mu(A\cup B)=1.
\]
Since $\mu(A\cap B)=0$, this is equivalent to
\[
    B=X\setminus A
\]
up to a null set.

Now suppose that $\delta>0$. Equality in \eqref{eq:enlargement_sharp} is precisely equality
in the chain \eqref{eq:key_theta_chain} with $\sigma=\delta$. By the uniqueness assumption for the supremum defining $\Theta$, equality in the last step of
\eqref{eq:key_theta_chain} forces
\[
    \mu(A)=v_\delta.
\]
Equality in the middle step forces
\[
    \mu(A_{<\delta})=R_\delta(\mu(A)),
\]
and equality in the first step forces
\[
    \mu(B)=1-\mu(A_{<\delta}).
\]
Since $B\subseteq X\setminus A_{<\delta}$, this is equivalent to
\[
    \mu\bigl(B\Delta(X\setminus A_{<\delta})\bigr)=0.
\]
Thus
\[
    \mu(B)=1-R_\delta(v_\delta),
\]
and the stated conditions are necessary. Conversely, these conditions give
\[
    \mu(A)^{\frac12}\mu(B)^{\frac12}
    =
    \sqrt{v_\delta(1-R_\delta(v_\delta))}
    =
    \Theta(\delta),
\]
so they are also sufficient. This completes the proof.
\end{proof}

The above lemma shows that, in the setting of a Polish probability space $(X,d,\mu)$, the ``dual'' of a measurable set $A$ with $0<\mu(A)<1$ is nothing but $X\setminus A_{<\delta}$ for any $\delta$ such that $\mu(A_{<\delta})<1$. We can now deduce a metric-Santal\'o inequality for such spaces as a direct consequence of Lemma~\ref{l:transfer}.

\begin{theorem}
\label{thm:general_enlargement_theorem}
Let $(X,d,\mu)$ be a Polish probability space with enlargement functions
$(R_s)_{s\geq 0}$ and associated function $\Theta$. Suppose that $\Theta$ is continuous and decays to zero at infinity. Then:
\begin{enumerate}
    \item The space $(X,\mu)$ satisfies the metric-Santal\'o inequality induced by $2\cdot\Theta$ on the domain $(\mathcal{B}(X),\mathcal{B}(X))$.

    \item Let $\mathcal A,\mathcal D\subset \mathcal{B}(X)$ be sub-collections
    satisfying $\mu(A\cap D)=0$ for every $A\in\mathcal A$ and $D\in\mathcal D$. Then $(X,\mu)$ satisfies
    the sharper metric-Santal\'o inequality induced by $\Theta$ on the domain $(\mathcal A,\mathcal D)$.
\end{enumerate}
\end{theorem}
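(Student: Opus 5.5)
The plan is to verify condition (1) of Lemma~\ref{l:transfer_distance} with the appropriate $G$; the equivalence in that lemma then gives the functional form, so both parts reduce to Lemma~\ref{l:general_enlargement_santalo}.

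\textbf{Part (1).} Set $G=2\Theta$. Since $\Theta$ is non-increasing (Definition~\ref{d:enlargement}), continuous by hypothesis, and decays to zero at infinity, $G$ meets the requirements of Lemma~\ref{l:transfer_distance}. It remains to show that for all $A,B\in\mathcal B(X)$,
\[
\mu(A)^{\frac12}\mu(B)^{\frac12}\le 2\Theta(d(A,B)).
\]
I split into cases.
\begin{itemize}
\item If $A$ or $B$ is empty, or has measure zero, the left-hand side vanishes and there is nothing to prove. The convention $d(\emptyset,B)=+\infty$ is harmless here, since the left side is $0$.
\item Otherwise $0<\mu(A)\mu(B)\le 1$ automatically, because $\mu$ is a probability measure. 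Then \eqref{eq:enlargement_general} of Lemma~\ref{l:general_enlargement_santalo} gives exactly the claimed bound.
\end{itemize}
Hence condition (1) of Lemma~\ref{l:transfer_distance} holds on $(\mathcal B(X),\mathcal B(X))$, which is the metric-Santal\'o inequality induced by $2\Theta$.

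\textbf{Part (2).} Take $G=\Theta$, which has the same regularity properties. For $A\in\mathcal A$ and $D\in\mathcal D$ we have $\mu(A\cap D)=0$ by hypothesis. As before, the degenerate cases are trivial. In the remaining cases the sharp bound \eqref{eq:enlargement_sharp} gives $\mu(A)^{\frac12}\mu(D)^{\frac12}\le\Theta(d(A,D))$. Lemma~\ref{l:transfer_distance} then applies on the domain $(\mathcal A,\mathcal D)$.

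\textbf{Main obstacle.} The only delicate point is bookkeeping in the passage through Lemma~\ref{l:transfer}, which Lemma~\ref{l:transfer_distance} invokes with $F(t)=G(-t)$ on $(a,b]=(-\infty,0]$. Two things need checking.
\begin{itemize}
\item The hypothesis that $\sup -c=-d(A,B)$ lies in $[a,b]$ for non-empty sets. This holds because $d\ge0$.
\item Continuity of $F$ up to $b=0$, so that $F(0)=G(0)$ is finite. Here $G(0)=2\Theta(0)=1$, respectively $\Theta(0)=\tfrac12$.
\end{itemize}
Once these are confirmed, both conclusions follow directly.
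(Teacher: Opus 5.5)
Your proposal is correct and matches the paper's proof: both parts verify condition (1) of Lemma~\ref{l:transfer_distance}, with $G=2\Theta$ via \eqref{eq:enlargement_general} and with $G=\Theta$ via \eqref{eq:enlargement_sharp} of Lemma~\ref{l:general_enlargement_santalo}. Your extra bookkeeping (empty and null sets, $\Theta(0)=\tfrac12$, and $-d(A,B)\in(-\infty,0]$) spells out what the paper dismisses as ``trivial null cases.''
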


\begin{proof}
Disregarding trivial null cases, Lemma~\ref{l:general_enlargement_santalo} gives, with $G=2\Theta$,
\[
    \mu(A)^{\frac12}\mu(B)^{\frac12}
    \leq
    2\Theta(d(A,B)).
\]
Thus condition (1) of Lemma~\ref{l:transfer_distance} is satisfied, proving the first
claim.

For the second claim, let $A\in\mathcal A$ and $B\in\mathcal D$. By assumption,
$\mu(A\cap B)=0$. Hence Lemma~\ref{l:general_enlargement_santalo} gives, with $G=\Theta$,
\[
    \mu(A)^{\frac12}\mu(B)^{\frac12}
    \leq
    \Theta(d(A,B)).
\]
This verifies condition (1) of Lemma~\ref{l:transfer_distance} on the domain
$(\mathcal A,\mathcal D)$ and proves the sharper claim.
\end{proof}

\subsection{Isoperimetry}
\label{sec:iso_gen}
The preceding framework includes, in particular, enlargement inequalities obtained by integrating an isoperimetric profile. Recall that, for a Borel set $A\subset X$, its outer Minkowski boundary measure is defined by
\begin{equation}
\label{eq boundary measure}
    \mu^+(A):=\liminf_{h\to0^+}
    \frac{\mu(A_h)-\mu(A)}{h}.
\end{equation}
A continuous, strictly positive function $I:(0,1)\to(0,\infty)$ is called an
isoperimetric function for $(X,d,\mu)$ if
\begin{equation}
\label{eq:extremal_iso}
    \mu^+(A)\geq I(\mu(A))
\end{equation}
for every Borel set $A\subset X$ such that $0<\mu(A)<1$. We call any set obtaining equality in \eqref{eq:extremal_iso} an extremal set.

The following standard lemma, see, for example, \cite{BH97_2}, shows that isoperimetry and enlargement are, in some sense, equivalent.

\begin{lemma}
\label{l:isop_to_enl}
Let $(X,d,\mu)$ be a Polish probability space. Suppose it admits a continuous isoperimetric function $I:(0,1)\to(0,\infty)$. Define
\begin{equation}
\label{eq:phi_def}
\Phi(v):=
\begin{cases}
    \int_{\frac{1}{2}}^v\frac{dr}{I(r)},
    & v\in(\frac{1}{2},1),
    \\
    0, & v=\frac{1}{2},
    \\
    -\int_{v}^\frac{1}{2}\frac{dr}{I(r)}, & v \in (0,\frac{1}{2}),
\end{cases}
\end{equation}
and set $a:=\inf \Phi((0,1))$ and $b:=\sup \Phi((0,1)),$ where $-\infty \leq a \leq 0 <b\leq \infty$. Define the distribution function $\Psi:\mathbb R\to[0,1]$ by
\begin{equation}
\label{eq:Psi_def}
\Psi(t):=
\begin{cases}
    0, & t\leq a,\\
    \Phi^{-1}(t), & t\in(a,b),\\
    1, & t\geq b.
\end{cases}
\end{equation}
Then $\Psi$ is continuous and non-decreasing with $\Psi(0)=\frac{1}{2}$. For each $s\geq0$, define
$R_s^I:[0,1]\to[0,1]$ by
\begin{equation}
\label{eq:canonical_enlargement}
    R_s^I(0):=0,
    \qquad
    R_s^I(1):=1,
\quad \text{and }
    R_s^I(v):=\Psi\bigl(\Phi(v)+s\bigr),
    \qquad v\in(0,1).
\end{equation}
Then $R_s^I$ is an enlargement function for $(X,d,\mu)$ in the
sense of Definition~\ref{d:enlargement}. Conversely, if $(X,d,\mu)$ has $R_s^I$ of the above form as an enlargement function, then it satisfies isoperimetry with isoperimetric function $I$.

Moreover, if $A\subset X$ is a Borel set with $0<\mu(A)<1$ and, for some
$s>0$ such that $R_s^I(\mu(A))<1$, equality holds in
\eqref{eq:enlarge_ineq}, then
\begin{equation}
\label{eq:enlargement_equality_path}
    \mu(A_t)=R_t^I(\mu(A)),
    \qquad 0\leq t\leq s.
\end{equation}
In particular, $\mu(A_0)=\mu(A)$ and $A$ is an extremal set for the
isoperimetric inequality \eqref{eq:extremal_iso}.

\end{lemma}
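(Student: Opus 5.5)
The plan is to work with the ODE viewpoint: the isoperimetric function $I$ controls the derivative of $t\mapsto \mu(A_t)$, and $\Phi$ straightens that ODE into a unit-speed translation.

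\textbf{Properties of $\Psi$.} Since $I$ is continuous and strictly positive on $(0,1)$, the function $\Phi$ is $C^1$ and strictly increasing there, with $\Phi'=1/I$ and $\Phi(1/2)=0$. Hence $\Phi:(0,1)\to(a,b)$ is a homeomorphism. Its inverse extended by $0$ and $1$ is continuous, because $\Phi(v)\to a$ as $v\to0$ and $\Phi(v)\to b$ as $v\to1$. It is also non-decreasing, with $\Psi(0)=1/2$.

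\textbf{Properties of $R^I_s$.} We have $R^I_0(v)=\Psi(\Phi(v))=v$ for $v\in(0,1)$. Monotonicity and continuity in $s$ are inherited from $\Psi$, and $R_s^I(v)\ge v$ follows from monotonicity.

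\textbf{Enlargement inequality.} Fix $A$ with $0<\mu(A)<1$; the cases $\mu(A)\in\{0,1\}$ are trivial. Set $u(t)=\mu(A_t)$, which is non-decreasing. Using $(A_t)_h\subseteq A_{t+h}$ (valid in any metric space by the triangle inequality), we get
\[
D^+u(t)=\liminf_{h\to0^+}\frac{u(t+h)-u(t)}{h}\ \ge\ \mu^+(A_t)\ \ge\ I(u(t))
\]
whenever $u(t)<1$. Then $w(t)=\Phi(u(t))$ satisfies a lower Dini derivative bound $D^+w\ge 1$ as long as $u<1$.

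This is the main obstacle. A monotone function with $D^+w\ge1$ everywhere must satisfy $w(t)-w(0)\ge t$. I would prove this with the standard lemma: if $g$ is non-decreasing with lower right Dini derivative $\ge1$ everywhere, then $g(t)-g(0)\ge t$. The proof considers $\sup\{\tau: g(\tau)-g(0)\ge(1-\varepsilon)\tau\}$. Left limits are controlled by monotonicity (jumps only help), and the Dini bound pushes past any supposed first failure point. Two technical points need care:
\begin{itemize}
\item the chain rule for Dini derivatives through $\Phi$, which holds because $\Phi$ is $C^1$ with positive derivative;
\item possible right-discontinuities of $u$, which only help monotonically.
\end{itemize}
This gives $\Phi(\mu(A_s))\ge\Phi(\mu(A))+s$ until $u$ hits $1$. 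Applying $\Psi$ then yields $\mu(A_s)\ge R_s^I(\mu(A))$, and once $u=1$ the inequality is trivial.

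\textbf{Converse.} Suppose $\mu(A_h)\ge\Psi(\Phi(\mu(A))+h)$. Subtract $\mu(A)$, divide by $h$, and take the liminf. Since $\Psi$ is differentiable at $\Phi(v)$ with derivative $I(v)$ by the inverse function theorem, we get $\mu^+(A)\ge I(\mu(A))$.

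\textbf{Equality statement.} Suppose $\mu(A_s)=R^I_s(\mu(A))<1$. For $t\le s$, the sub-interval $[t,s]$ gives
\[
\Phi(\mu(A_s))\ge \Phi(\mu(A_t))+(s-t)\ge \Phi(\mu(A))+s,
\]
so equality propagates to all $t$, yielding $\mu(A_t)=R^I_t(\mu(A))$. At $t=0$ this gives $\mu(A_0)=\mu(A)$, the closure adding no mass. Differentiating $t\mapsto R_t^I(\mu(A))$ at $0^+$ gives $\mu^+(A)=I(\mu(A))$, so $A$ is extremal.
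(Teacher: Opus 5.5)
Your proposal is correct and follows the same route as the paper's sketch. You use the Dini bound $D^+u\ge I(u)$ coming from $(A_t)_h\subseteq A_{t+h}$, integrate in $\Phi$-coordinates, prove the converse by differentiating $\Psi$ at $\Phi(v)$, and propagate equality along $[0,s]$; your comparison on the sub-interval $[t,s]$ is exactly the paper's semigroup relation $R^I_{s-t}\circ R^I_t=R^I_s$ plus strict monotonicity, written with $R^I_t$ as translation by $t$. You also supply the comparison lemma that the paper leaves as ``it can be shown'' (in fact $u$ is right-continuous, since $A_t=\bigcap_{h>0}A_{t+h}$, so only upward left-jumps can occur).
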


\begin{proof}[Proof Sketch]
We sketch the proof. Fix a Borel set $A\subset X$, and set
\[
    u(s):=\mu(A_s),
    \qquad s\geq0.
\]
Since $(A_s)_h\subseteq A_{s+h},$ one has
\[
    \liminf_{h\to0^+}
    \frac{u(s+h)-u(s)}{h}
    \geq \mu^+(A_s)
    \geq I(u(s))
\]
whenever $u(s)\in(0,1)$. It can be shown that this implies
\[
    \Phi(u(s))\geq \Phi(u(0))+s
\]
as long as $u(s)<1$. Since $u(0)=\mu(A_0)\geq\mu(A)$, the monotonicity of
$\Phi$ gives
\[
    \Phi(u(s))\geq \Phi(\mu(A))+s.
\]
Applying $\Psi$, we obtain
\[
    \mu(A_s)=u(s)
    \geq \Psi\bigl(\Phi(\mu(A))+s\bigr)
    =R_s^I(\mu(A)).
\]
The cases $\mu(A)\in\{0,1\}$ are immediate. The remaining properties in
Definition~\ref{d:enlargement} follow directly from the definition of
$R_s^I$. The converse direction holds by differentiation.

Finally, suppose that, for some $s>0$ such that
$R_s^I(\mu(A))<1$, equality holds in \eqref{eq:enlarge_ineq}. Set
$v:=\mu(A)$ and $u(t):=\mu(A_t)$. For every $t\in[0,s]$, we have shown
\[
    u(t)\geq R_t^I(v).
\]
On the other hand, since $(A_t)_{s-t}\subseteq A_s,$ another application of the enlargement inequality yields
\[
    u(s)\geq R_{s-t}^I(u(t)).
\]
It can be shown that the family $(R_t^I)_{t\geq0}$ satisfies the semigroup relation
\[
    R_{s-t}^I\bigl(R_t^I(v)\bigr)=R_s^I(v).
\]
Moreover, because $R_s^I(v)<1$, the map
$w\mapsto R_{s-t}^I(w)$ is strictly increasing on the relevant interval.
Thus, if $u(t)>R_t^I(v)$ for some $t\in[0,s]$, then
\[
    u(s)\geq R_{s-t}^I(u(t))
    >R_{s-t}^I\bigl(R_t^I(v)\bigr)
    =R_s^I(v),
\]
a contradiction. Therefore,
\[
    \mu(A_t)=u(t)=R_t^I(v),
    \qquad 0\leq t\leq s.
\]
Taking $t=0$ gives $\mu(A_0)=\mu(A)$. Furthermore,
\[
    \mu^+(A)
    =\liminf_{h\to0^+}
      \frac{\mu(A_h)-\mu(A)}{h}=\lim_{h\to0^+}
      \frac{R_h^I(v)-v}{h}
     =I(v).
\]
Hence $A$ is an extremal set for \eqref{eq:extremal_iso}.
\end{proof}
To provide an example, the Gaussian isoperimetric inequality mentioned in \eqref{eq:gamma_iso} is usually expressed as, for $A\subset \R^n$ Borel with $0<\gamma_n(A)<1$,
\[
\gamma_n^+(A)\geq I_\gamma\left(\gamma_n(A)\right), \qquad I_\gamma=\Psi^\prime_\gamma\circ \Psi_\gamma^{-1}.
\]

The above lemma, in conjunction with Theorem~\ref{thm:general_enlargement_theorem}, allows an immediate passage from isoperimetric information directly to metric-Santal\'o inequalities.

\begin{corollary}[Metric-Santal\'o from Differential Isoperimetry]
\label{cor:differential_isoperimetry_santalo}
Let $(X,d,\mu)$ be a Polish probability space which admits a continuous isoperimetric function $I: (0,1) \to (0,\infty)$. Let $(R_s^I)_{s\geq  0}$ be the family of enlargement functions induced by $I$ via Lemma~\ref{l:isop_to_enl}, and let 
\begin{equation}
\label{eq:theta_I_def}
\Theta^I(s) = \sup_{0 \le v \le 1} \sqrt{v\bigl(1 - R_s^I(v)\bigr)}.\end{equation}
Suppose $\Theta$ is continuous and decaying to zero at infinity.
Then:
\begin{enumerate}
    \item The space $(X,\mu)$ satisfies the metric-Santal\'o inequality induced by $2\cdot \Theta^I$ on the domain $(\mathcal{B}(X), \mathcal{B}(X))$.
    \item For any sub-collections $\mathcal{A}, \mathcal{D} \subset \mathcal{B}(X)$ of essentially disjoint sets ($\mu(A \cap D) = 0$ for all $A \in \mathcal{A}$ and $D \in \mathcal{D}$), $(X,\mu)$ satisfies the sharper metric-Santal\'o inequality induced by $\Theta^I$ on the domain $(\mathcal{A}, \mathcal{D})$.
\end{enumerate}
\end{corollary}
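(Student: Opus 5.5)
This corollary follows by composing Lemma~\ref{l:isop_to_enl} with Theorem~\ref{thm:general_enlargement_theorem}. No new geometric input is required; the work is checking that the hypotheses of the theorem hold for the family $(R^I_s)_{s\geq0}$.

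First, I invoke Lemma~\ref{l:isop_to_enl} with the continuous isoperimetric function $I$. It gives that $(R_s^I)_{s\ge0}$ from \eqref{eq:canonical_enlargement} is a family of enlargement functions for $(X,d,\mu)$ in the sense of Definition~\ref{d:enlargement}. I will briefly check the structural requirements there:
\begin{itemize}
\item $R^I_0(v)=\Psi(\Phi(v))=v$ on $(0,1)$, since $\Psi$ inverts $\Phi$ on $(a,b)$, and the endpoints are fixed by definition.
\item $R^I_s(v)\ge v$ and $s\mapsto R^I_s(v)$ is non-decreasing, because $\Psi$ is non-decreasing.
\item $s\mapsto R^I_s(v)$ is continuous, because $\Psi$ is continuous.
\end{itemize}
Hence \eqref{eq:enlarge_ineq} holds with $R_s=R^I_s$, and the associated function from \eqref{eq:theta_def} is exactly $\Theta^I$ as defined in \eqref{eq:theta_I_def}.

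Second, I apply Theorem~\ref{thm:general_enlargement_theorem} to $(X,d,\mu)$ with this family. Its only additional hypotheses are that $\Theta$ be continuous and decay to zero at infinity, which are assumed in the corollary (the statement's ``$\Theta$'' means $\Theta^I$). Part (1) of the theorem then gives the metric-Santal\'o inequality induced by $2\Theta^I$ on $(\mathcal B(X),\mathcal B(X))$. Part (2), applied to the essentially disjoint sub-collections $\mathcal A,\mathcal D$, gives the sharper inequality induced by $\Theta^I$.

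I expect no real obstacle. The only delicate point is making sure that $G=2\Theta^I$ (resp.\ $G=\Theta^I$) fits Lemma~\ref{l:transfer_distance}: it must be continuous, non-increasing and decay to zero. Continuity and decay are assumed. Monotonicity follows because each $R^I_s(v)$ is non-decreasing in $s$, so $v(1-R^I_s(v))$ is non-increasing in $s$ for each fixed $v$, and hence so is its supremum over $v$. With this verified, the corollary follows immediately.
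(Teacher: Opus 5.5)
Your proposal is correct and follows the paper's route exactly. The paper obtains this corollary immediately by combining Lemma~\ref{l:isop_to_enl}, which supplies $(R^I_s)_{s\ge0}$ as an enlargement family whose associated function is $\Theta^I$, with Theorem~\ref{thm:general_enlargement_theorem}; your checks of the conditions in Definition~\ref{d:enlargement}, and of the monotonicity of $\Theta^I$ needed for Lemma~\ref{l:transfer_distance}, are accurate and spell out what the paper leaves implicit.
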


When the isoperimetric function $I$ is symmetric, the enlargement
function $R_s^I$ can be expressed in terms of a symmetric distribution function.

\begin{proposition}
\label{p:symmetric_isop_enlargement}
Let $(X, d, \mu)$ be a Polish probability space with continuous isoperimetric function $I$. Then,
\begin{equation}
\label{eq:sym_iso}
    I(v)=I(1-v),
    \qquad v\in(0,1).
\end{equation}
holds if and only if the function $\Psi$ given by \eqref{eq:Psi_def} satisfies
\begin{equation}
\label{eq:Psi_reflexive}
    1-\Psi(t)=\Psi(-t),
    \qquad t\in\R.
\end{equation}
Moreover, the function $\Psi$ is log-concave (resp. strictly log-concave) on $\Phi((0,1))$ if and only if $\frac{I(v)}{v}$ is non-increasing (resp. decreasing) on $(0,1)$.
\end{proposition}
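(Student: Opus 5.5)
Both claims reduce to working with $\Phi$ on $(0,1)$ and its inverse $\Psi=\Phi^{-1}$ on $(a,b)=\Phi((0,1))$. Since $I$ is continuous and strictly positive, $\Phi$ is $C^1$ and strictly increasing with $\Phi'(v)=1/I(v)$. Hence $\Psi$ is $C^1$ and strictly increasing on $(a,b)$, with
\[
\Psi'(t)=I(\Psi(t)), \qquad t\in(a,b).
\]

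\textbf{Symmetry.} First assume \eqref{eq:sym_iso}. Substituting $r\mapsto 1-r$ in \eqref{eq:phi_def} gives, for $v\in(0,\tfrac12)$,
\[
\Phi(1-v)=\int_{1/2}^{1-v}\frac{dr}{I(r)}=\int_{v}^{1/2}\frac{dr}{I(1-r)}=-\Phi(v),
\]
and the case $v\ge \tfrac12$ is the same. So $\Phi(1-v)=-\Phi(v)$ on $(0,1)$, which forces $a=-b$. Now take $t\in(a,b)$ and put $v=\Psi(t)$. Then $\Phi(1-v)=-t$, so $\Psi(-t)=1-v=1-\Psi(t)$. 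For $t\ge b$ we have $-t\le a$, and both sides of \eqref{eq:Psi_reflexive} equal $0$; the case $t\le a$ is symmetric. This proves \eqref{eq:Psi_reflexive}.

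Conversely, assume \eqref{eq:Psi_reflexive}. Because $\Psi$ takes values in $(0,1)$ exactly on $(a,b)$, and $t\mapsto -t$ preserves the set where $\Psi\in(0,1)$, we get $a=-b$. Differentiating \eqref{eq:Psi_reflexive} on $(a,b)$ gives $\Psi'(t)=\Psi'(-t)$, that is, $I(\Psi(t))=I(\Psi(-t))=I(1-\Psi(t))$. Since $\Psi$ maps $(a,b)$ onto $(0,1)$, this is \eqref{eq:sym_iso}.

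\textbf{Log-concavity.} On $(a,b)$ we have
\[
(\log\Psi)'(t)=\frac{\Psi'(t)}{\Psi(t)}=\frac{I(\Psi(t))}{\Psi(t)}.
\]
A $C^1$ function is concave (resp.\ strictly concave) on an interval exactly when its derivative is non-increasing (resp.\ decreasing). So $\log\Psi$ is concave on $(a,b)$ if and only if $t\mapsto I(\Psi(t))/\Psi(t)$ is non-increasing. Since $\Psi\colon(a,b)\to(0,1)$ is a strictly increasing bijection, composing with it preserves and reflects monotonicity. Therefore this holds if and only if $v\mapsto I(v)/v$ is non-increasing on $(0,1)$, and the strict versions correspond in the same way.

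The one point that needs care, rather than a real obstacle, is the claim that concavity is equivalent to a monotone derivative without assuming a second derivative. This holds because $\log\Psi$ is $C^1$: a differentiable function is concave if and only if its derivative is non-increasing, by the mean value theorem in one direction and the supporting-line characterization in the other.
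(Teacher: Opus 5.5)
Your proof is correct and follows the same route as the paper: the substitution $r\mapsto 1-r$ giving $\Phi(1-v)=-\Phi(v)$, differentiating $\Psi(-t)=1-\Psi(t)$ for the converse, and the identity $(\log\Psi)'=I(\Psi)/\Psi$ for the log-concavity claim. You are also more careful than the paper: it does not check $a=-b$ before extending the symmetry to all $t\in\R$, and it proves only the ``if'' direction of the log-concavity equivalence, whereas your bijection argument gives both directions, including the strict case.
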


\begin{proof}
If $v=\frac{1}{2}$, the identity $\Phi(1-v)=-\Phi(v)$ is immediate.
Let $v\in(0,\frac{1}{2})$. Then, the change of variables $r\mapsto 1-r$ and the symmetry of $I$ give
\[
    \Phi(1-v)
    =
    \int_{\frac{1}{2}}^{1-v}\frac{dr}{I(r)}
    =
    \int_v^{\frac{1}{2}}\frac{dr}{I(1-r)}
    =
    \int_v^{\frac{1}{2}}\frac{dr}{I(r)}
    =
    -\Phi(v).
\]
Similarly, if $v\in\left(\frac{1}{2},1\right)$, then
\[
    \Phi(1-v)
    =
    -\int_{1-v}^{\frac{1}{2}}\frac{dr}{I(r)}
    =
    -\int_{\frac{1}{2}}^v\frac{dr}{I(1-r)}
    =
    -\int_{\frac{1}{2}}^v\frac{dr}{I(r)}
    =
    -\Phi(v).
\]
Therefore,
\[
    \Phi(1-v)=-\Phi(v),
    \qquad v\in(0,1).
\]
Consequently, for every $t\in\Phi((0,1))$,
\[
    \Psi(-t)=1-\Psi(t).
\]
We extend this identity to $t\in \R$ by continuously extending $\Psi$ as constants beyond $\Phi((0,1))$, which yields the claim. The converse direction follows by differentiation. 

Finally, suppose that the map \[ v\mapsto \frac{I(v)}{v} \] is non-increasing (resp. decreasing) on $(0,1)$. On the interval $\Phi\left((0,1)\right)$, we have $\Psi=\Phi^{-1}$ and hence \[ \Psi'(t)=I(\Psi(t)). \] Therefore, \[ \frac{d}{dt}\log\Psi(t) =\frac{I(\Psi(t))}{\Psi(t)}. \] Since $\Psi$ is non-decreasing (resp. increasing), the right-hand side is non-increasing (resp. decreasing) in $t$. Thus $\log\Psi$ is concave on $\Phi\left((0,1)\right)$.
\end{proof}

We now determine the function $\Theta^I$ explicitly in this symmetric case.

\begin{proposition}
\label{p:realization_of_theta}
Let $(X, d, \mu)$ be a Polish probability space. Suppose that either of the following equivalent conditions holds (by Lemma~\ref{l:isop_to_enl} and Proposition~\ref{p:symmetric_isop_enlargement}):
\begin{enumerate}
    \item it has a continuous isoperimetric function $I$ such that \eqref{eq:sym_iso} holds and that $v\mapsto \frac{I(v)}{v}$ is non-increasing;
    \item it has $R_s^I(v) = \Psi\left(\Psi^{-1}(v) +  s\right)$ as an enlargement function, where the distribution function $\Psi$ defined in \eqref{eq:Psi_def} is log-concave and is such that \eqref{eq:Psi_reflexive} holds.
\end{enumerate}
Then, we have 
\begin{equation}
\label{eq:theta_evaluated}
    \Theta^I(s)=\Psi\left(-\frac{s}{2}\right),
\end{equation}
where $\Theta^I$ is defined in \eqref{eq:theta_I_def}.
\end{proposition}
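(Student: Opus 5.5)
We have to compute $\Theta^I(s)=\sup_{0\le v\le 1}\sqrt{v(1-R_s^I(v))}$ with $R_s^I(v)=\Psi(\Psi^{-1}(v)+s)$. The plan is to reparametrize by $t=\Psi^{-1}(v)$ and use the reflection identity \eqref{eq:Psi_reflexive} to rewrite the objective. For $v\in(0,1)$, write $v=\Psi(t)$ with $t\in\Phi((0,1))$. Then
\[
1-R_s^I(v)=1-\Psi(t+s)=\Psi(-t-s),
\]
so the squared objective becomes $h(t):=\Psi(t)\Psi(-t-s)$. The endpoints need separate attention. At $v=0$ the objective is $0$. At $v=1$ we have $R_s^I(1)=1$, so the objective is again $0$. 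Hence $\Theta^I(s)^2=\sup_t h(t)$, with $t$ ranging over $\Phi((0,1))$. Since $\Psi$ is extended by the constants $0$ and $1$ outside this interval, we may let $t$ range over all of $\R$ without changing the supremum; the reflection identity also holds there.

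The key step is to show that the supremum of $h$ is attained at the symmetric point $t=-s/2$. Since $\Psi$ is log-concave, $\log h(t)=\log\Psi(t)+\log\Psi(-t-s)$ is concave in $t$ where it is finite. Moreover, $h$ is symmetric about $t=-s/2$: the substitution $t\mapsto -s-t$ interchanges the two factors. A concave function that is symmetric about a point attains its maximum at that point, because $\log h(t_0)\le \tfrac12[\log h(t)+\log h(-s-t)]=\log h(t)$ with $t_0=-s/2$. This gives
\[
\sup_t h(t)=h(-s/2)=\Psi(-s/2)^2,
\]
and therefore $\Theta^I(s)=\Psi(-s/2)$. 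The value $v=\Psi(-s/2)$ lies in $(0,1)$ when $-s/2\in\Phi((0,1))$, and the value is achieved there. If instead $-s/2\le a$, then $\Psi(-s/2)=0$, and $h\equiv 0$ as well, because for every $t$ one of $t$ or $-t-s$ is at most $-s/2\le a$.

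The main point requiring care is the domain where $\log h=-\infty$, since there concavity must be interpreted in the extended sense. I would handle it by noting that log-concavity of $\Psi$ on $\Phi((0,1))$, together with $\Psi=0$ on $(-\infty,a]$ and $\Psi=1$ on $[b,\infty)$, makes $\Psi$ log-concave on all of $\R$ as a function into $[0,1]$. Indeed, $\Psi$ is continuous and non-decreasing, and gluing the constant $1$ onto a non-decreasing log-concave function preserves log-concavity. With that in hand, the midpoint inequality $h(t_0)^2\ge h(t)h(-s-t)=h(t)^2$ holds pointwise with no issue from infinite values.

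If one starts from condition (1) instead, Proposition~\ref{p:symmetric_isop_enlargement} converts it into condition (2), so the same argument applies.
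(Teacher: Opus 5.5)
Your proof is correct and follows the paper's argument. You substitute $v=\Psi(t)$, use \eqref{eq:Psi_reflexive} to turn the objective into $\Psi(t)\Psi(-t-s)$, bound it by $\Psi(-s/2)^2$ via log-concavity at the midpoint $-s/2$ of $t$ and $-t-s$, and note equality at $t=-s/2$. The paper applies that midpoint inequality to $\Psi$ directly, which is a bit shorter than your route through concavity of $\log h$ plus symmetry. Your treatment of the endpoints and of log-concavity beyond $\Phi((0,1))$ fills in details the paper leaves implicit. One slip: in the display $\log h(t_0)\le \tfrac12[\log h(t)+\log h(-s-t)]$ the inequality should be $\ge$, as you correctly write later.
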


\begin{proof}
We first compute the
function $\Theta^I$ defined in \eqref{eq:theta_I_def}.

Let $s\geq0$ and let $v\in(0,1)$. Write
\[
    v=\Psi(a)
\]
for some $a\in\R$. Using \eqref{eq:Psi_reflexive}, we evaluate the expression
inside the supremum defining $\Theta(s)$:
\[
    v\bigl(1-R_s^I(v)\bigr)
    =
    \Psi(a)\Bigl(1-\Psi(a+s)\Bigr)
    =
    \Psi(a)\Psi(-a-s).
\]
By the log-concavity of $\Psi$,
\begin{equation}
\label{eq:log_concave_midpoint}
    \sqrt{v\bigl(1-R_s^I(v)\bigr)}
    =
    \sqrt{\Psi(a)\Psi(-a-s)}
    \leq
    \Psi\left(\frac{a+(-a-s)}{2}\right)
    =
    \Psi\left(-\frac{s}{2}\right).
\end{equation}
The right-hand side is independent of $a$, and the choice $a=-\frac{s}{2}$ gives equality in \eqref{eq:log_concave_midpoint}. Therefore, $\Theta^I(s)=\Psi\left(-\frac{s}{2}\right),$ as claimed.
\end{proof}

We explicitly mention that we will combine Proposition~\ref{p:realization_of_theta} with Corollary~\ref{cor:differential_isoperimetry_santalo}. Moreover, we can combine Proposition~\ref{p:realization_of_theta} with Lemma~\ref{l:general_enlargement_santalo} to obtain a sharp geometric Santal\'o inequality in terms of $\Psi$; we leave these details to the reader.

    We can now immediately prove the a functional Santal\'o for even log-concave probability measures on the real line.
\begin{corollary}
\label{cor:functional_1d_symmetric}
Let $\mu$ be an even, log-concave probability measure on $\R$ with density. 

For any pair of measurable functions $f, g:\R \to [0,\infty)$ and any non-decreasing measurable function $W: \R_+ \to (0, \infty)$ satisfying the pointwise relation
\begin{equation}
\label{eq:pointwise_cond_1d_sym}
f(x)^{\frac{1}{2}} g(y)^{\frac{1}{2}} \leq W\left(\frac{|x-y|}{2}\right) \quad \text{for all } x, y \in \R,
\end{equation}
one has the functional inequality
\begin{equation}
\label{eq:functional_ineq_1d_sym}
\left( \int_{\R} f(x) \, d\mu(x) \right)^{\frac{1}{2}} \left( \int_{\R} g(y) \, d\mu(y) \right)^{\frac{1}{2}} \leq  \int_{\R} W(|r|) \,d\mu(r).
\end{equation}
\end{corollary}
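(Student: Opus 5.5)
The plan is to obtain the statement as a direct instance of Corollary~\ref{cor:differential_isoperimetry_santalo}(1) combined with Propositions~\ref{p:symmetric_isop_enlargement} and~\ref{p:realization_of_theta}. Write $d\mu=\varphi(x)\,dx$ with $\varphi$ even and log-concave, and let $\Psi_\mu(t)=\mu((-\infty,t))$ be the distribution function. Let $(\alpha,\beta)$ be the interior of the support; by evenness $\alpha=-\beta$.

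\textbf{Step 1: the isoperimetric function.} For an even (indeed, any) log-concave probability measure on $\R$, half-lines are isoperimetric, by Bobkov's one-dimensional theorem \cite{BH97_2}. Hence
\[
I(v)=\varphi\bigl(\Psi_\mu^{-1}(v)\bigr), \qquad v\in(0,1),
\]
is a continuous, strictly positive isoperimetric function, since $\varphi>0$ on $(\alpha,\beta)$. I will cite this result rather than reprove it; I expect it to be the main point requiring care, together with checking continuity and positivity of $I$ near the endpoints of $(0,1)$.

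\textbf{Step 2: identify $\Psi$ and verify the hypotheses.} With $\Phi$ as in \eqref{eq:phi_def}, the substitution $r=\Psi_\mu(x)$ gives $dr/I(r)=dx$, so
\[
\Phi(v)=\Psi_\mu^{-1}(v)-\Psi_\mu^{-1}(\tfrac12)=\Psi_\mu^{-1}(v),
\]
because $\Psi_\mu(0)=\frac12$ by evenness. Therefore the function $\Psi$ of \eqref{eq:Psi_def} is exactly $\Psi_\mu$. The hypotheses of Proposition~\ref{p:realization_of_theta}(2) then hold:
\begin{enumerate}
\item evenness of $\varphi$ gives $1-\Psi_\mu(t)=\Psi_\mu(-t)$, which is \eqref{eq:Psi_reflexive};
\item $\Psi_\mu$ is log-concave, being the integral of a log-concave density (Prékopa).
\end{enumerate}
So Proposition~\ref{p:realization_of_theta} yields $\Theta^I(s)=\Psi_\mu(-s/2)$. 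This is continuous and decays to zero at infinity, so Corollary~\ref{cor:differential_isoperimetry_santalo}(1) applies on the domain $(\mathcal B(\R),\mathcal B(\R))$ with $G(s)=2\Psi_\mu(-s/2)$. Note that $\mathcal F(\mathcal B(\R))$ contains all non-negative measurable functions, so no restriction on $f,g$ arises.

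\textbf{Step 3: unwind Lemma~\ref{l:transfer_distance}.} Apply the lemma with the non-decreasing function $\widetilde W(r):=W(r/2)$, so that \eqref{eq:pointwise_cond_1d_sym} is precisely its pointwise hypothesis $f(x)^{1/2}g(y)^{1/2}\le \widetilde W(|x-y|)$. Since $d(-G)(r)=\varphi(-r/2)\,dr=\varphi(r/2)\,dr$, the right-hand side becomes
\[
\int_0^\infty W(r/2)\,\varphi(r/2)\,dr \;=\; 2\int_0^\infty W(u)\varphi(u)\,du \;=\; \int_\R W(|u|)\,d\mu(u),
\]
using the substitution $u=r/2$ and the evenness of $\varphi$. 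This is exactly \eqref{eq:functional_ineq_1d_sym}.
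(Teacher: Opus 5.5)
Your proposal is correct and is essentially the paper's own proof. Bobkov's half-line isoperimetry gives $I=\varphi\circ\Psi^{-1}$, Lemma~\ref{l:isop_to_enl} and Proposition~\ref{p:realization_of_theta} give the profile $G(s)=2\Psi(-s/2)$, and Corollary~\ref{cor:differential_isoperimetry_santalo} with $\widetilde W(r)=W(r/2)$, the substitution $u=r/2$ and the evenness of $\varphi$ finish the argument. Your explicit check that $\Phi=\Psi_\mu^{-1}$ (via $\Psi_\mu(0)=\tfrac12$) fills in a step the paper leaves implicit; note also that the paper attributes the half-line result to \cite{SB96} rather than \cite{BH97_2}.
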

    \begin{proof}
    We apply Corollary~\ref{cor:differential_isoperimetry_santalo} to the Polish probability space $(\R, |\cdot|, \mu)$. Denote by
    \[
    \Psi(t) = \int_{-\infty}^t \varphi(x) \, dx, \quad \text{for } t \in \R,
    \]
    the cumulative distribution function of $\mu$. Here, $\varphi(x)=\frac{d\mu}{dx}$ is the Radon-Nikodym derivative of $\mu$, which exists and is log-concave on its support, by Borell's classification of log-concave measures \cite{Bor75}. Moreover, $\Psi$ is log-concave. We also note that, because $\mu$ is even, we have $\varphi(x) = \varphi(-x)$, and, therefore:
    \[
    1 - \Psi(t) = \int_t^\infty \varphi(x) \, dx = \int_{-\infty}^{-t} \varphi(x) \, dx = \Psi(-t).
    \]

    By a classic result of Bobkov \cite{SB96} , $\mu$ satisfies isoperimetry with its isoperimetric function $I=\varphi \circ \Psi^{-1}$; the isoperimetric extremal sets are half-lines. By Lemma~\ref{l:isop_to_enl}, $\mu$ satisfies the enlargement inequality \eqref{eq:enlarge_ineq} with $R_s(v)=\Psi\left(\Psi^{-1}(v)+s\right)$ for all $s$.

    Let $\widetilde W(r)=W\left(\frac r2\right)$. Then the pointwise
assumption becomes
\[
    f(x)^{\frac12}g(y)^{\frac12}
    \leq
    \widetilde W(|x-y|).
\]
 By Corollary~\ref{cor:differential_isoperimetry_santalo} and Proposition~\ref{p:realization_of_theta}, applied with the metric profile
\[
    G(s)=2\Theta^I(s)=2\Psi\left(-\frac{s}{2}\right),
\]
we obtain
\[
\left( \int_{\R} f \,d\mu \right)^{\frac{1}{2}}
\left( \int_{\R} g \,d\mu \right)^{\frac{1}{2}}
\leq
\int_0^\infty \widetilde W(r)\,d(-G)(r).
\]
Since
\[
    d(-G)(r)
    =
    -\frac{d}{dr}\left(2\Psi\left(-\frac r2\right)\right)dr
    =
    \varphi\left(\frac r2\right)dr,
\]
we get
\[
\left( \int_{\R} f \,d\mu \right)^{\frac{1}{2}}
\left( \int_{\R} g \,d\mu \right)^{\frac{1}{2}}
\leq
\int_0^\infty
W\left(\frac r2\right)
\varphi\left(\frac r2\right)\,dr.
\]
Changing variables $u=r/2$ gives
\[
    \int_0^\infty
    W\left(\frac r2\right)
    \varphi\left(\frac r2\right)\,dr
    =
    2\int_0^\infty W(u)\varphi(u)\,du.
\]
Since $\varphi$ is even,
\[
    2\int_0^\infty W(u)\varphi(u)\,du
    =
    \int_{\R}W(|u|)\,d\mu(u).
\]
This proves \eqref{eq:functional_ineq_1d_sym}.
\end{proof}

\subsection{RCD Spaces} 
\label{sec:RCD}
The theory of $\operatorname{RCD}$ spaces is a rich topic. We will follow the work \cite{AM16}; please see the references therein. We will denote by 
$$\mathcal{P}_2(X):=\left\{\mu \in \mathcal{P}(X): \int_X d^2\left(x_0, x\right) \mathrm{d} \mu(x)<\infty\text{ for some (and hence all) }x_0 \in X\right\},$$
the subspace of $\mathcal{P}(X)$ consisting of all the probability measures with finite second moment. For $\nu_0, \nu_1 \in \mathcal{P}_2(X)$ the Wasserstein-2 distance $W_2\left(\nu_0, \nu_1\right)$ is defined by
\begin{equation}
\label{eq:wasser}
W_2^2\left(\nu_0, \nu_1\right)\!=\!\!\!\inf_{\pi \in \mathcal{P}(X\times X)}\left\{\int_{X\times X} \!\!\!\!\!\!\!d^2(x,y) \, d\pi(x,y): \pi(\cdot\times X)= \nu_0(\cdot), \pi(X\times \cdot)=\nu_1(\cdot)\right\},
\end{equation}
where the infimum is taken over all $\pi$ with $\nu_0$ as the first marginal and $\nu_1$ as the second marginal. We will denote by $\operatorname{Geo}(X)$ the space of all constant speed geodesics $\gamma:[0,1] \rightarrow X$. Finally, denote by $\operatorname{dom}_\mu\left(H\right)$ the elements $\nu \in \mathcal{P}_2(X)$ for which the relative entropy $H(\nu|\mu)<\infty$.

\begin{definition}[$\mathrm{CD}(K, \infty)$ condition] Given $K \in \mathbb{R}$, we say that a Polish probability space $(X, d,\mu)$ is a $\operatorname{CD}(K, \infty)$-space if for any two measures $\nu_0, \nu_1 \in \operatorname{dom}_\mu\left(H\right)$ there exists a geodesic $\left(\nu_t\right) \in \operatorname{Geo}\left(\mathcal{P}_2(X)\right)$ which satisfies the convexity inequality
\begin{equation*}
H(\nu_t|\mu) \leq(1-t) H(\nu_0|\mu)+t H(\nu_1|\mu)-\frac{K}{2} t(1-t) W_2^2\left(\nu_0, \nu_1\right), \quad \text { for all } t \in[0,1] .
\end{equation*}
\end{definition}

Next, if $\operatorname{LIP}(X)$ denotes the set of Lipschitz functions on $X$, then if $f \in\operatorname{LIP}(X)$, its metric slope is
\[
\operatorname{lip}f(x):=\limsup_{y\to x}\frac{|f(x)-f(y)|}{d(x,y)},
\]
with the convention that $\operatorname{lip}f(x)=0$ for an isolated point $x$. Using this notion, the Cheeger energy of $f\in L^2(X,\mu)$ is
\[
\operatorname{Ch}_\mu(f):=\inf\left\{\liminf_{n\to \infty}\frac{1}{2}\int_X \operatorname{lip}^2f_nd\mu\,\bigg|\, \{f_n\}\subset\operatorname{LIP}(X),\,\| f_n- f\|_{L^2(X,\mu)}\xrightarrow{n\to \infty} 0 \right\}.
\]
\begin{definition}[$\mathrm{RCD}(K, \infty)$ condition] Let $(X,d,\mu)$ be a Polish measure space. We say that it is an $\operatorname{RCD}(K, \infty)$-space if it satisfies the $\mathrm{CD}(K, \infty)$-condition and if its Cheeger energy is quadratic, i.e., for all $f,g\in L^2(X,\mu)$ such that $0<\operatorname{Ch}_\mu(f)\operatorname{Ch}_\mu(g)<\infty$, one has
\[
\operatorname{Ch}_\mu(f+g) + \operatorname{Ch}_\mu(f-g)=2\operatorname{Ch}_\mu(f) + 2\operatorname{Ch}_\mu(g).
\]
\end{definition}

The result from \cite[Theorem 4.2]{AM16} implies that, if $(X, d,\mu)$ is an $\operatorname{RCD}(K, \infty)$ probability space for $K>0$, then Gaussian isoperimetry holds: for every non-empty, measurable $A\subset X$ such that $0<\mu(A)<1$,
\begin{equation}
\label{eq:RCD_iso}
\mu(A_r)\geq \Psi_\gamma\left(a+\sqrt{K}r\right), \quad a =\Psi_\gamma^{-1}(\mu(A)),
\end{equation}
where, again, $\Psi_\gamma$ is the cumulative distribution function of the one-dimensional standard Gaussian measure. Formally speaking, the reference provided establishes the version of \eqref{eq:RCD_iso} in the form of \eqref{eq:extremal_iso}, with $I=\sqrt{K}I_\gamma$; this equivalent form follows by Lemma~\ref{l:isop_to_enl}.

\begin{corollary}[Functional Santal\'o inequalities on $\operatorname{RCD}(K, \infty)$-spaces]
    \label{cor:RCD}
    Let $(X,d,\mu)$ be an $\operatorname{RCD}(K,\infty)$ probability space for some $K > 0$. For any pair of measurable functions $f, g: X \to [0,\infty)$ and any non-decreasing measurable function $W: \R_+ \to (0, \infty)$ satisfying the pointwise relation
    \begin{equation}
    \label{eq:pointwise_cond_rcd}
    f(x)^{\frac{1}{2}} g(y)^{\frac{1}{2}} \leq W\left(\frac{d(x,y)}{2}\right) \quad \text{for all } x, y \in X,
    \end{equation}
    one has the functional inequality
    \begin{equation}
    \label{eq:functional_ineq_rcd}
    \left( \int_X f(x) \, d\mu(x) \right)^{\frac{1}{2}} \left( \int_X g(y) \, d\mu(y) \right)^{\frac{1}{2}} \leq  \int_{\R} W\left(\frac{|r|}{\sqrt{K}}\right) \, d\gamma(r).
    \end{equation}
\end{corollary}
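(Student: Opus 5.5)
The proof mirrors that of Corollary~\ref{cor:functional_1d_symmetric}: it will go through Corollary~\ref{cor:differential_isoperimetry_santalo}(1) combined with Proposition~\ref{p:realization_of_theta}, with the Gaussian profile rescaled by $\sqrt{K}$.

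\emph{Step 1: the isoperimetric profile and its hypotheses.} By \cite[Theorem 4.2]{AM16}, $(X,d,\mu)$ has the continuous, strictly positive isoperimetric function $I=\sqrt{K}\,I_\gamma$, where $I_\gamma=\Psi_\gamma'\circ\Psi_\gamma^{-1}$. Computing \eqref{eq:phi_def} gives $\Phi(v)=\Psi_\gamma^{-1}(v)/\sqrt{K}$. Hence $a=-\infty$, $b=+\infty$, and the distribution function \eqref{eq:Psi_def} is $\Psi(t)=\Psi_\gamma(\sqrt{K}t)$. I then check the hypotheses of Proposition~\ref{p:realization_of_theta}:
\begin{itemize}
\item $I(v)=I(1-v)$, because the Gaussian density is even.
\item $v\mapsto I(v)/v$ is non-increasing. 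Writing $v=\Psi_\gamma(t)$, this amounts to $t\mapsto \Psi_\gamma'(t)/\Psi_\gamma(t)$ being non-increasing, which is exactly the log-concavity of $\Psi_\gamma$. That in turn follows from Pr\'ekopa--Leindler, or from the classical fact that integrals of log-concave densities are log-concave.
\end{itemize}

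\emph{Step 2: the metric profile.} Proposition~\ref{p:realization_of_theta} gives
\[
\Theta^I(s)=\Psi\left(-\tfrac{s}{2}\right)=\Psi_\gamma\left(-\tfrac{\sqrt{K}s}{2}\right).
\]
This function is continuous, non-increasing, and tends to $0$ at infinity, so Corollary~\ref{cor:differential_isoperimetry_santalo}(1) applies. Thus $(X,\mu)$ satisfies the metric-Santal\'o inequality induced by $G=2\Theta^I$ on the domain $(\mathcal{B}(X),\mathcal{B}(X))$.

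Since $\mathcal{F}(\mathcal{B}(X))$ contains every non-negative measurable function, no restriction on $f$ or $g$ is needed. Setting $\widetilde W(r)=W(r/2)$, which is non-decreasing, hypothesis \eqref{eq:pointwise_cond_rcd} becomes $f(x)^{1/2}g(y)^{1/2}\le \widetilde W(d(x,y))$. This is exactly condition (2) of Lemma~\ref{l:transfer_distance}.

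\emph{Step 3: computing the right-hand side.} The absolutely continuous measure is
\[
d(-G)(r)=\sqrt{K}\,\varphi_\gamma\left(\tfrac{\sqrt{K}r}{2}\right)dr,
\]
where $\varphi_\gamma$ is the standard Gaussian density. Substituting $u=\sqrt{K}r/2$ gives
\[
\int_0^\infty W\left(\tfrac r2\right)\sqrt{K}\,\varphi_\gamma\left(\tfrac{\sqrt{K}r}{2}\right)dr
=2\int_0^\infty W\left(\tfrac{u}{\sqrt{K}}\right)\varphi_\gamma(u)\,du .
\]
By evenness of $\varphi_\gamma$, the last expression equals $\int_{\R}W(|u|/\sqrt{K})\,d\gamma(u)$, which is \eqref{eq:functional_ineq_rcd}.

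\emph{Main obstacle.} No step is deep. The only points needing care are in Step 1: checking the hypotheses of Lemma~\ref{l:isop_to_enl} and Proposition~\ref{p:realization_of_theta} for the rescaled Gaussian profile (continuity, the monotonicity of $I(v)/v$, and $a=-\infty$, $b=+\infty$ so that $\Psi$ is a genuine inverse), and confirming that the $\sqrt{K}$ rescaling propagates correctly. Everything else is bookkeeping identical to the one-dimensional case.
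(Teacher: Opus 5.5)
Your proposal is correct and follows the paper's proof essentially step for step. Both arguments go through Corollary~\ref{cor:differential_isoperimetry_santalo} and Proposition~\ref{p:realization_of_theta} with $G(s)=2\Psi_\gamma\bigl(-\tfrac{\sqrt{K}s}{2}\bigr)$, substitute $\widetilde W(s)=W(s/2)$, and finish with the change of variables $r=\sqrt{K}s/2$ and evenness of $\gamma$. Your explicit computation $\Phi=\Psi_\gamma^{-1}/\sqrt{K}$ and your reduction of the monotonicity of $I(v)/v$ to the log-concavity of $\Psi_\gamma$ simply fill in hypothesis checks that the paper leaves implicit.
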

\begin{proof}
Let $\Psi_\gamma$ denote the distribution function of the standard Gaussian
measure on $\R$. By the $\operatorname{RCD}(K,\infty)$ isoperimetric inequality
\eqref{eq:RCD_iso}, we may invoke Corollary~\ref{cor:differential_isoperimetry_santalo} and
Proposition~\ref{p:realization_of_theta} with
\[
    G(s)=2\Theta^I(s)    =
    2\Psi_\gamma\left(-\frac{\sqrt K\,s}{2}\right).
\]

Therefore, if we set
\[
    \widetilde W(s)=W\left(\frac{s}{2}\right),
\]
the pointwise assumption \eqref{eq:pointwise_cond_rcd} becomes
\[
    f(x)^{\frac12}g(y)^{\frac12}
    \leq
    \widetilde W(d(x,y))
\]
and implies the functional metric-Santal\'o inequality
\[
    \left( \int_X f \,d\mu \right)^{\frac12}
    \left( \int_X g \,d\mu \right)^{\frac12}
    \leq
    \int_0^\infty \widetilde W(s)\,d(-G)(s).
\]
Since
\[
    d(-G)(s)
    =
    -\frac{d}{ds}
    \left[
        2\Psi_\gamma\left(-\frac{\sqrt K\,s}{2}\right)
    \right]ds
    =
    \sqrt K\,\Psi_\gamma'\left(\frac{\sqrt K\,s}{2}\right)ds,
\]
we obtain
\[
    \left( \int_X f \,d\mu \right)^{\frac12}
    \left( \int_X g \,d\mu \right)^{\frac12}
    \leq
    \int_0^\infty
    W\left(\frac{s}{2}\right)
    \sqrt K\,\Psi_\gamma'\left(\frac{\sqrt K\,s}{2}\right)ds.
\]
With the change of variables $r=\frac{\sqrt K\,s}{2}$, this becomes
\[
    2\int_0^\infty
    W\left(\frac{r}{\sqrt K}\right)d\gamma(r).
\]
By the evenness of $\gamma$ and the fact that $r\mapsto W(|r|/\sqrt K)$ is
even, this equals
\[
    \int_{\R}W\left(\frac{|r|}{\sqrt K}\right)d\gamma(r),
\]
which proves \eqref{eq:functional_ineq_rcd}.
\end{proof}

\subsection{Spherical Blaschke-Santal\'o}
\label{sec:sbs}
In this section, we aim to derive an analogue of the functional Blaschke-Santal\'o inequality on the sphere, completing the example from the introduction. First, some notation: for a non-empty, measurable set $A\subset \mathbb{S}^n$, we set
\[
r_A:=\Sigma_n^{-1}\left(\sigma_n(A)\right)
\]
and let $C_A$ be the spherical cap of radius $r_A$ with center at the north pole $e_{n+1}$ (without loss of generality); that is,
\begin{equation}
\label{eq:caps}
\sigma_n(C_A)=\Sigma_n(r_A)=\sigma_n(A).
\end{equation}
We show that $\Sigma_n$ is log-concave.

\begin{proposition}
\label{p:log_check}
    For every $n \geq 2$, the function $\Sigma_n$ is strictly log-concave on $(0,\pi)$.
\end{proposition}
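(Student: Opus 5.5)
To prove Proposition~\ref{p:log_check}, we write $\Sigma_n(r)=c_n\int_0^r \sin^{n-1}\theta\,d\theta$ with $c_n>0$ a normalizing constant. Since multiplying by $c_n$ does not affect log-concavity, it suffices to treat $S(r):=\int_0^r\sin^{n-1}\theta\,d\theta$ on $(0,\pi)$. The quickest route is the standard fact that a primitive of a log-concave density is log-concave. The integrand $\phi(\theta)=\sin^{n-1}\theta$ is strictly log-concave on $(0,\pi)$, because $(\log\phi)''=-(n-1)\csc^2\theta<0$ for $n\ge2$. Extending $\phi$ by $0$ outside $(0,\pi)$ keeps it log-concave on $\R$. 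Then $S(r)=\int_\R \phi(\theta)\chi_{(-\infty,r]}(\theta)\,d\theta$, and Pr\'ekopa's theorem (the marginal of a log-concave function on $\R^2$, here $(\theta,r)\mapsto\phi(\theta)\chi_{\{\theta\le r\}}$) gives log-concavity of $S$. This yields only non-strict log-concavity, so we also give a direct derivative argument that produces strictness.

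Directly, $\log S$ is $C^2$ on $(0,\pi)$, and strict concavity of $\log S$ is equivalent to $S''S-(S')^2<0$, that is, to
\[
D(r):=(n-1)\cos r\,\sin^{n-2}r\;S(r)-\sin^{2n-2}r<0,\qquad r\in(0,\pi).
\]
We split into two cases.

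If $r\in[\pi/2,\pi)$, then $\cos r\le0$ and $S>0$, so the first term of $D(r)$ is $\le0$. The second term is strictly negative, so $D(r)<0$ immediately.

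If $r\in(0,\pi/2)$, we divide $D(r)<0$ by $(n-1)\cos r\sin^{n-2}r>0$. The inequality becomes
\[
S(r)<\frac{\sin^n r}{(n-1)\cos r}=:T(r).
\]
We prove this by comparing derivatives. Both sides vanish as $r\to0^+$, since $T(r)\sim r^n/(n-1)\to0$. Differentiating,
\[
T'(r)=\frac{n\sin^{n-1}r\cos^2 r+\sin^{n+1}r}{(n-1)\cos^2 r}
=\sin^{n-1}r\,\frac{n\cos^2r+\sin^2r}{(n-1)\cos^2r}.
\]
The fraction exceeds $1$, because $n\cos^2 r+\sin^2 r>(n-1)\cos^2 r$, which reduces to $\cos^2r+\sin^2r>0$. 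Hence $T'(r)>\sin^{n-1}r=S'(r)$ on $(0,\pi/2)$. Integrating from $0$ gives $S<T$ there, and so $D<0$.

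The only delicate point is the behaviour near $r=0$, where both $S$ and $T$ vanish; it is resolved by their vanishing limits together with the strict derivative comparison. Combining the two cases, $(\log\Sigma_n)''<0$ on all of $(0,\pi)$, which gives strict log-concavity. The hypothesis $n\ge2$ enters through the factor $n-1>0$, which is needed both to divide by $n-1$ and for the strict sign in the first case.
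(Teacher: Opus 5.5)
Your proof is correct, and its skeleton matches the paper's. Both reduce strict log-concavity to the pointwise inequality $f'(r)\,\Sigma_n(r)<f(r)^2$ for the density $f\propto\sin^{n-1}$, and both settle $[\pi/2,\pi)$ by the sign of $\cos r$.

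The difference lies in how the remaining inequality on $(0,\pi/2)$ is proved. That inequality is $S<f^2/f'$; for $f=\sin^{n-1}$, your $T$ is exactly $f^2/f'$.

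\begin{itemize}
\item The paper fixes $r$ and integrates in $\theta$ the tangent-line majorant $f(\theta)<f(r)\exp\bigl(\tfrac{f'(r)}{f(r)}(\theta-r)\bigr)$, which comes from strict concavity of $\log f$. This gives the slightly stronger bound $\Sigma_n(r)\le \frac{f(r)^2}{f'(r)}\bigl(1-e^{-rf'(r)/f(r)}\bigr)$. It uses nothing about the density beyond strict log-concavity and $f'>0$.
\item You compare derivatives in $r$. Since $T'-f=f\,\frac{(f')^2-ff''}{(f')^2}=-\frac{f^3(\log f)''}{(f')^2}$, your trigonometric inequality $n\cos^2r+\sin^2r>(n-1)\cos^2r$ is strict log-concavity of $\sin^{n-1}$ in differential form.
\end{itemize}

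So the two arguments rest on the same principle. Yours is a fully explicit computation, but it needs the extra boundary check $T(0^+)=0=S(0)$, which you handle correctly. The Pr\'ekopa remark is fine but unnecessary.

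One small inaccuracy in your closing comment: on $[\pi/2,\pi)$ the strict sign comes from the term $-\sin^{2n-2}r<0$, not from $n-1>0$. The hypothesis $n\ge 2$ is used only when you divide by $n-1$ on $(0,\pi/2)$.
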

\begin{proof}
    First, we show that $\sin^{n-1}$ is strictly log-concave. Let $$h(\theta)=-\log \sin^{n-1}(\theta)=-(n-1)\log\sin(\theta).$$ We want to show that this function is strictly convex. Differentiating twice, we see that $h^{\prime\prime}(\theta)=(n-1)\left(\frac{1}{\sin(\theta)}\right)^2$, which is strictly positive on $(0,\pi)$ for $n \geq 2$.

    Let $c=\int_0^\pi \sin^{n-1}(\theta) \, d\theta$ and define the log-concave function $f(\theta)= \frac{1}{c}\sin^{n-1}(\theta)$, so that $\Sigma_n(r)=\int_0^r f(\theta) \,d\theta$. For $-\log V$ to be convex, we must have, by differentiating twice,
    \begin{equation}
    \label{eq:log_test}
        f^{\prime}(r)\cdot \Sigma_n(r) < f^2(r).
    \end{equation}
    Notice that $f^2$ is strictly positive on $(0,\pi)$. However, $$f^\prime(r)=\frac{1}{c}(n-1)\sin^{n-2}(r)\cos(r),$$ is non-positive on $[\frac{\pi}{2},\pi)$. Therefore, on this interval, \eqref{eq:log_test} holds trivially. For the interval $(0,\frac{\pi}{2})$ we use that $f$ is log-concave to deduce the usual tangent line bound (of the concave function $\log f$)
    \[
    f(\theta) < f(r)\exp\left(\frac{f^\prime(r)}{f(r)}\left(\theta-r\right)\right).
    \]
    Here, we used that, since $f$ is strictly log-concave, the inequality is strict. Integrating in $\theta$ yields
    \[
    \Sigma_n(r) \leq f(r)\int_0^r\exp\left(\frac{f^\prime(r)}{f(r)}\left(\theta-r\right)\right) \,d\theta = \frac{f(r)^2}{f^\prime(r)}\left(1-e^{-\frac{f^\prime(r)}{f(r)}r}\right)<\frac{f(r)^2}{f^\prime(r)},
    \]
    which completes the proof. We mention that the inequality is strict, since $f^\prime>0$.
\end{proof}

We now use the transference principle to establish a functional version of this inequality.

\begin{proof}[Proof of Corollary~\ref{cor:spherical_functional}]
Let
\[
    d(x,y)=\arccos\langle x,y\rangle
\]
be the geodesic distance on $\mathbb S^n$. By the spherical isoperimetric
inequality \eqref{eq:sph_iso}, Corollary~\ref{cor:differential_isoperimetry_santalo},
and Proposition~\ref{p:realization_of_theta}, if we define the metric profile,
\[
    H(s)
    =
    \Sigma_n\left(\frac{\pi-s}{2}\right),
    \qquad 0\leq s\leq \pi,
\]
and $H(s)=0$ for $s>\pi$, then the Polish probability space $(\mathbb S^n, d,\sigma_n)$ satisfies the metric-Santal\'o inequality with
profile $2H$.

Since
\[
    \langle x,y\rangle=\cos d(x,y),
\]
we reparametrize the profile by setting
\[
    G(r)=H(\arccos r)
    =
    \Sigma_n\left(\frac{\pi-\arccos r}{2}\right),
    \qquad -1\leq r\leq 1.
\]
Equivalently, the Lebesgue-Stieltjes measure $dG$ is the pushforward of $d(-H)$ under the
map $s\mapsto \cos s$. Hence, for every measurable $\Omega$,
\[
    \int_0^\pi \Omega(\cos s)\,d(-H)(s)
    =
    \int_{-1}^1 \Omega(r)\,dG(r).
\]

Now the pointwise assumption gives
\[
    f(x)^{\frac12}g(y)^{\frac12}
    \leq
    \Omega(\langle x,y\rangle)
    =
    \Omega(\cos d(x,y)).
\]
Notice that $\Omega\circ \cos$ is non-decreasing. Applying the metric-Santal\'o inequality with profile $2H$ and then using the
change of variables above yields
\[
    \left(\int_{\mathbb S^n} f\,d\sigma_n\right)^{\frac12}
    \left(\int_{\mathbb S^n} g\,d\sigma_n\right)^{\frac12}
    \leq
    2\int_0^\pi \Omega(\cos s)\,d(-H)(s)
    =
    2\int_{-1}^1 \Omega(r)\,dG(r),
\]
which proves \eqref{eq:sph_func_BS_general}.

Finally, assume that $f(x)g(y)=0$ whenever $\langle x,y\rangle>0$. Then the
positive superlevel sets of $f$ and $g$ are essentially disjoint: indeed, if a
point belonged to both, then taking $x=y$ would give $\langle x,y\rangle=1>0$,
contradicting the assumption. Therefore the sharp metric-Santal\'o inequality
applies with profile $H$ instead of $2H$. Repeating the same change of variables
gives
\[
    \left(\int_{\mathbb S^n} f\,d\sigma_n\right)^{\frac12}
    \left(\int_{\mathbb S^n} g\,d\sigma_n\right)^{\frac12}
    \leq
    \int_0^\pi \Omega(\cos s)\,d(-H)(s)
    =
    \int_{-1}^1 \Omega(r)\,dG(r),
\]
which proves \eqref{eq:sph_func_polar_bound}.
\end{proof}

In \cite{GHS01}, Gao, Hug and Schneider introduced the polarity 
\begin{equation}
    A^s=\left\{u\in\mathbb{S}^n:\langle u,a\rangle \leq 0 \,\,\, \forall \,\, a\in A\right\}, \qquad A\subset \mathbb{S}^n.
\end{equation}
This polarity fits with the framework utilized in Proposition~\ref{p:dualities}, choosing the cost $c(x,y)=-e^{\langle x,y\rangle}$. Moreover, in the terminology of Section~\ref{sec:set_prop}, this is a convex duality, as the sublevel sets of this cost function are geodesically convex on the sphere. In \cite[Corollary]{GHS01}, Gao, Hug and Schneider proved the following spherical Blaschke-Santal\'o inequality.

\begin{proposition}
\label{p:cap_BS}
   Let $A\subset \mathbb{S}^n$ be a measurable set with positive measure. Then,
\begin{equation}
\sigma_{n}(A^s) \leq \sigma_{n}(C_A^s).
\label{eq:sph_BS}
\end{equation}
If $0<\sigma_n(A) < \frac{1}{2}$, there is equality in \eqref{eq:sph_BS} if and only if $A$ is a spherical cap up to removal of null sets. If $\sigma_n(A)\geq \frac{1}{2}$, there is always equality, namely, both sides of \eqref{eq:sph_BS} are zero.
\end{proposition}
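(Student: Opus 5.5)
Our plan is to derive \eqref{eq:sph_BS} from the spherical isoperimetric inequality \eqref{eq:sph_iso} with enlargement parameter $\delta=\frac{\pi}{2}$, as announced in the introduction. The key observation is that $A^s$ is exactly the complement of the open $\frac{\pi}{2}$-neighbourhood of $A$. Indeed, $\langle u,a\rangle\le 0$ means $d(u,a)=\arccos\langle u,a\rangle\ge\frac{\pi}{2}$, so
\[
A^s=\{u\in\mathbb S^n: d(u,a)\ge \tfrac{\pi}{2}\ \forall a\in A\}\supseteq \mathbb S^n\setminus A_{<\pi/2},
\]
and conversely $A^s\subseteq \mathbb S^n\setminus A_{<\pi/2}$, since $d(u,A)<\frac{\pi}{2}$ produces some $a\in A$ with $d(u,a)<\frac{\pi}{2}$. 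Hence $\sigma_n(A^s)=1-\sigma_n(A_{<\pi/2})$.

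We next lower-bound $\sigma_n(A_{<\pi/2})$. For $t<\frac{\pi}{2}$ we have $A_t\subseteq A_{<\pi/2}$, so \eqref{eq:sph_iso} gives $\sigma_n(A_{<\pi/2})\ge\sigma_n((C_A)_t)=\Sigma_n(\min\{r_A+t,\pi\})$. Letting $t\uparrow\frac{\pi}{2}$ and using continuity of $\Sigma_n$ (exactly as in the proof of Lemma~\ref{l:general_enlargement_santalo}) yields $\sigma_n(A_{<\pi/2})\ge\Sigma_n(\min\{r_A+\frac{\pi}{2},\pi\})$. For the cap $C_A$ centred at $e_{n+1}$ of radius $r_A$, the polar $C_A^s$ is the cap centred at $-e_{n+1}$ of radius $\frac{\pi}{2}-r_A$ when $r_A\le\frac{\pi}{2}$, and is empty otherwise. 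Its measure is therefore $1-\Sigma_n(r_A+\frac{\pi}{2})$ in the first case and $0$ in the second. Combining these gives $\sigma_n(A^s)\le\sigma_n(C_A^s)$.

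For the case $\sigma_n(A)\ge\frac12$, i.e.\ $r_A\ge\frac{\pi}{2}$, the right side is $\Sigma_n(\pi)=1$ (or is degenerate in the boundary case $r_A=\frac{\pi}{2}$), so $\sigma_n(A^s)=0=\sigma_n(C_A^s)$. When $r_A=\frac{\pi}{2}$ the set $C_A^s$ is a single point, so it still has measure zero.

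For equality when $0<\sigma_n(A)<\frac12$, we expect the main obstacle, since we must pass from equality in the enlargement at the open radius $\frac{\pi}{2}$ to rigidity. We would use the last part of Lemma~\ref{l:isop_to_enl}, with $R_s$ induced by the spherical isoperimetric profile. Equality forces $\sigma_n(A_{<\pi/2})=R_{\pi/2}(\sigma_n(A))<1$, and the semigroup argument there, applied with closed neighbourhoods, propagates equality $\sigma_n(A_t)=R_t(\sigma_n(A))$ to all $t\le s$ for any $s<\frac{\pi}{2}$. In particular $A$ attains equality in \eqref{eq:sph_iso} for some $\delta>0$, and the equality case of \eqref{eq:sph_iso} quoted in the introduction forces $A$ to be a cap up to null sets. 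Some care is needed in the limit from closed to open neighbourhoods; we would handle it by monotonicity in $t$, since each inequality in the chain is squeezed between its values at $t$ and at $\frac{\pi}{2}$. Conversely, caps clearly give equality by the explicit computation above.
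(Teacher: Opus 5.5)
Your proposal is correct and follows essentially the paper's route: write $A^s$ as the complement of the open $\frac{\pi}{2}$-neighbourhood of $A$, bound its measure with the spherical isoperimetric inequality \eqref{eq:sph_iso}, identify $C_A^s$ as the opposite cap of radius $\frac{\pi}{2}-r_A$, and get the equality case from that of \eqref{eq:sph_iso}. You are more careful than the paper about closed versus open neighbourhoods, but the last part of Lemma~\ref{l:isop_to_enl} assumes equality at a closed radius, so run the propagation with the open neighbourhood on top: use $(A_t)_{<\frac{\pi}{2}-t}\subseteq A_{<\frac{\pi}{2}}$ to get $R_{\frac{\pi}{2}-t}\bigl(\sigma_n(A_t)\bigr)\le\sigma_n(A_{<\frac{\pi}{2}})=R_{\frac{\pi}{2}}(\sigma_n(A))$ for $t<\frac{\pi}{2}$, which is what your squeezing remark amounts to.
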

We now show that the spherical isoperimetric inequality \eqref{eq:sph_iso} itself implies \eqref{eq:sph_BS}.

\begin{proof}[Proof of Proposition~\ref{p:cap_BS}]
    Recall that $C_A$ is a spherical cap with radius $r_A$ satisfying $$\sigma_n(A) = \sigma_n(C_A).$$ 
    
    Suppose $\sigma_n(A) < \frac{1}{2}$, which implies $r_A \leq \pi/2$. By the definition of spherical polarity, any point $y \in A^s$ satisfies $\langle x, y \rangle \leq 0$ for all $x \in A$. Consequently, the spherical distance, $d(x,y) = \arccos(\langle x, y\rangle)$, between any point in $A$ and any point in $A^s$ is bounded below by $\pi/2$. 
    
    Therefore, $A^s$ must be contained in the complement of the open $\pi/2$-neighborhood of $A$, denoted $A_{\pi/2} = \{z \in \mathbb{S}^n : d(z, A) < \pi/2\}$. This yields the volume bound:
    \begin{equation} \label{eq:polar_complement_bound}
        \sigma_n(A^s) \leq \sigma_n(\mathbb{S}^n \setminus A_{\pi/2}) = 1 - \sigma_n(A_{\pi/2}).
    \end{equation}

    By the spherical isoperimetric inequality \eqref{eq:sph_iso}, the volume of the $\pi/2$-neighborhood of $A$ is bounded below by the volume of the $\pi/2$-neighborhood of $C_A$. That is, $$\sigma_n(A_{\pi/2}) \geq \sigma_n((C_A)_{\pi/2}).$$ 
    Substituting this into \eqref{eq:polar_complement_bound} gives:
    \begin{equation}\label{eq:polar_isoperimetric_bound}
        \sigma_n(A^s) \leq 1 - \sigma_n((C_A)_{\pi/2}) = \sigma_n(\mathbb{S}^n \setminus (C_A)_{\pi/2}).
    \end{equation}
    
    We now analyze the right-hand side. The open neighborhood $(C_A)_{\pi/2}$ is a cap of radius $r_A + \pi/2$. Its complement $\mathbb{S}^n \setminus (C_A)_{\pi/2}$ is the closed opposing cap of radius $\pi - (r_A + \pi/2) = \pi/2 - r_A$. Thus, the volume of the complement is $\Sigma_n(\pi/2 - r_A)$.
    
    On the other hand, we claim that $C_A^s$ is exactly this complement cap. Indeed, if $C_A$ is centered at a pole $e_{n+1}$, a point $y$ belongs to $C_A^s$ if and only if its distance to every point in $C_A$ is at least $\pi/2$. Since the furthest points of $C_A$ from $e_{n+1}$ are at distance $r_A$, $y$ must be at a distance of at least $\pi/2 + r_A$ from $e_{n+1}$. This defines an opposing cap centered at $-e_{n+1}$ with radius $\pi - (\pi/2 + r_A) = \pi/2 - r_A$. 
    
    Therefore, 
    \[
    \sigma_n(\mathbb{S}^n \setminus (C_A)_{\pi/2}) = \sigma_n(C_A^s).
    \] 
    Combining this with \eqref{eq:polar_isoperimetric_bound} yields $\sigma_n(A^s) \leq \sigma_n(C_A^s)$, establishing \eqref{eq:sph_BS}.
    
    Now, we analyze the case of equality when $\sigma_n(A) < 1/2$. If $A$ is a spherical cap (up to a null set), then it is a rotation of $C_A$. Since rotations preserve volume under polarity, we have $\sigma_n(A^s) = \sigma_n(C_A^s)$. 
    
    Conversely, suppose $\sigma_n(A^s) = \sigma_n(C_A^s)$. Then equality must hold in our application of the spherical isoperimetric inequality: $\sigma_n(A_{\pi/2}) = \sigma_n((C_A)_{\pi/2})$. By the equality conditions of the spherical isoperimetric inequality \eqref{eq:sph_iso}, this implies that $A$ must be a spherical cap up to a set of measure zero.

    Finally, if $\sigma_n(A) \geq 1/2$, then the radius of the equivalent cap $C_A$ satisfies $r_A \geq \pi/2$. Consequently, the open $\pi/2$-neighborhood of $C_A$ has full Haar measure, meaning $\sigma_n((C_A)_{\pi/2}) = 1$. By the spherical isoperimetric inequality \eqref{eq:sph_iso}, we must also have $\sigma_n(A_{\pi/2}) = 1$. Since the polar sets are contained in the complements of these neighborhoods, both $\sigma_n(A^s)$ and $\sigma_n(C_A^s)$ are zero, and the inequality \eqref{eq:sph_BS} holds trivially as $0 \leq 0$. 
\end{proof}

\subsection{Hamming Cube} 
\label{sec:hcube}

Let $n \geq 2$. We denote by $Q_n=\{0,1\}^n$ the Hamming cube in $\R^n$ equipped with the Hamming distance
\[
d_{H}(x,y) = \sum_{i=1}^n|x_i-y_i|. 
\]
Notice that $d_H$ takes only the values $0,\dots,n$. The associated norm is the $\ell_1$ norm
\[
\|x\|_1=\sum_{i=1}^nx_i.
\]
We equip the Hamming cube with the uniform probability measure $\mu_n$. 

We denote by $\# A$ the cardinality of $A\subset Q_n$. Harper's vertex isoperimetric theorem (see \cite[Chapter~3]{AGA}) then states that, if $\# A=m$, one has
\begin{equation}
\label{eq:ham_iso}
\# A_k \geq \#\left( I_m\right)_k, \text{ for } k=0,\ldots,n.
\end{equation}
where $A_k=\{x\in Q_n:d_H(x,A)\leq k\}$ and, for $m=0,\ldots,2^n$ the symbol $I_m$ denotes the first $m$ elements of $Q_n$ in simplicial order: first by cardinality, and lexicographically inside each cardinality layer.

In this section, we define the continuous cost  function $$c(x,y)=-\left(n-d_H(x,y)\right)$$ and we define the profile function $F_n$ by $F_n(-1):=0$,
    \[
    F_n(j):= \max_{0\leq m\leq 2^n}\left[\frac{m}{2^n} \left(1-\frac{\#(I_m)_{n-j-1}}{2^n}\right) \right]^{\frac{1}{2}}, \ j=0,\ldots, n-1,
    \]
and $F_n(n):=1$. The following result is a sharp Boolean geometric Santal\'o inequality.
\begin{lemma}
\label{l:ham}
 Using the above conventions, set $M(A,B)=\sup_{x\in A, y\in B} -c(x,y)$ for non-empty $A,B\subset Q_n$. Then, 
    \begin{equation} \label{eq Boolean BS}
        \sqrt{\mu_n(A)\mu_n(B)}\leq F_n\left( M(A,B) \right).
    \end{equation}
    It is sharp in the sense that, if $j<n$, then 
\begin{equation}\label{eq:DiscreteFunction}
   F_n(j)=\max\{\sqrt{\mu_n(A)\mu_n(B)}: d_H(A,B)\geq n-j\}.
\end{equation}
\end{lemma}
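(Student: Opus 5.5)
Write $j:=M(A,B)=n-d_H(A,B)\in\{0,\dots,n\}$, which is an integer because $d_H$ is integer-valued and $A,B$ are finite. The plan is to reduce \eqref{eq Boolean BS} to Harper's theorem \eqref{eq:ham_iso}, following the chain \eqref{eq:key_theta_chain} in Lemma~\ref{l:general_enlargement_santalo}, and to treat the endpoint values of $j$ separately.

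\textbf{Case $j=n$.} Here $F_n(n)=1$, so the bound is trivial since $\mu_n\le 1$. (The value $F_n(-1)=0$ is never used, because $A,B\neq\emptyset$ forces $j\geq 0$.)

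\textbf{Case $0\le j\le n-1$.} Then $d_H(A,B)=n-j\geq1$. Every $y\in B$ has $d_H(y,A)\ge n-j$, so $y\notin A_{n-j-1}$, and therefore
\[
B\subseteq Q_n\setminus A_{n-j-1}.
\]
Set $m=\#A$. Harper's theorem gives $\#A_{n-j-1}\ge \#(I_m)_{n-j-1}$, hence
\[
\mu_n(B)\le 1-\frac{\#(I_m)_{n-j-1}}{2^n}.
\]
Multiplying by $\mu_n(A)=m/2^n$ and taking square roots gives $\sqrt{\mu_n(A)\mu_n(B)}\le\bigl[\tfrac{m}{2^n}\bigl(1-\tfrac{\#(I_m)_{n-j-1}}{2^n}\bigr)\bigr]^{1/2}\le F_n(j)$, since $F_n(j)$ is the maximum over all $m$. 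This proves \eqref{eq Boolean BS}.

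\textbf{Sharpness \eqref{eq:DiscreteFunction}.} Fix $j<n$. First, "$\ge$": for any admissible $A,B$ with $d_H(A,B)\ge n-j$, the inclusion $B\subseteq Q_n\setminus A_{n-j-1}$ still holds, so the same bound gives $\sqrt{\mu_n(A)\mu_n(B)}\le F_n(j)$.

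Next, "$\le$": let $m^\ast$ attain the maximum in $F_n(j)$, and take $A=I_{m^\ast}$ and $B=Q_n\setminus (I_{m^\ast})_{n-j-1}$. Every point of $B$ is at distance at least $n-j$ from $A$, so $d_H(A,B)\ge n-j$ and $\sqrt{\mu_n(A)\mu_n(B)}=F_n(j)$.

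The main obstacle is the degenerate situations where $A$ or $B$ is empty. If $B$ is empty, both sides of the product vanish, and emptiness is consistent with the constraint $d_H(\cdot,\emptyset)=\infty$ in \eqref{eq:DiscreteFunction}, which I read as allowing empty sets. If $m^\ast=0$, then $F_n(j)=0$ and there is nothing to prove. If $(I_{m^\ast})_{n-j-1}=Q_n$, the product is zero and again nothing is needed. Otherwise both sets are nonempty and the extremal pair above applies directly.
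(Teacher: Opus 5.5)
Your proof is correct and follows essentially the same route as the paper. Both arguments dispose of the trivial case $M(A,B)=n$, then combine the inclusion $B\subseteq Q_n\setminus A_{n-M-1}$ with Harper's theorem, and for sharpness both use the extremal pair $A=I_{m^\ast}$, $B=Q_n\setminus(I_{m^\ast})_{n-j-1}$. Your explicit check of the upper-bound half of \eqref{eq:DiscreteFunction} is a small addition that the paper leaves implicit. The degenerate cases you worry about never occur: taking $m=1$ already shows $F_n(j)>0$ for $j<n$, so the maximizer gives two non-empty sets.
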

\begin{proof}
    If $M=n$, the inequality \eqref{eq Boolean BS} is immediate. Assume $M\leq n-1$. In particular, for each $x\in A$ and $y\in B$ we have
    \[
    d_H(x,y)\geq n-M\geq 1.
    \]
    If $k=n-M-1$, then $B\cap A_k=\emptyset$. Therefore, if $m=\#A$, we have by \eqref{eq:ham_iso} \begin{align*}
        \mu_n(B)\leq 1-\mu_n(A_k)\leq 1-\frac{\#(I_m)_k}{2^n}.
    \end{align*} It follows that
    \[
    \sqrt{\mu_n(A)\mu_n(B)}\leq \left( \frac{m}{2^n} \left(1-\frac{\#(I_m)_k}{2^n}\right) \right)^{\frac{1}{2}}.
    \]

    It remains to prove sharpness. Fix $j<n$, and let $m$ be chosen so that the
maximum in the definition of $F_n(j)$ is attained. Set
\[
    A=I_m
    \qquad\text{and}\qquad
    B=Q_n\setminus (I_m)_{n-j-1}.
\]
Then, by construction,
\[
    d_H(A,B)\geq n-j.
\]
Moreover,
\[
    \mu_n(A)=\frac{m}{2^n}
    \qquad\text{and}\qquad
    \mu_n(B)=1-\frac{\#(I_m)_{n-j-1}}{2^n}.
\]
Therefore,
\[
    \sqrt{\mu_n(A)\mu_n(B)}
    =
    \left[
        \frac{m}{2^n}
        \left(
            1-\frac{\#(I_m)_{n-j-1}}{2^n}
        \right)
    \right]^{\frac12}
    =
    F_n(j).
\]
This proves \eqref{eq:DiscreteFunction}.
\end{proof}

We now establish a functional Boolean Santal\'o inequality. We set $$\Delta F_n(j):=F_n(j)-F_n(j-1),$$ where $F_n$ is as in \eqref{eq:DiscreteFunction}.

\begin{theorem} \label{t:ham}
   Using the above conventions, let $\Omega:\{0,\ldots,n\}\to [0,\infty)$ be a non-increasing function and let $f,g:Q_n\to [0,\infty)$. Then, if 
    \[
    \sqrt{f(x)g(y)}\leq \Omega (-c(x,y)), \qquad \, \forall x,y \in Q_n\]
    we have
    \[ \sqrt{\int_{Q_n} f \ d\mu_n\int_{Q_n} g \ d\mu_n}\leq \sum_{j=0 }^{n} \Omega(j) \Delta F_n(j).
    \]
\end{theorem}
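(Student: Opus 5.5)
This will follow from the transference principle (Lemma~\ref{l:transfer}) with $m=2$, $X_1=X_2=Q_n$, $\mu_{X_1}=\mu_{X_2}=\mu_n$, $\lambda_1=\lambda_2=\frac12$, and the cost $c(x,y)=-(n-d_H(x,y))$. Continuity is automatic, since $Q_n$ is a finite discrete metric space; it is Polish and every function on it is continuous. The collections are $\mathcal C_{X_1}=\mathcal C_{X_2}=2^{Q_n}$, so that $\mathcal F(\mathcal C_{X_i})$ consists of all non-negative functions on $Q_n$. The quantity $-c=n-d_H$ takes values in $\{0,\dots,n\}$, so for non-empty $A,B$ we have $M(A,B)\in[0,n]$. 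I will take $a=-1$ and $b=n$.

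The profile must be continuous and non-decreasing on $(-1,n]$, while $F_n$ is only defined on integers. I will therefore extend $F_n$ to a right-continuous step function $\widetilde F(r)=F_n(\lfloor r\rfloor)$, and then smooth it. The cleanest route is to avoid continuity altogether: the proof of (1)$\Rightarrow$(2) in Lemma~\ref{l:transfer} only uses that $d\nu=dF$ is a positive measure with $\nu((a,M])=F(M)$. Here the measure is $\nu=\sum_{j=0}^n\Delta F_n(j)\,\delta_j$. Before using it, I must check monotonicity, i.e.\ $\Delta F_n(j)\ge0$. By \eqref{eq:DiscreteFunction}, $F_n(j)$ is a maximum over pairs with $d_H(A,B)\ge n-j$, a constraint that weakens as $j$ grows, so $F_n$ is non-decreasing on $0,\dots,n-1$. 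We also have $F_n(n)=1\ge F_n(n-1)$ and $F_n(0)\ge0=F_n(-1)$. If one insists on literal continuity, one can instead replace the jumps by steep linear ramps on $(j-\varepsilon,j]$ and let $\varepsilon\to0$; since the cost takes only integer values, the functional side is unaffected.

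With the setup in place, the geometric hypothesis (1) is exactly Lemma~\ref{l:ham}: $\sqrt{\mu_n(A)\mu_n(B)}\le F_n(M(A,B))$.

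For the functional side, I will rerun the sup-convolution argument directly. For $r\in(j-1,j]$, define
\[
\square(r)=\max\{\sqrt{f(x)g(y)}:n-d_H(x,y)\ge j\}.
\]
Theorem~\ref{thm:MMRR_equivalence}, or equivalently a layer-cake argument over the finitely many level sets, gives
\[
\sqrt{\textstyle\int f\,d\mu_n\int g\,d\mu_n}\le\int\square\,d\nu=\sum_j\square(j)\,\Delta F_n(j).
\]
The hypothesis together with the monotonicity of $\Omega$ gives $\square(j)\le\Omega(j)$, and the claim follows.

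The main obstacle is the mismatch between the paper's continuous-$F$ formulation and the step profile. I expect the atomic measure $dF=\sum_j\Delta F_n(j)\delta_j$ to work directly, because the argument only evaluates $\nu$ on intervals $(a,M]$ with integer $M$. For the same reason I will check carefully that the right endpoint $j$ is included, so that $\nu((-1,M])=F_n(M)$.
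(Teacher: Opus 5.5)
Your proposal is correct. It follows the paper's overall strategy: Lemma~\ref{l:ham} supplies hypothesis (1), and transference does the rest. The only difference is how you reconcile the integer-valued profile with the continuity that Lemma~\ref{l:transfer} requires of $F$.

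The paper removes your ``main obstacle'' by moving the discreteness from $F$ into $\Omega$. It takes $\bar F_n$ to be the piecewise affine interpolation of $F_n$ on $(-1,n]$, which is continuous, non-decreasing, and satisfies $\bar F_n(-1^+)=0$. It then sets $\bar\Omega(t)=\Omega(j)$ for $t\in(j-1,j]$, which is non-increasing. Because $-c$ and $M(A,B)$ take only integer values, Lemma~\ref{l:ham} is literally hypothesis (1) for $\bar F_n$, and the pointwise hypothesis is unchanged. Since $d\bar F_n$ gives mass $\Delta F_n(j)$ to $(j-1,j]$, where $\bar\Omega\equiv\Omega(j)$, one gets $\int_{-1}^n\bar\Omega\,d\bar F_n=\sum_{j=0}^n\Omega(j)\Delta F_n(j)$. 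Lemma~\ref{l:transfer} then applies verbatim.

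Your ramp variant is this same trick with ramp width $\varepsilon$. It needs an extension of $\Omega$ off the integers, which you leave unspecified. With the step extension above, the right-hand side equals $\sum_j\Omega(j)\Delta F_n(j)$ for every $\varepsilon\in(0,1]$, so no limit is needed; $\varepsilon=1$ is the paper's choice.

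Your atomic route is also valid via Theorem~\ref{thm:MMRR_equivalence}, which allows an arbitrary Polish measure space $(Y,\nu)$. On the finite cube maxima are attained, so $\square\alpha=\sqrt{\alpha_1\alpha_2}\,\chi_{(-1,M(A,B)]}$ with the right endpoint included, as you anticipated. However, you must take $Y=(-1,n]$ rather than $(a,b)=(-1,n)$ as in the paper's proof of Lemma~\ref{l:transfer}. The atom at $n$ has mass $1-F_n(n-1)=\tfrac12$ and is already needed for $A=B=Q_n$.

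Do not replace Theorem~\ref{thm:MMRR_equivalence} by a naive layer-cake at a common height $\tau$. That only gives $\int_0^\infty\sqrt{\mu_n(\{f\ge\tau\})\mu_n(\{g\ge\tau\})}\,d\tau$, which by Cauchy--Schwarz is at most, not at least, $\sqrt{\int f\,d\mu_n\int g\,d\mu_n}$. The level sets of $f$ and $g$ have to be paired at different heights, for instance by mass fraction.

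Your explicit check that $\Delta F_n(j)\ge0$ is a useful addition, since the paper only asserts that $\bar F_n$ is non-decreasing.
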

\begin{proof}
    Set $\bar{F}_n:(-1,n]\to [0,\infty)$ to be the piecewise affine extension of $F_n$, and  $\bar{\Omega}:(-1,n]\to [0,\infty)$, as $\bar{\Omega}(t):=\Omega(j)$, if $t\in (j-1,j]$. Notice $\bar F_n$ is non-decreasing. Then we see that
    \[
    \sum_{j=0 }^{n} \Omega(j) \Delta F_n(j)=\int_{-1}^n\bar{\Omega}\ d \bar{F_n}.\]
    Therefore, the claim follows by Lemma~\ref{l:ham} and Lemma~\ref{l:transfer}.
    \end{proof}

Formally speaking, Lemma~\ref{l:ham} and Theorem~\ref{t:ham} also follow from Theorem~\ref{thm:general_enlargement_theorem}, by using $\Theta(k)=F_n(n-k)$. We chose to use Lemma~\ref{l:transfer} for the proof, as it seemed simpler in this case.

\section{Applications to Optimal Transport}
\label{sec:opt_trans}
\subsection{History} 
Let $c: X\times Y \to(-\infty,\infty]$ be a measurable cost function, bounded from below and lower semi-continuous. Then, the optimal transport cost between $\nu_1\in \mathcal{P}(X)$ and $\nu_2\in \mathcal{P}(Y)$ is defined as 
\begin{equation}
\label{eq:costs}
    \mathcal{T}_c(\nu_1,\nu_2) = \inf_{\pi \in \mathcal{P}(X\times Y)}\left\{\int_{X\times Y} c(x,y) \, d\pi(x,y): \pi(\cdot\times Y)= \nu_1(\cdot), \pi(X\times \cdot)=\nu_2(\cdot)\right\}.
\end{equation}

Kantorovich \cite{KL40,KL48} showed that the infimum in \eqref{eq:costs} is attained and that it can alternatively be given as follows:
\begin{equation}
\label{eq:kantor}
    \mathcal{T}_c(\nu_1,\nu_2)= \sup_{\substack{\varphi\in L^1(X,d\nu_1), \psi\in L^1(Y,d\nu_2)\\ \varphi(x)+\psi(y) \leq c(x,y), \forall x\in X, y\in Y }}\left\{\int_X \varphi(x) \, d\nu_1(x) + \int_Y \psi(y) \, d\nu_2(y) \right\}.
\end{equation}

We will need the dual formula for relative entropy: supposing that $\nu\in \mathcal{P}(X)$ and $\nu$ has density $f$ with respect to $\mu$ such that $\log f\in L^1(\nu)$, then
\begin{equation}
\label{eq:dual_rel_entropy}
H(\nu|\mu) = \sup_{g\in L^1(\nu)}\left\{\int_{X} g \, d\nu - \log \int_{X} e^g \, d\mu\right\}.
\end{equation}

We now discuss the pertinent history of inequalities for optimal transport costs in the usual case of $X=Y=\R^n$. Following the method put forth by Marton \cite{MK96}, Talagrand \cite{TM96} established the Gaussian transport inequality: if $X=Y=\R^n$ and $c(x,y)=|x-y|^2$, then
\begin{equation}
\label{eq:Talagrand}
\mathcal{T}_c(\nu,\gamma_n) \leq 2H(\nu|\gamma_n),\quad\forall \nu\in\mathcal{P}(\R^n),
\end{equation}
with equality when the density of $\nu$ is a translate of the density of $\gamma_n$. It turns out that $\mathcal{T}_c(\nu_1,\nu_2)^\frac{1}{2}$ (which is a Wasserstein-2 distance between $\nu_1$ and $\nu_2$, compare \eqref{eq:wasser}) satisfies the triangle inequality. This observation combined with \eqref{eq:Talagrand} implies
\begin{equation}
\mathcal{T}_c(\nu_1,\nu_2) \leq 4H(\nu_1|\gamma_n) + 4H(\nu_2|\gamma_n), \quad \forall\nu_1,\nu_2\in\mathcal{P}(\R^n),
\label{eq:triangle_talagrand}
\end{equation}
with equality when $\nu_1$ and $\nu_2$ are standard Gaussians with opposite means. Fathi \cite{F18}, using Proposition~\ref{prop:Ball_Fradelizi_Meyer} in the form of Lehec \cite{JL09}, established a symmetrized version of this inequality: suppose that either $\nu_1$ or $\nu_2$ has center of mass at the origin. Then, 
\[
\mathcal{T}_c(\nu_1,\nu_2) \leq 2H(\nu_1|\gamma_n) + 2H(\nu_2|\gamma_n).
\]
In \cite{FGSZ24}, a method from \cite{BG99} and Proposition~\ref{prop:Ball_Fradelizi_Meyer} were used to establish transport-entropy inequalities similar to \eqref{eq:Talagrand}. It follows from \cite[Theorem 3.1]{FGSZ24} that, if $\Omega:\R_+\to(0,\infty)$ is a continuous, non-increasing, log-concave function such that $\int_0^\infty \Omega(t^2) t^{n-1}dt<+\infty$, and if $\mu_\Omega$ is the probability measure on $\R^n$ with density proportional to $\Omega(|x|^2)$, then
\begin{equation}
\label{eq:FGSZ}
    \mathcal{T}_{c_\Omega}(\nu_1,\nu_2) \leq H(\nu_1|\mu_\Omega) + H(\nu_2|\mu_\Omega), \quad \forall\, \nu_1,\nu_2\in\mathcal{P}(\R^n),
\end{equation}
where 
\[
c_\Omega(x,y) = \log\left(\frac{\Omega(|\langle x,y \rangle|)^2}{\Omega(|x|^2)\Omega(|y|^2)}\right), \quad x,y\in\R^n.
\]

\subsection{New results on optimal transport} 
This section is dedicated to generalizing \eqref{eq:FGSZ} to arbitrary Polish spaces using the transference principle, and subsequently applying it to the space of matrices. 

We first establish a universal transport-entropy inequality for spaces satisfying the functional condition of Lemma~\ref{l:transfer}. To construct a non-negative transport cost, we utilize reference functions satisfying a $c$-duality bound, analogous to the $c$-Legendre transforms described in Section 1.4.

\begin{theorem}
\label{thm:abstract_transport}
    Let $(X_1, \mu_1)$ and $(X_2, \mu_2)$ be Polish measure spaces, and let $c$ be a continuous cost function on $X_1\times X_2$ such that $-c(X_1 \times X_2)=[a,b)$. Fix a weight $\lambda \in (0,1)$. Assume that $(X_1, \mu_1)$ and $(X_2,\mu_2)$ satisfy the Cost-Santal\'o inequality in Lemma~\ref{l:transfer} associated with $(\lambda,c,F)$ and domain $(\mathcal{A}_1, \mathcal{A}_2)$, where $F:[a,b]\to [0,\infty)$ is a continuous, non-decreasing function such that $F(a)=0$.  Assume also that the total masses 
    \[
    0<Z_1 = \int_{X_1} \Omega(\Phi_1(x_1)) d\mu_1(x_1),\, Z_2 = \int_{X_2}\Omega(\Phi_2(x_2)) d\mu_2(x_2) <+\infty.
    \]
    
    Let $\Phi_1: X_1 \to [a,b]$ and $\Phi_2: X_2 \to [a,b]$ be measurable functions satisfying
    \begin{equation}
        \label{eq:duality_bound_abstract}
        \lambda \Phi_1(x_1) + (1-\lambda) \Phi_2(x_2) \ge -c(x_1, x_2), \quad \forall x_1\in X_1, x_2\in X_2.
    \end{equation}
    Let $\Omega: [a,b] \to (0,\infty)$ be a continuous, non-increasing, log-concave function. Moreover, assume the following balance condition holds:
    \begin{equation}
    \label{eq:balance_cond}
        \int_a^b \Omega(r) dF(r) \leq Z_1^\lambda Z_2^{1-\lambda}.
    \end{equation}
    Define the probability measures
    \[
    dP_1(x_1) = Z_1^{-1} \Omega(\Phi_1(x_1))d\mu_1(x_1) \quad \text{and} \quad dP_2(x_2) = Z_2^{-1} \Omega(\Phi_2(x_2)) d\mu_2(x_2)
    \]
    and define the transport cost function on $X_1 \times X_2$ by:
    \begin{equation}
        c_{\Omega}(x_1, x_2) = \log \left( \frac{\Omega(-c(x_1, x_2))^{\frac{1}{1-\lambda}}}{\Omega(\Phi_1(x_1))^{\frac{\lambda}{1-\lambda}} \Omega(\Phi_2(x_2))} \right).
    \end{equation}
    Then $c_\Omega \ge 0$, and for any $\nu_1 \in \mathcal{P}(X_1)$ having density with respect to $P_1$, and $\nu_2 \in \mathcal{P}(X_2)$ having density with respect to $P_2$, one has the optimal transport bound:
    \begin{equation}
        \mathcal{T}_{c_{\Omega}}^{\mathcal{A}}(\nu_1, \nu_2) \leq \frac{\lambda}{1-\lambda} H(\nu_1 | P_1) + H(\nu_2 | P_2),
    \end{equation}
    where $\mathcal{T}_{c_{\Omega}}^{\mathcal{A}}(\nu_1, \nu_2)$ denotes the restricted Kantorovich cost taken as the supremum over pairs $(\varphi, \psi)$ satisfying the pointwise bound $\varphi(x_1) + \psi(x_2) \le c_\Omega(x_1, x_2)$, with the additional constraint that $e^{\frac{1-\lambda}{\lambda}\varphi} \Omega(\Phi_1(\cdot)) \in \mathcal{F}(\mathcal{A}_1)$ and $e^{\psi} \Omega(\Phi_2(\cdot)) \in \mathcal{F}(\mathcal{A}_2)$.
\end{theorem}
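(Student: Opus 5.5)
The argument has two parts. First we check $c_\Omega\ge 0$. Then we bound the restricted Kantorovich dual using the functional cost-Santal\'o inequality from Lemma~\ref{l:transfer}(2) combined with the entropy duality \eqref{eq:dual_rel_entropy}, following Bobkov--G\"otze. This is the route of \cite{FGSZ24}.

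\textbf{Step 1: nonnegativity.} Since $\Omega$ is log-concave and non-increasing, the duality bound \eqref{eq:duality_bound_abstract} gives
\[
\Omega(-c(x_1,x_2)) \ge \Omega\bigl(\lambda\Phi_1(x_1)+(1-\lambda)\Phi_2(x_2)\bigr)\ge \Omega(\Phi_1(x_1))^\lambda\,\Omega(\Phi_2(x_2))^{1-\lambda}.
\]
Raising to the power $\frac{1}{1-\lambda}$ and taking logarithms yields $c_\Omega\ge 0$.

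\textbf{Step 2: the dual pair.} Fix an admissible pair $(\varphi,\psi)$ with $\varphi(x_1)+\psi(x_2)\le c_\Omega(x_1,x_2)$. Set
\[
f_1=e^{\frac{1-\lambda}{\lambda}\varphi}\,\Omega(\Phi_1), \qquad f_2=e^{\psi}\,\Omega(\Phi_2).
\]
By the restriction in $\mathcal{T}^{\mathcal A}_{c_\Omega}$, these lie in $\mathcal F(\mathcal A_1)$ and $\mathcal F(\mathcal A_2)$. The constraint $\varphi+\psi\le c_\Omega$ multiplied by $1-\lambda$ and exponentiated reads
\[
e^{(1-\lambda)\varphi}\,e^{(1-\lambda)\psi}\,\Omega(\Phi_1)^{\lambda}\,\Omega(\Phi_2)^{1-\lambda}\le \Omega(-c).
\]
This is exactly $f_1^\lambda f_2^{1-\lambda}\le\Omega(-c)$. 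The pointwise hypothesis of Lemma~\ref{l:transfer}(2) therefore holds on the range $[a,b)$ of $-c$.

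\textbf{Step 3: apply the cost-Santal\'o inequality.} Lemma~\ref{l:transfer}(2) and the balance condition \eqref{eq:balance_cond} give
\[
\Bigl(\int f_1\,d\mu_1\Bigr)^\lambda\Bigl(\int f_2\,d\mu_2\Bigr)^{1-\lambda}\le\int_a^b\Omega\,dF\le Z_1^\lambda Z_2^{1-\lambda}.
\]
Dividing by $Z_1^\lambda Z_2^{1-\lambda}$ gives
\[
\lambda\log\int e^{\frac{1-\lambda}{\lambda}\varphi}\,dP_1+(1-\lambda)\log\int e^{\psi}\,dP_2\le 0.
\]

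\textbf{Step 4: entropy duality.} Apply \eqref{eq:dual_rel_entropy} with $g=\frac{1-\lambda}{\lambda}\varphi$ against $P_1$ and $g=\psi$ against $P_2$:
\[
\tfrac{1-\lambda}{\lambda}\int\varphi\,d\nu_1\le H(\nu_1|P_1)+\log\int e^{\frac{1-\lambda}{\lambda}\varphi}\,dP_1,
\]
and similarly for $\psi$. Multiply the first by $\frac{\lambda}{1-\lambda}$ and add the second. Step 3, divided by $1-\lambda$, says
\[
\tfrac{\lambda}{1-\lambda}\log\int e^{\frac{1-\lambda}{\lambda}\varphi}\,dP_1+\log\int e^{\psi}\,dP_2\le 0,
\]
so the log terms are absorbed. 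This leaves
\[
\int\varphi\,d\nu_1+\int\psi\,d\nu_2\le\tfrac{\lambda}{1-\lambda}H(\nu_1|P_1)+H(\nu_2|P_2).
\]
Taking the supremum over admissible pairs gives the claim.

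\textbf{Main obstacle.} The delicate point is integrability in Step 4. The duality formula \eqref{eq:dual_rel_entropy} is stated for $g\in L^1(\nu)$ and for densities with $\log f\in L^1$. If a relative entropy is infinite the bound is trivial, so we may assume both are finite. I would then handle potentials that are not integrable, or unbounded above, by truncating $\varphi$ and $\psi$ from above. The difficulty is that truncation must preserve both the constraint $\varphi+\psi\le c_\Omega$ and membership of the associated functions in $\mathcal F(\mathcal A_i)$. Truncating from above shrinks superlevel sets consistently, and the constraint survives because $c_\Omega\ge0$, so membership should be checked carefully but ought to hold. Monotone convergence then finishes the argument.
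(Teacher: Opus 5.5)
Your proof is correct and follows the paper's argument. Nonnegativity of $c_\Omega$ comes from the duality bound plus monotonicity and log-concavity of $\Omega$. Then, for each admissible pair, you apply Lemma~\ref{l:transfer}(2) to $e^{\frac{1-\lambda}{\lambda}\varphi}\Omega(\Phi_1)$ and $e^{\psi}\Omega(\Phi_2)$, use the balance condition to make the log-moment terms nonpositive, and absorb them via the entropy duality \eqref{eq:dual_rel_entropy}.

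The truncation step you flag is not needed. The Kantorovich dual is taken over $L^1$ potentials, so the Gibbs inequality $\int g\,d\nu\le H(\nu|P)+\log\int e^g\,dP$ applies directly, and Step 3 already forces both exponential moments to be finite. This is fortunate, because truncating $\varphi$ from above would not obviously keep $e^{\frac{1-\lambda}{\lambda}\varphi}\Omega(\Phi_1)$ in $\mathcal{F}(\mathcal{A}_1)$. Its superlevel sets become intersections with superlevel sets of $\Omega\circ\Phi_1$, which need not lie in $\mathcal{A}_1$.
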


\begin{proof}
    We first observe that $c_\Omega$ is non-negative. By the duality bound \eqref{eq:duality_bound_abstract} and the fact that $\Omega$ is non-increasing, we have $\Omega(-c(x_1, x_2)) \ge \Omega(\lambda \Phi_1(x_1) + (1-\lambda)\Phi_2(x_2))$. Because $\Omega$ is log-concave, $\Omega(\lambda \Phi_1 + (1-\lambda)\Phi_2) \ge \Omega(\Phi_1)^\lambda \Omega(\Phi_2)^{1-\lambda}$. Raising both sides to the power of $\frac{1}{1-\lambda}$ and taking the logarithm ensures $c_\Omega(x_1, x_2) \ge 0$.

    By the duality formula for relative entropy \eqref{eq:dual_rel_entropy}, we can express the entropy terms as a supremum over all integrable measurable functions $\varphi$ and $\psi$:
    \begin{align*}
    &\frac{\lambda}{1-\lambda} H(\nu_1 | P_1) + H(\nu_2 | P_2) \\
    &= \sup_{\varphi + \psi \leq c_\Omega} \left\{ \int_{X_1} \varphi\, d\nu_1 + \int_{X_2} \psi \,d\nu_2 - \left( \frac{\lambda}{1-\lambda} \log \int_{X_1} e^{\frac{1-\lambda}{\lambda}\varphi} dP_1 + \log \int_{X_2} e^\psi dP_2 \right) \right\}.
    \end{align*}
    
    Let $\mathcal{A}_{\text{admis}}$ be the set of pairs $(\varphi, \psi)$ that satisfy both the Kantorovich pointwise bound $\varphi(x_1) + \psi(x_2) \leq c_\Omega(x_1, x_2)$ and the domain constraint $f(\cdot) = e^{\frac{1-\lambda}{\lambda}\varphi(\cdot)} \Omega(\Phi_1(\cdot)) \in \mathcal{F}(\mathcal{A}_1)$ and $g(\cdot) = e^{\psi(\cdot)} \Omega(\Phi_2(\cdot)) \in \mathcal{F}(\mathcal{A}_2)$. Restricting the supremum to this set gives:
    \begin{align*}
    &\frac{\lambda}{1-\lambda} H(\nu_1 | P_1) + H(\nu_2 | P_2) \\
    &\ge \sup_{(\varphi, \psi) \in \mathcal{A}_{\text{admis}}} \left\{ \int_{X_1} \varphi \,d\nu_1 + \int_{X_2} \psi\, d\nu_2 - \left( \frac{\lambda}{1-\lambda} \log \int_{X_1} e^{\frac{1-\lambda}{\lambda}\varphi} dP_1 + \log \int_{X_2} e^\psi dP_2 \right) \right\}.
    \end{align*}

    For any valid pair $(\varphi, \psi) \in \mathcal{A}_{\text{admis}}$, the Kantorovich constraint implies:
    \[
    f(x_1)^\lambda g(x_2)^{1-\lambda} = e^{(1-\lambda)\varphi(x_1)} e^{(1-\lambda)\psi(x_2)} \Omega(\Phi_1(x_1))^\lambda \Omega(\Phi_2(x_2))^{1-\lambda} \leq \Omega(-c(x_1, x_2)).
    \]
    Since $f \in \mathcal{F}(\mathcal{A}_1)$ and $g \in \mathcal{F}(\mathcal{A}_2)$, we can apply Condition (2) of Lemma~\ref{l:transfer} to the unnormalized measures $\mu_1$ and $\mu_2$, by setting $\Omega(b):=\lim_{r\to b^-}\Omega(r)$,
    \[
    \left( \int_{X_1} f d\mu_1 \right)^\lambda \left( \int_{X_2} g d\mu_2 \right)^{1-\lambda} \leq \int_a^b \Omega(r) dF(r).
    \]
    Inserting the normalized probability measures $P_1$ and $P_2$, and utilizing the balance condition \eqref{eq:balance_cond}, this becomes:
    \[
    \left( \int_{X_1} e^{\frac{1-\lambda}{\lambda}\varphi} dP_1 \right)^\lambda \left( \int_{X_2} e^\psi dP_2 \right)^{1-\lambda} \leq 1.
    \]
    Taking the logarithm of both sides divided by $1-\lambda$, we obtain:
    \[
    \frac{\lambda}{1-\lambda} \log \int_{X_1} e^{\frac{1-\lambda}{\lambda}\varphi} dP_1 + \log \int_{X_2} e^\psi dP_2 \leq 0.
    \]
    Since we now know the sign of this term, we may drop it in our entropy bound and continue the inequality in the correct direction:
    \begin{align*}
    \frac{\lambda}{1-\lambda} H(\nu_1 | P_1) + H(\nu_2 | P_2) &\ge \sup_{(\varphi, \psi) \in \mathcal{A}_{\text{admis}}} \left\{ \int_{X_1} \varphi d\nu_1 + \int_{X_2} \psi d\nu_2 \right\} \\
    &= \mathcal{T}_{c_\Omega}^{\mathcal{A}}(\nu_1, \nu_2).
    \end{align*}
    This establishes the desired restricted optimal transport bound.
\end{proof}

\begin{rem}
    If the cost-Santal\'o inequality holds for the full domain of measurable sets, the constraint $\mathcal{A}_{\text{admis}}$ can be dropped. In this case, the proof above directly establishes the following sharper inequality for the true optimal transport cost:
    \begin{align*}
    &\frac{\lambda}{1-\lambda}H(\nu_1|P_1) + H(\nu_2|P_2) 
    \\
    &\geq \mathcal{T}_{c_\Omega}(\nu_1,\nu_2) - \sup_{\varphi+\psi \leq c_\Omega}\left\{\frac{\lambda}{1-\lambda}\log\int_{X_1} e^{\frac{1-\lambda}{\lambda}\varphi}dP_1 + \log\int_{X_2} e^{\psi}dP_2\right\}.
    \end{align*}
\end{rem}

We now apply Theorem~\ref{thm:abstract_transport} to derive an optimal transport inequality for matrices.

\begin{proof}[Proof of Corollary~\ref{c:mass_trans}]
    We apply the framework of Theorem~\ref{thm:abstract_transport} with $X_1 = \R^n$ and $X_2 = M_{n,m}(\R)$ equipped with standard Lebesgue measures $dv$ and $dA$. We set $\lambda = \frac{m}{m+1}$, meaning $\frac{\lambda}{1-\lambda} = m$. Define the underlying continuous cost $c(v, A) = -h_Q(A^t v)$.
    
    Define the functions $\Phi_1(v) = |v|^{\frac{m+1}{m}}$ and $\Phi_2(A) = \|A\|^{m+1}_{(\B)^{\star,Q}}$. By the gauge definition \eqref{eq:polar_ball}, we have $\|A\|_{(\B)^{\star,Q}} \ge h_Q\left(A^t\frac{v}{|v|}\right)$ for any non-zero $v$; applying Young's inequality in this case with conjugate exponents $p = m+1$ and $q = \frac{m+1}{m}$ yields the duality bound \eqref{eq:duality_bound_abstract}:
    \begin{equation}
    \frac{m}{m+1}|v|^{\frac{m+1}{m}} + \frac{1}{m+1}\|A\|^{m+1}_{(\B)^{\star,Q}} \ge |v| h_Q\left(A^t\frac{v}{|v|}\right) = h_Q(A^tv) = -c(v,A).
    \end{equation}
    
    By taking the $(m+1)$th root of the hypothesis of Theorem~\ref{t:main}, the spaces satisfy the Cost-Santal\'o inequality associated with the domain $(\mathcal{C}^n,\mathcal{B}\left(M_{n,m}(\R)\right))$. Furthermore, the right-hand side of Theorem~\ref{t:main} guarantees the required balance condition is satisfied. Thus, we invoke Theorem~\ref{thm:abstract_transport} for the restricted transport cost. Inserting our definitions, the transport cost evaluates to:
    \[
    c_{\Omega}(v,A)\! = \!\log\left( \frac{\Omega(-c(v,A))^{\frac{1}{1-\lambda}}}{\Omega(\Phi_1(v))^{\frac{\lambda}{1-\lambda}} \Omega(\Phi_2(A))} \right) \!=\! \log\left( \frac{\Omega(h_Q(A^tv))^{m+1}}{\Omega(|v|^{\frac{m+1}{m}})^m \Omega(\|A\|^{m+1}_{(\B)^{\star,Q}})} \right) \!\!=\! c_{\Omega,m}(v,A).
    \]
    
    This establishes the restricted transport bound (1). To establish (2), assume $Q$ is origin-symmetric and $\nu_1, \nu_2$ are even measures. Then, $c_{\Omega,m}(-v, -A) = c_{\Omega,m}(v, A)$. We claim that the usual cost $\mathcal{T}_{c_{\Omega,m}}$ coincides with $\mathcal{T}^{\mathcal{F}}_{c_{\Omega,m}}$ given our assumptions, which yields (2) from (1). 

    Indeed, recall that, for the former, we take the supremum over all integrable pairs $(\varphi, \psi)$ such that $\varphi(v) + \psi(A) \le c_{\Omega,m}(v, A)$. We first note that, because $\nu_1, \nu_2$ and $c_{\Omega,m}$ are even, we may replace any valid pair of potentials with their even parts. Setting $\tilde{\varphi}(v) = \frac{1}{2}(\varphi(v) + \varphi(-v))$ and $\tilde{\psi}(A) = \frac{1}{2}(\psi(A) + \psi(-A))$, the evenness of the cost ensures the pointwise bound $\tilde{\varphi}(v) + \tilde{\psi}(A) \le c_{\Omega,m}(v, A)$ holds, and the evenness of the measures ensures the integrals with respect to $\nu_1$ and $\nu_2$ remain unchanged.  Thus, the Kantorovich supremum can be evaluated entirely over even potentials.
    
    Moreover, by standard approximation arguments, we may further restrict the supremum to potentials that are bounded from above. For such an admissible pair, define the upper semi-continuous envelopes
    \[
    \overline{\varphi}(v):=\limsup_{w\to v}\varphi(w)
    \qquad\text{and}\qquad
    \overline{\psi}(A):=\limsup_{B\to A}\psi(B).
    \]
    Since $c_{\Omega,m}$ is continuous, for every $v\in\mathbb R^n$ and $A\in M_{n,m}(\mathbb R)$, we have
    \[
    \overline{\varphi}(v)+\psi(A) \leq \limsup_{w\to v}c_{\Omega,m}(w,A)\leq c_{\Omega,m}(v,A).
    \]
    Iterating, we get 
    \[
    \overline{\varphi}(v)+\overline{\psi}(A)\leq c_{\Omega,m}(v,A).
    \]
    Thus, $(\overline\varphi,\overline\psi)$ remains admissible. Moreover, $\overline\varphi\geq\varphi$ and $\overline\psi\geq\psi$, so this replacement does not decrease the supremum. The envelopes remain bounded above and even. Consequently, the Kantorovich supremum may be evaluated over bounded-above, even, upper semi-continuous potentials.

    For such potentials, the functions
    \[f(v)= e^{\frac{\varphi(v)}{m}}
    \Omega\left(|v|^{\frac{m+1}{m}}\right) \quad \text{and} \quad g(A)=e^{\psi(A)}\Omega\left(\|A\|_{(B_2^n)^{\star,Q}}^{m+1}\right)\]
    are upper semicontinuous and even. Since the potentials are bounded above and $\Omega$ vanishes at infinity, both $f$ and $g$ vanish at infinity. Their nontrivial superlevel sets are therefore compact and origin-symmetric, and hence have barycenter at the origin.

    Thus, we have illustrated that the supremum defining $\mathcal{T}_{c_{\Omega,m}}$ can be taken over even $f \in \mathcal{F}_n$ and even, measurable $g:M_{n,m}(\R)\to\R_+$, and we conclude.
\end{proof}

\textbf{Acknowledgments.} We thank Dario Cordero-Erausquin, Matthieu Fradelizi, Julian Haddad, and Eli Putterman for the discussions regarding some of the geometric properties in Section~\ref{sec:properties}.

\end{document}